\newtheorem{Theorem}{Theorem}[subsection]
\newtheorem{theorem}[Theorem]{Theorem}
\newtheorem{Corollary}[Theorem]{Corollary}
\newtheorem{corollary}[Theorem]{Corollary}
\newtheorem{proposition}[Theorem]{Proposition}
\newtheorem{Lemma}[Theorem]{Lemma}
\newtheorem{lemma}[Theorem]{Lemma}
\theoremstyle{definition}
\newtheorem{Definition}[Theorem]{Definition}
\newtheorem{definition}[Theorem]{Definition}
\newtheorem{Example}[Theorem]{Example}
\newtheorem{example}[Theorem]{Example}
\newtheorem{Notations}[Theorem]{Notation}
\theoremstyle{remark}
\newtheorem{remark}[Theorem]{Remark}
\DeclareMathOperator{\Spec}{Spec}
\DeclareMathOperator{\sheafHom}{\mathcal{H}om}
\DeclareMathOperator{\Hom}{Hom}
\DeclareMathOperator{\Homs}{Hom_s}
\DeclareMathOperator{\homsheaf}{\mathcal{H}om}
\DeclareMathOperator{\homs}{\mathcal{H}om_s}
\DeclareMathOperator{\image}{Im}
\DeclareMathOperator{\rank}{rank}
\DeclareMathOperator{\Bun}{Bun}
\DeclareMathOperator{\Sp}{Sp}
\DeclareMathOperator{\homsFz}{\mathcal{H}om_{s,F_0}}
\DeclareMathOperator{\coker}{coker}
\DeclareMathOperator{\Sch}{Sch} 
\DeclareMathOperator{\Sets}{Sets}
\DeclareMathOperator{\Cohomology}{H}
\DeclareMathOperator{\Aut}{Aut}
\DeclareMathOperator{\Sesq}{Sesq}
\DeclareMathOperator{\herm}{\mathfrak{herm}}
\DeclareMathOperator{\Gr}{gr}
\DeclareMathOperator{\proj}{\mathfrak{Proj}}
\DeclareMathOperator{\module}{\mathfrak{mod}}
\DeclareMathOperator{\can}{can}
\DeclareMathOperator{\Tr}{Tr}
\DeclareMathOperator{\ed}{ed}
\DeclareMathOperator{\N}{N}
\DeclareMathOperator{\Fgt}{Fgt}
\DeclareMathOperator{\trdeg}{trdeg}
\DeclareMathOperator{\Grass}{Grass}
\newcommand{\gr}[2][]{\operatorname{gr_{#2}^{#1}}}
\newcommand{\kerimage}[2]{\ker(\theta^{#1}) \cap \image(\theta^{#2})}
\newcommand{\bC}{{\bf C}}
\newcommand{\sO}{{\mathscr O}}
\newcommand{\sE}{{\mathscr E}}
\newcommand{\sF}{{\mathscr F}}
\newcommand{\sW}{{\mathscr W}}
\newcommand{\sV}{{\mathscr V}}
\newcommand{\sI}{{\mathscr I}}
\newcommand{\sJ}{{\mathscr J}}
\newcommand{\sM}{{\mathscr M}}
\newcommand{\sA}{{\mathscr A}}
\newcommand{\sG}{{\mathscr G}}
\newcommand{\cO}{\mathcal O}
\newcommand{\lkF}{\pi_{L}^{*}(\pi_K)_{*}\sF}
\newcommand{\shom}{{\mathcal Hom}}
\newcommand{\ZZ}{{\mathbb Z}}
\newcommand{\NN}{{\mathbb N}}
\newcommand{\fX}{{\mathfrak X}}
\DeclareMathOperator{\Fil}{Fil}
\DeclareMathOperator{\End}{End}
\DeclareMathOperator{\Nil}{Nil}
\DeclareMathOperator{\ev}{ev}
\newcommand{\tNil}{\widetilde{\Nil}}
\newcommand{\tN}{\widetilde{\N}}
\newcommand{\sEnd}{\mathcal{E}nd}
\newcommand{\Spr}{{\rm Sp}_{2r}}
\newcommand{\Coh}{\mathcal{C}oh}
\newcommand{\Vect}{\mathcal{V}ect}
\newcommand{\Nilscheme}{{\bf{Nil}}}
\newtcolorbox{important}{colback=red!5!white, 
colframe=red!75!black}
\newtcolorbox{todobox}{colback=green!5!white,
colframe=green!75!black}
\newtcolorbox{shaded}{colback=red!5!white, title={To be deleted},
colframe=red!75!black,  breakable}
\let\c@equation\c@Theorem
\numberwithin{equation}{section}
\title{On the Essential Dimension of Symplectic Vector Bundles over Curves}
\author{Ajneet Dhillon}
\email{adhill3@uwo.ca}
\author{Sayantan Roy Chowdhury}
\email{sroycho4@uwo.ca}
\begin{document}
\maketitle

\begin{abstract}
   Let $X$ be a smooth geometrically connected projective curve of genus at least 2 over a field of 
   characteristic zero. We compute the essential dimension of the moduli stack of symplectic bundles
   over $X$. Unlike the case of vector bundles, we are able to precisely compute the essential dimension 
   as the generic gerbe of the moduli stack has period 2 over it's moduli space.  
\end{abstract}

\tableofcontents 

%auto-ignore 
\section{Introduction}   

Let $k$ be a field of characteristic 0. Fix a smooth geometrically connected projective curve $X$ over $k$ of genus $g$ at least 2. 
In this paper we study the essential dimension of the moduli stack of principal bundles over the symplectic group $\Spr$. 
The main result of this paper is the calculation of the essential dimension, see \ref{t:essential} and \ref{c:ed symplectic}. 

A lower bound for the essential dimension can be obtained by combining \cite[7.3]{coskun} with \cite[5.4]{BRV}, see the proof 
of \ref{t:essential} for further details. The main content of this article is to prove the reverse inequality. For this we 
borrow an idea from \cite{BDH} and introduce in section \ref{s:nilpotent} an algebraic stack parameterising symplectic bundles with 
a nilpotent endomorphism. We are able to compute its dimension in \ref{Lemma Dimension count}. This ultimately leads to a bound on the transcendence 
degree of the field of moduli of a symplectic bundle in \ref{l:Field of Moduli for indecomposable bundles over algebraically closed field}. 
At this point we are reduced to studying the essential dimension of the residual gerbe of a symplectic bundle over it's field of moduli. 
To understand this number, we associate to a symplectic bundle an algebra with involution in \ref{s:categorical}. In this section it is shown that 
the essential dimension of this residual gerbe can be computed studying the essential dimension of Hermitian modules over this algebra. 
Using elementary means we can bound the essential dimension of the functor of Hermitian modules. 
An unpublished result of  Danny Ofek  and Zinovy Reichstein along with \cite{lotscher} provides a precise calculation of this number. 

A broad overview of essential dimension can be found in \cite{reichstein12, reichstein10}. 
Given an algebraic stack  $\mathfrak{X}$ over a field $k$ the essential dimension of  $\mathfrak{X}$  is a measure of the number of parameters 
needed to parameterise a generic element of  $\mathfrak{X}$. This number first appeared in the work of Reichstein and Buhler \cite{buhler}.
From another point of view there are two ways to define the dimension of a variety. The first is the Krull dimension and the second via the transcendence 
degree of it's function field. The function field approach when generalised to algebraic stacks yields the notion of essential dimension, see \cite{BRV}. 
To make this precise consider a functor 
$$
F: \text{Fields}_{k}\to \text{Sets}
$$
where $\text{Fields}_{k}$ is the category of field extensions of $k$. Given a field extension $K/k$ and $x\in F(K)$ we say that a subfield 
$K\supseteq L \supseteq k$ is a \emph{field of definition} for $x$ if there is a $y\in F(L)$ with $x=F(i)(y)$ where $i:L\hookrightarrow K$ is the inclusion.   
The \emph{essential dimension} of $x$ is defined to be 
$$
\ed_{k}(x) = \min \text{tr.deg}(L/k)
$$
where the minimum is taken over all fields of definition of $x$. The \emph{essential dimension of $F$} is defined to be 
$$
\ed_{k}F =\sup \ed_{k}(x)
$$
where the supremum is taken over all $x$ and all field extensions $K/k$. 
If  $\mathfrak{X}$ is an algebraic stack over a field $k$ then the \emph{essential dimension of  $\mathfrak{X}$ }, denote $\ed_{k} \mathfrak{X}$ is defined to 
be the essential dimension of the associated functor of isomorphism classes of objects over fields.

The main result of this paper is a computation of $\ed_k \Bun_{X,\Spr}$ when $X$ has genus at least 2  and $\Bun_{X,\Spr}$ is the moduli stack 
of $\Spr$-bundles on $X$. We prove:

\begin{theorem}
    Let $X$ be a curve of genus $g \ge 2$ over a field $k$ of characteristic 0. Then 
\[      \ed_k(\Bun_{X,\Spr}) = (g-1)( \dim \Spr) + 2^{\nu_2(2r)}          \] 
where $\nu_2(2r)$ is the highest power of 2 dividing $2r$.  
\end{theorem}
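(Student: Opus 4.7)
The plan is to establish matching inequalities whose common value is $(g-1)\dim \Spr + 2^{\nu_2(2r)}$. For the lower bound, the strategy is to combine the Brosnan--Reichstein--Vistoli inequality \cite[5.4]{BRV}, bounding $\ed_k(\mathfrak{X})$ below by $\dim \mathfrak{X}$ plus the essential dimension of the generic residual gerbe, with the calculation of \cite[7.3]{coskun}. The dimension of $\Bun_{X,\Spr}$ is $(g-1)\dim \Spr$, which is standard for $G$-bundles on curves of genus $g \ge 2$, so the contribution of the generic gerbe must account for the $2^{\nu_2(2r)}$ summand; this is where one uses that the centre of $\Spr$ is $\mu_2$ and that the essential dimension of a $\mu_2$-gerbe is the index of its associated Brauer class.

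The upper bound is the main content and proceeds in two stages. The first stage bounds the essential dimension of the coarse moduli functor. Given a symplectic bundle $E$ over a large field $K$, the stack $\Nilscheme$ of pairs (symplectic bundle, nilpotent endomorphism) introduced in Section \ref{s:nilpotent} serves as the probing device, following the method of \cite{BDH}. A generic symplectic bundle has few nilpotent endomorphisms, so the dimension formula of Lemma \ref{Lemma Dimension count}, combined with a specialization argument, forces $E$ to descend up to isomorphism over the algebraic closure to a subfield $L \subseteq K$ with $\trdeg(L/k) \le (g-1)\dim \Spr$. This yields the bound \ref{l:Field of Moduli for indecomposable bundles over algebraically closed field} on the essential dimension of the field-of-moduli functor.

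The second stage, and the principal obstacle, is to upgrade this to a bound on the essential dimension of the stack itself by controlling the residual gerbe. Following Section \ref{s:categorical}, the approach is to attach to the bundle a central simple algebra with involution $(A,\sigma)$ over its field of moduli $L$ and identify the residual gerbe with the gerbe of Hermitian $(A,\sigma)$-modules of the appropriate rank. In this way the essential dimension of the gerbe becomes the essential dimension of the Hermitian module functor, which elementary arguments bound above and which, by the unpublished result of Ofek--Reichstein together with \cite{lotscher}, is exactly $2^{\nu_2(2r)}$. Adding the two contributions gives the matching upper bound. The sharpness, as advertised in the abstract, hinges on the period of the generic gerbe being precisely $2$ rather than some larger invariant, so that the Hermitian module essential dimension is the exact correction rather than merely an upper estimate.
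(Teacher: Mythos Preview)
Your lower-bound strategy matches the paper exactly: apply \cite[5.4]{BRV} to the generic gerbe of the regularly stable locus $\Bun_{\Spr}^{rs}\to M_{\Spr}^{rs}$, whose index is $2^{\nu_2(2r)}$ by \cite[7.3]{coskun}. Your upper-bound framework (field of moduli plus residual gerbe) is also the paper's, and Stage~1 correctly invokes Lemma~\ref{Lemma Dimension count} and Corollary~\ref{c:transcendece symplectic}.

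The gap is in Stage~2. You assert that for an arbitrary symplectic bundle the Hermitian-module functor has essential dimension exactly $2^{\nu_2(2r)}$, attributing this to Ofek--Reichstein and \cite{lotscher}. That is not what the paper proves, and it is not true in general: for a non-simple bundle the algebra $A=\End((\pi_K)_*\sF)$ is merely Artinian with involution (not central simple), and the elementary Hermitian bound of Lemma~\ref{l:ed less than rank} only yields $\ed_{k(\sG)}(\sG)\le 2r$. The Ofek--Reichstein result is mentioned in the introduction but is never invoked in the proof. If you combine the uniform bound $\trdeg_k k(\sE)\le (g-1)\dim\Spr$ with only $\ed(\sG)\le 2r$, you get $(g-1)\dim\Spr+2r$, which is too weak whenever $2^{\nu_2(2r)}<2r$.

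The paper closes this by a case split you have omitted. For \emph{simple} bundles (Lemma~\ref{l:ed simple}) one shows directly that $\Aut(\sE,h)\cong\mu_2$, so the residual gerbe is a $\mu_2$-gerbe with index a power of $2$ dividing $2r$, and \cite[5.4]{BRV} gives $\ed(\sG)\le 2^{\nu_2(2r)}$ with no Hermitian analysis needed. For \emph{non-simple} bundles (Lemma~\ref{l:ed not simple}) the gerbe bound is only $2r$, but the presence of a nonzero self-adjoint nilpotent (or a Levi splitting) forces the finer count in Lemma~\ref{l:Field of Moduli for indecomposable bundles over algebraically closed field} to drop, giving $\trdeg_k k(\sE)\le (g-1)(\dim\Spr-2r+1)$; adding $2r$ still yields at most $(g-1)\dim\Spr+1$. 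This trade-off between a sharper moduli bound and a weaker gerbe bound is the missing ingredient in your outline.
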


The coarse moduli space of $\Bun_{X,\Spr}$ has dimension $(g-1)( \dim \Spr)$. The term $2^{\nu_2(2r)}$ is the extra contribution to the essential 
dimension coming from the generic gerbe of $\Bun_{X,\Spr}$ over it's coarse moduli space. This is a $\mu_{2}$-gerbe. As the main conjecture in 
\cite{ct} is known for $\mu_{2}$-gerbes we are able to compute the essential dimension, in contrast to the vector bundle case in \cite{BDH}. 

The paper begins in section 2 where we introduce the notions of filtered and graded symplectic vector bundles. This notion arises naturally 
later in the paper as a symplectic vector bundle with a nilpotent self-adjoint endomorphism has a natural filtration. In section 3
we study sheaves of homomorphism associated to filtered symplectic vector bundles. These properties will be needed in section 4.7 to compute 
the dimension of the stack $\Nil_{X,\Spr}^{n}$. In section 4, we introduce the main stacks used in the paper and study their deformation theory and 
dimensions. The main result in this section is \ref{Lemma Dimension count}. In section 5 we after recalling some notions pertaining to modules 
over a ring with involution we prove the main result of the section, \ref{l:cat equivalence}. This result describes a categorical equivalence
between modules and objects of the residual gerbe  of a point in $\Bun_{X,\Spr}$. This result forms a bridge between principal bundles 
and the theory of rings with involution. 

An underlying assumption in this paper is that $char(k)=0$. This fact is used in section 5, in order for the trace pairing to be non-degenerate. 
It is also applied in  
 Lemma \ref{l:surjectivity on associated graded vector space case} 
    and Lemma \ref{l:Aut unipotent}.

\section*{Acknowledgements}

Both authors would like to thank Danny Ofek and Zinovy Reichstein for valuable input. 

Ajneet Dhillon was partially supported by an NSERC discovery grant.

\section*{Notation} 

\begin{tabular}{cl}
    $k$ & Our base field of characteristic 0. \\ 
    $X$ & A smooth projective, geometrically connected curve over $k$. \\ 
    $\ed_{K}{\mathfrak X}$ & The essential dimension of the stack or functor ${\mathfrak X}$ over the field $K$.  \\ 
    & The reader is referred to the introduction  for a precise definition \\ 
    & Basic properties of essential dimension can be found in \cite{BRV} or \cite{BDH}.\\
    $\sF^{\vee}$ & The dual sheaf, $\shom(\sF,\sO_{S})$ of the quasi-coherent sheaf $\sF$ on a scheme $S$. \\ 
    $\Bun_{X,\Spr}$ & The moduli stack principal $\Spr$-bundles over $X$, see \ref{d:BunSp}. \\ 
    $\tNil_{X,\Spr}^n$ & The stack of self-adjoint nilpotent endomorphisms, see \ref{d:nilEnd}. \\ 
    $\Nil_{X,\Spr}^{n}$ & The stack of filtered self-adjoint endomorphisms, see \ref{d:nilStack}. \\ 
    $M^{*}$ & The dual of a module with respect to a ring with involution, see \ref{d:M to M*}.  \\
    $\proj(A)$ & The category of finitely generated projective modules over a ring $A$. \\
    $\herm_{\epsilon}(A,\mu)$ & The category of $\epsilon$-Hermitian modules over a ring with involution, see \ref{d:herm}. \\
\end{tabular}

%auto-ignore 
\section{Filtered symplectic vector bundles}

%%%%%%%%%%%%%%%%%%%%%%%%%%%%%%%%%%%%%%%%%%%%%%%%
%%%%%%%%%%%%%%%%%%%%%%  Filtered objects
%%%%%%%%%%%%%%%%%%%%%%%%%%%%%%%%%%%%%%%%%%%%%%%%
 
\subsection{Graded vector bundles.}

Let $S$ be a scheme. \emph{A graded vector bundle} on $S$ is a locally free sheaf $\sE$ of finite rank on $S$ together with a direct 
sum decomposition 
$$
\sE = \bigoplus_{i\in\ZZ} \sE_{i}. 
$$
Necessarily each of the $\sE_{i}$ are locally free sheaves of finite rank and if $S$ is quasi-compact, almost all of them are zero. 

A morphism of graded vector bundles is defined in the obvious way. We denote by ${\rm Gr}(S)$ the category of graded vector bundles on $S$. 

The dual of a graded vector bundle is graded in the following way:
$$
(\sE)^{\vee} = \bigoplus_{i\in\ZZ} (\sE^{\vee})_{i}\quad \text{where}\quad (\sE^{\vee})_{i}= (\sE_{-i})^{\vee}. 
$$

If $\sE=\bigoplus_{i\in\ZZ} \sE_{i}$ is a graded vector bundle and $n\in \ZZ$ then we can form a new graded vector bundle 
$\sE[n]$ with shifted grading given by 
$$
(\sE[n])_{i}= \sE_{i+n}. 
$$
If $\sE=\bigoplus_{i}\sE_{i}$ and $\sF=\bigoplus_{i}\sF_{i}$ are graded vector bundles then a morphism of vector bundles 
$\theta:\sE\to \sF$ is said to be graded of degree $a$, where $a\in \ZZ$, if $\theta=\bigoplus\theta_{i}$ where 
$$
\theta_{i}:\sE_{i}\to \sF_{i+a}. 
$$

\subsection{Filtered vector bundles.}

Let $S$ be a scheme.
The category of vector bundles on $S$  has the structure of an exact category, the admissible monomorphisms are monomorphisms whose cokernel 
is a vector bundle.

A \emph{filtration}  $F$ on a vector bundle  $\sE$ consists of a sequence of admissible subobjects
$$
F^{n}(\sE)\hookrightarrow F^{n+1}(\sE)\hookrightarrow \ldots \hookrightarrow F^{m-1}(\sE)\hookrightarrow F^{m}(\sE)=\sE
$$
where $n<m\in \ZZ$. The fact that the inclusions are admissible means that 
$$
\Gr_{F}^{i}(\sE):= F^{i}(\sE)/F^{i-1}(\sE)
$$
is locally free. 
If $F^{n}(\sE)\ne 0$ and $F^{m-1}(\sE)\ne \sE$ we will say that the \emph{amplitude} of the filtration is $[n,m]$. We will sometimes 
denote this data by $(\sE,F)$. It will be convenient to extend the filtration outside the range $[n,m]$. This is done by setting 
$$
F^{i}(\sE) = 
\begin{cases}
    \sE & i>m \\
    0 & i<n. 
\end{cases}
$$

There is an obvious notion of a morphism of filtered vector bundles. We denote by $\Fil(S)$ the category of filtered vector bundles on $S$. 

If $n\in \ZZ$ then there 
is a shifting by $n$ functor $\Fil(\bC)\to \Fil(\bC)$ that shifts filtrations by $n$. We denote the shifted filtration by 
$(\sE,F[n])$, and we have 
$$
F^{i}[n](\sE) = F^{i+n}(\sE). 
$$

If $(\sE,F)$ is a filtered object then its dual is also filtered with filtration $(\sE^{\vee},F^{\vee})$. We define 
$$
F^{\vee, -i}(\sE^{\vee}) = \ker(\sE^{\vee}\to F^{i}(\sE)^{\vee}). 
$$
The minus sign is needed to ensure that the filtration is increasing. 

If $(\sE,F)$ is a filtered object we let 
$$
\Gr_{F}^{i}(\sE) = F^{i}(\sE) /F^{i-1}(\sE) \quad\text{and}\quad \Gr_{F}(\sE) = \bigoplus_{i} \Gr_{F}^{i}(\sE). 
$$
We will call $\Gr_{F}(\sE)$ the associated graded vector bundle  of $(\sE,F)$. 

There is an obvious functor 
$$
\Gr : \Fil(S)\to {\rm Gr}(S). 
$$
This functor is not compatible with taking duals. Instead, we have a canonical isomorphism
\begin{equation}\label{e:filtshift}
    \left(\Gr(\sE,F)\right)^{\vee}[-1] \cong \Gr((\sE^{\vee},F^{\vee})). 
\end{equation}

We can equip $\Gr_{F}(\sE)$ with a filtration defined by 
$$
F^{i}(\Gr_{F}(\sE)) = \bigoplus_{j\le i} \Gr_{F}^{i}(\sE). 
$$
We say that a filtered object $(\sE,F)$ is \emph{split filtered} if
there is an isomorphism of filtered objects
$$
\phi: (\sE,F) \stackrel{\sim }{\to } \Gr_{F}(\sE)
$$
satisfying $\phi_{j}(x) = \bar{x}$ for $j\ge i$
where $x\in F^{i}(\sE)$ and $\phi_{j}$ is the composition 
$$
(\sE,F) \stackrel{\sim }{\to } \Gr_{F}(\sE) \stackrel{proj }{\to } \Gr_{F}^{j}(\sE). 
$$

A choice of such an  isomorphism $\sE\to \Gr_{F}(\sE)$ will be called a \emph{ splitting}. 

\begin{Example}
    If $A$ is a ring then consider $\bC=$ the category of finitely generated projective $A$-modules. In this category every 
    filtered object is split filtered.  
\end{Example}

%%%%%%%%%%%%%%%%%%%%%%%%%%%%%%%%%%%%%%%%%%%%%%%%
%%%%%%%%%%%%%%%%%%%%%% Symplectic Sheaves
%%%%%%%%%%%%%%%%%%%%%%%%%%%%%%%%%%%%%%%%%%%%%%%%
 
\subsection{Symplectic vector bundles.}

Let $S$ be a scheme. 
 A \emph{symplectic vector bundle} is a vector bundle $\sE$ on $S$ together with an isomorphism
$$
h:\sE\to \sE^{\vee}
$$
such that the following diagram commutes
\begin{center}
    \begin{tikzcd}
        \sE\ar[r,"h"] \ar[d,"\ev"] & \sE^\vee\ar[d,"-1"] \\ 
        \sE^{\vee\vee} \ar[r,"h^\vee"] & \sE^{\vee}. 
    \end{tikzcd}
\end{center}
The left vertical arrow is the canonical isomorphism. 
The data of $h$ is the same as that of a bilinear form 
$$
Q:\sE\otimes \sE\to \sO_{S}
$$
that is non-degenerate and alternating. We will summarize this data by saying that 
$
(\sE, h)$
is a symplectic vector bundle. 

If $\sW\hookrightarrow \sE$ is a subsheaf, not necessarily admissible, we define
$$
\sW^{\perp}:= \ker(\sE \xrightarrow{h} \sE ^{\vee}\to \sW^{\vee}). 
$$

\begin{Lemma}
    \label{l:perp splitting}
    Suppose that the inclusion $\sW\hookrightarrow \sE$ is split. Then the inclusion 
    $$
    \sW^{\perp}\hookrightarrow \sE
    $$
    is split. 
\end{Lemma}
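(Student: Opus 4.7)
The plan is to exploit the isomorphism $h:\sE\to\sE^\vee$ to convert the problem about $\sW^\perp$ into a dual statement about $\sW$. The key observation is that splitting of inclusions is preserved under dualization, and then transported across $h$.

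First I would unwind the definition. Let $i:\sW\hookrightarrow\sE$ be the given split inclusion, with retraction $r:\sE\to\sW$ satisfying $r\circ i = \Identity_{\sW}$. By definition,
$$
\sW^\perp = \ker\bigl(\sE \xrightarrow{h} \sE^\vee \xrightarrow{i^\vee} \sW^\vee\bigr).
$$
Since $i$ is split, the dual $i^\vee:\sE^\vee\to \sW^\vee$ is split surjective, with explicit section $r^\vee:\sW^\vee\to\sE^\vee$ (indeed $i^\vee\circ r^\vee=(r\circ i)^\vee=\Identity_{\sW^\vee}$). Because $h$ is an isomorphism, the composite $i^\vee\circ h:\sE\to\sW^\vee$ is therefore also split surjective, with section $h^{-1}\circ r^\vee$.

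Hence I have a short exact sequence
$$
0 \to \sW^\perp \to \sE \xrightarrow{i^\vee\circ h} \sW^\vee \to 0
$$
which is split on the right, so $\sW^\perp\hookrightarrow \sE$ is a split monomorphism, as required. There is really no obstacle here; the only thing to check is that the composite $i^\vee\circ h$ is indeed surjective (it is, as the composition of an iso with a surjection), so that $\sW^\perp$ appears as the kernel of a split surjection and hence as a direct summand of $\sE$.
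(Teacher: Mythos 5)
Your proposal is correct and follows essentially the same route as the paper: identify $\sW^{\perp}$ as the kernel of $\sE\xrightarrow{h}\sE^{\vee}\twoheadrightarrow\sW^{\vee}$, note that the splitting of $\sW\hookrightarrow\sE$ dualizes to a section of $\sE^{\vee}\to\sW^{\vee}$, and transport it through the isomorphism $h$ to split the resulting short exact sequence. Your write-up just makes explicit the section $h^{-1}\circ r^{\vee}$ that the paper leaves implicit.
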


\begin{proof}
    We have an exact sequence 
    $$
    0\to \sW^{\perp}\to \sE \to \sW^{\vee}\to 0. 
    $$
    As $\sW\hookrightarrow \sE$ is split so is the map $\sE^{\vee}\twoheadrightarrow \sW^{\vee}$. 
\end{proof}

It follows that if $\sW\hookrightarrow \sE$ is admissible then so is $\sW^{\perp}\hookrightarrow \sE$.

A morphism of symplectic vector bundles $f:(\sE,h)\to (\sF,g)$ is a morphism of vector bundles $f:\sE\to \sF$ such that the following diagram commutes:
\begin{center}
    \begin{tikzcd}
        \sE \ar[r,"h"]\ar[d,"f"] & \sE^\vee \\ 
        \sF \ar[r,"g"] & \sF^\vee \ar[u,"f^\vee"].\\    \end{tikzcd}
\end{center}

It follows easily that such 
a morphism is always a monomorphism of vector bundles. Hence, if $f:(\sE, Q)\to (\sF, Q') $ is morphism 
and $\sE\to \sF$ is admissible and $\rank(\sE)=\rank(\sF)$ then it is necessarily an isomorphism. 

Suppose that $(\sE, h)$ is a symplectic vector bundle 
If $f:\sE\to \sE$ is an endomorphism of the underlying vector bundle $\sE$, not necessarily a symplectic endomorphism, then we define
$$
\mu(f) = h^{-1}\circ f^{\vee} \circ h.
$$
We say that $f$ is \emph{self-adjoint }if $\mu(f)=-f$. 

Suppose that $(\sE,h)$ is a symplectic vector bundle and $\sE=\bigoplus \sE_{i}$ is graded. If the 
morphism 
$$
h:\sE\to \sE^{\vee}
$$
is a morphism of graded vector bundles then we will say that $(\sE=\oplus \sE_{i},h)$ is a \emph{graded symplectic vector bundle}.  
A graded endomorphism $\theta:\sE\to \sE$ is said to be self adjoint if $\mu(\theta)=-\theta $. 

\begin{lemma}\label{l:gradedSymplectic}
    Let $(\sE=\bigoplus \sE_{i},h)$ be a graded symplectic vector bundle. Then 
    $h$ decomposes as $h=\oplus_{i} h_{i}$ where 
    $$ h_{i}:\sE_{i}\to \sE_{-i}^{\vee}$$
    and $h_{i}^{\vee}=-h_{-i}$. 
    In this setting a graded morphism of degree $a$, say $\theta:\sE\to\sE$ is self adjoint if it decomposes as 
    $$
    \theta =\bigoplus_{i}\theta_{i}\quad\text{where}\quad 
    \theta_{i}:\sE_{i}\to \sE_{i+a}
    $$
    and 
    $$
    -\theta_{i}= h_{i+a}^{-1}\circ \theta_{-i-a}^{\vee}\circ h_{i}. 
    $$
    
\end{lemma}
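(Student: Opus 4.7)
The plan is to unwind the definitions component by component, using the dual grading convention $(\sE^\vee)_i = (\sE_{-i})^\vee$ established above.

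First, since $h:\sE \to \sE^\vee$ is a morphism of graded vector bundles of degree zero, it automatically decomposes as $h = \bigoplus_i h_i$ with $h_i:\sE_i \to (\sE^\vee)_i = (\sE_{-i})^\vee$. The commutative square defining the symplectic structure reads, after identifying $\sE^{\vee\vee}$ canonically with $\sE$ via $\ev$, as $h^\vee = -h$ as maps $\sE \to \sE^\vee$. By functoriality of dualization, the component of $h^\vee$ sending $\sE_{-i}$ into $(\sE^\vee)_{-i} = \sE_i^\vee$ is precisely $h_i^\vee$. Comparing with the corresponding component $h_{-i}$ of $-h$ yields $h_i^\vee = -h_{-i}$.

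For the second assertion, the graded endomorphism $\theta$ of degree $a$ has components $\theta_i:\sE_i \to \sE_{i+a}$. Its dual $\theta^\vee:\sE^\vee\to\sE^\vee$ is then also of degree $a$ in the dual grading, and a direct index check identifies the $i$-th component $(\theta^\vee)_i:(\sE^\vee)_i \to (\sE^\vee)_{i+a}$ with $\theta_{-i-a}^\vee:(\sE_{-i})^\vee \to (\sE_{-i-a})^\vee$. Composing through $h$ and $h^{-1}$ shows that the $i$-th component of $\mu(\theta) = h^{-1}\circ \theta^\vee\circ h$ is the map $h_{i+a}^{-1}\circ \theta_{-i-a}^\vee\circ h_i:\sE_i \to \sE_{i+a}$. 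Setting this equal to the $i$-th component $-\theta_i$ of $-\theta$ produces the asserted formula.

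The main thing to keep track of is the index shift $(\sE^\vee)_i = (\sE_{-i})^\vee$ together with the way dualization reverses morphisms, thereby converting a component indexed by $i$ into one indexed by $-i$ (or $-i-a$, in the shifted case). Once this bookkeeping is set up, both statements of the lemma are immediate consequences of the defining identity for $h$ and the definition of self-adjointness $\mu(\theta) = -\theta$, so there is no substantive obstacle beyond this careful accounting of indices.
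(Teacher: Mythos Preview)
Your proof is correct and follows exactly the same approach as the paper's own proof, which simply says the decomposition follows from the definition of the induced grading on $\sE^{\vee}$ and the second assertion follows from the diagram defining the symplectic structure. You have written out explicitly the index bookkeeping that the paper leaves to the reader, but there is no difference in substance.
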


\begin{proof}
  The decomposition follows directly from the definition of the induced grading on $\sE^{\vee}$. The second assertion
  follows from the diagram defining the notion of symplectic vector bundle. 
\end{proof}

Suppose that $(\sE,h)$ is a symplectic vector bundle and $(\sE,F)$ is a filtration. 
Recall that $\sE^{\vee}$ inherits a filtration from $\sE$. We say that $(\sE,h,F)$ is a
\emph{filtered symplectic vector bundle} if
$$
h:(\sE,F)\to (\sE^{\vee}, F^{\vee}[1])
$$
is a morphism of filtered objects. Note that is necessarily an isomorphism.

We will later produce examples of filtered 
symplectic objects obtained from symplectic sheaves equipped with nilpotent 
self adjoint endomorphisms. The presence of the 1 in the definition is justified
by appealing to these examples, also see Lemma \ref{l:sympFiltDual} below. 
To understand what this condition is asserting in terms of $\sE$, we have
$$
F^{i}(\sE)^{\perp}= h^{-1}(F^{\vee,-i}(\sE^{\vee})). 
$$
Then the condition becomes 
$$
F^{-1-i}(\sE)=F^{i}(\sE)^{\perp} \quad\text{or}\quad F^{j}(\sE)=F^{-j-1}(\sE)^{\perp}. 
$$

\begin{Lemma}
    \label{l:filtSympConstruct}
    Consider a collection of vector bundles $\sE_{i}$ with $-n\le i \le n$ and $n\in \NN^{>0}$.
    Suppose that we have isomorphisms $h_{i}:\sE_{i}\to \sE_{-i}^{\vee}$ for $-n\le i \le n$. 
    
    If $h_{i}=-h_{-i}^{\vee}$ then 
    $$
    \bigoplus_{i=-n}^{n}h_{i}:\bigoplus_{i=-n}^{n} \sE_{i}\to \bigoplus_{i=-n}^{n} \sE_{i}^{\vee}
    $$
    is a filtered symplectic object with filtration given by 
    $F^{k}(\bigoplus_i \sE_{i}) = \bigoplus_{i\le k} \sE_{i}$. Furthermore, this filtered object is split. 

    Finally, consider morphisms 
    $$
    \theta_{i,j}:\sE_{i}\to \sE_{j}.  
    $$
    Then $\theta = \bigoplus_{i,j}\theta_{i,j}$ is self adjoint if and only if 
    $$
    \theta_{i,j} = -h_{j}^{-1}\theta_{-j,-i}^{\vee}h_{i}. 
    $$
\end{Lemma}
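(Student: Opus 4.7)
The plan is to verify each of the three assertions by direct component-wise calculation, exploiting the fact that once $\sE$ is presented as $\bigoplus_i \sE_i$ everything reduces to bookkeeping with the data $(h_i)$. For the symplectic axiom $h^{\vee} = -h$, I would dualize the hypothesis $h_i = -h_{-i}^{\vee}$ (using the canonical identification $\sE_i^{\vee\vee} \cong \sE_i$) to obtain $h_i^{\vee} = -h_{-i}$, which is exactly the component form of the commutative diagram defining a symplectic vector bundle.

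For the filtration compatibility $h: (\sE, F) \to (\sE^{\vee}, F^{\vee}[1])$, I would compute both sides directly from the definitions: on the one hand, $h(F^k \sE) = \bigoplus_{i \le k} \sE_{-i}^{\vee} = \bigoplus_{j \ge -k} \sE_{j}^{\vee}$, and on the other, $(F^{\vee}[1])^k(\sE^{\vee}) = F^{\vee,k+1}(\sE^{\vee}) = \ker(\sE^{\vee} \to F^{-k-1}(\sE)^{\vee})$, which, under the induced grading on $\sE^{\vee}$, is also $\bigoplus_{j \ge -k} \sE_j^{\vee}$, so the two coincide. Splitness is then immediate: $\Gr_F^i(\sE) = F^i(\sE)/F^{i-1}(\sE) = \sE_i$, so the identity $\sE \to \Gr_F(\sE) = \bigoplus_i \sE_i$ serves as a splitting isomorphism, and the compatibility $\phi_j(x) = \bar{x}$ for $x \in F^i(\sE)$ when $j \ge i$ holds tautologically.

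For the self-adjointness criterion, I would first rewrite $\mu(\theta) = -\theta$ as the equivalent equation $h\theta + \theta^{\vee} h = 0$ and then decompose both summands into their $\sE_i \to \sE_m^{\vee}$ components. The first contributes $h_{-m} \theta_{i,-m}$ (push $\sE_i$ through $\theta_{i,-m}$ into $\sE_{-m}$, then through $h_{-m}$ into $\sE_m^{\vee}$), while the second contributes $\theta_{m,-i}^{\vee} h_i$ (using that the $(-i,m)$-component of $\theta^{\vee}$ is the dual of $\theta_{m,-i}$). Vanishing of the sum, followed by inverting $h_{-m}$ and setting $j = -m$, produces exactly the stated identity $\theta_{i,j} = -h_j^{-1} \theta_{-j,-i}^{\vee} h_i$. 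I anticipate no genuine obstacle; the whole argument is index chasing, and the only points requiring vigilance are the shift convention $(F[n])^i = F^{i+n}$ (which pins down the direction of the $+1$ in $F^{\vee}[1]$) and the contravariance of dualization, which swaps indices via $(\theta^{\vee})_{j,i} = \theta_{i,j}^{\vee}$.
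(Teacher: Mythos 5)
Your proposal is correct and follows essentially the same route as the paper: a direct component-wise verification, with the same key computation identifying $h(F^k\sE)$ with the shifted dual filtration (the paper's displayed formulas for $F^{\vee,-j}(\sE^{\vee})$ and $F^{j}(\sE)^{\perp}$), the tautological splitting, and the index-chasing check that $\mu(\theta)=-\theta$ is equivalent to $\theta_{i,j}=-h_{j}^{-1}\theta_{-j,-i}^{\vee}h_{i}$. You merely spell out the steps the paper dismisses as routine, and your attention to the shift convention and the contravariance of dualization is exactly where the care is needed.
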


\begin{proof}
    Let $\sE=\bigoplus \sE_{i}$ and $h=\bigoplus h_{i}$. The given map is clearly an isomorphism. 
    It is easy to check that the morphism satisfies the required property to induce a symplectic object. 
    The filtration is clear split, so it just remains to check that $h$ preserves filtrations. 
    We compute to see that 
    $$
    F^{\vee,-j}(\sE^{\vee}) = \bigoplus_{i=j+1}^{n}\sE_{i}^{\vee}\quad\text{and}\quad F^{i}(\sE)^{\perp}=\bigoplus_{i=-n}^{-1-j} \sE_{i}.
    $$
    It follows that the object is filtered symplectic. It is straightforward to see that the filtration is split. 

    The assertion about self adjointness of $\theta$ follows readily from a computation. 
\end{proof}

A filtered symplectic vector bundle isomorphic to one constructed in the proposition is called a \emph{split filtered symplectic vector bundle.}

Suppose that $(\sE,h,F)$ is a symplectic filtered sheaf or module. Then we have exact sequences
$$
0\to F^{i}(\sE)\to \sE^{\vee} \to (F^{-i-1}(\sE))^{\vee}\to 0. 
$$
where the first map is induced by $h$.
In other words there are induced isomorphisms 
$F^{i}(\sE) \cong (\sE/F^{-1-i}(\sE))^\vee$ as the dual is exact. 
Hence 
\begin{eqnarray*}
    F^{i}(\sE)/F^{i-1}(\sE) &\cong & \coker( (X/F^{-i}(\sE))^\vee \to (X/F^{-1-i}(\sE))^\vee ) \\
    &\cong & (F^{-i}(\sE)/F^{-i-1}(\sE))^\vee. 
\end{eqnarray*}

In summary:
\begin{Lemma}\label{l:sympFiltDual}
    Suppose that $(\sE,h,F)$ is a filtered symplectic sheaf or module.  Then there are induced isomorphisms
    $$
    h_{i}:\Gr_{F}^{i}(\sE)\cong (\Gr_{F}^{-i}(\sE))^{\vee}
    $$
    for all $i$. Furthermore, these isomorphisms satisfy the conditions of Lemma \ref{l:filtSympConstruct} so that 
    $\Gr_{F}(\sE)$ inherits the structure of a split filtered, symplectic vector bundle. 
    
\end{Lemma}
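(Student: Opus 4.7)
My plan is to build on the computation already sketched in the displayed exact sequences immediately before the lemma, and then extract the sign compatibility from the defining anti-self-duality diagram of the symplectic form.

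First, I would make explicit the construction of the maps $h_i$. Since $(\sE,h,F)$ is filtered symplectic, the condition $F^{-1-i}(\sE) = F^{i}(\sE)^{\perp}$ gives an exact sequence
\[
0 \to F^{i}(\sE) \xrightarrow{h} \sE^{\vee} \to F^{-i-1}(\sE)^{\vee} \to 0,
\]
yielding the identification $F^{i}(\sE) \cong (\sE/F^{-1-i}(\sE))^{\vee}$. Passing to the quotient by the image of $F^{i-1}(\sE)$ and using exactness of the dual for vector bundles, one obtains
\[
h_{i}:\Gr_{F}^{i}(\sE) \xrightarrow{\sim} (\Gr_{F}^{-i}(\sE))^{\vee}.
\]
I would write this out as a commutative diagram with rows $0\to F^{i-1}(\sE)\to F^{i}(\sE)\to \Gr_{F}^{i}(\sE)\to 0$ and $0\to (\sE/F^{-i}(\sE))^{\vee}\to (\sE/F^{-1-i}(\sE))^{\vee}\to (\Gr_{F}^{-i}(\sE))^{\vee}\to 0$, with the first two vertical maps given by restrictions of $h$; since these restrictions are isomorphisms, the induced map on cokernels is the desired isomorphism $h_{i}$.

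Next, I would verify the sign condition $h_{i} = -h_{-i}^{\vee}$, which is the hypothesis of \ref{l:filtSympConstruct}. This is the technical heart of the argument. Starting from the defining square of a symplectic vector bundle, $h^{\vee}\circ \ev = -h$, I would restrict this identity to $F^{i}(\sE)$ and then pass to the associated graded. The key point is that the canonical isomorphism $\ev$ is compatible with filtrations, so on the graded pieces $\ev$ induces the canonical identification $\Gr_{F}^{i}(\sE) \cong (\Gr_{F}^{i}(\sE))^{\vee\vee}$. The dual $h^{\vee}$ restricted along the natural map to $\Gr_{F}^{-i}(\sE)^{\vee\vee}$ then becomes exactly $h_{-i}^{\vee}$ after these identifications, and the sign on the right carries through. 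This is the main obstacle: one needs to be careful with the two layers of duality and the shift by one encoded in $F^{\vee}[1]$, and verify the signs commute with passage to the quotient.

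Finally, once the identities $h_{i}^{\vee}=-h_{-i}$ are established, I would invoke \ref{l:filtSympConstruct} directly with the data $\{\Gr_{F}^{i}(\sE), h_{i}\}$. That lemma then produces a split filtered symplectic structure on $\Gr_{F}(\sE)$ whose filtration is precisely the one introduced earlier by $F^{i}(\Gr_{F}(\sE))=\bigoplus_{j\le i}\Gr_{F}^{j}(\sE)$, completing the proof.
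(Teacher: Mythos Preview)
Your proposal is correct and follows essentially the same approach as the paper: the construction of $h_i$ via the exact sequences and passage to cokernels is exactly the ``calculation above'' that the paper invokes, and your derivation of the sign condition $h_i=-h_{-i}^{\vee}$ from the symplectic identity $h^{\vee}\circ\ev=-h$ is what the paper abbreviates as ``follows from the fact that $h$ is symplectic.'' Your write-up simply unpacks these steps in more detail than the paper's two-line proof.
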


\begin{proof}
    The first part is the calculation above. The second part follows from the fact that $h$ is symplectic. 
    
\end{proof}

%%%%%%%%%%%%%%%%%%%%%%%%%%%%%%%%%%%%%%%%%%%%%%%%
%%%%%%%%%%%%%%%%%%%%%% The local structure of filtered symplectic vector bundles.
%%%%%%%%%%%%%%%%%%%%%%%%%%%%%%%%%%%%%%%%%%%%%%%%
 
\subsection{The local structure of filtered symplectic vector bundles.}

\begin{Lemma}
    \label{l:easySplitting}
    Suppose that we have a split inclusion
    \[    \sW \oplus \sV \hookrightarrow \sE          \] 
    in the category $\bC$. 
    Then there exists a splitting of the map 
    \[     \sE^{\vee} \twoheadrightarrow  \sW^{\vee}                                \]
    such that every element in the image of this splitting vanishes on $\sV$.
    \end{Lemma}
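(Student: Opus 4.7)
The plan is to reduce the assertion to producing a retraction $\sE \to \sW$ whose kernel contains $\sV$, and then to dualize. Unpacking the hypothesis, a splitting of the inclusion $\sW\oplus\sV \hookrightarrow \sE$ is a morphism $\rho:\sE \to \sW\oplus\sV$ such that $\rho$ composed with the given inclusion is the identity on $\sW\oplus\sV$. Compose $\rho$ with the projection $p_{\sW}:\sW\oplus\sV\to \sW$ to obtain
\[
r := p_{\sW}\circ \rho : \sE \longrightarrow \sW.
\]

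The first thing I would verify is that $r$ is a retraction of $\sW\hookrightarrow \sE$: composing with the inclusion gives $p_{\sW}\circ\rho|_{\sW} = p_{\sW}\circ \iota_{\sW} = \Identity_{\sW}$, where $\iota_{\sW}:\sW\hookrightarrow \sW\oplus\sV$ is the canonical inclusion. Dualizing then produces $r^{\vee}:\sW^{\vee}\to \sE^{\vee}$, and the identity above dualizes to show that $r^{\vee}$ is a section of $\sE^{\vee}\twoheadrightarrow \sW^{\vee}$. This provides the required splitting.

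It remains to check the vanishing condition on $\sV$. For any $f\in \sW^{\vee}$ the image $r^{\vee}(f) = f\circ r$ restricts along $\sV\hookrightarrow \sE$ to $f\circ r|_{\sV}$, and by the same splitting identity $\rho|_{\sV} = \iota_{\sV}$, so $r|_{\sV} = p_{\sW}\circ \iota_{\sV} = 0$. Hence every element in the image of the constructed splitting vanishes on $\sV$. There is no real obstacle here; the content of the lemma is simply the observation that splitting off the larger summand $\sW\oplus \sV$ is strictly stronger than splitting off $\sW$, and the additional information is precisely that $\sV$ lies in the kernel of the chosen retraction.
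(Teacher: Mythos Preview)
Your proof is correct and is precisely the elementary argument the paper has in mind; the paper's own proof reads in its entirety ``This is elementary.'' You have simply spelled out the details.
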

    
    \begin{proof}
    This is elementary. 
    \end{proof}

    If $(\sE,h)$ is a symplectic vector bundle then a subsheaf $\sW\subseteq \sE$ is said to be \emph{isotropic} if $\sW\subseteq \sW^{\perp}$.

    \begin{Lemma}
        \label{l:twoStepSplitting}
        Let $(\sE,Q)$ be a symplectic vector bundle on a scheme $S$. Consider an isotropic locally free subsheaf
        \[  0 \subset \sW  \subset \sE                                                       \]
        such that both the inclusions $\sW\hookrightarrow \sW^{\perp}$ and $\sW^{\perp}\hookrightarrow \sE$ are 
        admissible. 
        Then the filtration
        \[ 
        0\subseteq F^{-1}(\sE)=\sW \subseteq F^{0}(\sE)=\sW^{\perp} \subseteq F^{1}(\sE)=\sE
        \]
        forms a filtered symplectic vector bundle $(\sE,Q,F)$. 
    
        Suppose now that $S$ is affine. If a splitting of the filtration on $\sW^{\perp} $ is chosen then 
        there exists a splitting of the filtration on $\sE$ extending the splitting of the filtration on $\sW^{\perp}$ such
        that the map 
        \[     \sE/\sW^{\perp} \hookrightarrow \sE \xrightarrow{h} \sE^{\vee} \twoheadrightarrow (\sW^{\perp}/\sW)^{\vee}                                                     \]
    vanishes.
    \end{Lemma}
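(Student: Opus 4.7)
I would separate the two assertions. The first is a direct verification. By the characterisation recalled just before Lemma \ref{l:filtSympConstruct}, asking that the filtration with $F^{-1}(\sE) = \sW$, $F^0(\sE) = \sW^\perp$, $F^1(\sE) = \sE$ make $(\sE, Q, F)$ a filtered symplectic vector bundle amounts to the identities $F^j(\sE) = F^{-j-1}(\sE)^\perp$ for all $j$. In the relevant range these read $\sE = 0^\perp$, $\sW^\perp = \sW^\perp$, and $\sW = (\sW^\perp)^\perp$; only the last is of substance, and it follows from non-degeneracy of $Q$ together with the fact that $\sW$ is locally free of finite rank. The admissibility of the filtration, and hence local freeness of the graded pieces, is built into the hypotheses.

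For the second part, assume $S$ is affine and let $\sW^\perp = \sW \oplus \sV$ be the given splitting. Since $\sW^\perp \hookrightarrow \sE$ is admissible and $S$ is affine, the sequence $0 \to \sW^\perp \to \sE \to \sE/\sW^\perp \to 0$ splits, so I can pick a lift $\mathscr{U} \subset \sE$ of $\sE/\sW^\perp$, giving $\sE = \sW \oplus \sV \oplus \mathscr{U}$. Let $\alpha \colon \mathscr{U} \to \sV^\vee$ denote the composition $\mathscr{U} \hookrightarrow \sE \xrightarrow{h} \sE^\vee \twoheadrightarrow \sV^\vee$; the aim is to replace $\mathscr{U}$ by a translate $\mathscr{U}'$ on which $\alpha$ vanishes, without disturbing the splitting of $\sW^\perp$.

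The crucial point is that $Q|_{\sV \otimes \sV}$ is non-degenerate. Indeed, $u \in \sW^\perp$ satisfies $Q(u,w) = 0$ for $w \in \sW$, and combined with the alternating property this means $Q$ vanishes on $\sW \times \sW^\perp$, so $Q|_{\sW^\perp}$ descends to a bilinear form on $\sW^\perp/\sW$; the identity $(\sW^\perp)^\perp = \sW$ from part one says exactly that this descended form has trivial radical. Under the isomorphism $\sV \xrightarrow{\sim} \sW^\perp/\sW$ furnished by the chosen splitting, $Q|_\sV$ is identified with this non-degenerate induced form, so $Q|_\sV \colon \sV \to \sV^\vee$ is an isomorphism. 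I would then set $\psi = -(Q|_\sV)^{-1} \circ \alpha \colon \mathscr{U} \to \sV$ and $\mathscr{U}' = \{u + \psi(u) : u \in \mathscr{U}\} \subset \sE$.

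To finish, I would check that $\mathscr{U}' \cap \sW^\perp = 0$ (since $\psi(u) \in \sW^\perp$ would force $u \in \sW^\perp \cap \mathscr{U} = 0$) and that $\mathscr{U}'$ still maps isomorphically onto $\sE/\sW^\perp$, so $\sE = \sW \oplus \sV \oplus \mathscr{U}'$ is a splitting extending the given one on $\sW^\perp$. The vanishing is then the computation $Q(u + \psi(u), v) = \alpha(u)(v) + Q(\psi(u), v) = \alpha(u)(v) - \alpha(u)(v) = 0$ for $v \in \sV$, by the defining property of $\psi$. The only conceptual step is the non-degeneracy of $Q|_\sV$; everything else is formal manipulation of split admissible sequences on an affine base, in the same spirit as Lemmas \ref{l:perp splitting} and \ref{l:easySplitting}.
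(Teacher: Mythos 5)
Your proposal is correct in substance, but it takes a genuinely different route from the paper's. For the second assertion the paper does not correct a complement of $\sW^{\perp}$ directly: it applies Lemma \ref{l:easySplitting} to the split inclusion $\sW \oplus \sW^{\perp}/\sW \hookrightarrow \sE$ to produce a splitting $\sigma^{\vee}$ of $\sE^{\vee}\twoheadrightarrow \sW^{\vee}$ whose image vanishes on the chosen complement of $\sW$ in $\sW^{\perp}$, and then uses $h$ to identify $\sE/\sW^{\perp}$ with $\sW^{\vee}$ (the kernel of $u\mapsto h(u)|_{\sW}$ is exactly $\sW^{\perp}$), so that $h^{-1}\circ\sigma^{\vee}$ is a splitting of $\sW^{\perp}\hookrightarrow\sE$ with the required vanishing built in. You instead take an arbitrary lift $\mathscr{U}$ of $\sE/\sW^{\perp}$ and replace it by the graph of $\psi=-(Q|_{\sV})^{-1}\circ\alpha$, a Gram--Schmidt-type correction; this is a clean and arguably more transparent construction, and your verifications (that $\sE=\sW\oplus\sV\oplus\mathscr{U}'$ and that $Q(\mathscr{U}',\sV)=0$) are fine. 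The trade-off is that your route needs the descended form on $\sW^{\perp}/\sW$ to be non-degenerate, a fact the paper's argument avoids at this stage.

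That non-degeneracy claim is the one thin spot. "Trivial radical" only gives that $\sV\to\sV^{\vee}$ is injective, and over an affine base an injective map between locally free sheaves of the same rank need not be an isomorphism (think of multiplication by a non-unit non-zerodivisor on $\sO_S$), so the inference as you wrote it is incomplete. The statement is nevertheless true and is already available before this lemma: part 1 makes $(\sE,Q,F)$ a filtered symplectic vector bundle, and Lemma \ref{l:sympFiltDual} then gives the isomorphism $\Gr_{F}^{0}(\sE)=\sW^{\perp}/\sW \cong (\sW^{\perp}/\sW)^{\vee}$. (Alternatively, surjectivity follows by lifting a functional on $\sW^{\perp}/\sW$ through the locally split surjection $\sE^{\vee}\twoheadrightarrow(\sW^{\perp})^{\vee}$ and observing that $h^{-1}$ of the lift lies in $\sW^{\perp}$.) With that reference inserted your argument closes up; your treatment of part 1 ($\sW=\sW^{\perp\perp}$ by a rank count using local freeness of the graded pieces) is at the same level of detail as the paper's.
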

        
    \begin{proof}
        We need to show that we have a diagram
        \begin{center}
            \begin{tikzcd}
                0\arrow[hookrightarrow]{r} & \sW\arrow[hookrightarrow]{r}\arrow[d] & \sW^\perp\arrow[hookrightarrow]{r} \arrow[d] & \sE \arrow[d]\\
                0\arrow[hookrightarrow]{r} & \left(\sE/\sW^\perp \right)^\vee\arrow[hookrightarrow]{r} & \left(\sE/\sW\right)^\vee \arrow[hookrightarrow]{r} & \sE^\vee \\
            \end{tikzcd}
        \end{center}
        where the vertical arrows are isomorphisms. The right two arrows are clearly isomorphisms. 
        To complete the proof we need to see that the inclusion $\sW\subseteq \sW^{\perp\perp}$ is an equality. As $\sW^{\perp}/\sW$ is locally free
        we just need to compute ranks. But we always have $\rank T^{\perp} = \rank \sE -\rank T$ for a locally free subsheaf $T\subseteq \sE$. This 
        settles the first claim.

        Assume now that $S$ is affine. 
        Since $\sE/\sW^{\perp}$ is  projective, the map
        \[      \sW \oplus \sW^{\perp}/\sW \stackrel{\sim}{\to} \sW^{\perp}  \hookrightarrow  \sE        \]
        is split. Write $i:\sW\hookrightarrow \sE$ for the inclusion. By Lemma \ref{l:twoStepSplitting}, we can 
        choose a splitting  $\sigma^{\vee}$ of 
        $i^{\vee}:\sE^{\vee} \twoheadrightarrow \sW^{\vee}$ that vanishes on $\sW^{\perp}/\sW$.
        We observe that the sequence 
        $$
        0\to \sW^{\perp}\to \sE \stackrel{h^{-1}i^{\vee}h}{\longrightarrow }\sE/\sW^{\perp}\to 0
        $$
        is in fact exact and hence the composition 
        $$
        \sE/\sW^{\perp}\to \sW^{\vee} \stackrel{\sigma^{\vee} }{\to } \sE^{\vee} \to \sE
        $$
        is a splitting of the inclusion $\sW^{\perp}\hookrightarrow \sE$. 
        The result follows from a diagram chase. 
    
    \end{proof}

    \begin{Lemma}\label{l:twoStepVanishing}
         Suppose that $S$ is an affine scheme and
         $(\sE,h)$ is a symplectic vector bundle on $S$. Let 
        \[  0 \subset \sW \subset  \sW^{\perp} \subset \sE                                                       \]
        be as in the previous Lemma and assume further that
         the vector bundle $\sE/\sW^{\perp}$ is  free. 
         Suppose that we have a chosen a splitting of $\sW^\perp \twoheadrightarrow \sW^\perp/\sW$. 
        Then there exists a splitting of the filtration on $\sE$ extending the chosen splitting of the filtration on $\sW^{\perp}$ such
        that both the maps
        \[     \sE/\sW^{\perp} \hookrightarrow \sE \xrightarrow{h} \sE^{\vee} \twoheadrightarrow (\sW^{\perp}/\sW)^{\vee}                                                     \]
        \[     \sE/\sW^{\perp} \hookrightarrow \sE \xrightarrow{h} \sE^{\vee} \twoheadrightarrow (\sE/\sW^{\perp})^{\vee}                                                     \]
    vanish.
    \end{Lemma}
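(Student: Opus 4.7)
The strategy is to start from the splitting $\sigma_0 : \sE/\sW^{\perp} \to \sE$ supplied by Lemma \ref{l:twoStepSplitting} (which already makes the first of the two displayed composites vanish) and to correct it by a term landing in $\sW$ so that the second composite also vanishes while the first remains zero.

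Any valid correction has the form $\sigma = \sigma_0 + \iota\tau$ with $\iota : \sW^{\perp} \hookrightarrow \sE$ and $\tau : \sE/\sW^{\perp} \to \sW^{\perp}$, and this automatically still gives a splitting of the filtration extending the chosen splitting of $\sW^{\perp}$. To keep the first composite zero, I would argue that $\tau$ must actually take values in $\sW$: by Lemma \ref{l:sympFiltDual} the form $h$ descends to a nondegenerate form on $\sW^{\perp}/\sW$, while $\sW$ and $\sW^{\perp}$ pair trivially, so any component of $\tau$ in a complement $\sW^{\perp}/\sW \subset \sW^{\perp}$ would show up in the composite $\sE/\sW^{\perp} \to (\sW^{\perp}/\sW)^{\vee}$.

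Restricting to $\tau : \sE/\sW^{\perp} \to \sW$, I would compute the new self-pairing of $\sigma$ on $\sE/\sW^{\perp}$. Writing $A(e,f) = h(\sigma_0 e)(\sigma_0 f)$ for the alternating form already present and $B(e,f) = h(\tau e)(\sigma_0 f)$, the quadratic term in $\tau$ vanishes since $\sW$ is isotropic, and the alternating property of $h$ converts the remaining mixed term $h(\sigma_0 e)(\tau f)$ into $-B(f,e)$. Thus the self-pairing equals $A(e,f) + B(e,f) - B(f,e)$, and I need $B(f,e) - B(e,f) = A(e,f)$. Since $A$ is alternating and $\operatorname{char}(k) = 0$, the choice $B = -\tfrac{1}{2}A$ solves this.

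It remains to convert the chosen bilinear form $B : \sE/\sW^{\perp} \otimes \sE/\sW^{\perp} \to \sO_S$ into an honest map $\tau : \sE/\sW^{\perp} \to \sW$. For this I would use the isomorphism $h|_{\sW} : \sW \xrightarrow{\sim} (\sE/\sW^{\perp})^{\vee}$, which is contained in the diagram of Lemma \ref{l:twoStepSplitting} (and which the freeness of $\sE/\sW^{\perp}$ makes entirely explicit), and then set $\tau = (h|_{\sW})^{-1} \circ B$. I expect no substantive obstacle beyond careful bookkeeping of signs and canonical identifications; the only genuine analytic input is the characteristic-zero hypothesis, which is precisely what allows the alternating form $A$ to be written as the antisymmetrization of a bilinear form.
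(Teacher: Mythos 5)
Your argument is correct, but it is not the route the paper takes, so a comparison is worth recording. The paper also starts from the splitting of Lemma \ref{l:twoStepSplitting}, but then uses the freeness of $\sE/\sW^{\perp}$ in an essential way: it picks a basis $f_1,\dots,f_s$, sets $e_i = h^{-1}(f_i^{\vee}) \in \sW$, and performs a Gram--Schmidt-style triangular correction $\overline{f}_i = f_i - \sum_{k<i} Q(f_i,f_k)e_k$, checking directly that $Q(\overline{f}_i,\overline{f}_j)=0$ and that the pairing against $\sW^{\perp}$ is unchanged. Your correction is instead a single coordinate-free one: you modify $\sigma_0$ by $\tau = (h|_{\sW})^{-1}\circ B$ with $B = -\tfrac{1}{2}A$, where $A$ is the residual alternating self-pairing of $\sigma_0$; the computation $A(e,f)+B(e,f)-B(f,e)=0$ and the fact that $\tau$ lands in $\sW$ (so the pairing against $\sW^{\perp}$ and the filtration splitting are untouched) are exactly right, and the identification $h|_{\sW}:\sW \xrightarrow{\sim} (\sE/\sW^{\perp})^{\vee}$ is indeed available from the diagram in Lemma \ref{l:twoStepSplitting}. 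The trade-off: your proof never uses the freeness of $\sE/\sW^{\perp}$ (only the existence of $\sigma_0$, hence only $S$ affine), so it is in that respect more general and cleaner; on the other hand it divides by $2$, whereas the paper's triangular correction uses no denominators and so works in any characteristic. Since the standing hypothesis is $\operatorname{char}(k)=0$, this costs nothing here, though note the paper does not list this lemma among those needing characteristic zero, and your version would add it to that list.
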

    
    \begin{proof}
    By Lemma \ref{l:twoStepSplitting} there is an extension of the 
    splitting of filtration on $\sW^{\perp}$ to a splitting of the filtration of $\sE$ 
    such that the map  
    \[     \sE/\sW^{\perp} \hookrightarrow \sE \xrightarrow{h} \sE^{\vee} \twoheadrightarrow (\sW^{\perp}/\sW)^{\vee}     \]
    vanishes.
    
    Since $\sE/\sW^{\perp}$ is free, we can choose a basis $\{f_1,f_2 \hdots f_s\}$ of $\sE/\sW^{\perp}$. The dual basis of 
    $(\sE/\sW^{\perp})^{\vee}$ is denoted $\{f_1^{\vee},f_2^{\vee} \hdots f_s^{\vee}\}$. 
    
    We define elements $e_i$ of $\sW$ as 
    \[  e_i = h^{-1} (f_i^{\vee})                                      \] 
    for $1 \le i \le s$. Note that $e_{i}\in \sW$ as $\sE$ is filtered symplectic by the previous Lemma. Let $Q$ be the bilinear 
    form associated to $h$. 
    We now define a map $\sE/\sW^{\perp} \hookrightarrow \sE$  by sending elements $f_i$ to $\overline{f}_i$ for $1 \le i \le s$ 
    with $\overline{f}_i$ defined as 
    \begin{align*}
        \overline{f}_1 &= f_1 \\ 
        \overline{f}_2 &= f_2 - Q(f_2,f_1)e_1 \\  
        \overline{f}_3 &= f_3 - Q(f_3,f_1)e_1 - Q(f_3,f_2)e_2 \\ 
            &\vdots \\
        \overline{f}_i &= f_i - \sum_{k=1}^{i-1} Q(f_3,f_k)e_k  \\ 
        &\vdots \\ 
        \overline{f}_s &= f_s - \sum_{k=1}^{s-1} Q(f_3,f_s)e_s.
    \end{align*}
    Since for all $i$, the $e_i$'s lie in $\sW$, $Q(\overline{f}_i , x) = Q(f_i, x)$ for all elements $x$ in $\sW^{\perp}$, $1 \le i \le s$.
    the map  
    \[     \sE/\sW^{\perp} \hookrightarrow \sE \xrightarrow{h} \sE^{\vee} \twoheadrightarrow (\sW^{\perp}/\sW)^{\vee}     \]
    is still 0. Moreover, it follows from computation $Q(\overline{f}_i,\overline{f}_j) = 0$ for all $1 \le i,j \le s$. Hence, 
    the map 
    \[     \sE/\sW^{\perp} \hookrightarrow \sE \xrightarrow{h} \sE^{\vee} \twoheadrightarrow (\sE/\sW^{\perp})^{\vee}                                                     \]
    also vanishes.
    \end{proof}

    \begin{Lemma}
    \label{l:splitFilteredSympTwo}
    Let $(\sE,h)$ be a symplectic vector bundle on a scheme $S$. Consider an admissible isotropic
    subbundle $\sW\hookrightarrow \sE$ with induced filtered, symplectic vector bundle
    \[  0 \subset \sW=F^{-1}(\sE) \subset  \sW^{\perp}=F^{0}(\sE) \subset \sE=F^{1}(\sE),       \]
    see \ref{l:twoStepSplitting}. 
    If $\sE/\sW^{\perp}$ is free and the filtration  on $\sW^{\perp}$ is split, 
    then  $(\sE,h,F)$ is a split, filtered, symplectic vector bundle. 
    \end{Lemma}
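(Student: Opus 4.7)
The strategy is to use Lemma \ref{l:twoStepVanishing} to produce a splitting of the filtration on $\sE$ that is simultaneously well-adjusted to the symplectic form, and then to check that the resulting isomorphism $\phi : \Gr_{F}(\sE) \xrightarrow{\sim} \sE$ identifies $h$ with the split graded symplectic form on $\Gr_{F}(\sE)$ coming from Lemma \ref{l:filtSympConstruct} applied to the isomorphisms $h_{i}$ of Lemma \ref{l:sympFiltDual}.

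First I would invoke Lemma \ref{l:twoStepVanishing}, using the hypotheses that $\sE/\sW^{\perp}$ is free and that a splitting of $\sW^{\perp}\twoheadrightarrow \sW^{\perp}/\sW$ has been chosen. This produces a lift $\sigma : \sE/\sW^{\perp} \to \sE$ of the quotient $\sE\twoheadrightarrow \sE/\sW^{\perp}$, extending the given splitting of $\sW^{\perp}$, with the two vanishing properties stated in \ref{l:twoStepVanishing}. (The construction in the proof of \ref{l:twoStepVanishing} is carried out in terms of a global basis of $\sE/\sW^{\perp}$ and so is valid for any scheme $S$ on which $\sE/\sW^{\perp}$ is free; no genuine affinity is needed.) Combining $\sigma$ with the chosen splitting $\sW^{\perp} \cong \sW \oplus (\sW^{\perp}/\sW)$ then yields an isomorphism of filtered vector bundles
\[
\phi : \Gr_{F}(\sE) \;=\; \sW \,\oplus\, (\sW^{\perp}/\sW) \,\oplus\, (\sE/\sW^{\perp}) \;\xrightarrow{\sim}\; \sE,
\]
which is a splitting of the filtration $F$ in the sense defined before Lemma \ref{l:filtSympConstruct}.

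Next I would verify that $\phi^{*}h$ coincides with $\bigoplus_{i} h_{i}$ by checking each pairing between the three summands. The pairing on $\sW\otimes \sW$ vanishes by isotropy of $\sW$; the pairings between $\sW$ and $\sW^{\perp}/\sW$ vanish because $\sW\subseteq \sW^{\perp}$ together with the alternating property. The pairing $(\sE/\sW^{\perp})\otimes (\sW^{\perp}/\sW)\to \sO_{S}$ and its transpose vanish by the first vanishing in Lemma \ref{l:twoStepVanishing}, and $(\sE/\sW^{\perp})\otimes (\sE/\sW^{\perp})\to \sO_{S}$ vanishes by the second. The pairing on $(\sW^{\perp}/\sW)\otimes (\sW^{\perp}/\sW)$ reduces, through the splitting of $\sW^{\perp}$, to the descent of $h|_{\sW^{\perp}}$ to the quotient, which is exactly $h_{0}$ by the construction in Lemma \ref{l:sympFiltDual}. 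Finally, for the pairing $\sW \otimes (\sE/\sW^{\perp})$: given $w\in \sW$ the functional $Q(w,-)$ annihilates $\sW^{\perp}$, hence descends to $\sE/\sW^{\perp}$, and this descent is by definition $h_{-1}(w)\in (\sE/\sW^{\perp})^{\vee}$; composing with $\sigma$ therefore recovers $h_{-1}$.

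Assembling these six calculations shows that $\phi$ is simultaneously a splitting of the filtration and an isomorphism of symplectic vector bundles, so $(\sE,h,F)$ is split filtered symplectic in the sense of Lemma \ref{l:filtSympConstruct}. The main obstacle is producing the splitting $\sigma$ with both of the vanishing properties of Lemma \ref{l:twoStepVanishing} simultaneously; once this is in hand the symplectic compatibility is an essentially formal unwinding of the definitions, with the only subtle point being the sign bookkeeping in the $\sW\otimes (\sE/\sW^{\perp})$ pairing, where the shift by $1$ in the definition of a filtered symplectic vector bundle and the alternating property of $Q$ must be tracked carefully.
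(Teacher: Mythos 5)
Your proposal follows essentially the same route as the paper's proof: choose, via Lemma \ref{l:twoStepVanishing}, an extension of the given splitting of $\sW^{\perp}$ to a splitting of the filtration on $\sE$ enjoying the two extra vanishing properties, and then verify that every pairing between the three graded pieces matches the split form of Lemma \ref{l:filtSympConstruct} (isotropy and the perp conditions handle the pairings involving $\sW$, the two vanishings handle those involving $\sE/\sW^{\perp}$, and the last one follows by duality), which is exactly the paper's argument. The only caveat is your parenthetical claim that affineness is dispensable: the splitting fed into Lemma \ref{l:twoStepVanishing} comes from Lemma \ref{l:twoStepSplitting}, whose proof uses affineness (projectivity of $\sE/\sW^{\perp}$ over an affine base) to split $\sW^{\perp}\hookrightarrow\sE$, and freeness of $\sE/\sW^{\perp}$ alone does not split $\sE\twoheadrightarrow\sE/\sW^{\perp}$ over a general scheme --- though the paper itself states the lemma for arbitrary $S$ and only ever applies it over affines, so this is a shared, not a new, gap.
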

    
    Recall that the term ``split, filtered, symplectic vector bundle '' was defined in the discussion immediately following 
    Lemma \ref{l:filtSympConstruct}.

    \begin{proof}
    We choose a splitting of the filtration  $E$ as in Lemma \ref{l:twoStepVanishing}. 
    The maps 
\begin{eqnarray*}
    \sW &\hookrightarrow \sE \xrightarrow{h} \sE^{\vee} &\twoheadrightarrow \sW^{\vee} \\
    \sW & \hookrightarrow \sE \xrightarrow{h} \sE^{\vee} &\twoheadrightarrow (\sW^{\perp}/\sW)^{\vee} \\
    \sW^{\perp}/\sW &\hookrightarrow \sE \xrightarrow{h} \sE^{\vee} &\twoheadrightarrow \sW^{\vee}
\end{eqnarray*}
    vanish since $\sE$ is a filtered symplectic object. Lemma \ref{l:twoStepVanishing} shows
    that the maps 
    \begin{eqnarray*}
        \sE/\sW^{\perp} &\hookrightarrow \sE \xrightarrow{h} \sE^{\vee} &\twoheadrightarrow (\sE/\sW^{\perp})^{\vee} \\
        \sE/\sW^{\perp} &\hookrightarrow \sE \xrightarrow{h} \sE^{\vee} &\twoheadrightarrow (\sW^{\perp}/\sW)^{\vee}     
    \end{eqnarray*}
    vanish as well for a particular splitting. The map 
    \[ \sW^{\perp}/\sW \hookrightarrow \sE \xrightarrow{h_Q} E^{\vee} \twoheadrightarrow (\sE/\sW^{\perp})^{\vee} \]
    vanishes since it is dual of the last map. It is easily checked that this splitting of the filtration gives $(\sE,h,F)$ the structure of a filtered split,
    symplectic vector bundle.
    \end{proof}

    \begin{Lemma}
            \label{l:inductionStep}
    Let $(\sE,h)$ be a filtered symplectic vector bundle equipped with filtration 
        \[    0 = F^{-n}(\sE) \subset F^{-n+1}(\sE)  \subset \hdots \subset F^{n-1}(\sE) =   \sE     \]
        such that $\sE/F^{n-2}(E)$ is free and $n\ge 2$. 
    Then the following statements hold. 
    \begin{enumerate}
        \item The vector bundle $F^{1-n}(\sE)$ is isotropic with $F^{1-n}(\sE)^{\perp} =  F^{n-2}(\sE)$.
        \item There are induced isomorphisms 
        $$
        h_{1-n}:F^{1-n}(\sE)\to (\sE/F^{n-2}(\sE))^{\vee}\quad\text{and}\quad 
        h_{n-1}:\sE/F^{n-2}(\sE)\to F^{1-n}(\sE)^{\vee}. 
        $$
        \item There is an induced isomorphism 
        $$
        \bar{h}:F^{n-2}(\sE)/F^{1-n}(\sE) \to (F^{n-2}(\sE)/F^{1-n}(\sE))^{\vee}
        $$
        that makes it into a filtered symplectic vector bundle where the filtration is induced from the filtration 
        on $F^{n-2}(\sE)$. 
        \item There is an isomorphism of filtered symplectic vector bundles 
        $$
        (\sE,F,h)\cong (F^{1-n}(\sE)\oplus F^{n-2}(\sE)/F^{1-n}(\sE))\oplus (\sE/F^{n-2}(\sE)), F', h_{1-n}\oplus \bar{h}\oplus h_{n-1} 
        $$
        where the filtration $F'$ has bottom term $F^{1-n}(\sE)$, middle terms induced from the filtration on $F^{n-2}(\sE)$. 
    \end{enumerate}
    \end{Lemma}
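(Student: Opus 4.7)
The plan is to exploit the filtered symplectic identity $F^{j}(\sE) = F^{-j-1}(\sE)^{\perp}$ recalled in the discussion preceding Lemma \ref{l:filtSympConstruct}, and then invoke Lemma \ref{l:splitFilteredSympTwo} to split off the extreme layers of the filtration. Parts (1) and (2) fall out quickly. Setting $j = 1-n$ in the identity gives $F^{1-n}(\sE)^{\perp} = F^{n-2}(\sE)$; isotropy of $F^{1-n}(\sE)$ then follows from the inclusion $F^{1-n}(\sE) \subseteq F^{n-2}(\sE)$, which holds precisely because $n \ge 2$. For (2), dualizing
\[
0\to F^{n-2}(\sE)\to \sE\to \sE/F^{n-2}(\sE)\to 0
\]
and composing the resulting kernel with $h^{-1}$ identifies $(\sE/F^{n-2}(\sE))^{\vee}$ with $F^{1-n}(\sE)$; equivalently, $h_{1-n}$ is the isomorphism produced by Lemma \ref{l:sympFiltDual} applied to $i = 1-n$, since $\Gr_{F}^{1-n}(\sE) = F^{1-n}(\sE)$ and $\Gr_{F}^{n-1}(\sE) = \sE/F^{n-2}(\sE)$. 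The same Lemma (or a direct dualization) produces $h_{n-1}$.

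For (3), since $F^{1-n}(\sE) = F^{n-2}(\sE)^{\perp}$, the bilinear form $Q$ vanishes on $F^{n-2}(\sE) \otimes F^{n-2}(\sE)$ whenever one entry lies in $F^{1-n}(\sE)$, so $Q$ descends to an alternating pairing $\bar{h}$ on $F^{n-2}(\sE)/F^{1-n}(\sE)$; non-degeneracy reduces to a rank count, the radical of the restricted form being exactly $F^{1-n}(\sE)$. The inherited filtration $\bar{F}^{i} := F^{i}(\sE)/F^{1-n}(\sE)$ for $1-n \le i \le n-2$ satisfies the filtered symplectic identity $\bar{F}^{j} = (\bar{F}^{-j-1})^{\perp}$ inside the middle piece: the inclusion follows by passing $F^{j}(\sE) = F^{-j-1}(\sE)^{\perp}$ through the quotient, and equality is obtained by a rank comparison using the isomorphisms of graded pieces from Lemma \ref{l:sympFiltDual}.

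For (4), I apply Lemma \ref{l:splitFilteredSympTwo} to the isotropic admissible subbundle $F^{1-n}(\sE) \hookrightarrow \sE$ equipped with the coarsened three-step filtration $0 \subset F^{1-n}(\sE) \subset F^{n-2}(\sE) \subset \sE$ as in Lemma \ref{l:twoStepSplitting}. The hypotheses are in place: $\sE/F^{n-2}(\sE)$ is free by assumption, and the inclusion $F^{1-n}(\sE) \hookrightarrow F^{n-2}(\sE)$ splits because its quotient is locally free and the ambient base may be taken affine. Lemma \ref{l:splitFilteredSympTwo} then yields a splitting of $\sE$ as a symplectic direct sum whose summands carry the pairings $h_{1-n}$, $\bar{h}$, and $h_{n-1}$ from (2) and (3); equipping the middle summand with its inherited filtration from (3) produces the filtered isomorphism claimed in (4).

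The main obstacle is verifying that the splitting from Lemma \ref{l:splitFilteredSympTwo}, which a priori only respects the coarse three-step filtration, is compatible with the finer filtration $F$ on the middle piece. This is benign because the extreme summands $F^{1-n}(\sE)$ and $\sE/F^{n-2}(\sE)$ occupy filtration degrees disjoint from those of the middle piece, so any splitting of the outer shell is automatically a filtered isomorphism once the middle summand carries its own inherited filtration. A secondary point is that Lemma \ref{l:splitFilteredSympTwo} implicitly needs an affine base for the splitting of the outer layers, so Lemma \ref{l:inductionStep} should be interpreted in the same setting or applied after localization.
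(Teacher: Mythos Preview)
Your proof is correct and follows essentially the same route as the paper: part (1) comes from the filtered symplectic identity $F^{j}(\sE)^{\perp}=F^{-j-1}(\sE)$ together with $n\ge 2$, parts (2)--(4) are obtained by applying Lemma \ref{l:splitFilteredSympTwo} to the coarsened three-step filtration and then checking that $h_{1-n}\oplus\bar h\oplus h_{n-1}$ is filtered symplectic via Lemma \ref{l:filtSympConstruct}. Your explicit verification that the splitting of the outer shell is automatically compatible with the finer filtration on the middle piece, and your observation that the argument implicitly requires the affine hypothesis inherited from Lemma \ref{l:twoStepVanishing}, are points the paper passes over in silence, so your write-up is if anything more careful.
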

    
    \begin{proof}
    The first part follows from the definition of a filtered symplectic vector bundle and the fact that $n\ge 2$.
    The second and third part follow from Lemma \ref{l:splitFilteredSympTwo}. The fact that $h_{1-n}\oplus \bar{h}\oplus h_{n-1}$ 
    induces a symplectic structure follows from  Lemma \ref{l:filtSympConstruct}, one readily checks the required property. 
    One needs to check that $h_{1-n}\oplus \bar{h}\oplus h_{n-1}$ induces a structure of a filtered symplectic vector bundle. 
    This amounts to showing that $\bar{h}$ induces a filtered symplectic structure on $F^{n-2}(\sE)/F^{1-n}(\sE)$. 
    This easily follows from the fact that $h$ induces a filtered symplectic structure on $\sE$. 
    \end{proof}
    
    \begin{Lemma} 
    \label{l:splitFilteredSymplectic}
        Let $(\sE,h)$ be a filtered symplectic vector bundle with  filtration $F$ defined as
         \[    0 = F^{-n}(\sE) \subset F^{-n+1}(\sE)  \subset \hdots \subset F^{n-1}(\sE) =   \sE     \]
        If the sheaves $\gr[i]{F}(\sE)$ are free for all $-n+1 \le i \le n-1$, then $(\sE,h,F)$ is a split, filtered, symplectic 
        vector bundle. 
    \end{Lemma}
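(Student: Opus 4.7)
The plan is to proceed by induction on $n$. The base case is $n = 2$, where the filtration takes the form $0 \subset F^{-1}(\sE) \subset F^{0}(\sE) \subset F^{1}(\sE) = \sE$. Setting $\sW = F^{-1}(\sE)$, the freeness of $\Gr_F^{0}(\sE)$ makes the filtration $0 \subset \sW \subset \sW^{\perp} = F^{0}(\sE)$ split, while the freeness of $\sE/\sW^{\perp} = \Gr_F^{1}(\sE)$ supplies the remaining hypothesis of Lemma \ref{l:splitFilteredSympTwo}, which produces the desired splitting.

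For $n \ge 3$, the freeness of $\sE/F^{n-2}(\sE) = \Gr_F^{n-1}(\sE)$ allows us to invoke Lemma \ref{l:inductionStep}. This yields the decomposition
\[
(\sE, F, h) \cong (F^{1-n}(\sE) \oplus M \oplus \sE/F^{n-2}(\sE),\ F',\ h_{1-n} \oplus \bar{h} \oplus h_{n-1})
\]
with $M = F^{n-2}(\sE)/F^{1-n}(\sE)$. The middle factor $M$ inherits a filtered symplectic structure with filtration $G^{j}(M) = F^{j}(\sE)/F^{1-n}(\sE)$ for $1-n \le j \le n-2$; this places $M$ into the setting of the present lemma with parameter $n' = n-1$, and the associated graded pieces $\Gr_G^{j}(M) \cong \Gr_F^{j}(\sE)$ for $2-n \le j \le n-2$ are free by hypothesis. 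The inductive hypothesis therefore applies to give a splitting of $M$.

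Composing the resulting splitting of the middle with the outer decomposition from Lemma \ref{l:inductionStep} produces an isomorphism $\sE \cong \bigoplus_{i=1-n}^{n-1} \Gr_F^{i}(\sE) = \Gr_F(\sE)$, and Lemma \ref{l:filtSympConstruct} applied to the pairings $h_{i}$ confirms that this direct sum carries the structure of a split filtered symplectic vector bundle. The main obstacle I expect is the bookkeeping needed to confirm that the three separate splittings — the two outer pairs of graded pieces via $h_{1-n}$ and $h_{n-1}$, and the recursively obtained splitting of $M$ — assemble into a single isomorphism satisfying the normalisation $\phi_{j}(x) = \bar{x}$ for $x \in F^{i}(\sE)$ and $j \ge i$ demanded by the definition of a splitting. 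This is a diagram chase made routine by the careful choices arranged in Lemmas \ref{l:twoStepSplitting} and \ref{l:twoStepVanishing}, but it forms the substantive part of the verification.
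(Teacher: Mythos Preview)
Your proof is correct and follows essentially the same approach as the paper, which simply states that the result follows by induction using Lemma~\ref{l:inductionStep}. Your choice to handle the base case $n=2$ via Lemma~\ref{l:splitFilteredSympTwo} rather than taking the trivial case $n=1$ is a harmless variation, and your identification of the middle piece as an instance of the lemma with parameter $n'=n-1$ is exactly the inductive mechanism the paper has in mind.
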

        
    \begin{proof}
        This follows by induction using the previous Lemma. 
    \end{proof}

%auto-ignore 
\section{The sheaf of self adjoint homomorphisms}

\subsection{Graded vector bundles.}
Let $\sE=\bigoplus_{i}\sE_{i}$ be a graded vector bundle. We define 
$$
\shom^{\Gr}(\sE,\sE) \subseteq \shom(\sE,\sE)   
$$
to be the subsheaf of homomorphisms that preserve the grading. More generally, if $a$ is an integer, we define 
$$
\shom^{\Gr,\deg a}\subseteq \shom(\sE,\sE) 
$$
to be the subsheaf of degree $a$ homomorphisms. A homomorphism $\phi$ has degree $a$ if the restricted maps satisfy $\phi: \sE_{i}\to \sE_{i+a}$
for every $i$. 
Now suppose that $\sE $ is a graded symplectic vector bundle. So it comes equipped with an isomorphism $h:\sE\to \sE^{\vee}$. Recall that a
homomorphism $f$ is self-adjoint if $\mu(f)=-f$, where $\mu(f)=h^{-1}\circ f^{\vee}\circ h$. By our degree conventions for the dual, if $f$ has 
degree $a$ then so does $f^{\vee}$. We define 
$$
\shom^{\Gr,\deg a}_{s}(\sE,\sE) \subseteq \shom^{\Gr,\deg a}(\sE,\sE) 
$$
to be the subsheaf of self-adjoint homomorphisms. 
 
\begin{Lemma}
    \label{l:explicitDescGraded}
    Consider a graded symplectic vector bundle $(\sE,h)$ on a scheme $S$. Recall that $\sE$ has an explicit description as 
    $$
    \sE = \bigoplus_{i=-n}^{n}\sE_{i}\quad\text{with }h\text{ given by}\quad h_{i}:\sE_{i}\stackrel{\sim }{\to }\sE_{-i}^{\vee}. 
    $$
    Then to give a section of $\shom^{\Gr,\deg a}_{s}(\sE,\sE)$ is the same as giving maps 
    $$
    \theta_{i}:\sE_i\to \sE_{i+a} \quad -n\le i \le n 
    $$
    with $\theta_{-i-a,-i}= -h_{-i}^{-1}\theta^{\vee}_{i,i+a}h_{-i-a}$. In the above, we set $\sE_{i}=0$ if $i\not\in [-n,n]$. 
\end{Lemma}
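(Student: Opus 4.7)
The plan is to reduce directly to Lemma \ref{l:gradedSymplectic}, which already expresses what self-adjointness means componentwise for a graded symplectic vector bundle. First I note that a section of $\shom^{\Gr,\deg a}(\sE,\sE)$ is, by definition, a degree $a$ graded endomorphism of $\sE$, and such a thing is in bijection with a tuple of maps $\theta_i:\sE_i\to \sE_{i+a}$ for $i\in\mathbb{Z}$, subject only to the convention $\sE_j = 0$ whenever $j\notin[-n,n]$. Thus the only content of the statement is to translate the self-adjointness condition $\mu(\theta)=-\theta$ into a componentwise relation between these $\theta_i$.

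Lemma \ref{l:gradedSymplectic} asserts that $\theta = \bigoplus_i \theta_i$ with $\theta_i:\sE_i\to \sE_{i+a}$ is self-adjoint if and only if
\[
-\theta_i \;=\; h_{i+a}^{-1}\circ \theta_{-i-a}^{\vee}\circ h_i
\]
for every $i$. The index substitution $i\mapsto -i-a$ is a bijection on $\mathbb{Z}$, and under it the displayed identity becomes
\[
\theta_{-i-a} \;=\; -\,h_{-i}^{-1}\circ \theta_i^{\vee}\circ h_{-i-a},
\]
which is precisely the relation in the statement, written in the double-subscript notation where $\theta_{j,k}$ denotes the component $\sE_j\to \sE_k$. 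If one prefers a self-contained verification, it suffices to combine the componentwise description $h_j:\sE_j\to \sE_{-j}^{\vee}$ with the observation that, under the convention $(\sE^{\vee})_j=(\sE_{-j})^{\vee}$ from section 2.1, the degree $a$ component $(\theta^{\vee})_j:\sE_{-j}^{\vee}\to \sE_{-j-a}^{\vee}$ equals $(\theta_{-j-a})^{\vee}$, and then to compute $\mu(\theta)_j = h_{j+a}^{-1}\circ(\theta^{\vee})_j\circ h_j$ and equate with $-\theta_j$.

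There is no genuine obstacle here; the statement is really a restatement of Lemma \ref{l:gradedSymplectic} specialized to the explicit description of $\sE$ in the hypothesis. The only thing to watch is the sign flip coming from the dual grading, since a careless substitution would swap the roles of source and target indices. The convention $\sE_j=0$ for $|j|>n$ simply kills those components $\theta_{i,j}$ whose source or target is trivial, and leaves the self-adjointness relation for the remaining nontrivial components unchanged.
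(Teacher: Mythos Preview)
Your proof is correct and takes essentially the same approach as the paper, which simply says the result follows from Lemma~\ref{l:gradedSymplectic}. You merely spell out the index substitution $i\mapsto -i-a$ that converts the self-adjointness relation of that lemma into the form stated here.
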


\begin{proof}
    Follows from \ref{l:gradedSymplectic}. 
\end{proof}

\begin{corollary}\label{c:odd}
    In the above situation, let $a$ be an odd integer. Then we have an isomorphisms 
    \begin{eqnarray*}
        \shom^{\Gr,\deg a}_{s}(\sE,\sE) &\cong& \bigoplus_{i> -a/2} \shom(\sE_{i},\sE_{i+a}) \\ 
      &\cong&   \bigoplus_{i< -a/2} \shom(\sE_{i},\sE_{i+a}). 
    \end{eqnarray*}
\end{corollary}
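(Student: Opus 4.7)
My plan is to combine the explicit description from Lemma \ref{l:explicitDescGraded} with the observation that for odd $a$ the involution $i \mapsto -i-a$ on $\ZZ$ has no fixed points, so the index set decomposes into unordered pairs.

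By Lemma \ref{l:explicitDescGraded}, a section of $\shom^{\Gr,\deg a}_{s}(\sE,\sE)$ is the same as a tuple $(\theta_i)_{-n \le i \le n}$ with $\theta_i : \sE_i \to \sE_{i+a}$ subject to the constraint
$$
\theta_{-i-a} = -h_{-i}^{-1} \theta_i^{\vee} h_{-i-a},
$$
where we set $\sE_j = 0$ for $|j|>n$. Since $a$ is odd, the equation $2i = -a$ has no integer solution, hence the involution $\sigma : i \mapsto -i - a$ acts on $\ZZ$ without fixed points. Moreover, $\sigma$ exchanges the two half-lines $\{i : i > -a/2\}$ and $\{i : i < -a/2\}$, so every orbit of $\sigma$ contains exactly one element from each half. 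This gives a bijection between $\{i > -a/2\}$ and $\{i < -a/2\}$ by $i \mapsto -i-a$.

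First I would define the projection morphism
$$
\shom^{\Gr,\deg a}_{s}(\sE,\sE) \longrightarrow \bigoplus_{i > -a/2} \shom(\sE_i, \sE_{i+a}), \qquad (\theta_j)_j \longmapsto (\theta_i)_{i > -a/2}.
$$
Next I would construct its inverse: given a tuple $(\theta_i)_{i > -a/2}$, set $\theta_{-i-a} := -h_{-i}^{-1} \theta_i^{\vee} h_{-i-a}$ for each $i > -a/2$. This assigns a morphism to each index in $\{i < -a/2\}$ via the bijection above, and since there are no fixed indices the assignment is unambiguous. A direct check using the relation $h_i^{\vee} = -h_{-i}$ from Lemma \ref{l:gradedSymplectic} shows that the resulting tuple satisfies the self-adjointness constraint in both directions, giving the first isomorphism. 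The second isomorphism follows by the symmetric argument, projecting onto indices $i < -a/2$ instead.

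The only subtlety, and the step I would most carefully verify, is the consistency of the self-adjointness relation: applying the defining formula twice should recover the original $\theta_i$. This amounts to checking that
$$
-h_{-(-i-a)}^{-1}\bigl(-h_{-i}^{-1}\theta_i^{\vee}h_{-i-a}\bigr)^{\vee} h_{-(-i-a)-a} = \theta_i,
$$
which reduces, using $h_j^{\vee} = -h_{-j}$ and the canonical double dual identification, to a straightforward cancellation. With this verified the two maps are mutually inverse and the corollary follows.
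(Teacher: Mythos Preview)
Your proposal is correct and is exactly the argument the paper has in mind: the corollary is stated without proof immediately after Lemma~\ref{l:explicitDescGraded}, so the intended reasoning is precisely to use that explicit description together with the fixed-point-free involution $i\mapsto -i-a$ on the index set when $a$ is odd. Your consistency check via $h_j^{\vee}=-h_{-j}$ is the right verification and nothing further is needed.
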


\begin{Definition}
    \label{d:selfHom}
Let $\sE$ and $\sF$ be vector bundles on a scheme $S$. Let 
$$
h:\sE\to \sF^{\vee}
$$
be an isomorphism. We define $\shom_{s,h}(\sE,\sF)$, the sheaf of self adjoint homomorphisms to be 
the subsheaf
of $\homsheaf(\sE,\sF)$ consisting of local sections $\theta$ such that 
the following diagram commutes
\[ 
\begin{tikzcd}
\sE \arrow[d, "h"] \arrow[r, "\theta"] & \sF \arrow[d, "h^{\vee}"] \\
\sF^{\vee} \arrow[r, "-\theta^{\vee}"]  & \sE^{\vee}               
\end{tikzcd}\]
\end{Definition}

\begin{corollary}\label{c:even}
    In the above situation, let $a=2m$ be an even integer. Then we have an isomorphism 
    \begin{eqnarray*}
        \shom^{\Gr,\deg 2m}_{s}(\sE,\sE) &\cong& \shom_{s,h_{-m}} (\sE_{-m},\sE_{m}) \oplus \bigoplus_{i>m} \shom(\sE_{i},\sE_{i+2m}) \\
        &\cong& \shom_{s,h_{-m}} (\sE_{-m},\sE_{m}) \oplus \bigoplus_{i<m} \shom(\sE_{i},\sE_{i+2m}).    
    \end{eqnarray*}
\end{corollary}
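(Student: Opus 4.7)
The plan is to invoke Lemma \ref{l:explicitDescGraded} and then analyze what its constraint does under the involution $i\mapsto -i-2m$ of the index set, which, in contrast to the odd case of Corollary \ref{c:odd}, now has a fixed point at $i=-m$.

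By Lemma \ref{l:explicitDescGraded}, a section of $\shom^{\Gr,\deg 2m}_{s}(\sE,\sE)$ is the same as a family of maps $\theta_{i}:\sE_{i}\to\sE_{i+2m}$, $-n\le i\le n$, subject to the relation
\[
\theta_{-i-2m,-i}=-h_{-i}^{-1}\theta_{i,i+2m}^{\vee}h_{-i-2m}.
\]
The map $i\mapsto -i-2m$ has the single fixed point $i=-m$, and on its complement partitions the index set into two-element orbits $\{i,-i-2m\}$ whose representatives sit on opposite sides of $-m$. For $i\ne -m$ the relation expresses $\theta_{-i-2m,-i}$ in terms of $\theta_{i,i+2m}$, so specifying the collection $\{\theta_{i}\}_{i\ne -m}$ reduces to the free choice of one representative from each orbit; this accounts for the direct summand $\bigoplus\shom(\sE_{i},\sE_{i+2m})$, and picking one or the other side of the fixed point delivers the two displayed versions of the decomposition.

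What remains is to interpret the constraint at the fixed point $i=-m$. Setting $i=-m$ gives
\[
\theta_{-m,m}=-h_{m}^{-1}\theta_{-m,m}^{\vee}h_{-m},
\]
a condition on the single map $\theta_{-m,m}:\sE_{-m}\to\sE_{m}$. Using the identity $h_{m}=-h_{-m}^{\vee}$ furnished by Lemma \ref{l:gradedSymplectic}, a short sign-tracking calculation rewrites this relation as the commutative square of Definition \ref{d:selfHom} with the pairing $h_{-m}:\sE_{-m}\to\sE_{m}^{\vee}$. Hence the fixed-point contribution is precisely $\shom_{s,h_{-m}}(\sE_{-m},\sE_{m})$, and combining this with the free off-diagonal data produces the claimed isomorphism.

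The only step that demands any care, and thus the main obstacle, is the sign bookkeeping at the fixed point: one must correctly identify $\theta_{-m,m}=-h_{m}^{-1}\theta_{-m,m}^{\vee}h_{-m}$ with the self-adjointness condition of Definition \ref{d:selfHom} by substituting $h_{m}=-h_{-m}^{\vee}$. Everything else is immediate from Lemma \ref{l:explicitDescGraded} and the orbit structure of the involution $i\mapsto -i-2m$.
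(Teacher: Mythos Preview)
Your proof is correct and is precisely the argument the paper leaves implicit: the corollary is stated without proof as an immediate consequence of Lemma \ref{l:explicitDescGraded}, and your orbit analysis of the involution $i\mapsto -i-2m$ with its fixed point at $i=-m$, together with the identification of the fixed-point constraint with Definition \ref{d:selfHom}, is exactly the intended computation. Note that your argument actually produces the index ranges $i>-m$ and $i<-m$, consistent with the pattern of Corollary \ref{c:odd}; the displayed $i>m$ and $i<m$ in the statement appear to be typographical slips.
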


\subsection{Filtered vector bundles.}

\begin{example}
    \label{d:selfAdjointHom}
    In the special case that $\sE=\sF$ in the above and $h$ defines a symplectic vector bundle structure on $\sE$ 
    we recover 
     the \emph{sheaf of self adjoint homomorphisms of $(\sE$,h)}. We will often write
     $$
     \homs(\sE,\sE) := \shom_{s,h}(\sE,\sE). 
     $$
\end{example}

Now suppose that $(\sE,h,F)$ is a filtered symplectic vector bundle with filtration 
\[    0 = F^{-n}(\sE) \subset F^{1-n}(\sE)  \subset \dots \subset F^{n-1}(\sE) =   \sE.   \]
Then $\homs(\sE,\sE)$ comes equipped with a filtration $F_{h}$ of the form 
\begin{align*}
        0 = F_h^{-2n+1}(\homs(\sE,\sE)) &\subset F_h^{-2n+2}(\homs(\sE,\sE))  \subset \hdots\\
                                    &\subset F^{2n-2}(\homs(\sE,\sE)) =  \homs(\sE,\sE).
\end{align*}        
The sheaf $F_h^{j}(\homs(\sE,\sE))$ is defined to be the subsheaf of $\homs(\sE,\sE)$ consisting of elements $f$ 
that satisfy
$$
    f(F^{i}(\sE))\subseteq F^{i+j}(\sE)\quad \forall\ 1-n\le i \le n-1. 
$$

\begin{example}
    Consider a split filtered symplectic vector bundle given by locally free sheaves $\sA_{i}$ for $1-n\le i \le n-1$ and isomorphisms 
    $$
    h_{i}:\sA_{i}\to \sA_{-i}
    $$
    as in Lemma \ref{l:filtSympConstruct}. A self-adjoint homomorphism is given $\theta_{i,j}:A_{i}\to A_{j}$ satisfying the condition 
    of Lemma \ref{l:filtSympConstruct}. Then we have 
    $$
    (\theta_{i,j})\in F_{h}^{\alpha}(\shom(\sE,\sE)) \iff  
    \theta_{i,j}=0 \quad \text{whenever }j>i+\alpha. 
    $$
    This allows us to describe the associated graded sheaf of $\shom_{s}(\oplus \sA_{i},\oplus \sA_{i})$ relatively easily. 
    Write $\sE =\oplus \sA_{i}$ viewed as a filtered symplectic vector bundle. 
\end{example}

\begin{lemma}
    Let $(\sE,h,F)$ be a split filtered symplectic vector bundle. Then we have a canonical isomorphism
    $$
    \Gr^{a}\shom^{\Fil}_{s}(\sE,\sE)\cong \shom^{\Gr,\deg a}_{s}(\Gr \sE,\Gr \sE). 
    $$
\end{lemma}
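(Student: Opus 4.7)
The plan is to reduce the statement to bookkeeping via the explicit descriptions already built up in the paper, namely Lemmas \ref{l:filtSympConstruct}, \ref{l:gradedSymplectic}, and \ref{l:explicitDescGraded}, together with the preceding Example. Since $(\sE,h,F)$ is a split filtered symplectic vector bundle, fix a splitting and identify $\sE=\bigoplus_i \sA_i$ with $\sA_i = \Gr^i_F(\sE)$, and write $h=\bigoplus h_i$ with $h_i:\sA_i\to \sA_{-i}^\vee$ as in Lemma \ref{l:filtSympConstruct}. Under this identification, a self-adjoint endomorphism $\theta$ of $\sE$ corresponds to a family $(\theta_{i,j})$ with $\theta_{i,j}:\sA_i\to \sA_j$ satisfying
$$\theta_{i,j}=-h_j^{-1}\,\theta_{-j,-i}^{\vee}\,h_i,$$
and, by the Example immediately preceding the lemma, $\theta\in F_h^a\shom_s^{\Fil}(\sE,\sE)$ if and only if $\theta_{i,j}=0$ whenever $j>i+a$.

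With this in hand, I would define the map
$$\varphi_a:F_h^a\shom_s^{\Fil}(\sE,\sE)\longrightarrow \shom_s^{\Gr,\deg a}(\Gr \sE,\Gr \sE),\qquad (\theta_{i,j})\longmapsto (\theta_{i,i+a})_i,$$
picking out the diagonal entries at shift $a$. Checking that the image is self-adjoint of degree $a$ in the sense of Lemma \ref{l:explicitDescGraded} is immediate: the self-adjointness relation for $\theta$, specialized at $(i,j)=(i,i+a)$, reads $\theta_{i,i+a}=-h_{i+a}^{-1}\theta_{-i-a,-i}^{\vee}h_i$, which is exactly the relation for a graded self-adjoint morphism of degree $a$ stated in Lemma \ref{l:gradedSymplectic}.

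For injectivity modulo $F_h^{a-1}$, observe that the kernel of $\varphi_a$ consists of those $(\theta_{i,j})\in F_h^a$ with $\theta_{i,i+a}=0$ for all $i$. Combined with the defining condition $\theta_{i,j}=0$ for $j>i+a$, this is precisely $\theta_{i,j}=0$ for all $j\ge i+a$, i.e.\ the condition $\theta\in F_h^{a-1}$. For surjectivity, given a graded self-adjoint degree $a$ morphism $(\theta_i)$ with $\theta_i:\sA_i\to \sA_{i+a}$, I would extend by zero: set $\tilde\theta_{i,j}=\theta_i$ if $j=i+a$ and $\tilde\theta_{i,j}=0$ otherwise. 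The self-adjointness condition $\tilde\theta_{i,j}=-h_j^{-1}\tilde\theta_{-j,-i}^{\vee}h_i$ reduces to the given graded self-adjointness when $j=i+a$, and both sides vanish otherwise since $j=i+a$ is equivalent to $-i=-j+a$. This $\tilde\theta$ evidently lies in $F_h^a$ and maps to $(\theta_i)$. Passing to the quotient yields the desired canonical isomorphism
$$\Gr^a \shom_s^{\Fil}(\sE,\sE)=F_h^a/F_h^{a-1}\xrightarrow{\ \sim\ }\shom_s^{\Gr,\deg a}(\Gr \sE,\Gr \sE).$$

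The construction is local on $S$, so once the identification is verified over an affine chart where the splitting is fixed, it globalizes. There is essentially no hard step; the only mild obstacle is keeping the sign conventions and index ranges straight between the filtered and graded self-adjointness relations, which is managed by directly substituting $j=i+a$ into the conditions supplied by Lemmas \ref{l:filtSympConstruct} and \ref{l:gradedSymplectic}.
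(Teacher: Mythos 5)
Your proposal is correct and follows essentially the same route as the paper: identify $\sE$ with its associated graded via the splitting, note that the canonical map $F_h^a\shom_s^{\Fil}(\sE,\sE)\to\shom_s^{\Gr,\deg a}(\Gr\sE,\Gr\sE)$ has kernel $F_h^{a-1}$, and prove surjectivity by extending a graded self-adjoint morphism by zero off the shift-$a$ diagonal. Your explicit check of the self-adjointness of the extension (via $j=i+a\iff -i=-j+a$) is exactly the "readily checked" step the paper leaves implicit.
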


\begin{proof}
    There is a canonical map 
    $$
    F^{a}\shom^{\Fil}_{s}(\sE,\sE)\cong \shom^{\Gr,\deg a}_{s}(\Gr \sE,\Gr \sE)
    $$
    with kernel $F^{a-1}\shom^{\Fil}_{s}(\sE,\sE)$. So it remains to show that this map is surjective. 
    Consider $\theta$ a section of $\shom^{\Gr,\deg a}_{s}(\Gr \sE,\Gr \sE)$. By Lemma \ref{l:explicitDescGraded}, the map $\theta$ decomposes 
    as $\oplus \theta_{i}$ subject to the given constraint. We define 
    $$
    \theta_{i,j} = \begin{cases}
        \theta^{i} & j=i+a\\
        0 & \text{otherwise}. 
    \end{cases}
    $$
    Then one readily checks that 
    $$
     \bigoplus \theta_{i,j} \in F^{a}\shom^{\Fil}_{s}(\sE,\sE)\quad\text{lifts}\quad \theta. 
    $$
\end{proof}

\begin{corollary}\label{c:gradedReduction}
    Let $(\sE,h,F)$ be a filtered symplectic vector bundle. Then we have a canonical isomorphism
    $$
    \Gr^{a}\shom^{\Fil}_{s}(\sE,\sE)\cong \shom^{\Gr,\deg a}_{s}(\Gr \sE,\Gr \sE). 
    $$
\end{corollary}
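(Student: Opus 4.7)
The plan is to reduce to the split case already handled by the preceding lemma via a local-to-global argument.

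First I would observe that the canonical map
\[
F^{a}\shom^{\Fil}_{s}(\sE,\sE)\twoheadrightarrow \shom^{\Gr,\deg a}_{s}(\Gr\sE,\Gr\sE)
\]
constructed in the proof of the previous lemma makes sense for an arbitrary filtered symplectic vector bundle, not only a split one: it is induced by the natural map that sends an endomorphism preserving the filtration shifted by $a$ to its induced graded pieces, and its kernel contains $F^{a-1}\shom^{\Fil}_{s}(\sE,\sE)$ by construction. Thus the map factors through a canonical map
\[
\Gr^{a}\shom^{\Fil}_{s}(\sE,\sE)\longrightarrow \shom^{\Gr,\deg a}_{s}(\Gr\sE,\Gr\sE)
\]
which is defined globally on $S$ and functorial in $(\sE,h,F)$.

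Next I would argue that being an isomorphism of quasi-coherent sheaves on $S$ is a Zariski-local property, so it suffices to produce, around every point $s\in S$, an open neighbourhood $U\subseteq S$ on which the restricted map is an isomorphism. Since each graded piece $\Gr_{F}^{i}(\sE)$ is a locally free $\sO_{S}$-module, we may shrink $U$ to an affine open on which every $\Gr_{F}^{i}(\sE)|_{U}$ is free. By Lemma \ref{l:splitFilteredSymplectic}, the filtered symplectic vector bundle $(\sE,h,F)|_{U}$ is then split filtered symplectic, so the previous lemma applies on $U$ and the restricted map is an isomorphism there.

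Finally, because the canonical map is constructed intrinsically from the filtered symplectic structure (independent of any choice of splitting), the local isomorphisms obtained on an affine cover automatically agree on overlaps and produce the desired global isomorphism. The only potential obstacle is the worry that the isomorphism from the previous lemma depends on a choice of splitting; however, the content of that lemma is that one \emph{particular}, canonically defined map (the associated graded of the filtration on $\shom_s$) is an isomorphism when a splitting exists. Since that map is the same one we wrote down in general, the gluing is automatic and no cocycle data needs to be checked.
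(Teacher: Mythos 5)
Your argument is essentially the paper's own proof: both construct the canonical map $F^{a}\shom^{\Fil}_{s}(\sE,\sE)\to \shom^{\Gr,\deg a}_{s}(\Gr\sE,\Gr\sE)$ with kernel $F^{a-1}$, observe that the resulting question on $\Gr^{a}$ is Zariski-local, and reduce to the split case (via Lemma \ref{l:splitFilteredSymplectic} on an affine open where the graded pieces are free) handled by the preceding lemma. The extra remarks you make about independence of the splitting and automatic gluing are correct and only spell out what the paper leaves implicit.
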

 
\begin{proof}
    As in the lemma, there is a canonical map $$
    F^{a}\shom^{\Fil}_{s}(\sE,\sE)\cong \shom^{\Gr,\deg a}_{s}(\Gr \sE,\Gr \sE). 
    $$
    with kernel $F^{a-1}\shom^{\Fil}_{s}(\sE,\sE)$. It remains to show that it is surjective. This question is local,
    so we may assume that $(\sE,h,F)$ is split. This follows from the previous result. 
\end{proof}

\begin{proposition}
    \label{p:degRank}
    Let $(\sE, h, F ) $ be a filtered symplectic vector bundle on a smooth projective curve $X$. 
      Then 
      \begin{enumerate}
          \item $\deg(\Gr^0_F(\homs(\sE,\sE))) = 0$;
          \item $\deg(F^{-2}(\homs(\sE,\sE))) = \deg(F^{1}(\homs(\sE,\sE)))$;
          \item $\rank(\Gr_{F}^i(\homs(\sE,\sE))) =     \rank(\Gr_{F}^{-i}({F}(\homs(\sE,\sE))))$;
          \item $\rank(F^{-i-1}(\homs(\sE,\sE))) + \rank(F^{i}(\homs(\sE,\sE))) = \rank(\homs(\sE,\sE))$.
      \end{enumerate}
\end{proposition}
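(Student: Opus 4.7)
The plan is to reduce all four statements to one symmetry result about the graded pieces of $\homs(\sE,\sE)$. Concretely, the key claim is the natural isomorphism of vector bundles on $X$,
\[
\Gr_F^{a}(\homs(\sE,\sE)) \;\cong\; \bigl(\Gr_F^{-a}(\homs(\sE,\sE))\bigr)^{\vee},\qquad a \in \ZZ,
\]
from which all four conclusions will follow by counting ranks and degrees.

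To obtain the key isomorphism I would first invoke Corollary \ref{c:gradedReduction} to replace the left-hand side by $\shom^{\Gr,\deg a}_s(\Gr \sE,\Gr \sE)$, recalling that $\Gr \sE$ carries a graded symplectic structure via Lemma \ref{l:sympFiltDual}. Corollaries \ref{c:odd} and \ref{c:even} then expand this as a direct sum of tensor products $\sE_i^{\vee}\otimes \sE_{i+a}$ (indexed over $i>-a/2$), together with an extra self-adjoint middle summand $\shom_{s,h_{-m}}(\sE_{-m},\sE_m)$ when $a=2m$ is even. Dualizing each tensor summand and using the graded-symplectic identification $\sE_k^{\vee}\cong \sE_{-k}$ gives $(\sE_i^{\vee}\otimes \sE_{i+a})^{\vee}\cong \sE_{-i}^{\vee}\otimes \sE_{-i-a}$, and the substitution $j=-i$ matches this with a summand of $\Gr^{-a}$ (namely the one indexed by $j<-a/2$, i.e.\ the ``second form'' in the relevant Corollary). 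In the even case, the parallel identification $\shom_{s,h_{-m}}(\sE_{-m},\sE_m)^{\vee}\cong \shom_{s,h_m}(\sE_m,\sE_{-m})$ follows by dualizing the commutative square of Definition \ref{d:selfHom} and invoking $h_i^{\vee}=-h_{-i}$ from Lemma \ref{l:gradedSymplectic}.

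From the key isomorphism the four parts of the proposition are short deductions. Part (iii) is immediate since dual vector bundles have equal rank. Part (i) follows because $\Gr_F^{0}(\homs(\sE,\sE))$ is self-dual on the curve $X$, hence has degree zero. For part (iv), one writes $\rank F^{i}=\sum_{j\le i}\rank \Gr_F^{j}$ and uses (iii) to rewrite $\rank F^{-i-1}=\sum_{k\ge i+1}\rank \Gr_F^{k}$; the two contributions then add to $\sum_{j}\rank \Gr_F^{j}=\rank \homs(\sE,\sE)$. Part (ii) is the calculation
\[
\deg F^{1}(\homs(\sE,\sE))-\deg F^{-2}(\homs(\sE,\sE)) \;=\; \deg \Gr_F^{-1}+\deg \Gr_F^{0}+\deg \Gr_F^{1} \;=\; 0,
\]
combining $\deg \Gr_F^{0}=0$ from (i) with $\deg \Gr_F^{1}+\deg \Gr_F^{-1}=0$, which is the degree consequence of the key isomorphism.

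The main obstacle will be the bookkeeping in the even case: verifying that the self-adjointness condition dualizes precisely into the condition defining the paired middle summand, and that the ``first form'' of $\Gr^a$ in Corollary \ref{c:even} matches the ``second form'' of $\Gr^{-a}$ after the reindexing $j=-i$. Once the duality conventions for the graded pieces are nailed down, the remainder of the proof is a formal counting exercise.
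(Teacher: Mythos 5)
Your proposal is correct and takes essentially the same route as the paper's proof: reduce to graded pieces via Corollary \ref{c:gradedReduction}, use Corollaries \ref{c:odd} and \ref{c:even} together with the symplectic duality $\Gr_F^i(\sE)\cong(\Gr_F^{-i}(\sE))^{\vee}$ to obtain the rank and degree symmetries, and then conclude by counting. The only cosmetic difference is that you package everything as the single isomorphism $\Gr_F^{a}(\homs(\sE,\sE))\cong\bigl(\Gr_F^{-a}(\homs(\sE,\sE))\bigr)^{\vee}$ and deduce (1) from self-duality of the degree-zero piece, whereas the paper gets (1) from the degree-zero adjoint bundle and treats the $a=\pm1$ duality ad hoc in the proof of (2).
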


\begin{proof}
    For all the assertions we will freely apply \ref{c:gradedReduction}. 

    For the first assertion, we remark that it is well-known that $\shom_{s}^{\deg 0}(\sE_{0},\sE_{0})$ has degree 0 as it is 
    the adjoint bundle of the symplectic vector bundle. The assertion follows from \ref{c:even}.

   It follows that 
    $\deg(F^{0}\homs(\sE,\sE)) = \deg(F^{-1}\homs(\sE,\sE))$. Furthermore, using 
    Corollary \ref{c:odd} and the fact that $\Gr_{F}^i(E) = (\Gr_{F}^{-i}(E))^{\vee}$, it follows that  
    \begin{align*}
           (\Gr^{1}_{F}(\shom(\sE,\sE)))^{\vee} &\cong  \bigoplus_{i>-1/2}\shom(\Gr^{i}\sE, \Gr^{i+1}\sE)^{\vee}\\    
           &\cong \bigoplus_{i>-1/2}\shom(\Gr^{i+1}\sE, \Gr^{i}\sE)^{\vee} \\
           &\cong \bigoplus_{i>-1/2}\shom(\Gr^{-i}\sE, \Gr^{-i-1}\sE)^{\vee} \\ 
           &\cong \bigoplus_{i<-1/2}\shom(\Gr^{i}\sE, \Gr^{i-1}\sE)^{\vee}.                 
    \end{align*}
    
    This implies $\deg(\Gr^{-1}_{F}(\homs(\sE,\sE))) = -\deg(\Gr^{1}_{F}(\homs(\sE,\sE)))$. Thus, 
    \begin{align*}
        \deg(F^{1}(\homsheaf_{s}(\sE,\sE))) - \deg(F_h^{-2}(\homsheaf_{s}(E,E))) =  &\deg(\Gr^{-1}_{F}(\homs(\sE,\sE)))\\ 
                                                                         &+ \deg(\Gr^{0}_{F}\homs(\sE,\sE))\\
                                                                         &+ \deg(\Gr^{1}_{F}\homs(\sE,\sE)) = 0,
    \end{align*} 
    and the second assertion is proved.

    For the third claim we write down the calculation for $i$ odd, the even case is similar. We make use of Corollary \ref{c:odd} and the 
    fact that 
    $$
    \rank\Gr^{i}_{F}(\sE) =\rank\Gr^{-i}_{F}(\sE). 
    $$
    
    Then 
    \begin{eqnarray*}
        \rank\Gr^{-i}_{F}(\sE) &=& \sum_{j<i/2}\rank(\Gr^{j}_{F}(\sE))\rank(\Gr^{j-i}_{F}(\sE)) \\
                &=& \sum_{j<i/2} \rank(\Gr^{-j}_{F}(\sE_{j}))\rank(\Gr^{-j+i}_{F}(\sE)) \\
                &=& \sum_{j>i/2} \rank(\Gr^{j}_{F}(\sE_{j}))\rank(\Gr^{j+i}_{F}(\sE)) \\ 
                &=& \rank\Gr^{i}_{F}(\sE). 
    \end{eqnarray*}

    For the last claim we have 
    \begin{eqnarray*}
        \rank(\sE/F^{-i-1}) &=& \sum_{j>=-i-1} \rank \Gr^{j}_{F}(\sE) \\ 
        &=& \sum_{j>-i-1} \rank(\Gr^{-j}(\sE)) \\
        &=& \sum_{j<i+1}\rank(\Gr^{j}(\sE)) \\ 
        &=& \rank (F^{i}). 
    \end{eqnarray*}

\end{proof}

%\input{symplecticSheaves.tex}

%auto-ignore 
\section{Stacks of nilpotent endomorphisms}

%%%%%%%%%%%%%%%%%%%%%%%%%%%%%%%%%%%%%%%%%%%%%%%%
%%%%%%%%%%%%%%%%%%%%%% Filtrations induced by nilpotent endomorphisms
%%%%%%%%%%%%%%%%%%%%%%%%%%%%%%%%%%%%%%%%%%%%%%%%
 
\subsection{Filtrations induced by nilpotent endomorphisms.}

\label{s:nilpFilt}

If $\theta$ is a nilpotent endomorphism of a vector bundle $\sE$ we define its \emph{order of nilpotency} to be the 
integer $n$ with 
$$
\theta^{n}=0 \quad\text{and}\quad \theta^{n-1}\ne 0. 
$$

Let $(\sE,h)$ be a symplectic vector bundle over a scheme $S$ with 
 \[ \theta : \sE \rightarrow \sE \]
a self adjoint nilpotent endomorphism of order of nilpotency $n>0$. Then we can equip $(\sE,h)$ with a filtration
 \[    0 = C^{-n}(\sE) \subset C^{1-n}(\sE)  \subset \hdots \subset C^{n-1}(\sE) =   \sE     \]
where 
\[     C^l(\sE) = \underset{\substack{i-j = l \\ 0\le i\le n\\ 1\le j\le n+1}}{\sum} \ker(\theta^i) \cap \image(\theta^{j-1})    \] 
The filtration seems somewhat arbitrary, however it has an important characterization, see \ref{l: charactisation of the filtraion} below.

\begin{Lemma}
\label{Lemma isomorphism from the kth graded piece to -kth graded piece of the canonical filtration}
    With the definitions as above, the map
    \[ \theta^l : C^l(\sE) \rightarrow  C^{-l}(\sE)              \]
    is surjective for $0\le k \le n-1$. Moroever the induced map
    \[  \theta^l : \Gr_{C}^l(\sE) \rightarrow \Gr_{C}^{-l}(\sE)                        \]
    is an isomorphism for $0\le l \le n-1$. 
\end{Lemma}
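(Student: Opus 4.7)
The plan is to handle the two claims in three stages: (i) show that $\theta^l$ is a well-defined surjection from $C^l(\sE)$ onto $C^{-l}(\sE)$; (ii) show that $\theta^l$ lowers the filtration index by $2l$, so that it induces a morphism on the associated graded sheaves; and (iii) prove that this induced morphism is an isomorphism.

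For stage (i), a direct calculation on generators does the job. On a generator $\ker(\theta^i) \cap \image(\theta^{j-1})$ of $C^l(\sE)$ (so $i - j = l$), applying $\theta^l$ lands inside $\ker(\theta^{i-l}) \cap \image(\theta^{j+l-1}) = \ker(\theta^j) \cap \image(\theta^{i-1})$, which is a generator of $C^{-l}(\sE)$ since $j - i = -l$. For surjectivity, I would take a local section $y = \theta^{j'-1}(x)$ of a generating piece $\ker(\theta^{i'}) \cap \image(\theta^{j'-1})$ of $C^{-l}(\sE)$ with $j' = i' + l$, and set $z := \theta^{i'-1}(x)$. A direct check gives $\theta^l(z) = \theta^{i' + l - 1}(x) = y$, while $z \in \image(\theta^{i'-1})$ and $\theta^{i' + l}(z) = \theta^{i' - 1}(\theta^{i'}(y)) = 0$ show that $z$ sits in the generator $\ker(\theta^{i' + l}) \cap \image(\theta^{i' - 1})$ of $C^l(\sE)$.

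The same generator-by-generator calculation shows $\theta^l(C^{l-1}(\sE)) \subseteq C^{-l-1}(\sE)$: starting from a generator with $i - j = l - 1$, the indices become $i' = j - 1$, $j' = i + 1$, giving $i' - j' = -l - 1$. Hence $\theta^l$ descends to a morphism $\Gr_C^l(\sE) \to \Gr_C^{-l}(\sE)$, surjective by stage (i).

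The main obstacle is stage (iii), the injectivity of the induced map. For this I would argue locally, reducing to a fibrewise computation: after passing to a stalk and base changing to the algebraic closure of the residue field, $\theta$ admits a Jordan decomposition, writing the fibre of $\sE$ as a direct sum of cyclic modules $k[t]/(t^m)$ on which $\theta$ acts as multiplication by $t$. On each such summand the generators of $C^l$ simplify to $t^{\max(j-1,\,m-i)} k[t]/(t^m)$; from this one reads off that $\Gr_C^l$ is one-dimensional precisely when $|l| \leq m - 1$ and $l \equiv m - 1 \pmod 2$, and that in these cases $\theta^l$ between the corresponding pieces is literally multiplication by $t^l$, which is an isomorphism. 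Summing over Jordan blocks yields the fibrewise equality $\rank \Gr_C^l(\sE) = \rank \Gr_C^{-l}(\sE)$, and a surjection of locally free sheaves of equal rank is forced to be an isomorphism. I expect the referenced characterization of $C^\bullet$ in \ref{l: charactisation of the filtraion} to streamline this last step, perhaps by identifying $C^\bullet$ with the weight filtration of an associated $\mathfrak{sl}_2$-triple so that the claim reduces to standard $\mathfrak{sl}_2$-representation theory.
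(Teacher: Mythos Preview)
Your stages (i) and (ii) are correct and essentially match the paper's argument, modulo the minor slip $\theta^{i'+l}(z) = \theta^{i'-1}(\theta^{i'}(y))$; the intended computation is $\theta^{i'+l}(z) = \theta^{i'}(\theta^l(z)) = \theta^{i'}(y) = 0$.

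Stage (iii), however, has a genuine gap. The lemma is stated for a symplectic vector bundle over an arbitrary scheme $S$, and in that generality the sheaves $\Gr_C^l(\sE)$ need not be locally free: the filtration $C^\bullet$ is built from kernels, images, intersections, and sums, none of which preserve local freeness. (Indeed, the very next lemma in the paper, Lemma~\ref{Lemma k-filtration is perp of -k-1 filtration in Canonical Filtration}, imposes flatness of $\Gr_C^i(\sE)$ as an explicit hypothesis precisely because it is not automatic.) Consequently your appeal to ``a surjection of locally free sheaves of equal rank is an isomorphism'' is not available. There is a related problem: the formation of $C^l$ does not commute with passage to a residue field (a non-flat base change), so your fibrewise Jordan computation calculates $\Gr_C^l$ of the fibre of $\sE$, not the fibre of $\Gr_C^l(\sE)$; these can differ.

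The paper avoids all of this with a short direct argument that works over any base. To prove injectivity one shows: if $x \in C^l(\sE)$ with $\theta^l(x) \in C^{-l-1}(\sE)$, then $x \in C^{l-1}(\sE)$. Write $\theta^l(x) = \sum_i a_i$ with $a_i \in \ker(\theta^i) \cap \image(\theta^{i+l})$. Exactly as in your own surjectivity step, each $a_i$ lifts to some $b_i \in \ker(\theta^{i+l}) \cap \image(\theta^i) \subset C^{l-1}(\sE)$ with $\theta^l(b_i) = a_i$. Then $x - \sum_i b_i \in \ker(\theta^l) \subset C^{l-1}(\sE)$, and hence $x \in C^{l-1}(\sE)$. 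No Jordan form, no flatness hypotheses.

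Finally, your suggestion to invoke Lemma~\ref{l: charactisation of the filtraion} would be circular: that lemma is proved \emph{after} this one and relies on it.
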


\begin{proof}
    If $x \in \kerimage{i}{i-l-1}$ then 
    $$
    \theta^{l}(x)\in \kerimage{i-l}{i-1}
    $$
     and hence $\theta^l$
    maps $C^l(E)$ to $C^{-l}(E)$. To show surjectivity, it is enough to show that an element $y\in \kerimage{i}{i+l-1}$ 
    has a preimage $x$. The map 
    \[ \theta^l : \image(\theta^{i-1}) \rightarrow    \image(\theta^{i+l-1})                 \] 
    is surjective so we can certainly find $x\in  \image(\theta^{i-1}) $ with $\theta^{l}(x)=y$.
    Since $y\in \ker(\theta^i)$ we have $x\in \ker(\theta^{i+l})$ thus
    finishing the proof of the first part.  
    Since 
    \[ \theta^l : C^l(\sE) \rightarrow  C^{-l}(\sE)              \]
    is surjective, it's composition with the quotient map $C^l(E)$ $\twoheadrightarrow$ $\frac{C^{-l}(E)}{C^{-l-1}(E)}$
    is surjective as well. 
    
    It remains to show that if $x\in C^l(\sE)$ then $\theta^l(x)\in C^{-l-1}(\sE)$
    if and only if 
    $x\in C^{l-1}(\sE)$. 
    The forward implication has already been proved. 
    
    So suppose that $\theta^{l}(x\in C^{-l-1}(\sE))$. 
    Let $\theta^l(x)$ = $\sum_{i} a_i$ where $a_i$ lies in $\kerimage{i}{i+l}$. 
    By the first part, there
    exists $b_i$ such that $\theta^l(b_i)$ = $a_i$. By a similar argument as above, such a $b_i$ has to lie in $\kerimage{i+l}{i} \subset C^{l-1}(\sE)$. 
    Also $x$ - $\sum_i b_i$ lies in $\ker(\theta^l) \subset C^{l-1}(\sE)$. Hence, $x$ has to lie in 
    $C^{l-1}(\sE)$.
\end{proof}

\begin{Corollary}
\label{Corollary map from graded Ck to graded C(k-2) is injective for k greater than equal to 1}
With definitions as above, the induced map 
\[                 \theta : \Gr_{C}^l(\sE) \rightarrow \Gr_{C}^{l-2}(\sE)          \]
is injective for $n-1\ge l \ge 1$.
\end{Corollary}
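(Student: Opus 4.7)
The plan is to factor $\theta^l=\theta^{l-1}\circ\theta$ and exploit the isomorphism $\theta^l:\Gr_C^l(\sE)\stackrel{\sim}{\to}\Gr_C^{-l}(\sE)$ established in Lemma \ref{Lemma isomorphism from the kth graded piece to -kth graded piece of the canonical filtration}. The essential auxiliary input, which I would prove first, is a ``shift by $-2$'' statement: $\theta^m(C^a(\sE))\subseteq C^{a-2m}(\sE)$ for every $m\ge 0$ and every $a$.

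To verify the shift, take a section $y$ of one of the generating subsheaves $\ker(\theta^i)\cap\image(\theta^{j-1})$ with $i-j=a$. Writing $y=\theta^{j-1}(z)$ gives $\theta^m(y)\in\image(\theta^{j+m-1})$, and the identity $\theta^{i-m}(\theta^m(y))=\theta^i(y)=0$ places $\theta^m(y)$ in $\ker(\theta^{i-m})$. The new admissible index is $(i-m)-(j+m)=a-2m$. The edge cases $i<m$ and $j+m-1\ge n$ both force $\theta^m(y)=0$, so in every case $\theta^m(y)$ lies in $C^{a-2m}(\sE)$. This also makes sense of the induced map $\theta:\Gr_C^l(\sE)\to\Gr_C^{l-2}(\sE)$ whose injectivity is the content of the corollary.

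For $l=1$ the induced map is literally $\theta^1$, so injectivity is already contained in the previous lemma. For $l\ge 2$, suppose $x\in C^l(\sE)$ satisfies $\theta(x)\in C^{l-3}(\sE)$; we must show $x\in C^{l-1}(\sE)$. Applying the shift property with $m=l-1$ and $a=l-3$ gives
\[
\theta^l(x)=\theta^{l-1}(\theta(x))\in C^{(l-3)-2(l-1)}(\sE)=C^{-l-1}(\sE),
\]
so the class of $\theta^l(x)$ in $\Gr_C^{-l}(\sE)$ vanishes. Because $\theta^l$ induces an isomorphism $\Gr_C^l(\sE)\stackrel{\sim}{\to}\Gr_C^{-l}(\sE)$, the class of $x$ in $\Gr_C^l(\sE)$ vanishes as well, that is, $x\in C^{l-1}(\sE)$. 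The main obstacle is purely bookkeeping: one must track the admissible ranges $0\le i\le n$ and $1\le j\le n+1$ carefully in the sum defining $C^\bullet$ to ensure that contributions ``pushed out'' of that range really vanish, so that $\theta^m$ genuinely lands in $C^{a-2m}$ rather than in a larger piece. Once that is in place, the corollary reduces to the one-line computation above combined with the isomorphism of the preceding lemma.
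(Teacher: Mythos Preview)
Your proof is correct and follows the same idea as the paper's one-line argument (``This follows from the injectivity of $\theta^l$''): the isomorphism $\theta^l:\Gr_C^l(\sE)\to\Gr_C^{-l}(\sE)$ factors through $\theta:\Gr_C^l(\sE)\to\Gr_C^{l-2}(\sE)$, forcing the latter to be injective. You simply make explicit the shift property $\theta^m(C^a)\subseteq C^{a-2m}$ needed for the factorization on graded pieces to be well-defined, which the paper leaves implicit.
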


\begin{proof}
This follows from the injectivity of $\theta^{l}$. 
\end{proof}

\begin{Lemma}
\label{Lemma k-filtration is perp of -k-1 filtration in Canonical Filtration}
    Let $S$ be a noetherian scheme. 
    Let $(\sE,h)$ be symplectic vector bundle on $S$ and let $\theta$ be a nilpotent 
    endomorphism of $\sE$ of symplectic type with order of nilpotency $n$. Suppose further 
     that $C^i(\sE)/C^{i-1}(\sE)$ is flat over $S$ for all $-n\le i\le n-1$. Then
        \[     C^{l}(\sE)^{\perp} = C^{-l-1}(\sE)                  \]
        for  $-n\le l\le n-1$.
\end{Lemma}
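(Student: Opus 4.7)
The plan is to establish the equality $C^l(\sE)^\perp = C^{-l-1}(\sE)$ in three stages: first prove the containment $C^{-l-1}(\sE) \subseteq C^l(\sE)^\perp$ by a direct algebraic manipulation using the self-adjointness of $\theta$, next verify that the two subsheaves have the same rank, and finally invoke the principle that a surjection of locally free sheaves of equal rank is automatically an isomorphism. The flatness hypothesis enters to guarantee that the relevant subsheaves are locally direct summands of $\sE$, so that their quotients are locally free and rank comparisons are meaningful.

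The starting point is the reformulation
\[ K_{i,j} = \ker\theta^i \cap \image\theta^{j-1} = \theta^{j-1}\bigl(\ker\theta^{i+j-1}\bigr), \]
valid at the sheaf level since $\theta^{j-1}(y)\in\ker\theta^i$ if and only if $y\in\ker\theta^{i+j-1}$. Setting $p=j-1$ recasts the definition as $C^l(\sE) = \sum_{p} \theta^p(\ker\theta^{2p+l+1})$, and analogously $C^{-l-1}(\sE) = \sum_{q}\theta^q(\ker\theta^{2q-l})$. For the containment it suffices to verify stalk-locally that $Q(x,y)=0$ when $x=\theta^q(x_0)$ with $x_0\in\ker\theta^{2q-l}$ and $y=\theta^p(y_0)$ with $y_0\in\ker\theta^{2p+l+1}$. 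Iterating the self-adjointness relation $Q(\theta v,w)=-Q(v,\theta w)$ yields $Q(\theta^a v,w)=(-1)^a Q(v,\theta^a w)$, so
\[ Q(x,y) = (-1)^p Q\bigl(\theta^{p+q}(x_0),y_0\bigr) = (-1)^q Q\bigl(x_0,\theta^{p+q}(y_0)\bigr). \]
The key combinatorial observation is that the two conditions ``$p-q\ge -l$'' and ``$p-q\le -l-1$'' exhaust all integer values of $p-q$; the first forces $p+q\ge 2q-l$ so $\theta^{p+q}(x_0)=0$, and the second forces $p+q\ge 2p+l+1$ so $\theta^{p+q}(y_0)=0$. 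Either way $Q(x,y)=0$, proving the desired containment.

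For the rank count, the hypothesis that $C^i(\sE)/C^{i-1}(\sE)$ is flat gives by induction on the filtration that each $C^i(\sE)$ is locally a direct summand of $\sE$; in particular $\sE/C^{-l-1}(\sE)$ and $\sE/C^l(\sE)^\perp\cong C^l(\sE)^\vee$ are both locally free. The isomorphisms $\theta^m:\Gr_C^m(\sE)\xrightarrow{\sim}\Gr_C^{-m}(\sE)$ from Lemma~\ref{Lemma isomorphism from the kth graded piece to -kth graded piece of the canonical filtration} give $\rank\Gr_C^m(\sE)=\rank\Gr_C^{-m}(\sE)$, whence $\rank C^{-l-1}(\sE)+\rank C^l(\sE)=\rank\sE$. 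Thus the containment from the first step induces a surjection $\sE/C^{-l-1}(\sE)\twoheadrightarrow \sE/C^l(\sE)^\perp$ between locally free sheaves of equal rank, which is automatically an isomorphism, yielding $C^{-l-1}(\sE)=C^l(\sE)^\perp$. The main delicacy is the sheaf-theoretic aspect of the first step: a section of $\theta^q(\ker\theta^{2q-l})$ need not globally be the image of a section of $\ker\theta^{2q-l}$ under $\theta^q$, but it is locally of that form, and since the vanishing of $Q(x,y)$ can be checked stalkwise, bilinearity reduces the problem to the model case verified above.
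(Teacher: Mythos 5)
Your proof is correct and follows essentially the same route as the paper: the containment $C^{-l-1}(\sE)\subseteq C^{l}(\sE)^{\perp}$ via the self-adjointness relation $Q(\theta v,w)=-Q(v,\theta w)$ together with an index dichotomy, the rank identity $\rank C^{-l-1}+\rank C^{l}=\rank\sE$ from the graded-piece isomorphisms of Lemma \ref{Lemma isomorphism from the kth graded piece to -kth graded piece of the canonical filtration}, and the flatness hypothesis to upgrade the containment of equal-rank locally split subsheaves to an equality. Your rewriting $\ker\theta^{i}\cap\image\theta^{j-1}=\theta^{j-1}(\ker\theta^{i+j-1})$ and the dichotomy on $p-q$ are only a tidier packaging of the paper's case analysis on whether $y$ lies in $\image\theta^{i}$.
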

\begin{proof}
We first show that $C^{-1-l}(\sE) \subset C^l(\sE)^{\perp}$. It is enough to show that for  
$x\in \kerimage{i}{i-l-1}$ and $y\in \kerimage{j}{j+l}$ that $h(x)(y)=0$. 

If $y\in  \image(\theta^i) $ then $y=\theta^{i}(y')$. In view of the self adjointness of $\theta$ we have 
$$
h(x)(\theta^{i}(y')) = -h(\theta^{i}(x))(y)=0, 
$$
as $x \in \ker(\theta^{i})$. 

So we may assume that $y\not\in \image(\theta^{i})$. It follows that 
$j+l < i$ and hence 
$j < i-l$. Thus $y\in \ker(\theta^{i-l-1})$ and the result follows from the fact that $x\in \image(\theta^{i-l-1})$.

In view of the noetherian hypothesis on $S$ and the flatness assumptions, all the quotients involving the various $C^{l}(E)$
have a well-defined rank as they are locally free. 
Since
\[\rank(C^l(\sE))   = \sum_{i \leq l} \rank(\Gr_{C}^i(\sE)).          
                  \]

Using Lemma \ref{Lemma isomorphism from the kth graded piece to -kth graded piece of the canonical filtration}
\begin{align*}
    \rank\left(\frac{E}{C^{-1-l}(\sE)}\right) &= \sum_{i > -1-l} \rank(\Gr_{C}^i(\sE))\\
                                &= \sum_{i > -1-l} \rank(\Gr_{C}^{-i}(\sE))\\
                                &= \sum_{i < l+1}   \rank(\Gr_{C}^i(\sE))\\
                                &= \rank(C^l(\sE)).
\end{align*}                                                      
Hence, we have $\rank(C^{-1-l}(\sE)) = \rank(\sE)$ - $\rank(C^l(\sE)) = \rank(C^l(\sE)^{\perp})$. The hypothesis implies that both $\sE/C^{-1-l}(\sE)$ and 
$C^l(\sE)$ are flat. The exact sequence 
\[       0 \rightarrow C^l(\sE)^{\perp} \rightarrow \sE \rightarrow C^l(\sE)^{\vee} \rightarrow 0         \]
implies that $\sE/C^l(\sE)^{\perp}$ is flat as well. Hence, $C^l(\sE)^{\perp}/C^{-1-l }(\sE)$ is flat of $\rank$ 0 and hence 0 itself.
\end{proof}

\begin{Lemma}
    \label{l: charactisation of the filtraion}
        Let $(\sE,h)$ be a symplectic vector bundle over a noetherian scheme $S$ equipped with a nilpotent self adjoint endomorphism $\theta$ and order of nilpotency $n$. 
        Suppose further that
         $\Gr_{C}^i(\sE)$ is flat over $S$ for all $i$. Let $F$ be a filtration on $\sE$ such that 
        \[    0 = F^{-n}(E) \subset F^{1-n}(E)  \subset \hdots \subset F^{n-1}(E) =   E     \]
        and $(\sE,F,h)$ is a filtered symplectic vector bundle. 
        Assume further that 

        \begin{enumerate}
            \item $\theta(F^i(\sE))$ $\subset$ $F^{i-2}(\sE)$
           \item and the induced map 
            \[ \theta^l : \Gr_{F}^l(\sE) \rightarrow \Gr_{F}^{-l}(\sE) \] 
           is injective for all $l$.
          \end{enumerate}
    Then 
        \[     C^{l}(\sE) = F^{l}(\sE)                  \]
        for all $l$.
    \end{Lemma}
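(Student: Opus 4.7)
The plan is to first establish the inclusion $C^l(\sE) \subseteq F^l(\sE)$ stalkwise using the injectivity hypothesis, and then to upgrade to equality by a rank count that exploits the symplectic self-duality satisfied by both filtrations. No induction on $l$ is needed.

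For the inclusion it suffices to show that each summand $\ker(\theta^i) \cap \image(\theta^{j-1})$ lies in $F^{i-j}(\sE)$; the case $i = 0$ is vacuous, so assume $i, j \ge 1$. Working at a stalk, let $x$ be a section of this summand and write $x = \theta^{j-1}(y)$, choosing $y$ so that the integer $k$ with $y \in F^k(\sE) \setminus F^{k-1}(\sE)$ is minimal; thus $\bar y \ne 0$ in $\Gr_F^k(\sE)$. The first hypothesis $\theta(F^i(\sE)) \subseteq F^{i-2}(\sE)$ gives $x \in F^{k-2(j-1)}(\sE)$, so it suffices to show $k \le i + j - 2$. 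Suppose instead that $k \ge i + j - 1$; since $i, j \ge 1$ this forces $k \ge 1$, placing us in the range of the injectivity hypothesis. The identity $\theta^{i+j-1}(y) = \theta^i(x) = 0$ gives $\bar\theta^{i+j-1}(\bar y) = 0$ in $\Gr_F^{k-2(i+j-1)}(\sE)$, and the factorization $\bar\theta^k = \bar\theta^{k-(i+j-1)} \circ \bar\theta^{i+j-1}$ forces $\bar\theta^k(\bar y) = 0$. By hypothesis the map $\bar\theta^k : \Gr_F^k(\sE) \to \Gr_F^{-k}(\sE)$ is a monomorphism of coherent sheaves, hence injective on local sections, so $\bar y = 0$, a contradiction.

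For the reverse inclusion I argue by rank. The filtered symplectic condition forces $F^l(\sE)^\perp = F^{-l-1}(\sE)$ and hence $\rank F^l + \rank F^{-l-1} = \rank \sE$; Lemma \ref{Lemma k-filtration is perp of -k-1 filtration in Canonical Filtration} (applicable by the flatness hypothesis on $\Gr_C$) supplies the analogous identity for $C$. Applying the first step to both $l$ and $-l-1$ gives $\rank C^l \le \rank F^l$ and $\rank C^{-l-1} \le \rank F^{-l-1}$, and summing forces equality in both. Since $\sE/C^l$ and $\sE/F^l$ are locally free, the quotient $F^l/C^l$ is a locally free sheaf of rank zero, hence vanishes, yielding $C^l = F^l$.

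The main obstacle is the index-bookkeeping in the first step, in particular the verification that the injectivity hypothesis is applicable precisely when needed (namely for $k \ge 1$, which is forced by $i, j \ge 1$); once this is handled, the remaining rank comparison is essentially automatic. It is worth noting that the argument only ever invokes the injectivity hypothesis on the strictly positive graded pieces, which is consistent with the natural symmetry coming from self-duality.
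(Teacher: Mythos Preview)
Your proof is correct and takes a genuinely different route from the paper. The paper proceeds by descending induction on $l \ge 0$: assuming $C^m = F^m$ for all $m \ge i$ (and hence $C^{-m-1} = F^{-m-1}$ via the perp relations), it establishes both inclusions $F^{i-1} \subseteq C^{i-1}$ and $C^{i-1} \subseteq F^{i-1}$ directly, the former using the surjectivity of $\theta^{i+1}$ onto $C^{-i-1}$ from Lemma~\ref{Lemma isomorphism from the kth graded piece to -kth graded piece of the canonical filtration} and the latter using the injectivity of $\bar\theta^i$ on $\Gr_F^i$. You instead prove $C^l \subseteq F^l$ for all $l$ in one stroke by the element-level contradiction argument, and then upgrade to equality by a rank count. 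Your route is cleaner in that it avoids the inductive bookkeeping and treats all indices uniformly; the paper's argument has the minor advantage of exhibiting both inclusions constructively at each step without a separate rank comparison. Both approaches ultimately rest on Lemma~\ref{Lemma k-filtration is perp of -k-1 filtration in Canonical Filtration} for the symplectic self-duality of the $C$-filtration. One incidental remark: the minimality of $k$ in your choice of $y$ is never actually used---the contradiction $\bar y = 0$ follows for any preimage with $k \ge i+j-1$---but this does no harm.
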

    
    \begin{proof}
    It is enough to prove that
        \[      C^l(\sE) = F^l(\sE)                 \]
    for all $n-1\ge l \geq$ $0$ since $C^l(\sE)^{\perp}$ = $C^{-1-l}(\sE)$ by Lemma \ref{Lemma k-filtration is perp of -k-1 filtration in Canonical Filtration} and 
    $F^l(\sE)^{\perp}$ = $F^{-1-l}(\sE)$ by definition. We proceed by descending induction on  $l$. 
    
    For $l=n-1$, we have $C^{l}(\sE) =E =F^l(\sE)$.
    So assume that  $C^{l}(\sE) = F^{l}(\sE)$ for $l \geq i > 0$. 
    We will use the fact $F^{-i-1}(\sE) =F^i(\sE)^{\perp} =C^i(\sE)^{\perp} = C^{-i-1}(\sE)$ repeatedly.

    By definition and Lemma \ref{Lemma isomorphism from the kth graded piece to -kth graded piece of the canonical filtration},
    $\theta^{i}(F^{i-1}(\sE)) \subset F^{-i-1}(\sE) = C^{-i-1}(\sE) = \theta^{i+1}(C^{i+1}(\sE))$.
    Consider $x \in F^{i-1}(\sE)$ then $\theta^{i}(x) = \theta^{i+1}(y)$ for $y \in C^{i+1}(\sE)$. This implies 
    $x \in \ker(\theta^i) + \theta(C^{i+1}(\sE)) \subset C^{i-1}(\sE)$. It follows that $F^{i-1}(\sE)\subseteq C^{i-1}(\sE)$. 
    
    On the other hand $\theta^i(C^{i-1}(\sE)) \subset C^{-i-1}(\sE) = F^{-i-1}(\sE)$. Hence the map 
    \[           \theta^i : F^i(E) \rightarrow   \frac{F^{-i}(E)}{F^{-i-1}(E)}                    \]
    vanishes on $C^{i-1}(E)$. By the injectivity of $\theta^i$ from $\Gr^{i}_{F}(\sE) \to \Gr^{-i}_{F}(\sE)$, we have $C^{i-1}(\sE) \subset F^{i-1}(\sE)$.
    Thus, $C^{i-1}(\sE) = F^{i-1}(\sE)$. This finishes the induction step.
    \end{proof}

%%%%%%%%%%%%%%%%%%%%%%%%%%%%%%%%%%%%%%%%%%%%%%%%
%%%%%%%%%%%%%%%%%%%%%% Stasks of nilpotent endomorphisms
%%%%%%%%%%%%%%%%%%%%%%%%%%%%%%%%%%%%%%%%%%%%%%%%
 
\subsection{Stacks of nilpotent endomorphisms.}

\label{s:nilpotent}

Let $k$ be a field. Let $X$ be a smooth geometrically connected projective curve over $k$ of genus $g$. 
We introduce several stacks here and prove that they are algebraic. 

  If $\fX$ is a category fibered in groupoids over $\Sch/k$ we will denote by $\fX(S)$ the groupoid of objects over a scheme $S$. 
  We will describe below some categories fibered in groupoids over $\Sch/k$ by describing $\fX(S)$ the entire category fibered in groupoids 
  can be recovered in the following way:

  \begin{enumerate}
    \item in all our constructions there will be evident pullback functors $f^{*}:\fX(S)\to \fX(T)$ for a morphism of schemes $f:T\to S$;
    \item a morphism in $\fX$ over such an $f$ is an isomorphism $f^{*}(x)\stackrel{\sim }{\to}y$ where $x\in \fX(S)$ and $y\in \fX(T)$.
  \end{enumerate}

All stacks below will be stacks in the fppf topology on $\Sch/k$. 

We will make use extensive use of the following theorem from \cite{EGA3}. 

\begin{theorem}
    Let $S$ be a locally noetherian scheme, and $f:Y\to S$ a proper morphism. Let $\sF$ be coherent sheaf on $Y$ that is 
    flat over $S$. Then there is a coherent sheaf $\sI$ on $S$ that induces natural isomorphisms
    $$
    f_{*}(F\otimes_{\sO_S} \sM) \stackrel{\sim }{\longrightarrow} \sheafHom_{\sO_S}(\sI,\sM)
    $$
    and 
    $$
    \Gamma(X, F\otimes_{\sO_S} \sM) \stackrel{\sim }{\longrightarrow} \Hom_{\sO_S}(\sI, \sM ), 
    $$
    where $M$ is a quasi-coherent sheaf on $S$.
    In the first expression (resp. second expression), both sides are viewed as functors 
    $$
    QCoh(S)\to QCoh(S) \quad (resp.\ QCoh(S)\to Ab). 
    $$
    Furthermore, the formation of $\sI $ is compatible with base change. 
\end{theorem}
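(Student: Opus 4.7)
The plan is to reduce to the affine case, produce a bounded complex of finitely generated projectives that universally computes cohomology in the coefficient module, and then dualize to obtain $\sI$. Working locally, I would write $S = \Spec A$ with $A$ noetherian, denote by $N$ the $A$-module corresponding to $\sM$, and look for a finitely generated $A$-module $I$ satisfying $\Hom_A(I, N) \cong \Gamma(Y, F \otimes_A N)$ naturally in $N$; the sheaf $\sI$ is then $\widetilde{I}$ locally.

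The central technical input I would use is the existence, after shrinking $S$ if necessary, of a bounded complex $L^\bullet = [L^0 \xrightarrow{d} L^1 \to \cdots \to L^m]$ of finitely generated projective $A$-modules together with a natural isomorphism $H^i(Y, F \otimes_A N) \cong H^i(L^\bullet \otimes_A N)$ for every $A$-module $N$, compatible with further base change. I would produce $L^\bullet$ starting from the Čech complex associated to a finite affine open cover of $Y$: properness guarantees a finite cover, the Čech complex computes cohomology, and flatness of $F$ over $A$ means that tensoring with $N$ still computes cohomology of $F \otimes_A N$. The Čech terms themselves are not finitely generated, so a second step is required: replace the Čech complex, up to quasi-isomorphism preserved after tensoring with any $N$, by a bounded complex of finitely generated projectives. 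This replacement, which uses the noetherian hypothesis and an inductive lifting argument on cocycles, is the main obstacle and the only genuinely technical step in the argument.

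Granted $L^\bullet$, I define $I := \coker\bigl((L^1)^\vee \xrightarrow{d^\vee} (L^0)^\vee\bigr)$, a finitely generated and therefore coherent $A$-module. Since $L^0$ and $L^1$ are finitely generated projective, the evaluation map $L^i \otimes_A N \to \Hom_A((L^i)^\vee, N)$ is an isomorphism for every $N$, so applying $\Hom_A(-, N)$ to the presentation of $I$ yields
$$\Hom_A(I, N) \cong \ker(L^0 \otimes_A N \to L^1 \otimes_A N) \cong H^0(L^\bullet \otimes_A N) \cong \Gamma(Y, F \otimes_A N),$$
natural in $N$. Sheafifying gives the asserted $f_*(F \otimes_{\sO_S} \sM) \cong \sheafHom_{\sO_S}(\sI, \sM)$.

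For gluing and base change, the construction of $\sI$ only uses $L^0$, $L^1$ and the differential $d$, all of which are preserved by base change $A \to B$ because the $L^i$ are projective; compatibility of $\sI$ with arbitrary base change is therefore built in. On overlaps of affine charts the local $I$'s represent the same functor $\sM \mapsto f_*(F \otimes \sM)$, so by Yoneda they are canonically isomorphic and glue to a global coherent sheaf $\sI$ on $S$. Everything after the production of $L^\bullet$ is formal Hom--tensor manipulation.
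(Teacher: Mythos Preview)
The paper does not give a proof of this theorem at all; it simply cites \cite[7.7.6, 7.7.9]{EGA3}. Your proposal is a correct sketch of exactly the argument that EGA (and, in slightly different packaging, Mumford's \emph{Abelian Varieties} or Hartshorne III.12) carries out: build the Grothendieck complex $L^\bullet$ of finitely generated projectives computing cohomology universally in the coefficient module, and take $\sI$ to be the cokernel of the dual of $d^0$. So there is nothing to compare---you have supplied the content behind the citation.
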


\begin{proof}
    This is \cite[7.7.6, 7.79]{EGA3}. 
\end{proof}

\begin{Corollary}
    Let $S$ be a locally noetherian algebraic space and $Y\to S$ a proper morphism of algebraic spaces that is representable by schemes. 
    If $\sF$ is a coherent sheaf on $Y$ that is flat over $S$ then there is a coherent sheaf $\sI$ on $S$ with analogous properties. 
\end{Corollary}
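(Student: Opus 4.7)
The plan is to reduce the algebraic space case to the scheme case by étale descent, using the base change compatibility of $\sI$ that is part of the conclusion of the theorem.

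First, I would choose an étale surjection $\pi : U \to S$ with $U$ a locally noetherian scheme. Since $Y \to S$ is representable by schemes, the base changes $Y_U := Y \times_S U$ and $Y_{U \times_S U} := Y \times_S (U \times_S U)$ are schemes, and the induced morphisms $Y_U \to U$ and $Y_{U \times_S U} \to U \times_S U$ are proper; the pullbacks $\sF_U$ and $\sF_{U \times_S U}$ of $\sF$ remain coherent and flat over the new base. Applying the theorem of the excerpt on the schemes $U$ and $U \times_S U$ produces coherent sheaves $\sI_U$ on $U$ and $\sI_{U \times_S U}$ on $U \times_S U$, each representing the corresponding $\sheafHom$-functor. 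The base change clause of the theorem then furnishes canonical isomorphisms $\mathrm{pr}_1^* \sI_U \cong \sI_{U \times_S U} \cong \mathrm{pr}_2^* \sI_U$, with $\mathrm{pr}_i : U \times_S U \to U$ the two projections.

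Next, I would verify that these isomorphisms satisfy the cocycle condition on $U \times_S U \times_S U$. This is a formal consequence of the fact that the isomorphism supplied by the theorem is canonical: pulling back the universal isomorphism $f_*(\sF \otimes -) \cong \sheafHom(\sI,-)$ along each of the three projections to $U$ and comparing with the representing sheaf on the triple fibre product forces the compatibility. Hence $(\sI_U, \mathrm{pr}_1^* \sI_U \xrightarrow{\sim} \mathrm{pr}_2^* \sI_U)$ is an étale descent datum for coherent sheaves on $S$. Effectivity of étale (indeed fpqc) descent for coherent sheaves on algebraic spaces yields a coherent sheaf $\sI$ on $S$ together with an isomorphism $\pi^*\sI \cong \sI_U$ that respects the descent data.

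It then remains to check that $\sI$ has the analogous universal property. Both sides of the desired isomorphism $f_*(\sF \otimes_{\sO_S} \sM) \cong \sheafHom_{\sO_S}(\sI, \sM)$ are sheaves on the small étale site of $S$ and their formation commutes with étale base change (for the left side by flat base change for proper pushforward; for the right side by compatibility of $\sheafHom$ with flat pullback applied to coherent sheaves over a locally noetherian base). Since the two sides agree after pullback to $U$ by construction, and the agreement is compatible with the descent data, the isomorphism descends to $S$; the global-sections version then follows by taking $\Gamma(S,-)$. The base change clause for $\sI$ over arbitrary maps of algebraic spaces $T \to S$ is obtained by composing with an étale cover of $T$ and invoking the corresponding clause from the theorem on schemes.

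The only nontrivial point is the verification of the cocycle condition; this is the step I would expect to occupy the most care, although once one unwinds the naturality of the representing object it is essentially automatic. All other ingredients—étale descent for coherent sheaves on algebraic spaces, flat base change for proper pushforward, and the statement of the theorem itself—are available off the shelf.
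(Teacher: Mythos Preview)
Your proposal is correct and follows essentially the same approach as the paper: choose an \'etale cover by a scheme, apply the scheme-theoretic theorem to obtain $\sI$ locally, use the base-change compatibility to produce descent data, and descend. The paper's proof is terser and leaves the cocycle verification and the check of the universal property implicit, but the strategy is identical.
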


\begin{proof}
    There is a locally noetherian scheme $S'$ and etale morphism $S'\to S$. If $\sF'$ is the pullback of $\sF$ to the scheme $Y'=Y\times_{S}S'$ the 
    $\sF'$ come equipped with descent data to $S$. From the theorem there is a sheaf $\sI'$ on $S'$ with the required properties. The descent data 
    for $\sF'$ induces descent data on $\sI'$ and hence there is a sheaf $\sI$ on $S$. One uses descent again to check that $\sI$ has the required properties. 
\end{proof}

\begin{remark}
    \label{r:representability}
    In the above situation, let $\sigma\in \Gamma(Y,\sF)$ be a section. Consider the functor 
    $$
    \Sch/S \to \Sets 
    $$
    given by 
    $$
    (g:T\to S) \mapsto 
    \begin{cases}
        \{*\} & \text{ if }(g\times 1)^{*}(\sigma)=0  \\
        \emptyset & \text{ otherwise}. 
    \end{cases}
    $$
    Then this functor is representable by a closed subspace of $S$. 
     
   To see why, the data of a section is the same as giving a morphism $\sI\to \sO_S$, the image of which is a sheaf of ideals
   which we call $\sJ$. The closed sub-algebraic space defined by $\sJ$ represents this functor in view of the compatibility of the construct
   with base change. 

   A similar argument shows that the functor 
   $$
   \Sch/S \to \Sets 
   $$
   given by 
   $$
   (g:T\to S) \mapsto 
   \begin{cases}
       \{*\} & \text{ if }(g\times 1)^{*}(\sF)=0  \\
       \emptyset & \text{ otherwise}. 
   \end{cases}
   $$
   is also representable. 
\end{remark}

\begin{Definition}
\label{d:BunSp}
    We denote by $\Bun_{X, \Sp_{2r}}$ the moduli stack of $\Sp_{2r}$-principal bundles on $X$ of rank $2r$. 
    The groupoid $\Bun_{X,\Sp_{2r}}(S)$  has objects $(\sE,h)$ that are symplectic vector bundles of rank $2r$ on $X_{S}$. 
    The morphisms in this groupoid are isomorphisms of symplectic vector bundles. 
 \end{Definition}   

 It is well known that $\Bun_{X, \Sp_{2r}}$ is an algebraic stack. 
 
 \begin{Definition}
 \label{Definition stacks of endomorphism of symplectic type}
    We denote by $\Hom_{X, \Sp_{2r}}$ the stack of principal bundles with homomorphism. The groupoid $\Hom_{X, \Sp_2r}(S)$ has 
    as objects triples $(\sE,h,\theta)$ where $(\sE,h)$ is an object of the groupoid $\Bun_{X,\Sp_{2r}}(S)$ and $\theta\in H^{0}(X_{S}, \End(\sE))$
    that is  self-adjoint. 
    A morphism $g:(\sE, h, \theta) \to (\sE', h', \theta')$ is an isomorphism 
              \[      g : (E,h) \rightarrow (E',h')                            \]
         such that $g \theta g^{-1} = \theta'$.
 \end{Definition}

 \begin{proposition}
    The category fibered in groupoids $\Hom_{X,\Sp_{2r}}$ is an algebraic stack. 
 \end{proposition}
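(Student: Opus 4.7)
The plan is to exhibit a representable 1-morphism $p\colon \Hom_{X,\Sp_{2r}}\to \Bun_{X,\Sp_{2r}}$ and then invoke the standard result that a category fibered in groupoids admitting a representable morphism to an algebraic stack is itself algebraic. The morphism $p$ sends a triple $(\sE,h,\theta)$ to the underlying symplectic bundle $(\sE,h)$; it is manifestly a well-defined morphism of categories fibered in groupoids over $\Sch/k$.

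To show that $p$ is representable by affine schemes, I would fix a scheme $S$ together with a morphism $S\to \Bun_{X,\Sp_{2r}}$ classifying a symplectic vector bundle $(\sE,h)$ on $X_S$. An object of the $2$-fiber product $\Hom_{X,\Sp_{2r}}\times_{\Bun_{X,\Sp_{2r}}} S$ over $T\to S$ is a triple $(\sE',h',\theta')\in \Hom_{X,\Sp_{2r}}(T)$ together with an isomorphism $\alpha\colon(\sE',h')\xrightarrow{\sim}(\sE_T,h_T)$ of symplectic bundles, and the only morphisms lying over $\mathrm{id}_T$ are those uniquely determined by compatibility with the $\alpha$. Transporting $\theta'$ along $\alpha$ to a self-adjoint endomorphism of $(\sE_T,h_T)$ yields an equivalence between this $2$-fiber product and the set-valued functor
$$
\Phi_{(\sE,h)}\colon (T\to S)\longmapsto H^{0}\bigl(X_T,\, \homs(\sE_T,\sE_T)\bigr).
$$

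In characteristic zero the sheaf $\homs(\sE,\sE)$ is a direct summand of the locally free sheaf $\homsheaf(\sE,\sE)$ (split using the involution $\mu$), so it is locally free on $X_S$ and in particular flat over $S$. Applying the theorem from \cite{EGA3} recalled above to the proper morphism $X_S\to S$ and the flat coherent sheaf $\homs(\sE,\sE)$ produces a coherent sheaf $\sI$ on $S$, compatible with base change, such that $\Phi_{(\sE,h)}(T)\cong \Hom_{\sO_T}(g^{*}\sI,\sO_T)$ naturally in $g\colon T\to S$. This functor is represented by the affine $S$-scheme $\Spec_{S}(\Sym \sI)$, so $p$ is representable by affine schemes. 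Since $\Bun_{X,\Sp_{2r}}$ is algebraic, pulling back a smooth atlas under $p$ gives a smooth atlas of $\Hom_{X,\Sp_{2r}}$, and the diagonal of $\Hom_{X,\Sp_{2r}}$ is representable because it factors through the representable relative diagonal of $p$ and the base change of the representable diagonal of $\Bun_{X,\Sp_{2r}}$; hence $\Hom_{X,\Sp_{2r}}$ is algebraic. The main subtlety is the unwinding of the $2$-fiber product groupoid into the set-valued functor $\Phi_{(\sE,h)}$, which depends on the rigidity observation that triples $(\sE',h',\theta',\alpha)$ admit no non-trivial automorphisms over $\mathrm{id}_T$; once this linearization is in hand, the rest is a formal application of the representability theorem for pushforwards together with standard manipulations of algebraic stacks.
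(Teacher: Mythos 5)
Your argument is correct, but it takes a different route from the paper's. The paper factors the forgetful morphism through an intermediate stack $\widetilde{\Hom}_{X,\Spr}$ of symplectic bundles with an arbitrary (not necessarily self-adjoint) endomorphism: representability of $\widetilde{\Hom}_{X,\Spr}\to\Bun_{X,\Spr}$ comes from applying the EGA~III representability theorem to the full locally free sheaf $\sEnd(\sE)$, and then the self-adjoint locus $\Hom_{X,\Spr}\hookrightarrow\widetilde{\Hom}_{X,\Spr}$ is cut out as a closed condition, namely the vanishing of the section $\theta+\mu(\theta)$, via the remark on representability of vanishing loci. You instead linearize the $2$-fiber product directly to the functor $T\mapsto H^{0}(X_T,\homs(\sE_T,\sE_T))$ and apply the EGA theorem to the self-adjoint subsheaf itself, using the characteristic-zero eigensheaf splitting of $\shom(\sE,\sE)$ under $\mu$ to see that $\homs(\sE,\sE)$ is locally free (hence flat over $S$, and its formation commutes with base change), concluding that the fibers are the affine $S$-schemes $\Spec_S(\Sym\sI)$. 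What your route buys is a cleaner structural statement (the forgetful morphism is representable by affine, indeed ``vector-bundle-like'', schemes in one step); what the paper's route buys is independence from the invertibility of $2$ at this stage and a softer argument (vanishing of a section rather than a summand decomposition). Two small points you should make explicit: the EGA representability theorem requires $S$ locally noetherian, so you need the standard reduction to the noetherian case (the paper does this by noting $\sE$ is locally of finite presentation), and you should state that the identification $\homs(\sE_T,\sE_T)\cong(\homs(\sE,\sE))_T$ holds because the eigensheaf decomposition of a locally free sheaf under the involution commutes with pullback when $2$ is invertible; with these in place your application of the base-change statement and the final atlas/diagonal argument (which is the standard lemma that a category fibered in groupoids admitting a morphism representable by algebraic spaces to an algebraic stack is algebraic) is complete.
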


 \begin{proof}
    Faithfully flat descent implies that the category $\Hom_{X,\Sp_{2r}}$ is a stack. To complete the proof, it suffices to show that
    the morphism $\Hom_{X,\Sp_{2r}}\to \Bun_{X,\Sp_{2r}}$ is representable. 
    To aid in the proof of representability we introduce a new stack $\widetilde{\Hom}_{X,\Spr}$ whose objects are as in 
    $\Hom_{X,\Spr}$ except that the self-adjointness condition is dropped. We first show that the forgetful morphism 
    $$
    \widetilde{\Hom}_{X,\Spr}\to \Bun_{X,\Spr}
    $$
    is representable. 
    Representability boils down to the following 
    question, given a vector bundle $\sE$ on $X_{S}$ then the functor 
    $$
    \Sch/S \longrightarrow \Sets
    $$
    given by 
    $$
    (f: T\to S) \longmapsto \End (f^{*}(\sE)))
    $$
    is representable. 
    As $\sE$ is locally of finite presentation, we may assume that $S$ is noetherian. Then this follows from 
    \cite[7.7.9]{EGA3} as the $\theta$ is exactly a global section of the locally free sheaf $\sEnd (\sE)$. 

    To complete the proof, we need to show that the faithful functor
    ${\Hom}_{X,\Spr}\hookrightarrow \widetilde{\Hom}_{X,\Spr}$ is representable. 
    This amounts to the condition that $\theta+\mu(\theta)=0$, and we may apply \ref{r:representability}. 
 \end{proof}

 \begin{Definition}\label{d:nilEnd}
 \label{Definition stack of nilpotent endomorphism of symplectic type}
    We denote by $\tNil_{X,\Spr}^n$  the stack of self-adjoint nilpotent endomorphisms of order of nilpotency $n$. 
    The objects of the groupoid $\tNil_{X,\Spr}^{n}(S)$ are triples triple $(\sE,h,\theta)$ as in $\Hom_{X,\Spr}$ with added condition
    that $\theta$ has order of nilpotency $n$. 
 \end{Definition}      

 \begin{proposition}
    The faithful functor
    $$
    \tNil_{X,\Spr}^{n}\to \Hom_{X,\Spr}
    $$
    is representable and hence the stack $\tNil_{X,\Spr}^{n}$ is algebraic. 
 \end{proposition}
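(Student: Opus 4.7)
The plan is to realize $\tNil_{X,\Spr}^{n}\to \Hom_{X,\Spr}$ as a locally closed immersion. Fix a morphism $S\to \Hom_{X,\Spr}$ corresponding to $(\sE,h,\theta)\in \Hom_{X,\Spr}(S)$; I will exhibit a locally closed subscheme of $S$ that represents the fiber product with $\tNil_{X,\Spr}^{n}$. Iterated composition of $\theta$ yields sections $\theta^{n-1}$ and $\theta^{n}$ of the locally free sheaf $\sEnd(\sE)$ on the proper $S$-scheme $X_{S}$.

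Since $\sEnd(\sE)$ is locally free, hence flat over $S$, one can reduce to the locally noetherian case exactly as in the proof of representability of $\widetilde{\Hom}_{X,\Spr}\to \Bun_{X,\Spr}$, and then invoke Remark \ref{r:representability}. Applied to the section $\theta^{n}$, it produces a closed subscheme $S_{n}\hookrightarrow S$ universally representing the vanishing of $\theta^{n}$. Applied next to the section $\theta^{n-1}$ on $X_{S_{n}}$, it produces a closed subscheme $V\hookrightarrow S_{n}$ universally representing the vanishing of $\theta^{n-1}$. Let $U$ denote the open complement $S_{n}\setminus V$, which is a locally closed subscheme of $S$.

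To finish, I verify that $U$ represents the fiber product. A morphism $T\to S$ factors through $S_{n}$ if and only if the pulled-back endomorphism satisfies $\theta^{n}=0$ on $X_{T}$. Given such a factorization, $T$ factors further through $U$ if and only if the image of $T$ in $S_{n}$ avoids the closed set $V$; this is equivalent to the condition $\theta_{t}^{n-1}\neq 0$ on $X_{t}$ for every $t\in T$, which is precisely the condition that the restriction defines an object of $\tNil_{X,\Spr}^{n}(T)$. Since $\Hom_{X,\Spr}$ is algebraic and $\tNil_{X,\Spr}^{n}\to \Hom_{X,\Spr}$ is now representable (in fact by a locally closed immersion), it follows that $\tNil_{X,\Spr}^{n}$ is algebraic.

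The main delicate point is translating the fiberwise condition ``every geometric fiber of $\theta$ has order of nilpotency exactly $n$'' into factorization through a representable subscheme. The key observation is that $V$ is closed in $S_{n}$, so the preimage of $V$ under any morphism $T\to S_{n}$ is a closed subscheme of $T$ that is empty if and only if no point of $T$ maps to $V$; this is what reconciles the fiberwise and scheme-theoretic formulations and makes the open complement $U$ represent the desired functor.
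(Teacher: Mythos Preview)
Your proof is correct and follows the same approach the paper gestures at (``similarly to the previous result''): apply Remark \ref{r:representability} to $\theta^{n}$ for the closed condition and take the open complement of the vanishing locus of $\theta^{n-1}$ for the nonvanishing condition. You are more careful than the paper about reading the order-of-nilpotency condition fiberwise, which is indeed the only interpretation under which $\tNil_{X,\Spr}^{n}$ forms a fibered category.
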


 \begin{proof}
    This follows similarly to the previous result. 
 \end{proof}

We refer the reader to \cite[2.4.4, 4.6.2.1]{lmb} for the definitions of $\Coh_{X}$, the stack of coherent sheaves,
and $\Vect_{X,r}$ its open substack of vector bundles. Let's amplify the fact that the objects of $\Coh_{X}$ are locally 
of finite presentation when the base scheme is not noetherian. This allows us to apply a reduction to the noetherian 
case for various arguments. 

\begin{proposition}\label{p:hyperQuot}
    Let $\Coh_{X}^{\Fil_n}$ be the category fibered in groupoids parameterising pairs $(\sE,F)$ where $\sE$ is 
    coherent sheaf on $X_{S}$, locally of finite presentation on $S$ and $F$ is a filtration on $\sE$ of the form 
    $$
    F^{0}=0 \subseteq F^{1}(\sE)\subseteq \ldots \subseteq F^{n}(\sE)=\sE
    $$
    such that all the sheaves $\Gr_{F}^{i}(\sE)$ are flat over $S$. Then the forgetful map 
    $$
    \Coh_{X}^{\Fil_n}\to \Coh_{X}
    $$
    is representable. 
    The substack parameterizing pairs $(\sE,F)$ where $\sE$ is locally free and so are $F^{i}(\sE)$ is an open substack. 
\end{proposition}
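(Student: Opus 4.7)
The plan is to exhibit $\Coh_{X}^{\Fil_n} \to \Coh_{X}$ as an iterated tower of relative Quot schemes. Fix a morphism $T \to \Coh_{X}$ classifying a coherent sheaf $\sE$ on $X_{T}$ that is locally of finite presentation on $T$; I must show that $\Coh_{X}^{\Fil_n} \times_{\Coh_{X}} T$ is representable. A standard limit argument using the fact that $\sE$ is locally of finite presentation reduces us to the case where $T$ is locally noetherian. Observe first that the flatness of every graded piece $\Gr_{F}^{i}(\sE)$ over $T$ forces, inductively from $F^{1}=\Gr_{F}^{1}(\sE)$ upward via the short exact sequences $0\to F^{i-1}\to F^{i}\to \Gr_{F}^{i}(\sE)\to 0$, each $F^{i}$ (and hence $\sE = F^{n}$) to be $T$-flat; so the fiber is empty unless $\sE$ is itself flat.

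I would then construct the representing space by building the filtration from the top down. Specifying an admissible subsheaf $F^{n-1}\subseteq \sE$ with $\sE/F^{n-1}$ flat is the same as choosing a flat coherent quotient of $\sE$, and such data are classified by Grothendieck's relative Quot scheme $P_{1}:=\mathrm{Quot}_{X_{T}/T}(\sE)$, which is representable since $X_{T}\to T$ is projective. Over $P_{1}$ sits the universal admissible subsheaf $F^{n-1,\mathrm{univ}}$. Iterating, set
\[ P_{k}:=\mathrm{Quot}_{X_{P_{k-1}}/P_{k-1}}(F^{n-k+1,\mathrm{univ}}) \quad \text{for } k\ge 2, \]
which parameterizes flat quotients of the preceding universal subsheaf and produces the next $F^{n-k,\mathrm{univ}}$. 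Each map $P_{k}\to P_{k-1}$ is representable, so the composition $P_{n-1}\to T$ is representable; by construction $P_{n-1}$ represents the functor of filtrations of $\sE$ with flat graded pieces, with the universal filtration assembled from the pullbacks of the $F^{n-k,\mathrm{univ}}$.

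For the openness claim, a flat coherent sheaf on the smooth projective curve fibration $X_{T}\to T$ is locally free if and only if it is fibrewise torsion-free, and torsion-freeness on fibres of a smooth curve is an open condition on the base. Applied to each $F^{i}$ in the universal filtration, this shows that the locus in $P_{n-1}$ where every $F^{i}(\sE)$ is locally free is open; since $\sE = F^{n}$ this in particular handles the local freeness of $\sE$ itself. The main technical obstacle will be confirming that each stage of the iterated Quot construction commutes with arbitrary base change, so that the universal kernels $F^{n-k+1,\mathrm{univ}}$ behave correctly in families and the tower really does represent the desired filtration functor; this is routine from the construction of the Quot scheme but requires careful bookkeeping.
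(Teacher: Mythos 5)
Your proof is correct and takes essentially the same route as the paper, which handles representability by citing the Stacks Project's Quot algebraic spaces (i.e.\ exactly the iterated relative Quot construction you spell out) and proves openness by the same standard fibrewise local-freeness argument used for the stack of vector bundles. The only detail worth adding is that your tower only imposes flatness of $\Gr_F^{n}(\sE),\dots,\Gr_F^{2}(\sE)$; flatness of $F^{1}(\sE)=\Gr_F^{1}(\sE)$ then comes for free because $\sE$, being an object of $\Coh_X$, is flat over the base and the kernel of a surjection of base-flat sheaves is base-flat.
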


\begin{proof}
    The map is representable by quot-algebraic spaces, see \cite[Tag 09TQ]{stacks-project}. 
     The second assertion follows in the same way as showing that the stack of vector bundles 
    is an open substack. 
\end{proof}

\begin{definition}
\label{d:nilStack}
    The stack of filtered self-adjoint nilpotent endomorphisms, denoted $\Nil_{X,\Spr}^{n}$, is the stack whose objects over scheme
    $S$ are quadruples $(\sE,h,\theta, F)$ where $(\sE,h,\theta )$ is an object of $\tNil_{X,\Spr}^{n}$ and $F$ is a filtration on $\sE$
    of the form 
    $$
    0= F^{-n}(\sE) \subseteq \ldots \subseteq F^{n-1}(\sE)=\sE
    $$
    so that $(\sE,h,F)$ becomes a filtered symplectic vector bundle. We require this data to satisfy 
    \begin{enumerate}
        \item $\theta(F^{i}(\sE))\subseteq F^{i-2}(\sE)$ and
        \item the induced map 
        $$
        \theta^{i}:\Gr^{i}_{F}(\sE)\to \Gr^{-i}_{F}(\sE)
        $$
        is injective for all $1-n\le i \le n-1$.
    \end{enumerate}
\end{definition}

\begin{Theorem}
    The stack $\Nil_{X,\Spr}^{n}$ is algebraic. 
\end{Theorem}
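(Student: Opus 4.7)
The plan is to show that the forgetful morphism $\Nil_{X,\Spr}^n \to \tNil_{X,\Spr}^n$ is representable; combined with the algebraicity of $\tNil_{X,\Spr}^n$ established immediately above, this gives the result. Given $(\sE,h,\theta)\in \tNil_{X,\Spr}^n(S)$, corresponding to a morphism $S\to \tNil_{X,\Spr}^n$, I need to describe the functor on $\Sch/S$ whose value on $T\to S$ is the set of filtrations $F$ on $\sE_T$ satisfying the three conditions of Definition~\ref{d:nilStack}.

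As a starting point, the underlying forgetful functor $\tNil^n_{X,\Spr}\to \Coh_X$ together with Proposition~\ref{p:hyperQuot} yields a representable algebraic space $Z_0\to S$ parameterizing filtrations of $\sE$ of the required amplitude with flat graded pieces, and an open subspace $Z_1\subseteq Z_0$ on which every $F^i(\sE)$ is locally free. Over $Z_1$ the remaining conditions are (a) the \emph{filtered symplectic} condition, i.e.\ that the composition
\[
F^i(\sE)\hookrightarrow \sE \xrightarrow{h} \sE^{\vee} \twoheadrightarrow F^{-i-1}(\sE)^{\vee}
\]
vanishes for each $i$ (so that $h$ respects the filtrations, and hence induces an isomorphism by the rank argument in the text); (b) the condition $\theta(F^i(\sE))\subseteq F^{i-2}(\sE)$, i.e.\ vanishing of the composition $F^i \hookrightarrow \sE \xrightarrow{\theta} \sE \twoheadrightarrow \sE/F^{i-2}$; and (c) injectivity of $\theta^i:\Gr_F^i(\sE)\to \Gr_F^{-i}(\sE)$ for each $i$.

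Conditions (a) and (b) are vanishing statements for global sections of coherent sheaves on $X_{Z_1}$ that are flat over the base (the $\homsheaf$ sheaves into duals of locally free pieces), so each cuts out a closed sub-algebraic-space of $Z_1$ by Remark~\ref{r:representability}. Condition (c) is open on the base: for a map of vector bundles on the proper family $X_{Z_1}\to Z_1$, fiberwise injectivity is equivalent to the cokernel being flat of rank $\rank(\Gr^{-i})-\rank(\Gr^i)$, an open condition by upper semicontinuity of the rank of the cokernel. Intersecting, we obtain a locally closed sub-algebraic-space $Z\subseteq Z_1$ that represents the desired functor, so $\Nil_{X,\Spr}^n\to \tNil_{X,\Spr}^n$ is representable and $\Nil_{X,\Spr}^n$ is algebraic. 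The main technical point requiring care is the precise application of Remark~\ref{r:representability} to conditions (a) and (b): one must check that the relevant compositions are genuinely global sections of coherent, fiberwise-flat sheaves on $X_S$, which holds because $\sE$ and each $F^i(\sE)$ are locally free after restriction to $Z_1$ and the curve is proper over the base.
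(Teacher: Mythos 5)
Your proposal is correct and follows essentially the same route as the paper: the paper also reduces to representability of $\Nil_{X,\Spr}^{n}\to\tNil_{X,\Spr}^{n}$, using the quot-space result of Proposition~\ref{p:hyperQuot} for the filtration data (via an intermediate stack $\widehat{\Nil}$) and then Remark~\ref{r:representability} to impose the remaining conditions. The only differences are cosmetic: you make the filtered-symplectic compatibility an explicit closed vanishing condition (which the paper leaves implicit in $\widehat{\Nil}$), and you realize the graded-injectivity condition as the open locus of fiberwise injectivity rather than the paper's ``vanishing of kernels,'' which under the base-change-stable reading of Definition~\ref{d:nilStack} amounts to the same thing.
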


\begin{proof}
    As an intermediate step, we first consider the stack $\widehat{\Nil_{X,\Spr}^{n}}$ whose objects are quadruples $(\sE,h,\theta, F)$
    where $(\sE,h,\theta)$ is an object of $\tNil_{X,\Spr}^{n}$ and $F$ is a filtration on $\sE$
    of the form 
    $$
    0= F^{-n}(\sE) \subseteq \ldots \subseteq F^{n-1}(\sE)=\sE.
    $$
    The two remaining conditions defining $\Nil_{X,\Spr}^{n}$ are dropped for  $\widehat{\Nil_{X,\Spr}^{n}}$. The stack  $\widehat{\Nil_{X,\Spr}^{n}}$ is algebraic 
    as the functor 
    $$
     \widehat{\Nil_{X,\Spr}^{n}}\to \tNil_{X,\Spr}^{n}
    $$
    is representable by quot algebraic-spaces as in \ref{p:hyperQuot}. 
    It remains to show that the functor 
    $$
    \Nil_{X,\Spr}^{n}\to \widehat{\Nil_{X,\Spr}^{n}}
    $$
    is representable. 
    The first condition in \ref{d:nilStack} defines a representable morphism using \ref{r:representability} as it corresponds to the vanishing of the 
    morphism 
    $$
    F^{i}(\sE) \to \sE/F^{i-2}(\sE). 
    $$
    Similarly, the second condition defines a representable morphism as it corresponds to the vanishing of kernels. 
\end{proof}

\begin{Theorem}
\label{t:bij on points}
    The natural map 
    $$
    \Nil_{X,\Spr}^{n}\to \tNil_{X,\Spr}^{n}
    $$
    is a bijection on isomorphism classes of $K$-points where $K/k$ is a field extension. 
\end{Theorem}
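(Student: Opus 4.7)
The plan is to identify, for any field extension $K/k$, the unique filtration on a given $K$-point $(\sE, h, \theta) \in \tNil_{X,\Spr}^n(K)$ that makes it into a $K$-point of $\Nil_{X,\Spr}^n$. The candidate is the canonical filtration $C^\bullet(\sE)$ built from kernels and images of powers of $\theta$ introduced in Section \ref{s:nilpFilt}.

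For uniqueness I would appeal directly to Lemma \ref{l: charactisation of the filtraion}, applied with $S = \Spec K$. Over a field every coherent sheaf is flat, so the hypothesis that $\Gr_C^i(\sE)$ be flat over $S$ is automatic, and the lemma forces any filtration $F$ satisfying the defining conditions of $\Nil_{X,\Spr}^n$ to coincide with $C^\bullet$. It follows that any isomorphism between two objects of $\tNil_{X,\Spr}^n(K)$ automatically preserves their canonical filtrations, which gives injectivity on isomorphism classes.

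For surjectivity I would set $F^l(\sE) := C^l(\sE)$ and verify the conditions of Definition \ref{d:nilStack}. The containment $\theta(C^i(\sE)) \subseteq C^{i-2}(\sE)$ is immediate from the definition of $C^l$, and the injectivity of $\theta^i : \Gr_C^i(\sE) \to \Gr_C^{-i}(\sE)$ in fact strengthens to an isomorphism by Lemma \ref{Lemma isomorphism from the kth graded piece to -kth graded piece of the canonical filtration}. The delicate point is checking that $(\sE,h,C)$ qualifies as a filtered symplectic vector bundle, which requires the graded pieces of $C^\bullet$ to be locally free on $X_K$ so that the inclusions are admissible.

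This is the main obstacle. Over a field, rank additivity in short exact sequences of coherent sheaves on the smooth curve $X_K$ is automatic, so the rank calculation in the proof of Lemma \ref{Lemma k-filtration is perp of -k-1 filtration in Canonical Filtration} still produces the orthogonality relation $C^l(\sE)^\perp = C^{-l-1}(\sE)$. Combined with the exact sequence $0 \to C^l(\sE)^\perp \to \sE \to C^l(\sE)^\vee \to 0$ induced by $h$, this identifies $\sE/C^{-l-1}(\sE)$ with $C^l(\sE)^\vee$. Since any coherent subsheaf of the locally free sheaf $\sE$ on the smooth curve $X_K$ is itself locally free, $C^l(\sE)$ and hence $C^l(\sE)^\vee$ are locally free, so each $C^{-l-1}(\sE)$ is saturated in $\sE$ and each $\Gr_C^l(\sE)$ is locally free. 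This upgrades $(\sE,h,C)$ to a filtered symplectic vector bundle and yields the required object of $\Nil_{X,\Spr}^n(K)$.
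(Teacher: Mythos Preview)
Your approach has a genuine gap: you are applying Lemma \ref{l: charactisation of the filtraion} with $S=\Spec K$, but the symplectic bundle $(\sE,h)$ lives on $X_K$, so the lemma must be invoked with $S=X_K$. The flatness hypothesis then asks that each $\Gr_C^i(\sE)$ be locally free on the curve $X_K$, and this is \emph{not} automatic. The canonical filtration is built from sheaves such as $\image(\theta^{j})$, which need not be saturated in $\sE$ when the rank of $\theta^{j}$ drops at finitely many points; in that case $\Gr_C^i(\sE)$ acquires torsion. So your injectivity argument does not go through as stated.

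The same problem infects the surjectivity argument, and your attempted repair is circular. The proof of Lemma \ref{Lemma k-filtration is perp of -k-1 filtration in Canonical Filtration} first shows $C^{-1-l}(\sE)\subseteq C^{l}(\sE)^{\perp}$ unconditionally, and then matches ranks; on the curve this only yields that $C^{l}(\sE)^{\perp}/C^{-1-l}(\sE)$ is torsion, not that it vanishes. You then use the (unproved) equality $C^{l}(\sE)^{\perp}=C^{-1-l}(\sE)$ to identify $\sE/C^{-1-l}(\sE)$ with $C^{l}(\sE)^{\vee}$ and deduce saturation. What you actually get for free is that $\sE/C^{l}(\sE)^{\perp}\cong C^{l}(\sE)^{\vee}$ is locally free, i.e.\ $C^{l}(\sE)^{\perp}$ is saturated; this says nothing about whether the a priori smaller subsheaf $C^{-1-l}(\sE)$ is saturated.

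The paper avoids this by never insisting that $C^{\bullet}$ itself be the filtration on all of $X_K$. For injectivity, Lemma \ref{l: charactisation of the filtraion} is applied only on the dense open where the $\Gr_C^i$ are locally free, forcing $F_1=F_2$ there; equality on all of $X_K$ then follows because two sections of the (separated) flag scheme $\Grass(i_1,\ldots,i_k,\sE)\to X_K$ that agree on a dense open agree everywhere. For surjectivity, $C^{\bullet}$ is regarded as a rational section of this flag scheme, which extends to all of $X_K$ by properness (\cite[Tag 0BXZ]{stacks-project}); the two axioms of Definition \ref{d:nilStack} are then checked by observing that the first is closed and the second forces the kernel of $\theta^{i}$ on graded pieces to be a torsion subsheaf of a locally free sheaf, hence zero.
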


\begin{proof}
    For injectivity, let us suppose there exists a symplectic vector bundle $(\sE,h)$ on $X_K$ with a
    self-adjoint nilpotent endomorphism $\theta$ with order of nilpotency $n$. Let $(\sE,h,\theta,F_1)$ 
    and $(\sE,h,\theta,F_2)$ be two points of $\Nil_{X,\Spr}^{n}$ mapping to $(\sE,h,\theta)$. By 
    Lemma \ref{l: charactisation of the filtraion}, $F_1$ and $F_2$ agree on an open dense subscheme of 
    the $X_K$. Since $F_1$ and $F_2$ are sections of a flag variety $\Grass(i_1,i_2 \hdots, i_k, \sE)$ which 
    agree on an open set, they are isomorphic.\\ 
    We now have to prove that the morphism is surjective. Again, let us suppose there exists a symplectic vector bundle 
    $(\sE,h)$ on $X_K$ with a self-adjoint nilpotent endomorphism $\theta$ with order of nilpotency $n$. The 
    filtration $C$ (refer Section \ref{s:nilpFilt}) defines a section $s$ of 
    \[       \Grass(i_1,i_2 \hdots i_k, \sE )  \rightarrow X_L                                        \]
    on an open dense subset of $X_L$. By \cite[\href{https://stacks.math.columbia.edu/tag/0BXZ}{Tag 0BXZ}]{stacks-project},
    $s$ can be extended to a section 
    \[ s :  X_K \rightarrow  \Grass(i_1,i_2 \hdots i_k, \sE ).                  \]
    This defines a filtration $F$ on $\sE$ such that 
  \[   \theta(F^i(\sE)) \subset F^{i-2}(\sE) \] 
    and
\[    \theta^i :  \Gr^i_F(\sE) \rightarrow \Gr^{-i}_F(\sE)                              \]
    is injective on an open dense subset of $X_K$. The first condition is a closed condition and hence satisfied on the 
    whole curve $X_K$. The second condition implies that the kernel of  
    \[    \theta^i :  \Gr^i_F(\sE) \rightarrow \Gr^{-i}_F(\sE)                              \]
    is 0 on an open dense subset of $X_K$, it is 0 on $X_K$. Thus, $(\sE,h,\theta,F)$ is $K$-point of 
    $ \Nil_{X,\Spr}^{n}$ that maps to $(\sE,h,\theta)$.
\end{proof}

%%%%%%%%%%%%%%%%%%%%%%%%%%%%%%%%%%%%%%%%%%%%%%%%
%%%%%%%%%%%%%%%%%%%%%% Deformations and obstructions
%%%%%%%%%%%%%%%%%%%%%%%%%%%%%%%%%%%%%%%%%%%%%%%%
 
\subsection{Deformations and obstructions.}

We assume in this subsection that $k$ is an algebraically closed field.

\begin{Definition}
Suppose that $\fX$ is an algebraic stack over $\Sch_{k}$. 
Given an epimorphism of local artinian rings $B \rightarrow A$ and an object $x\in\fX_{A}$ a \emph{lift of $x$ to $B$} is an object $y$ in $\fX_{B}$ and a 
morphism $f:x\to y$ in $\fX$.  
\end{Definition}
    
\begin{Example}
    Consider the stack $\Bun_{X,\Spr}$ and a point $(\sE,h)$ over $\Spec A$. Then a lift of this point to $\Spec B$ is a symplectic vector bundle $(\sE',h')$
    over $X_{B}$ of rank $2r$ together with an isomorphism $\sE'\otimes_{B} A\cong \sE$ inducing an isomorphism of symplectic vector bundles. 
\end{Example}

\begin{Lemma}
    \label{l:injectiveLift}
    Let 
    $$
    0 \rightarrow I \xrightarrow{i} B \xrightarrow{q} A \rightarrow 0.  
    $$
    be a square zero extension of local artinian $k$-algebras. 
    Let $R$ be an integral domain and let $\theta$ be an injective map of free $R \otimes_l A$ modules  
    \[   \theta : (R \otimes_l A)^k \rightarrow (R \otimes_l A)^n                          \]
   and let 
    \[  \psi:  (R \otimes_l B)^k \rightarrow (R \otimes_l B)^n  \]
   lift $\phi$. Then $\psi$ is injective.
\end{Lemma}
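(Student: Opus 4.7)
The strategy would be to apply McCoy's theorem: for any commutative ring $S$, a map of free modules $S^k \to S^n$ given by an $n \times k$ matrix $N$ is injective if and only if the ideal $I_k(N)$ of $k \times k$ minors contains a non-zero-divisor. Writing $M$ for the matrix of $\psi$ and $\bar M$ for its image in $R \otimes_k A$ (which is then the matrix of $\theta$), it would suffice to produce a $k \times k$ minor of $M$ that is a non-zero-divisor in $R \otimes_k B$.

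I would first observe that since $k$ is a field, $R$ is flat over $k$, so tensoring the exact sequence $0 \to I \to B \to A \to 0$ with $R$ yields the short exact sequence $0 \to R \otimes_k I \to R \otimes_k B \to R \otimes_k A \to 0$. By hypothesis and McCoy, there exists a non-zero-divisor $d \in I_k(\bar M) \subset R \otimes_k A$. I would next argue that the image of $d$ under the further reduction $R \otimes_k A \twoheadrightarrow R \otimes_k (A/\mathfrak m_A) = R$ (using that $A/\mathfrak m_A = k$ since $k$ is algebraically closed) is nonzero: otherwise $d$ would lie in $R \otimes_k \mathfrak m_A$, an ideal which is nilpotent because $\mathfrak m_A^N = 0$ for some $N$, so $d$ would be a nilpotent non-zero-divisor, forcing $d = 0$, a contradiction. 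Since $d$ is an $R \otimes_k A$-linear combination of the $k \times k$ minors of $\bar M$, at least one such minor must have nonzero image in $R$. The corresponding $k \times k$ minor $\tilde d$ of $M$ then also has nonzero image in $R$ (via $R \otimes_k B \twoheadrightarrow R$), since the two reductions $R \otimes_k B \to R \otimes_k A \to R$ and $R \otimes_k B \to R$ agree.

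It will then remain to verify that $\tilde d$ is a non-zero-divisor in $R \otimes_k B$. For this I would use the $\mathfrak m_B$-adic filtration $R \otimes_k B \supset R \otimes_k \mathfrak m_B \supset R \otimes_k \mathfrak m_B^2 \supset \cdots$, which is finite by the artinianness of $B$ and has graded pieces $R \otimes_k (\mathfrak m_B^i/\mathfrak m_B^{i+1})$ that are free $R$-modules. If $\tilde d \cdot x = 0$ for some nonzero $x \in R \otimes_k B$, pick $j$ maximal with $x \in R \otimes_k \mathfrak m_B^j$; then $\bar x \ne 0$ in the graded piece, and modulo $R \otimes_k \mathfrak m_B^{j+1}$ the equation reduces to $\tilde d_0 \bar x = 0$, where $\tilde d_0 \in R$ is the nonzero image of $\tilde d$. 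Since $R$ is a domain, $\tilde d_0$ acts injectively on any free $R$-module, forcing $\bar x = 0$, a contradiction. The main delicacy throughout is the consistent bookkeeping across the three rings $R$, $R \otimes_k A$, and $R \otimes_k B$, but it all reduces to the uniform observation that a nonzero image in $R$ detects non-zero-divisors via the nilpotence of the maximal ideals.
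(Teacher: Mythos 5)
Your argument is correct (under the reading, consistent with the paper's applications, that the artinian local $k$-algebras have residue field $k$), but it takes a genuinely different route from the paper's. The paper first reduces to the case $k=1$ by passing to $\bigwedge^{k}\theta$, and then argues by contradiction: a nonzero kernel element would make the coefficients of $\psi$ zero divisors in $R\otimes B$; by O'Carroll--Qureshi \cite{LQ} this ring is primary (unique associated prime), so those coefficients are nilpotent, hence so are the coefficients of $\theta$, which then all lie in the unique associated prime of $R\otimes A$ and are killed by a common nonzero element, contradicting injectivity of $\theta$. You instead work directly with the ideal of maximal minors via McCoy's theorem and replace the appeal to the primary-ring theorem by a hands-on filtration argument: an element of $R\otimes_k B$ with nonzero image in $R=R\otimes_k(B/\mathfrak m_B)$ is a non-zero-divisor because the $\mathfrak m_B$-adic graded pieces are free $R$-modules and $R$ is a domain. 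Your route is more elementary and self-contained (no external citation); the paper's route is insensitive to the residue field of $A$ and $B$, but leans on a nontrivial reference.

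Two small repairs are needed. First, McCoy's theorem in general says that $\theta$ is injective if and only if the annihilator of $I_k(\bar M)$ is zero; ``$I_k(\bar M)$ contains a non-zero-divisor'' is the Noetherian form of the statement. Your argument survives with the weaker conclusion: if every maximal minor of $\bar M$ reduced to $0$ in $R$, then all minors would lie in the nilpotent ideal $R\otimes_k\mathfrak m_A$, so the finitely generated ideal $I_k(\bar M)$ would be nilpotent and hence would have a nonzero annihilator, contradicting McCoy and the injectivity of $\theta$; thus some minor has nonzero image in $R$, which is all you use afterwards. (In the paper's application $R$ is Noetherian anyway, so your stronger form also applies.) Second, an artinian local $k$-algebra can have residue field a transcendental extension of $k$ even when $k$ is algebraically closed, so $A/\mathfrak m_A=k$ is an extra hypothesis rather than a consequence of $k=\bar k$; it does hold for the deformation rings the lemma is applied to, and in any case your proof generalizes by replacing $R$ with $R\otimes_k(B/\mathfrak m_B)$ throughout, which is again an integral domain because $k$ is algebraically closed, after which the filtration argument goes through verbatim.
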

    
\begin{proof}
     Since any map of free modules over a ring $S$ 
     \[       f : S^k \rightarrow S^n                               \] 
     is injective if and only if the map 
     \[       \bigwedge^k f : \bigwedge^k S^k \rightarrow \bigwedge^k S^n                                         \]
    is, we can reduce to the case $k = 1$. In this case, we prove by contradiction. We suppose that the map 
    \[      \psi : R \otimes B \rightarrow (R \otimes B)^n                                                    \]
    is not injective. We choose a basis $\{ a_1, a_2, \hdots a_n\}$ of $(R \otimes_l A)^n$
    and a basis $\{b_1, b_2, \hdots, b_n\}$ of $(R \otimes_l B)^n$ such that the epimorphism 
    \[         (R \otimes B)^n \twoheadrightarrow (R \otimes A)^n                                          \]
    sends $b_i$ to $a_i$.   Expressing $\psi$ in this basis of $(R \otimes B)^n$, we get 
    \[      \psi(1) = \sum_{i=1}^n \psi_i b_i,                                 \]
    with $\psi_i$ in $R \otimes_l B$. Non injectivity of $\psi$  implies that $\psi_i$ are zero divisors for all $i$. Since $R = R \otimes_l l$
    has an unique associated prime ideal, $R \otimes B$ has an unique associated prime ideal as well (\cite[Theorem 1]{LQ}).
    This implies that $\psi_i$ are nilpotent for all $i$. On the other hand, since $\psi$ lift $\phi$,
     \[      \phi(1) = \sum_{i=1}^n \phi_i a_i                               \]
    with  $\phi_i = (id_R \otimes q) (\psi_i)$, where $id_R \otimes q$ is the map in the exact sequence      
    \[   0 \rightarrow R \otimes_l I \xrightarrow{id_R \otimes i} R \otimes_l B \xrightarrow{id_R \otimes q} R \otimes_l A \rightarrow 0 . \]
     Hence, $\psi_i$ 
    being nilpotent implies that $\phi_i$ is nilpotent for all $1 \le i \le n$ and thus belong to the unique 
    associated prime of $R \otimes_l A$. Hence, there exists an element $r$ of $R\otimes_l A$ that annihilates
    $\phi_i$ for all $1\le i \le n$ contradicting the injectivity of $\phi$.   
\end{proof}

\begin{Lemma}
    \label{l:symplecticLift}
       Let $A$ be an artinian local $k$-algebra and  
        Let $U = \Spec(R)$ be an affine open subscheme of $X$. Suppose that $(\sE,h)$ is a symplectic vector bundle on $U_{A}$ such 
        that $\sE$ is a free $R_{A}$-module of rank $2r$. If 
        $$
        0\to I\to B\to A\to 0
        $$
        is a square zero extension of artinian local $k$-algebras
        then there exists a free symplectic vector bundle $(\sE',h')$ on $R \otimes_l B$ that lifts $(\sE,h)$. 
\end{Lemma}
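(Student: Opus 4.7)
The plan is to lift the underlying module and the symplectic form separately, exploiting freeness of $\sE$ to make the module lift trivial. First, fix a basis $\{e_1, \ldots, e_{2r}\}$ of $\sE$ and let $Q$ denote the bilinear form associated to $h$. The symplectic form is then encoded by the Gram matrix $H = (h_{ij})$ with $h_{ij} := Q(e_i, e_j) \in R \otimes_l A$; the alternating condition forces $h_{ii} = 0$ and $h_{ji} = -h_{ij}$, and non-degeneracy is equivalent to $\det(H)$ being a unit in $R \otimes_l A$. I would take the underlying module of the lift to be the free module $\sE' := (R \otimes_l B)^{2r}$, with basis $\{e_1', \ldots, e_{2r}'\}$ mapping to the chosen basis under the reduction $R \otimes_l B \twoheadrightarrow R \otimes_l A$; this automatically lifts $\sE$.

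Next, construct an alternating lift $H'$ of $H$ over $R \otimes_l B$ entrywise. For each pair $i < j$, pick any lift $h_{ij}' \in R \otimes_l B$ of $h_{ij}$ (possible by surjectivity of the reduction), set $h_{ii}' = 0$, and set $h_{ji}' := -h_{ij}'$. The resulting matrix $H'$ is tautologically alternating and reduces to $H$ modulo $R \otimes_l I$, so the only remaining obligation is to verify that $H'$ is invertible over $R \otimes_l B$.

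This verification is the single substantive step, but it is immediate from the observation that the kernel $R \otimes_l I$ of $R \otimes_l B \twoheadrightarrow R \otimes_l A$ is square zero, hence nilpotent. Consequently an element of $R \otimes_l B$ is a unit if and only if its image in $R \otimes_l A$ is a unit. Applying this to $\det(H') \in R \otimes_l B$, whose image is the unit $\det(H)$, gives $\det(H') \in (R \otimes_l B)^{\times}$, so $H'$ is invertible. Declaring $h' : \sE' \to (\sE')^{\vee}$ to be the morphism corresponding to $H'$ then produces the desired free symplectic vector bundle $(\sE', h')$ lifting $(\sE, h)$.

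I do not anticipate a genuine obstacle: freeness of $\sE$ together with the alternating (rather than symmetric) nature of the form trivialises any deformation-theoretic obstruction. Any entrywise lift of $H$ can be antisymmetrised without disturbing its reduction, and, as we are in characteristic $0$, the alternating condition coincides with skew-symmetry together with vanishing of the diagonal, both of which are preserved by this procedure. The only non-formal ingredient invoked is the standard fact that units lift along surjections with square-zero kernel.
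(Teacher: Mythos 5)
Your proof is correct, but it takes a slightly different route from the paper's. The paper works coordinate-free: it lifts the free module and lifts $h$ to an arbitrary isomorphism $\tilde{h}:\sE'\to(\sE')^{\vee}$, then anti-symmetrizes by averaging, $h'=\tfrac{\tilde{h}-\tilde{h}^{\vee}\circ\ev}{2}$, notes that this still lifts $h$ because $h$ is already skew, and concludes that $h'$ is an isomorphism since it lifts the surjection $h$ over a nilpotent extension. You instead fix a basis, encode $h$ by its Gram matrix, and lift only the strictly upper-triangular entries, so that skewness (and vanishing diagonal) of the lift holds by construction rather than by averaging; invertibility then comes from $\det(H')$ being a unit because the kernel $R\otimes_k I$ is square-zero. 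Both arguments rest on the same two pillars (freeness to lift the module and the form, plus nilpotence of the kernel to lift invertibility), and your determinant criterion is equivalent to the paper's surjectivity argument. What your version buys is that it avoids dividing by $2$, so it would work in any characteristic (with the diagonal set to zero to keep the form alternating); what the paper's averaging trick buys is a coordinate-free formulation of exactly the symmetrization device that is reused later (for lifting self-adjoint endomorphisms in Lemma \ref{l:existenceLiftsNilp} and Hermitian forms in Lemma \ref{l:Q herm surjective lifts iso}), which is why it is phrased that way there.
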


\begin{proof}
    We can lift the free module $E$ to a free module $E'$. We can also lift the isomorphism 
    $h : \sE \rightarrow \sE^{\vee}$ 
    to an isomorphism $ \tilde{h} : \sE' \rightarrow (\sE')^{\vee}$.
    We define
    \[           h' : \sE' \rightarrow (\sE')^{\vee} \quad\text{as}\quad     h' = \frac{\tilde{h} - \tilde{h}^{\vee} \circ \ev }{2}.                            \]
     This map lifts $\frac{h - h^{\vee} \circ \ev}{2} = h$. Since
    $h$ is surjective, we have that $h'$ is surjective and thus isomorphism. Moreover, since 
    \[    (h^{\vee} \circ \ev )^{\vee} \circ \ev  = h,                                    \]    
    we can compute that 
    \begin{align*}                                             
        h'^{\vee} \circ \ev      &= \frac{h^{\vee} \circ \ev  -  (h^{\vee} \circ \ev )^{\vee} \circ \ev }{2} \\ 
                                    &= \frac{h^{\vee} \circ \ev  - h}{2}\\ 
                                    &= -h'. 
    \end{align*}
    Hence $(\sE',h')$ is a symplectic vector bundle. 
\end{proof}

\begin{Lemma}
\label{l:filteredSymplecticLift}
    Let $U = \Spec(R)$ be an affine open subscheme of $X$ and let $A$ be an artinian local $k$-algebra. 
    Let $(\sE,F,h)$ be a filtered symplectic vector bundle on $U_{A}$
    such that the associated graded sheaf $\Gr_F^i(\sE)$ is free on 
    $R \otimes_l A$ for $-n+1 \le i \le n-1$. If 
    $$
    0\to I\to B\to A\to 0
    $$
    is a square zero extension of artinian local $k$-algebras then there exists a filtered symplectic vector bundle $(\sE',F',h)$ on $R \otimes_k B$ 
    that lifts $(\sE,F, h)$. 
\end{Lemma}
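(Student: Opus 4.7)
The plan is to reduce the problem to lifting graded symplectic data, and then reassemble the filtered symplectic structure using Lemma \ref{l:filtSympConstruct}.

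Since each $\Gr^i_F(\sE)$ is free over $R \otimes_k A$, Lemma \ref{l:splitFilteredSymplectic} implies that $(\sE, h, F)$ is split filtered symplectic. I would fix such a splitting, so that one may identify $\sE \cong \bigoplus_i \sE_i$ with $\sE_i := \Gr^i_F(\sE)$, the filtration with $F^k(\sE) = \bigoplus_{i \le k} \sE_i$, and the symplectic form with $h = \bigoplus_i h_i$, where the $h_i : \sE_i \to \sE_{-i}^\vee$ are isomorphisms satisfying $h_{-i} = -h_i^\vee$ as in Lemma \ref{l:gradedSymplectic}. In this picture, the lifting problem reduces to lifting the free modules $\sE_i$ together with the isomorphisms $h_i$, subject to the compatibility $h_{-i} = -h_i^\vee$.

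Next, lift each free $R \otimes_k A$-module $\sE_i$ to a free $R \otimes_k B$-module $\sE'_i$ of the same rank. For each $i > 0$, choose any $R \otimes_k B$-linear lift $\tilde h_i : \sE'_i \to (\sE'_{-i})^\vee$ of $h_i$. Its reduction modulo $I$ is the isomorphism $h_i$, whose determinant is a unit; since the kernel of $R \otimes_k B \to R \otimes_k A$ is the nilpotent ideal $R \otimes_k I$, the lifted determinant is a unit as well, so $\tilde h_i$ is an isomorphism. I then set $h'_i := \tilde h_i$ and $h'_{-i} := -\tilde h_i^\vee$; the latter lifts $h_{-i} = -h_i^\vee$ and, by construction, the pair satisfies $h'_{-i} = -(h'_i)^\vee$. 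For $i = 0$, pick any lift $\tilde h_0$ of $h_0$ and apply the averaging trick of Lemma \ref{l:symplecticLift}: set $h'_0 := (\tilde h_0 - \tilde h_0^\vee)/2$, which lifts $h_0$ (using $h_0 = -h_0^\vee$) and satisfies $(h'_0)^\vee = -h'_0$. This step uses the characteristic zero hypothesis.

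Finally, put $\sE' := \bigoplus_i \sE'_i$, $h' := \bigoplus_i h'_i$, and $F'^k(\sE') := \bigoplus_{i \le k} \sE'_i$. By Lemma \ref{l:filtSympConstruct}, the triple $(\sE', h', F')$ is a split filtered symplectic vector bundle on $R \otimes_k B$, and by construction its reduction modulo $I$ recovers $(\sE, h, F)$ under the chosen splitting. The only subtle points are the characteristic zero input at $i = 0$ and the bookkeeping needed to preserve the anti-symmetry relations on lifting; once the splitting is fixed, neither is serious, and the main work is carried by the combination of Lemmas \ref{l:splitFilteredSymplectic} and \ref{l:filtSympConstruct}.
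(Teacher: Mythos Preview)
Your proposal is correct and follows essentially the same route as the paper: split the filtered symplectic bundle via Lemma~\ref{l:splitFilteredSymplectic}, lift the free graded pieces and the maps $h_i$ (handling $i>0$ directly, $i<0$ by the anti-symmetry relation, and $i=0$ via the averaging of Lemma~\ref{l:symplecticLift}), then reassemble with Lemma~\ref{l:filtSympConstruct}. Your explicit justification that the lifted $\tilde h_i$ is an isomorphism (via nilpotency of $R\otimes_k I$) is a nice addition that the paper leaves implicit.
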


\begin{proof}
    By Lemma \ref{l:splitFilteredSymplectic} $(\sE,F,h)$ is a split filtered symplectic vector bundle. 
    As such, it is isomorphic to its associated graded filtered symplectic vector bundle. The symplectic structure can thus be 
    described by isomorphisms 
    \[       h_i : \Gr_F^i(\sE) \rightarrow \Gr_F^{-i}(\sE)^{\vee}                                  \]
    for $-n+1 \le i \le n-1$, satisfying $h_i = - h_{-i}^{\vee} \circ \ev $. Since $\Gr_F^i(\sE)$ is free for all $i$, 
    we choose free $R\otimes_{k}B$-modules $\sE_{i}'$ that lift $\Gr_F^i(E)$ for $-n+1 \le i \le n-1$. One chooses isomorphisms
    \[     h'_i : \sE'_{i} \rightarrow \sE_{-i}'                   \] 
    for $i>0$ that lift $h_i$. Using Lemma \ref{l:symplecticLift}, we can define isomorphism 
    \[    h'_0 : \sE_{0}'\to \sE_{0}'                        \]  
    that lifts $h_0$ and satisfies $h'_0 = - (h'_0)^{\vee} \circ \ev$. For $i < 0$, we define 
    \[     h'_i : \sE_{i}\to \sE_{-i}\quad\text{by}\quad      h'_i = - (h'_{-i})^{\vee} \circ \ev.                   \]
    By construction, $\sE':=\overset{n-1}{\underset{i= -n+1}{\oplus}}   \sE_{i}$ lifts $\overset{n-1}{\underset{i= -n+1}{\oplus}}  \Gr_{F'}^i(\sE)$ 
    and the modules 
    \[        (F')^k(\sE')= \overset{k}{\underset{i= -n+1}{\oplus}} \sE_{i}'           \]
    lifts $F^k(E)$ for $-n+1 \le k \le n-1$. One applies \ref{l:filtSympConstruct}
    to see that the lifted filtered symplectic vector bundle has a filtered symplectic structure. 
\end{proof}

\begin{Lemma}
    \label{l:existenceLiftsNilp}
Let $A$ be an artinian local $k$-algebra. Consider an $A$-point $(\sE,h,\theta,F)$ of $\Nil^{n}_{X,\Spr}$
Let $U = \Spec(R)$ be an affine open subscheme of $X$ such that $\Gr_F^i(\sE)$ is free for all $-n+1 \le i \le n-1$.
If 
$$
0\to I\to B\to A\to 0
$$
is a square zero extension of artinian local $k$-algebras then
there exists a quadruple $(\sE',h',\theta',F')$ over $U\otimes_{A}B$ lifting $(\sE|_{U}, h,\theta|_{U},F|_{U})$. 
The lift can be chosen so that 
\begin{enumerate}
    \item $\theta'((F')^{i}(\sE'))\subseteq (F')^{i-2}(\sE')$ and
    \item the induced map 
    $$
    \theta^{k}:\Gr_F^i(\sE)\to \Gr_F^{-i}(\sE)
    $$
    is injective for all $-n+1 \le i \le n-1.$ 
\end{enumerate}
\end{Lemma}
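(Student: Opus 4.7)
The plan is to reduce to the split filtered case, perform the lift componentwise between free modules, and enforce self-adjointness by an averaging trick that exploits $\mathrm{char}(k)=0$.

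By Lemma \ref{l:splitFilteredSymplectic} and the hypothesis on the graded pieces, the restricted data $(\sE|_U,h,F|_U)$ is a split filtered symplectic vector bundle; fix such a splitting and identify $\sE|_U$ with $\bigoplus_{i=1-n}^{n-1}\sA_i$ where $\sA_i=\Gr^i_F(\sE|_U)$ is free, $h=\bigoplus h_i$ with $h_i\colon \sA_i\xrightarrow{\sim}\sA_{-i}^{\vee}$ satisfying $h_i=-h_{-i}^{\vee}$, and $\theta=\bigoplus\theta_{i,j}$ with $\theta_{i,j}\colon\sA_i\to\sA_j$. The filtration constraint $\theta(F^i)\subseteq F^{i-2}$ forces $\theta_{i,j}=0$ unless $j\le i-2$, and self-adjointness translates via Lemma \ref{l:filtSympConstruct} into the explicit relation $\theta_{i,j}=-h_j^{-1}\theta_{-j,-i}^{\vee}h_i$.

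Then invoke Lemma \ref{l:filteredSymplecticLift} to lift $(\sE|_U,h,F|_U)$ to a split filtered symplectic vector bundle $(\sE',h',F')$ on $U_B$ whose graded pieces $\sE'_i$ are free lifts of $\sA_i$ and whose $h'_i$ lift $h_i$. To produce $\theta'$, partition the admissible index pairs $(i,j)$ (those with $j\le i-2$) into orbits under $(i,j)\leftrightarrow(-j,-i)$. For each two-element orbit pick an arbitrary lift $\widetilde\theta_{i,j}\colon\sE'_i\to\sE'_j$ of $\theta_{i,j}$ (possible as source and target are free) and define $\theta'_{-j,-i}:=-(h'_{-i})^{-1}\widetilde\theta_{i,j}^{\vee}h'_j$; a short reduction-modulo-$I$ check shows this lifts $\theta_{-j,-i}$ and that the resulting pair is self-adjoint. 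For each fixed orbit (i.e.\ $j=-i$ with $i\ge 1$), pick any lift $\widetilde\theta_{i,-i}$ and symmetrize by setting
\[
\theta'_{i,-i} \;:=\; \tfrac{1}{2}\bigl(\widetilde\theta_{i,-i}-(h'_{-i})^{-1}\widetilde\theta_{i,-i}^{\vee}h'_i\bigr),
\]
which is self-adjoint by construction and requires $\mathrm{char}(k)=0$. Setting $\theta':=\bigoplus\theta'_{i,j}$ then yields a self-adjoint endomorphism of $(\sE',h')$ lifting $\theta|_U$.

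Finally, verify the two listed conditions. Condition (1) is immediate from the vanishing $\theta'_{i,j}=0$ for $j>i-2$ in the split decomposition. For condition (2), observe that on the associated graded the iterated map $\theta'^{\,i}\colon\Gr^i_{F'}(\sE')\to\Gr^{-i}_{F'}(\sE')$ is the composition $\theta'_{-i+2,-i}\circ\cdots\circ\theta'_{i,i-2}$ of maps between free $R_B$-modules, which lifts the corresponding injective composition for $\theta$; applying Lemma \ref{l:injectiveLift} to this composite gives the required injectivity over $U_B$. The only genuinely delicate step is the symmetrization in the diagonal case, which is where the hypothesis $\mathrm{char}(k)=0$ enters; the rest of the argument is essentially bookkeeping once the reduction to the split case has been carried out.
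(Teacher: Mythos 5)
Your proposal is correct and follows essentially the same route as the paper: lift the split filtered symplectic structure via Lemma \ref{l:filteredSymplecticLift}, lift the blocks $\theta_{i,j}$ between free graded pieces, enforce self-adjointness by defining the partner block $-(h'_{j})^{-1}A^{\vee}h'_{i}$ (your orbit decomposition is exactly the paper's case split $i>|j|$, $i<|j|$, $i=-j$), symmetrize with the factor $\tfrac12$ on the diagonal blocks, and deduce injectivity on the associated graded from Lemma \ref{l:injectiveLift}. No gaps.
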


\begin{proof}
    By the previous Lemma we can certainly obtain the lift $(\sE',h',F')$. Furthermore, as $\Gr^{i}_{F'}(\sE)$
    is a flat deformation of the free module $\Gr^{i}_{F}(\sE)|_{U}$, is itself free. If follows from Lemma \ref{l:splitFilteredSymplectic}
    that $(\sE',h',F')$ is split filtered symplectic vector bundle. It follows that $h'$ is determined by the isomorphisms
    $$
    h'_{i}:\Gr^{i}_{F'}(\sE) \to \Gr^{-i}_{F'}(\sE)^{\vee}. 
    $$
    The final step is to lift the self adjoint $\theta$
     to a self adjoint morphism $\theta'$ that satisfies the properties (1) and (2) above. 
    The map $\theta$ can be written as 
$\oplus_{-n+1 \le i,j \le n-1} \theta_{i,j}$ on $\Gr_F(E)$
with 
\begin{equation}
\theta_{i,j} =  - h_j^{-1}  \theta_{-j,-i}^{\vee} h_i                                
\end{equation}
by Lemma \ref{l:filtSympConstruct}. Moreover, $\theta_{i,j} = 0$ for $i-j < 2$
since $\theta(F^{i}(\sE)) \subset F^{i-2}(\sE)$. 
As the vector bundles $\Gr^{i}_{F}(\sE')$ are free we can choose 
$$
A_{i,j}:\Gr^{i}(\sE')\to \Gr^{j}(\sE')
$$
lifting $\theta_{i,j}$. 
We define 
\[     \theta'_{i,j} : \Gr^i_{F'}(E') \rightarrow \Gr^{j}_{F'}(E')                            \]
by
\[ 
\theta'_{i,j} = \begin{cases}
    0       & \text{when}\ i-j<2\\ 
    A_{i,j} & \text{when}\ i-j\ge 2\ \text{and}\ i>|j| \\
    -(h'_{j})^{-1}(A_{-j-i})^{\vee}h_{i} & \text{when}\ i-j\ge 2\ \text{and}\ i<|j|. \\
    \frac{A_{i,-i}-  (h'_{-i})^{-1}A_{i,-i}^{\vee}h_{i}'}{2}                  & \text{when }i=-j\text{ and }i>0. 
\end{cases}
\]

% \begin{itemize}
%     \item For $i-j < 2$, we define $\theta'_{i,j} = 0$. This lifts $\theta_{i,j}$ in this case since $\theta_{i,j} = 0$
%           for $i-j < 2$.
%     \item For $i-j \ge 2$, $i>|j|$, we lift $\theta_{i,j}$ to a morphism $\theta'_{i,j}$.
%     \item $i-j \geq 2$, $i < |j|$, we define
%     \[         \theta'_{i,j} =  - (h')_j^{-1}  (\theta')_{-j,-i}^{\vee} h'_i  .                             \]
%     The above condition implies $-j > |i|$, and hence $\theta'_{-j,-i}$ in the RHS is defined. Moreover, this maps 
%    lifts $- h_j^{-1}  \theta_{-j,-i}^{\vee} h_i = \theta_{i,j}$ by equation \ref{Equation condition on thetaij}.
%     \item For $i>0$, $j=-i$, we lift $\theta_{i,-i}$ to $\tilde{\theta}_{i,-i}$. We define 
%           $\theta'_{i,-i}=\frac{\tilde{\theta}_{i,-i} -  (h')_{-i}^{-1}\tilde{\theta}_{i,-i}^{\vee} h'_i }{2}$. This 
%          lifts
%           $\frac{\theta_{i,-i} -  h_{-i}^{-1}\theta_{i,-i}^{\vee}h_i }{2} = \theta_{i,-i}$ by Equation 
%           \ref{Equation condition on thetaij}.
% \end{itemize}

We define $\theta'= \oplus_{i,j} \theta'_{i,j}$. This lifts $\theta$ since $\theta'_{i,j}$ lifts $\theta_{i,j}$ for all 
$i$,$j$. As $\theta_{i,j}=0$ when $i-j<2$ it follows that 
$$
\theta' ((F')^{i}(\sE') \subseteq (F')^{i-2}(\sE'))
$$
The lift is self-adjoint by Lemma \ref{l:filtSympConstruct}. 
Lastly, the induced maps 
\[   (\theta')^i : \Gr_{F'}^i(\sE') \rightarrow \Gr^{-i}_{F'}(\sE')                                                       \]
are injective since they lift the  injective maps (refer Lemma \ref{l:injectiveLift}) 
\[   \theta^i : \Gr^i_{F}(\sE') \rightarrow \Gr^{-i}_{F}(\sE').                                                        \]
\end{proof}

\begin{Lemma}
\label{l:uniquenessLift}
Let $\Spec(R)=U\subseteq X$ be an open affine subset. 
Consider a filtered symplectic vector bundle $(\sE_{0},F_{0},h_{0})$ on $U$ 
 such that $\Gr^i_{F_0}(\sE_0)$ is free for all $-n+1 \le i \le n-1$.
Let $(\sE_1,h_1, F_1)$ and $(\sE_2,h_{2}, F_2)$ be two filtered symplectic vector bundles over $R\otimes_l B$ that lift  the filtered 
symplectic vector bundle $(\sE_{0},h_{0},F)$. Suppose that both the associated graded sheaves $\Gr_{F_1}(\sE_1)$ and $\Gr_{F_2}(\sE_2)$ are flat
over $B$. 
Then there exists an isomorphism of  filtered symplectic vector bundles
\[       \sigma : \sE_1 \rightarrow \sE_2                              \]
that lifts the identity map on $\sE_{0}$. 
\end{Lemma}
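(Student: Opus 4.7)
The plan is to reduce via the splitting result to the associated graded and construct $\sigma$ piece by piece. By hypothesis, each $\Gr^{i}_{F_{j}}(\sE_{j})$ is a flat lift of the free module $\Gr^{i}_{F_{0}}(\sE_{0})$, hence itself free, and Lemma \ref{l:splitFilteredSymplectic} therefore makes both $(\sE_{1},F_{1},h_{1})$ and $(\sE_{2},F_{2},h_{2})$ split filtered symplectic vector bundles. After choosing compatible splittings --- which is possible because the unipotent group scheme of filtered automorphisms acting as the identity on the associated graded is smooth, so any splitting of $(\sE_{0},F_{0})$ extends to both $\sE_{1}$ and $\sE_{2}$ --- I may write $\sE_{j} = \bigoplus_{i}\sE_{j,i}$ with $h_{j} = \bigoplus_{i} h_{j,i}$, where $\sE_{j,i}$ is free and lifts $\sE_{0,i}$, and the component maps $h_{j,i}:\sE_{j,i}\to\sE_{j,-i}^{\vee}$ satisfy the constraint of Lemma \ref{l:gradedSymplectic}.

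Next I construct $\sigma = \bigoplus_{i}\sigma_{i}$ in pieces. For each $i > 0$, since $\sE_{1,i}$ and $\sE_{2,i}$ are free of the same rank over $R\otimes_{l}B$ and both lift $\sE_{0,i}$, I pick any $R\otimes_{l}B$-linear isomorphism $\sigma_{i}$ lifting the identity on $\sE_{0,i}$. For $i < 0$, I \emph{define} $\sigma_{i} := h_{2,i}^{-1} \circ \sigma_{-i}^{-\vee} \circ h_{1,i}$; the symplectic relation $h_{j,i} = -h_{j,-i}^{\vee}\circ\ev$ from Lemma \ref{l:gradedSymplectic} guarantees that this formula again lifts the identity, and it encodes exactly the compatibility needed for the direct sum to intertwine the off-diagonal components of the two symplectic forms.

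The main obstacle, as I see it, is the middle piece $i = 0$, where one needs a genuinely \emph{symplectic} isomorphism $\sigma_{0}\colon (\sE_{1,0},h_{1,0}) \to (\sE_{2,0},h_{2,0})$ lifting the identity on $(\sE_{0,0},h_{0,0})$. My plan is to first pick any lift $\tilde{\sigma}_{0}$ of the identity and then to correct it. The discrepancy $\tilde{\sigma}_{0}^{\vee}\circ h_{2,0}\circ \tilde{\sigma}_{0} - h_{1,0}$ is an alternating form valued in $I$, and one kills it by replacing $\tilde{\sigma}_{0}$ with $(1 + \tau)\tilde{\sigma}_{0}$ for a suitable $\tau \in I\cdot \End(\sE_{1,0})$; solving $\tau^{\vee}h_{1,0} + h_{1,0}\tau$ equal to the discrepancy comes down to the same symmetrization trick, with a factor of one half, as in the proof of Lemma \ref{l:symplecticLift}, which is why the characteristic zero hypothesis is essential here. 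Assembling $\sigma = \bigoplus_{i}\sigma_{i}$ then yields the required isomorphism of filtered symplectic vector bundles lifting the identity on $\sE_{0}$.
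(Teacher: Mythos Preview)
Your proof is correct and follows essentially the same approach as the paper: reduce to the split case via Lemma \ref{l:splitFilteredSymplectic} and then build $\sigma = \bigoplus_i \sigma_i$ piece by piece on graded components, handling $i>0$, $i<0$, and $i=0$ separately. One small sharpening: to conclude $h_{j}=\bigoplus_{i}h_{j,i}$ after splitting you need \emph{symplectic} splittings, so the group whose smoothness you invoke for the compatible-splittings step should be the group of filtered \emph{symplectic} automorphisms acting as the identity on the associated graded rather than all filtered automorphisms --- this subgroup is still smooth in characteristic zero, so the lifting argument goes through unchanged, and in fact your treatment of the ``lifts the identity'' requirement (both here and in the $i=0$ correction step) is more explicit than the paper's own proof, which simply cites that any two symplectic forms on a free module are isomorphic.
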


\begin{proof}
   The modules $\Gr^i_{F_1}(\sE_1)$ and $\Gr^i_{F_2}(\sE_2)$  for $-n+1 \le i \le n-1$ are free modules
 since they are flat lifts of free modules. By \ref{l:splitFilteredSymplectic} the filtered 
 symplectic vector bundles $\sE_i$ are in fact split filtered symplectic vector bundles. So there is a decomposition 
 $$
 h_{\alpha} = \bigoplus_{i} h_{\alpha, i} \quad \text{where}\quad
 h_{\alpha,i}:\Gr^{i}_{F_{\alpha}}(\sE_{\alpha})\to \Gr^{-i}_{F_{\alpha}}(\sE_{\alpha}),
 $$
 as in \ref{l:filtSympConstruct} for $\alpha = 1,2$. 
 As any two symplectic forms on a free module are isomorphic, there is an isomorphism 
 $\Gr^{0}_{F_1}(\sE_{2})\cong \Gr^{0}_{F_2}(\sE_{2}) $ commuting with the $h_{\alpha,0}$'s. 
 Similarly, one can choose isomorphisms $\Gr^{i}_{F_1}(\sE_{2})\cong \Gr^{i}_{F_2}(\sE_{2})$ for $i>0$
 so that the following diagrams commute:

 \begin{center}
    \begin{tikzcd}
        \Gr^{i}_{F_1}(\sE_{2}) \ar[r]\ar[d,"h_{1,i}"] & \Gr^{i}_{F_2}(\sE_{2}) \ar[d, "h_{2,i}"] \\
        \Gr^{-i}_{F_1}(\sE_{2}) \ar[r] & \Gr^{i}_{F_2}(\sE_{2}).
    \end{tikzcd}
 \end{center}
 There are induced isomorphism for $i<0$ that produce an isomorphism of filtered symplectic vector bundles. 

\end{proof}

\subsection{The obstruction and deformation space.}

We assume in this subsection that $k$ is an algebraically closed field.

 Given a $k$-point
 $(\sE_0,h_{0},\theta_0,F_0)$ of $\Nil_{X,\Spr}^{n}$ 
 we define a two term complex
 \[            P(\sE_0,h_{0},\theta_0,F_0) = (F_0)_h^0(\homs(E_0,E_0)) \xrightarrow{[\theta_0,-]} (F_0)_h^{-2}(\homs(E_0,E_0)).      \]
 Where $[\theta_{0},f]=\theta_{0}f - f\theta_{0}$. 

 \begin{Theorem}
\label{t:deformations}
    Let 
    \[      0 \to I \to B \to A \to 0                           \]
    be a square zero extension of local artinian $k$-algebras. Let  $(\sE_0,h_0,\theta_0,F_0)$ be a $k$-point of 
    $\Nil_{X,\Spr}^{n}$. 
    If $(\sE,h,\theta, F)$ is an $A$-point of $\Nil_{X,\Spr}^{n}$ lifting $(\sE_0,h_0,\theta_0,F_0)$ then 
    \begin{enumerate}
        \item there exists an obstruction to lifting $(\sE,h,\theta,F)$ to a $B$-point in 
        $$\Cohomology^2(X, P(\sE_0,h_{0},\theta_0,F_0) \otimes_{k} I);$$
        \item if the obstruction vanishes, the space of lifts is a torsor under 
        $$\Cohomology^1(X, P(\sE_0,h_{0},\theta_0,F_0) \otimes_{k} I);$$
        \item the automorphism group of a lift is isomorphic to 
        $$\Cohomology^0(X, P(\sE_0,h_{0},\theta_0,F_0) \otimes_{k} I).$$
    \end{enumerate}
 \end{Theorem}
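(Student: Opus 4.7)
The plan is to carry out the standard deformation-theoretic Čech argument, reducing the global lifting problem to local lifts provided by Lemma~\ref{l:existenceLiftsNilp} and comparisons between such lifts provided by Lemma~\ref{l:uniquenessLift}. First I would choose an affine open cover $\{U_\alpha\}$ of $X$ so that each graded piece $\Gr_{F_0}^i(\sE_0)|_{U_\alpha}$ is free; this exists because $X$ is a curve and these graded pieces are locally free. By Lemma~\ref{l:existenceLiftsNilp}, on each $U_\alpha \otimes_A B$ there is a local lift $(\sE_\alpha, h_\alpha, \theta_\alpha, F_\alpha)$ of the restriction of $(\sE,h,\theta,F)$.

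For assertion (3), an infinitesimal automorphism of a $B$-lift that restricts to the identity on the underlying $A$-lift has the form $\mathrm{id} + \phi$ with $\phi \in \Gamma(X, \sEnd(\sE_0)) \otimes_k I$. Preserving the symplectic form forces $\phi$ to be self-adjoint; preserving the filtration places it in $F_h^0(\homs(\sE_0,\sE_0))$; commuting with $\theta_0$ requires $[\theta_0,\phi] = 0$. This identifies the automorphism group with the kernel of $H^0(X, F_h^0 \otimes I) \xrightarrow{[\theta_0,-]} H^0(X, F_h^{-2} \otimes I)$, which is precisely $\mathbb{H}^0(X, P(\sE_0,h_0,\theta_0,F_0) \otimes_k I)$.

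For assertions (1) and (2), on each double overlap $U_{\alpha\beta}$ Lemma~\ref{l:uniquenessLift} supplies an isomorphism $\sigma_{\alpha\beta}$ of the filtered symplectic lifts $(\sE_\alpha, h_\alpha, F_\alpha)$ and $(\sE_\beta, h_\beta, F_\beta)$ over $U_{\alpha\beta} \otimes_A B$, though $\sigma_{\alpha\beta}$ need not intertwine $\theta_\alpha$ and $\theta_\beta$. The discrepancy $\psi_{\alpha\beta} := \sigma_{\alpha\beta}^{-1} \theta_\beta \sigma_{\alpha\beta} - \theta_\alpha$ is a section of $F_h^{-2}(\homs(\sE_0,\sE_0)) \otimes_k I$ over $U_{\alpha\beta}$, since both endomorphisms shift the filtration by $-2$. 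On triple overlaps the failure of $\{\sigma_{\alpha\beta}\}$ to satisfy the strict cocycle identity gives a Čech $2$-cochain valued in $F_h^0 \otimes I$, and combined with the $\theta$-discrepancies $\{\psi_{\alpha\beta}\}$ these assemble into a Čech $2$-cocycle for the two-term complex $P \otimes I$. This is the obstruction class in $\mathbb{H}^2(X, P \otimes I)$; one checks in the usual way that it is independent of the cover, the local lifts, and the choice of gluing data, and vanishes exactly when a global lift exists.

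When the obstruction vanishes, the local lifts can be modified to glue, and the resulting set of global lifts becomes a torsor under $\mathbb{H}^1(X, P \otimes I)$, realized as $1$-cocycles modulo coboundaries in the Čech bicomplex associated to $P \otimes I$. The main technical obstacle I anticipate is verifying that the Čech differential on $P \otimes I$ exactly models the nonabelian data $(\sigma_{\alpha\beta}, \psi_{\alpha\beta})$ up to the right notion of equivalence; once this identification is made, the well-definedness of the obstruction class and the torsor structure of (2) follow by routine deformation-theoretic arguments.
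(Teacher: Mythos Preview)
Your proposal is correct and follows essentially the same approach as the paper: both choose an affine cover on which the graded pieces are free, invoke Lemma~\ref{l:existenceLiftsNilp} for local lifts and Lemma~\ref{l:uniquenessLift} for gluing isomorphisms of the filtered symplectic data, and then package the triple-overlap cocycle failure together with the $\theta$-discrepancies $\sigma_{\alpha\beta}^{-1}\theta_\beta\sigma_{\alpha\beta}-\theta_\alpha$ into a \v{C}ech hypercocycle for $P\otimes I$. Your treatment of part~(3) and your explicit flagging of the cocycle verification are, if anything, slightly more detailed than the paper's own sketch.
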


 \begin{proof} In what follows we write $P:=  P(\sE_0,h_{0},\theta_0,F_0)$.
    Using Lemma 
    \ref{l:existenceLiftsNilp} we can find an open affine cover $\{U_{\alpha}\}_{\alpha \in I}$ of $X$
    so that our given $A$-point lifts locally to $B$-points  $(\sE_{\alpha},h_{\alpha},\theta_{\alpha},F_{\alpha})$. 
    
    Lemma \ref{l:uniquenessLift} shows that we have isomorphisms of filtered symplectic vector bundles 
   \[ g_{\alpha\beta} : \sE_{\alpha}|_{U_{\alpha} \cap U_{\beta}} \rightarrow \sE_{\beta}|_{U_{\alpha} \cap U_{\beta}}  \]
   on the double intersections $U_{\alpha} \cap U_{\beta}$. 
    On triple intersections $U_{\alpha} \cap U_{\beta} \cap U_{\gamma}$, 
   we define $c_{\alpha\beta\gamma}$ $\in$ $\homsFz(E_0,E_0 \otimes I)|_{U_{\alpha} \cap U_{\beta} \cap U_{\gamma}}$
    \[          c_{\alpha\beta\gamma} = 1 - g_{\alpha\gamma}^{-1} g_{\beta\gamma}g_{\alpha\beta}                    \]
and $\tau_{\alpha\beta}$ $\in$ $\homsFz(E_0,E_0(2) \otimes I)|_{U_{\alpha} \cap U_{\beta}}$
    \[           \tau_{\alpha\beta} = \theta_{\alpha} - g_{\alpha\beta}^{-1}\theta_{\beta}g_{\alpha\beta}.                                                                                                 \]
It is easy to check that the pair $(c_{\alpha\beta\gamma},a_{\alpha\beta})$ is a 2-cocycle for $P \otimes I$ and 
the vanishing of the corresponding cohomology class implies the existence of a global lift. 

For the second part, if there exists two lifts $(\sE_1,h_{1},\theta_1,F_1)$ and $(\sE_2,h_{2},\theta_2,F_2)$, then by 
\ref{l:uniquenessLift}, there exists an isomorphism of filtered symplectic vector bundles 
\[    \phi_{\alpha} : \sE_1|_{U_{\alpha}} \rightarrow  \sE_2|_{U_{\alpha}}.                                     \]
 Hence we define a 1-cocycle for $P \otimes I$,  
$(c_{\alpha\beta}, \tau_{\alpha})$ 
\[       c_{\alpha\beta} = \phi_{\beta}^{-1} g_{\alpha\beta} \phi_{\alpha} - 1                                   \]
\[       \tau_{\alpha} = \phi_{\alpha}^{-1}\theta_2\phi_{\alpha} - \theta_1                                          \]
where $g_{\alpha\beta}$ is a transition map
\[ g_{\alpha\beta} : \sE_{2}|_{U_{\alpha} \cap U_{\beta}} \rightarrow \sE_{2}|_{U_{\alpha} \cap U_{\beta}}  \]
The corresponding cohomology class vanishes if and only if the lifts are isomorphic. 

For the third part,  $\phi$ is an automorphism if and only if $1 - \phi$ is a global section of $P \otimes I$.
\end{proof}

\subsection{The dimension and smoothness $\Nil_{X,\Spr}^{n}$}
\label{s:dimension}

We will assume in this section that $k$ is an algebraically closed field of characteristic 0.

\subsection{Vanishing of the obstruction.}

\begin{Lemma}
\label{l:graded surjective implies map surjective}
    Let $(\sE_1,F_1)$ and $(\sE_2,F_2)$ be filtered vector bundles over a scheme $X$. 
    Let $\theta$ be a morphism of filtered vector bundles $(\sE_1,F_1)$ and $(\sE_2,F_2)$. Then $\theta$ is surjective 
    if $\Gr \theta$ is surjective.
\end{Lemma}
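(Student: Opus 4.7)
The plan is to argue by finite induction on the filtration degree, using the five lemma (or a short diagram chase) on the short exact sequences
$$
0 \to F_j^{i-1}(\sE_j) \to F_j^i(\sE_j) \to \Gr_{F_j}^i(\sE_j) \to 0
$$
for $j=1,2$, together with the map $\theta$ connecting them.

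First, since each filtration has finite amplitude, one picks an integer $N$ small enough that $F_1^{N}(\sE_1)=0=F_2^{N}(\sE_2)$ and an integer $M$ large enough that $F_1^{M}(\sE_1)=\sE_1$ and $F_2^{M}(\sE_2)=\sE_2$. The claim one proves by ascending induction on $i\ge N$ is that the restricted map
$$
\theta\big|_{F_1^{i}(\sE_1)}:F_1^{i}(\sE_1)\longrightarrow F_2^{i}(\sE_2)
$$
is surjective. The case $i=N$ is trivial, both sides being zero. For the inductive step one considers the commutative diagram with exact rows
$$
\begin{tikzcd}
0 \ar[r] & F_1^{i-1}(\sE_1) \ar[r]\ar[d,"\theta"] & F_1^{i}(\sE_1) \ar[r]\ar[d,"\theta"] & \Gr_{F_1}^{i}(\sE_1) \ar[r]\ar[d,"\Gr^{i}\theta"] & 0 \\
0 \ar[r] & F_2^{i-1}(\sE_2) \ar[r] & F_2^{i}(\sE_2) \ar[r] & \Gr_{F_2}^{i}(\sE_2) \ar[r] & 0.
\end{tikzcd}
$$
Surjectivity of the leftmost vertical map is the inductive hypothesis, surjectivity of the rightmost vertical map is by assumption on $\Gr\theta$, so the four lemma (or an immediate diagram chase) gives surjectivity of the middle vertical map.

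Taking $i=M$ yields the conclusion, since $F_1^{M}(\sE_1)=\sE_1$ and $F_2^{M}(\sE_2)=\sE_2$. There is no serious obstacle here: the only thing to check carefully is that the statement is truly local, so the claimed surjectivity may be verified stalkwise, where the short exact sequences above are genuine short exact sequences of modules and the elementary diagram chase applies. The argument never uses local freeness of $\sE_j$ beyond the fact that $F_j^{i}(\sE_j)/F_j^{i-1}(\sE_j)=\Gr_{F_j}^{i}(\sE_j)$, which is built into the definition of a filtered vector bundle.
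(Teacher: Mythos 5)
Your proposal is correct and is essentially the paper's own argument: ascending induction on the filtration index, with the commutative diagram of short exact sequences $0\to F^{i-1}\to F^{i}\to \Gr^{i}\to 0$ and the five lemma (or a direct chase) supplying the inductive step. The only addition, the remark that surjectivity may be checked stalkwise, is harmless but not needed, since the diagram argument already works in the abelian category of $\sO_X$-modules.
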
 

\begin{proof}
We prove by induction on $i$ that 
\[    \theta|_{F_1^i(\sE_1)} : F_1^i(\sE_1) \rightarrow F_2^i(\sE_2)                       \]
is surjective for all $i$. Let $m$ be such that $F_1^m(\sE_1) = F_2^m(\sE_2) = 0$. Then clearly 
\[    \theta|_{F_1^m(\sE_1)} : F_1^m(\sE_1) \rightarrow F_2^m(\sE_2)                                 \]
is surjective. We assume that 
\[       \theta|_{F_1^i(\sE_1)} : F_1^i(\sE_1) \rightarrow F_2^i(\sE_2)                                   \] 
is surjective. Then we have the following commutative diagram 
\[
\begin{tikzcd}
0 \arrow[r] & F_1^i(\sE_1) \arrow[r] \arrow[d, "\theta|_{F_1^{k}(\sE_1)}"] & F_1^{i +1}(\sE_1) \arrow[r] \arrow[d, "\theta|_{F_1^{k+1}(\sE_1)}"] & \Gr^{i +1}_{F_1}(\sE_1) \arrow[r] \arrow[d, "\Gr^{k+1}\theta"] & 0 \\
0 \arrow[r] & F_2^i(\sE_2) \arrow[r]                                       & F_2^{i +1}(\sE_2) \arrow[r]                                         & \Gr^{i +1}_{F_2}(\sE_2) \arrow[r]                               & 0
\end{tikzcd}\]
The first and the third vertical arrows are surjective by the induction hypothesis and the assumption on $\theta$ respectively. 
Hence, $\theta|_{F^{k+1(E)}}$ is surjective by the five lemma.
\end{proof}

\begin{Lemma}
\label{l:[theta,-] and gr commutes}
Let $(\sE,h,F)$ be a filtered symplectic vector bundle. Let $\theta$ be a self-adjoint endomorphism of $\sE$ such that 
$\theta(F^a(\sE)) \subset F^{a-2}(\sE)$. Then for all $a$, the following diagram is commutative 
\[% https://tikzcd.yichuanshen.de/#N4Igdg9gJgpgziAXAbVABwnAlgFyxMJZABgBpiBdUkANwEMAbAVxiRAB12BxAJwD1gdAL6c4ACwgBbAZwBiWBkID6wOEIAUogKKltAShBDS6TLnyEUZAIxVajFm068BdALQAmEe3FSZ7eYoqapreOvqGxiAY2HgERO7ktvTMrIgc3hLSwE48pLAA5gAEwkEaOYXautw8FaEGRiYx5vGkNtTJDmmimX68eTBFbp6lIby1cGHV41r1tjBQ+fBEoABmPFJIZCA4EEgJdimO3liSEavrp4hW1DtIAMzt9qnp2KcNIGsbiFu3V4+HXWqfDoyE4ODEMBwdFIrgoZw+Fz2N12iAeB06IFB1TBEKhMLhQgoQiAA
\begin{tikzcd}
{\Gr^{a}\shom^{\Fil}_{s}(\sE,\sE)} \arrow[rr, "\sim"] \arrow[d, "{\Gr^a[\theta,-]}"] &  & {\shom^{\Gr,\deg a}_{s}(\Gr \sE,\Gr \sE)} \arrow[d, "{[\Gr\theta,-]}"] \\
{\Gr^{a-2}\shom^{\Fil}_{s}(\sE,\sE)} \arrow[rr, "\sim"]                              &  & {\shom^{\Gr,\deg a-2}_{s}(\Gr \sE,\Gr \sE)}                           
\end{tikzcd}
\]
where the horizontal arrows are in Corollary \ref{c:gradedReduction}
\end{Lemma}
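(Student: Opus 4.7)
The diagram in question is concerned with naturality of the associated graded construction for the commutator, so my plan is to reduce to the split filtered case (where the isomorphism in Corollary \ref{c:gradedReduction} is given by an explicit formula) and then compute directly.

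Commutativity is a pointwise statement on $X$, so by Lemma \ref{l:splitFilteredSymplectic} I may pass to an affine open cover on which the graded pieces $\Gr_F^i(\sE)$ are free, and on each such open set I may assume that $(\sE,h,F)$ is a split filtered symplectic vector bundle. Choosing a splitting as in Lemma \ref{l:filtSympConstruct}, I write $\sE=\bigoplus_i \sA_i$ with $F^k(\sE)=\bigoplus_{i\le k}\sA_i$ and $h=\bigoplus_i h_i$, identifying $\sA_i$ with $\Gr_F^i(\sE)$. Under these identifications, the horizontal isomorphism of Corollary \ref{c:gradedReduction} sends the class of a self-adjoint $f\in F^a_h\shom_s^{\Fil}(\sE,\sE)$, decomposed as $f=\bigoplus_{i,j}f_{i,j}$ with $f_{i,j}\colon\sA_i\to\sA_j$ and $f_{i,j}=0$ whenever $j>i+a$, to the collection of top-degree components $\{f_{i,i+a}\}_i\in\shom^{\Gr,\deg a}_s(\Gr\sE,\Gr\sE)$.

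Applying the same analysis to $\theta$, the hypothesis $\theta(F^i(\sE))\subseteq F^{i-2}(\sE)$ forces $\theta=\bigoplus_{i,j}\theta_{i,j}$ with $\theta_{i,j}=0$ for $j>i-2$, and the horizontal isomorphism sends $\Gr\theta$ to $\{\theta_{i,i-2}\}_i$. Computing components of the commutator,
\[
[\theta,f]_{i,j}=\sum_{k}\bigl(\theta_{k,j}\circ f_{i,k}-f_{k,j}\circ\theta_{i,k}\bigr),
\]
and the image in $\Gr^{a-2}$ is read off from the $(i,j)$-components with $j=i+a-2$. Among the summands, a term $\theta_{k,i+a-2}\circ f_{i,k}$ is nonzero only if $k\le i+a$ (from $f$) and $i+a-2\le k-2$ (from $\theta$), forcing $k=i+a$ and yielding $\theta_{i+a,i+a-2}\circ f_{i,i+a}$; likewise $f_{k,i+a-2}\circ\theta_{i,k}$ forces $k=i-2$ and yields $f_{i-2,i+a-2}\circ\theta_{i,i-2}$. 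All other contributions lie in $F^{a-3}$ and vanish modulo $F^{a-3}$. Hence the image of $\Gr^a[\theta,f]$ under the horizontal isomorphism equals
\[
\bigl\{\theta_{i+a,i+a-2}\circ f_{i,i+a}\ -\ f_{i-2,i+a-2}\circ\theta_{i,i-2}\bigr\}_i,
\]
which is precisely $[\Gr\theta,\Gr f]$ evaluated graded-piece by graded-piece.

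There is no deep obstacle here: the isomorphism of Corollary \ref{c:gradedReduction} was proved by reducing to the split case, and once one is in that case the commutativity is forced by the observation that the top filtration degree of a composition is additive. The only thing to be careful about is bookkeeping the indices so that the degree-$(a-2)$ components of $[\theta,f]$ receive contributions only from the top-degree components of $\theta$ and of $f$, which is precisely what the constraints $f_{i,j}=0$ for $j>i+a$ and $\theta_{i,j}=0$ for $j>i-2$ enforce.
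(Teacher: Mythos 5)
Your proof is correct, but it takes a more computational route than the paper's. The paper's own argument is essentially two lines: it observes that the horizontal arrows of Corollary \ref{c:gradedReduction} are induced by $\alpha \mapsto \Gr\alpha$ on the level of $F^{a}\shom^{\Fil}_{s}(\sE,\sE)$, and then commutativity is immediate from the multiplicativity of the associated graded functor, $\Gr[\theta,\alpha]=\Gr(\theta\alpha-\alpha\theta)=\Gr\theta\,\Gr\alpha-\Gr\alpha\,\Gr\theta=[\Gr\theta,\Gr\alpha]$, with no splitting and no localization. You instead localize on $X$, invoke Lemma \ref{l:splitFilteredSymplectic} to put $(\sE,h,F)$ in split form on an affine cover with free graded pieces, and verify the identity component by component, checking that the $(i,i+a-2)$ entries of $[\theta,f]$ receive contributions only from $\theta_{i+a,i+a-2}\circ f_{i,i+a}$ and $f_{i-2,i+a-2}\circ\theta_{i,i-2}$. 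This is heavier machinery for the same underlying fact (additivity of top filtration degree under composition), but it has the merit of making explicit both what the isomorphism of Corollary \ref{c:gradedReduction} does in the split picture and why the lower-order components drop out modulo $F^{a-3}$ — points the paper's proof leaves implicit. Your reduction to the split case is legitimate (commutativity of a diagram of sheaf maps is local, and the splitting lemma applies on affines where the graded pieces are free), and the index bookkeeping is accurate, so the argument stands as written.
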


\begin{proof}
    It is enough to check that the following diagram 
    \[% https://tikzcd.yichuanshen.de/#N4Igdg9gJgpgziAXAbVABwnAlgFyxMJZABgBpiBdUkANwEMAbAVxiRAB12BxAJwD1gdAL6c4ACwgBbAZwBiWBkID6wOEIAUogKKltAShBDS6TLnyEUZAIxVajFm068BdALQAmEe3FSZ7eYoqapreOvqGxiAY2HgERO7ktvTMrIgc3hLSwE48pLAA5gAEwkEaOYXautw8FaEGRiYx5vGkNtTJDmmimX68eTBFbp6lIby1cGHV41r1tjBQ+fBEoABmPFJIZCA4EEgJdimO3liSEavrp4hW1DtIAMzt9qnp2KcNIGsbiFu3V4+HXWqfDoyE4ODEMBwdFIrgoZw+Fz2N12iAeB06IFB1TBEKhMLhQgoQiAA
\begin{tikzcd}
{F^{a}\shom^{\Fil}_{s}(\sE,\sE)} \arrow[rr, two heads] \arrow[d, "{\Gr^a[\theta,-]}"] &  & {\shom^{\Gr,\deg a}_{s}(\Gr \sE,\Gr \sE)} \arrow[d, "{[\Gr\theta,-]}"] \\
{F^{a-2}\shom^{\Fil}_{s}(\sE,\sE)} \arrow[rr, two heads]                              &  & {\shom^{\Gr,\deg a-2}_{s}(\Gr \sE,\Gr \sE)}                           
\end{tikzcd}
\]
The horizontal maps are 
\[     \alpha \mapsto \Gr \alpha.              \]
Hence, this follows from 
\[ \Gr [\theta, \alpha] = \Gr(\theta \alpha - \alpha \theta) = \Gr \theta \Gr \alpha - \Gr \alpha \Gr \theta = [\Gr \theta, \Gr \alpha ].  \]
\end{proof}

\begin{Lemma}
    \label{l:surjectivity on associated graded vector space case}
 Let $(V,h,F)$ be a filtered symplectic vector space. Let $\theta$ be a self-adjoint endomorphism of $V$ such that 
$\theta(F^a(V)) \subset F^{a-2}(V)$. Then,
\[   [\theta,-] : F_h^i(\Hom_s(V,V)) \rightarrow F_h^{i-2}(\Hom_s(V,V))          \]
is surjective for all $i \le 1$.
\end{Lemma}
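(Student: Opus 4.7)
The plan is to reduce the statement to an analogous question on the associated graded and then solve it using the representation theory of $\mathfrak{sl}_2$, the $\mathfrak{sl}_2$-action being furnished by Jacobson--Morozov.

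First I would reduce to the graded case. Giving the submodule $F_h^i(\Hom_s(V,V))$ the induced filtration by $F_h^j$ for $j\le i$, and similarly for $F_h^{i-2}$, the map $[\theta,-]$ is filtered of degree $-2$. A five-lemma induction on the associated graded pieces, exactly analogous to Lemma \ref{l:graded surjective implies map surjective}, reduces the desired surjectivity to the surjectivity of the induced maps
$$
\Gr^j_{F_h}\Hom_s(V,V)\longrightarrow \Gr^{j-2}_{F_h}\Hom_s(V,V)
$$
for every $j\le i$. By Lemma \ref{l:[theta,-] and gr commutes} combined with Corollary \ref{c:gradedReduction}, this map is identified with $[\Gr\theta,-]$ on the graded self-adjoint endomorphisms of $\Gr V$. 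Since $i\le 1$ gives $j\le 1$, it suffices to prove surjectivity of
$$
[\Gr\theta,-]\colon \Hom^{\Gr,\deg j}_s(\Gr V,\Gr V)\longrightarrow \Hom^{\Gr,\deg j-2}_s(\Gr V,\Gr V)
$$
for every $j\le 1$.

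Second, I would promote $\Gr\theta$ to an $\mathfrak{sl}_2$-action on $\Gr V$. The element $\Gr\theta$ is nilpotent in $\mathfrak{sp}(\Gr V)$, and the hypothesis that $\theta^l$ is injective on graded pieces, together with the equality $\dim\Gr^lV=\dim\Gr^{-l}V$ supplied by Lemma \ref{l:sympFiltDual}, upgrades the powers $(\Gr\theta)^l\colon \Gr^lV\to \Gr^{-l}V$ to isomorphisms. Since $k$ has characteristic zero, Jacobson--Morozov produces an $\mathfrak{sl}_2$-triple $(H,\Gr\theta,Y)$ inside $\mathfrak{sp}(\Gr V)$. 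The $H$-weight filtration on $\Gr V$ satisfies the hypotheses of Lemma \ref{l: charactisation of the filtraion}, and so does the filtration induced from the given grading; the uniqueness in that lemma then forces the two filtrations, and hence the two gradings, to agree.

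Finally I would apply the standard Lefschetz surjectivity for finite-dimensional $\mathfrak{sl}_2$-modules to the adjoint module $\mathfrak{sp}(\Gr V)$. On any finite-dimensional $\mathfrak{sl}_2$-module $W$, the lowering operator $\ad(\Gr\theta)$ sends $W_a$ onto $W_{a-2}$ whenever $a\le 1$: on each irreducible summand $V(n)$ the map $V(n)_a\to V(n)_{a-2}$ fails to be surjective only in the case $a=n+2\ge 2$. Corollary \ref{c:gradedReduction} identifies the weight $a$ subspace of $\mathfrak{sp}(\Gr V)$ with $\Hom^{\Gr,\deg a}_s(\Gr V,\Gr V)$, which supplies exactly the graded surjectivity required in the first step and completes the proof. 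The main obstacle is the identification of the given grading on $\Gr V$ with the $H$-weight decomposition coming from Jacobson--Morozov; once that matching is secured, everything else is formal or standard $\mathfrak{sl}_2$-representation theory.
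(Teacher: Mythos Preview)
Your overall strategy coincides with the paper's: reduce to the associated graded via Lemmas \ref{l:graded surjective implies map surjective} and \ref{l:[theta,-] and gr commutes}, then settle the graded case with Jacobson--Morozov and $\mathfrak{sl}_2$-representation theory. The endgame you sketch (lowering operator surjective on weights $\le 1$) is exactly the ``good grading'' conclusion the paper draws from \cite{VK} and \cite{Ali}.

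The gap is in the middle step, which you yourself flag as the main obstacle. Two problems. First, you invoke ``the hypothesis that $\theta^l$ is injective on graded pieces'' to get $(\Gr\theta)^l\colon \Gr^lV\to\Gr^{-l}V$ bijective, but the lemma as stated carries no such hypothesis; $(V,h,F)$ is merely filtered symplectic and $\theta$ merely self-adjoint with $\theta(F^a)\subset F^{a-2}$. Second, and more seriously, even granting the bijectivity, your appeal to Lemma \ref{l: charactisation of the filtraion} only matches the \emph{filtrations}, not the \emph{gradings}. Two gradings inducing the same filtration need not coincide, and an arbitrary Jacobson--Morozov $H$ for $\Gr\theta$ will in general produce a different splitting of that filtration than the one you started with. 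So you cannot conclude that the degree grading on $\Hom_s(\Gr V,\Gr V)$ is the $H$-weight grading, which is what your final $\mathfrak{sl}_2$ argument needs.

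The paper avoids this by running Jacobson--Morozov the other way round: rather than producing an arbitrary triple and then trying to match, it first writes down the semisimple element $\alpha$ directly from the given grading ($\alpha$ acts as multiplication by $i$ on $\Gr^iV$), checks $\alpha\in\Hom_s^{\Gr,\deg 0}$ and $[\alpha,\Gr\theta]=-2\Gr\theta$, and then invokes the refinement of Jacobson--Morozov (from \cite{Ali}) that completes a pair $(e,h)$ with $[h,e]=-2e$ to a full triple. With $H=\alpha$ thus fixed, the degree grading on $\mathfrak{sp}(\Gr V)$ \emph{is} the Dynkin grading by construction, and the surjectivity is the statement that Dynkin gradings are good. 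Your argument is easily repaired by making this same move.
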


\begin{proof}
   By Lemma \ref{l:graded surjective implies map surjective} and Lemma \ref{l:[theta,-] and gr commutes}, it is enough to prove that 
   \[    [\Gr\theta,-] : \Hom_s^{\Gr,\deg i}(\Gr V,\Gr V) \rightarrow \Hom_s^{\Gr,\deg i-2}(\Gr V,\Gr V)                         \] 
    for all $i \le 1$. Note that using the isomorphism 
    \[           \Gr^i(\Hom_s^{\Fil}(V,V)) =  \Hom_s^{\Gr,\deg i}(\Gr V,\Gr V),           \]
    this is part of the definition of a good grading on $\Hom_s(V,V)$ (refer \cite[Equation 0.2]{VK}) with the negative convention. To prove, this defines 
    a good grading, we define an element $h(\theta)$ in $\Hom_s^{\Gr,\deg 0}(\Gr V,\Gr V)$ by the equation 
    \begin{align*}
        \alpha = \oplus_i \alpha_i ,   \quad \quad \alpha_i : \Gr^i V &\rightarrow \Gr^i V  \\  
                                                    v  &\mapsto i.v. 
    \end{align*}
    This is of the adjoint type since 
    \[% https://tikzcd.yichuanshen.de/#N4Igdg9gJgpgziAXAbVABwnAlgFyxMJZABgBpiBdUkANwEMAbAVxiRAB12BxAJwD0sACgBqAShABfUuky58hFAEZyVWoxZtOvASPFSZ2PASJlFq+s1aIO3fsAC0WCbr6caMVvpAZD8osrNqCw1rLTtHZzFXdndPVRgoAHN4IlAAMx4IAFskMhAcCCQAJiD1KxAACwB9LElpEAzspGV8wsQAZlLLNmrar0acxDyC5q6Qm0Y0CroauvTMwZLWpE61but7TknpqocnaNjJCgkgA
    \begin{tikzcd}
    \Gr^i(V) \arrow[d, "h_i"] \arrow[r, "\alpha_i"] & \Gr^i(V) \arrow[d, "h_i"] \\
    \Gr^{-i}(V)^\vee \arrow[r, "-\alpha_{-i}^\vee"] & \Gr^{-i}(V)^\vee .       
    \end{tikzcd}\]
    Furthermore, it is clear by construction that the map 
    \[      [\alpha,-] :   \Hom_s^{\Gr,\deg i}(\Gr V,\Gr V) \rightarrow \Hom_s^{\Gr,\deg i }(\Gr V,\Gr V)                         \]
    is multiplication by $i$. In particular $[\alpha, \Gr \theta] = -2[\alpha, \Gr \theta]$. By \cite[Theorem 2.12, Claim 2, Page 26]{Ali}
    and \cite[Lemma 2.10]{Ali}, this pair $(\Gr \theta, \alpha)$ can be extended to a Jacobson Morozov triple 
    $(\Gr \theta, \alpha, f)$ such that 
    \[        [\alpha,f] = 2f, \quad [\Gr\theta, f] = \alpha \quad and \quad [\alpha,\Gr \theta] = -2\Gr \theta      \]
    and clearly the grading on $\Hom_s(V,V)$ is the corresponding Dynkin grading. This grading is now good by 
    \cite[Proposition 3.11]{Ali}. 
\end{proof}

\begin{Lemma}
    \label{Lemma cokernel is torsion}
    Let $X$ be a smooth projective curve. Let $(\sE,F, h, \theta )$ be a filtered symplectic vector bundle on $X$ with a self-adjoint 
    nilpotent endomorphism. 
    Then 
the map 
\[    [\theta,-] :    (F_0)_h^0(\homs(\sE,\sE)) \rightarrow (F_0)_h^{-2}(\homs(\sE,\sE))          \]
has torsion cokernel.
\end{Lemma}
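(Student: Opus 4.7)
The plan is to reduce the claim to the pointwise statement already proved in Lemma \ref{l:surjectivity on associated graded vector space case}, using the fact that on a smooth projective curve a coherent sheaf is torsion if and only if its stalk at the generic point vanishes. Concretely, both $(F_0)_h^0(\homs(\sE,\sE))$ and $(F_0)_h^{-2}(\homs(\sE,\sE))$ are coherent sheaves on $X$, and $[\theta,-]$ is a morphism between them; its cokernel is therefore a coherent sheaf. Since $X$ is a smooth curve, this cokernel is torsion precisely when it vanishes after pulling back to the generic point $\eta=\Spec k(X)$.

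First I would form the base change of the entire filtered symplectic data $(\sE,h,F,\theta)$ to $\eta$. The restriction $(\sE_\eta,h_\eta,F_\eta,\theta_\eta)$ is a filtered symplectic $k(X)$-vector space equipped with a self-adjoint nilpotent endomorphism satisfying $\theta_\eta(F^a_\eta) \subseteq F^{a-2}_\eta$, which is exactly the setup of Lemma \ref{l:surjectivity on associated graded vector space case}. Since the formation of $\homs$ and of the filtration $F_h$ is compatible with flat base change, one has a canonical identification
\[
\bigl((F_0)_h^i(\homs(\sE,\sE))\bigr)_\eta \;\cong\; F_h^i\bigl(\Hom_s(\sE_\eta,\sE_\eta)\bigr),
\]
under which the map $[\theta,-]$ on the left becomes the map $[\theta_\eta,-]$ on the right.

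Next I would invoke Lemma \ref{l:surjectivity on associated graded vector space case} with $i=0\le 1$ to conclude that
\[
[\theta_\eta,-]\colon F_h^0(\Hom_s(\sE_\eta,\sE_\eta))\twoheadrightarrow F_h^{-2}(\Hom_s(\sE_\eta,\sE_\eta))
\]
is surjective. Hence the cokernel of $[\theta,-]$ has trivial generic stalk, so it is a coherent sheaf supported on a proper closed subset of the smooth curve $X$; such a sheaf is torsion, proving the lemma.

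The only substantive point is the compatibility of the filtered self-adjoint Hom sheaf with localisation at $\eta$, which is routine because $k(X)$ is a flat $\sO_X$-module at $\eta$ and all the constructions used to build $(F_0)_h^i(\homs(\sE,\sE))$ (kernels of morphisms between locally free sheaves, restriction along the filtration) commute with flat base change. I do not expect any real obstacle beyond this verification; the heavy lifting concerning good gradings and Jacobson--Morozov triples has already been absorbed into Lemma \ref{l:surjectivity on associated graded vector space case}.
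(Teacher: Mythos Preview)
Your proposal is correct and follows essentially the same approach as the paper's proof: both reduce to the generic point of $X$, where the data becomes a filtered symplectic vector space, and then invoke Lemma~\ref{l:surjectivity on associated graded vector space case} (with $i=0$) to conclude surjectivity there. You have spelled out the compatibility with localisation in slightly more detail, but the argument is the same.
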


\begin{proof}
    It is enough to prove that the map
    \[    [\theta,-] :    (F_0)_h^0(\homs(\sE,\sE)) \rightarrow  (F_0)_h^{-2}(\homs(\sE , \sE))          \]
    is surjective after passing to the generic point. Thus it is enough to prove it for a filtered symplectic vector space 
    $(V,F, h)$ and a nilpotent self adjoint endomorphism $\theta$ with 
    order of nilpotency $n$. This now follows from Lemma \ref{l:surjectivity on associated graded vector space case}.
\end{proof}

\begin{Theorem}
\label{Theorem Vanishing of the Obstruction}
   Let $X$ be smooth projective curve over $k$.  
   Let $(\sE_{0},h_{0},\theta_{0},F_{0})$ be a $k$-point of $\Nil^{n}_{X,\Spr}$. 
   If $P=P(\sE_{0},h_{0},\theta_{0},F_{0})$ then 
    the second cohomology $\Cohomology^2(X,P)$ vanishes and hence $\Nil_{X,\Spr}^{n}$ is smooth.
\end{Theorem}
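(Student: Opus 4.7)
The plan is to compute $\mathbb{H}^2(X,P)$ using the hypercohomology spectral sequence and then to exploit Lemma \ref{Lemma cokernel is torsion}. I first set up the spectral sequence
$$E_1^{p,q} = H^q(X, P^p) \Rightarrow \mathbb{H}^{p+q}(X, P)$$
where $P^0 = (F_0)_h^0(\homs(\sE_0,\sE_0))$ and $P^1 = (F_0)_h^{-2}(\homs(\sE_0,\sE_0))$, with $d_1$ given by $[\theta_0,-]$. Since $X$ is a smooth projective curve, $H^q(X,-) = 0$ for $q \ge 2$, so the $E_1$ page is concentrated in the rows $q=0,1$. Moreover $P$ has no terms in degrees $\ge 2$. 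Tracking the pieces contributing to total degree $2$, the only surviving contribution is
$$\mathbb{H}^2(X,P) = \coker\bigl(H^1(X, P^0) \xrightarrow{[\theta_0,-]} H^1(X, P^1)\bigr).$$

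Next I reduce the problem to a statement about sheaves. By Lemma \ref{Lemma cokernel is torsion} the cokernel of the sheaf map $[\theta_0,-]: (F_0)_h^0(\homs) \to (F_0)_h^{-2}(\homs)$ is a torsion sheaf $T$ on $X$. Letting $I$ denote the image sheaf, I consider the two short exact sequences
$$0 \to I \to (F_0)_h^{-2}(\homs) \to T \to 0, \qquad 0 \to K \to (F_0)_h^0(\homs) \to I \to 0,$$
where $K$ is the kernel. Since $T$ is supported on a zero-dimensional subscheme of the curve, $H^1(X,T)=0$, so the first long exact sequence gives surjectivity $H^1(X,I) \twoheadrightarrow H^1(X,(F_0)_h^{-2}(\homs))$. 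From the second long exact sequence, $H^1(X,(F_0)_h^0(\homs)) \to H^1(X,I)$ is surjective because $H^2(X,K)=0$ on a curve. Composing these, the map on $H^1$ induced by $[\theta_0,-]$ is surjective, so $\mathbb{H}^2(X,P) = 0$.

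Finally, smoothness of $\Nil_{X,\Spr}^{n}$ follows immediately: by Theorem \ref{t:deformations}(1), the obstruction to extending any $A$-point to a $B$-point along a square-zero extension $0 \to I \to B \to A \to 0$ lies in $\mathbb{H}^2(X, P \otimes_k I)$, which is likewise zero by the same argument applied to $P \otimes_k I$ (tensoring with the finite-dimensional $k$-vector space $I$ preserves the torsion property of the cokernel and the curve cohomology vanishing). Hence the deformation functor is formally smooth, and combined with algebraicity already established this gives smoothness of $\Nil_{X,\Spr}^{n}$. The main step requiring care is the reduction of $\mathbb{H}^2$ to a surjectivity question on $H^1$, which is clean thanks to the low dimension of $X$; the substantive input is Lemma \ref{Lemma cokernel is torsion}, which has already been proved.
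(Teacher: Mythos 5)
Your argument is correct and follows essentially the same route as the paper: both reduce $\Cohomology^2(X,P)$ to the surjectivity of $[\theta_0,-]$ on $\Cohomology^1$ using that $\Cohomology^2$ of coherent sheaves vanishes on a curve, and then invoke Lemma \ref{Lemma cokernel is torsion} to kill $\Cohomology^1$ of the torsion cokernel. Your write-up merely makes explicit some details the paper leaves implicit (the spectral-sequence bookkeeping, the factorization through the image sheaf, and the harmless tensoring with $I$), so there is nothing to correct.
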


\begin{proof}
We have a long exact sequence 
\[       \Cohomology^1(X, (F_0)_h^0(\homs(\sE_0,\sE_0))) \xrightarrow{[\theta_0,-]} \Cohomology^1(X,(F_0)_h^{-2}(\homs(\sE_0,\sE_0))) \rightarrow \Cohomology^2(P) \rightarrow 0 \] 
since $\Cohomology^2(X, (F_0)_h^0(\homs(\sE_0,\sE_0)))$ = 0. Thus it is enough to prove that the map 
\[    [\theta_0,-] :    \Cohomology^1(X, (F_0)_h^0(\homs(\sE_0,\sE_0))) \rightarrow  \Cohomology^1(X,(F_0)_h^0(\homs(\sE_0,\sE_0)))           \]
is surjective. 

By Lemma \ref{Lemma cokernel is torsion}, the cokernel of the map
\[    [\theta_0,-] :   (F_0)_h^0(\homs(\sE_0,\sE_0)) \rightarrow  (F_0)_h^{-2}(\homs(\sE_0,\sE_0))          \]
is torsion. Hence, $\Cohomology^1(\coker([\theta_0,-]))$ = 0. \
\end{proof}  

\subsection{The dimension calculation.}

In this subsection we let 
 $X$ be smooth projective curve over $k$.  
 We will further fix $(\sE_{0},h_{0},\theta_{0},F_{0})$ to be a $k$-point of $\Nil^{n}_{X,\Spr}$.

\begin{Lemma}
\label{Lemma cokernel of map of associated graded is torsion}
The map 
\[      [\theta_0,-] :   \Gr^1_{(F_0)_h}(\homs(\sE_0,\sE_0)) \rightarrow \Gr^{-1}_{(F_0)_h}(\homs(\sE_0,\sE_0))                             \]
has torsion cokernel.
\end{Lemma}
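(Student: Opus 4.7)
The plan is to mirror the proof of Lemma \ref{Lemma cokernel is torsion} almost verbatim. Since both $\Gr^{1}_{(F_0)_h}(\homs(\sE_0,\sE_0))$ and $\Gr^{-1}_{(F_0)_h}(\homs(\sE_0,\sE_0))$ are locally free sheaves on the smooth curve $X$, the cokernel of $[\theta_0,-]$ between them is a coherent sheaf whose torsion-ness is equivalent to vanishing at the generic point $\eta$ of $X$. I would therefore localize at $\eta$ and reduce to the case of a filtered symplectic vector space $(V,h,F)$ equipped with a self-adjoint nilpotent endomorphism $\theta$ satisfying $\theta(F^{a}(V))\subseteq F^{a-2}(V)$.

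In this vector space setting, Lemma \ref{l:surjectivity on associated graded vector space case} applied with $i=1$ asserts surjectivity of
\[ [\theta,-]\colon F_h^{1}(\Hom_s(V,V))\longrightarrow F_h^{-1}(\Hom_s(V,V)). \]
This is a morphism of filtered vector spaces of degree $-2$, so passing to associated graded pieces preserves surjectivity: given a representative $\bar{b}\in \Gr^{-1}_{F_h}(\Hom_s(V,V))$, lift it to $b\in F_h^{-1}(\Hom_s(V,V))$, then pull back along the surjection to $a\in F_h^{1}(\Hom_s(V,V))$, whose class in $\Gr^{1}_{F_h}(\Hom_s(V,V))$ maps to $\bar{b}$. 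This produces the desired surjection at $\eta$ and completes the argument.

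An alternative route, which I would sketch for conceptual clarity, is to apply Corollary \ref{c:gradedReduction} to identify $\Gr^{i}_{F_h}(\Hom_s(V,V))$ with $\shom^{\Gr,\deg i}_{s}(\Gr V,\Gr V)$, and then invoke Lemma \ref{l:[theta,-] and gr commutes} to rewrite $\Gr^{1}[\theta,-]$ as $[\Gr\theta,-]$ on graded pieces. Surjectivity of the latter in degree $1\to -1$ is a direct consequence of the good grading property supplied by the Jacobson--Morozov triple $(\Gr\theta,\alpha,f)$ built during the proof of Lemma \ref{l:surjectivity on associated graded vector space case}. I do not anticipate any real obstacle here: the statement is precisely the graded shadow of the preceding torsion-cokernel lemma, applied in the single relevant degree, and characteristic zero (needed for Jacobson--Morozov) has already been used upstream.
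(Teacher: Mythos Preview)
Your proposal is correct and follows essentially the same approach as the paper: reduce to the generic point, apply Lemma~\ref{l:surjectivity on associated graded vector space case} with $i=1$ to get surjectivity of $[\theta,-]\colon F_h^{1}\to F_h^{-1}$ for vector spaces, and then pass to the associated graded pieces. The paper's proof is simply a terser version of your first paragraph; your alternative route via Corollary~\ref{c:gradedReduction} and Lemma~\ref{l:[theta,-] and gr commutes} is valid but not needed here.
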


\begin{proof}
Lemma \ref{l:surjectivity on associated graded vector space case} proves that the map
\[     [\theta_0,-] :    (F_0)_h^{1}(\homs(\sE_0,\sE_0)) \rightarrow (F_0)_h^{-1}(\homs(\sE_0,\sE_0))                      \]
is surjective for $E_0$ a vector space. This induces a surjective map 
\[      [\theta_0,-] :   \Gr^1_{(F_0)_h}(\homs(\sE_0,\sE_0)) \rightarrow \Gr^{-1}_{(F_0)_h}(\homs(\sE_0,\sE_0))                             \]
for $E_0$ a vector space. The result follows.
\end{proof}

\begin{Lemma}
\label{Lemma degrees is upped bounded}
The degrees of $(F_0)_h^0(\homs(\sE_0,\sE_0))$ and $(F_0)_h^{-2}(\homs(\sE_0,\sE_0))$ satisfy the following 
\[ \deg((F_0)_h^0(\homs(\sE_0,\sE_0))) - \deg((F_0)_h^{-2}(\homs(\sE_0,\sE_0))) \ge 0. \]                                                                             
\end{Lemma}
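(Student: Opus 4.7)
The plan is to reduce the degree inequality to a statement about $\deg \Gr^{-1}_{(F_0)_h}(\homs(\sE_0,\sE_0))$, and then extract positivity from Lemma \ref{Lemma cokernel of map of associated graded is torsion} by comparing ranks and degrees of source and target of $[\theta_0,-]$.

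First, I would use Proposition \ref{p:degRank}(1), which gives $\deg \Gr^0_{(F_0)_h}(\homs(\sE_0,\sE_0)) = 0$, together with the short exact sequences
\[ 0 \to F^{-1}_{(F_0)_h} \to F^{0}_{(F_0)_h} \to \Gr^{0}_{(F_0)_h} \to 0, \qquad 0 \to F^{-2}_{(F_0)_h} \to F^{-1}_{(F_0)_h} \to \Gr^{-1}_{(F_0)_h} \to 0 \]
(all applied to $\homs(\sE_0,\sE_0)$) to deduce
\[ \deg((F_0)_h^0(\homs(\sE_0,\sE_0))) - \deg((F_0)_h^{-2}(\homs(\sE_0,\sE_0))) = \deg(\Gr^{-1}_{(F_0)_h}(\homs(\sE_0,\sE_0))). \]
So it suffices to show $\deg(\Gr^{-1}_{(F_0)_h}(\homs(\sE_0,\sE_0))) \ge 0$.

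Next, I would invoke the commutator map
\[ [\theta_0,-] : \Gr^{1}_{(F_0)_h}(\homs(\sE_0,\sE_0)) \to \Gr^{-1}_{(F_0)_h}(\homs(\sE_0,\sE_0)). \]
By Proposition \ref{p:degRank}(3), the source and target have the same rank, and by Lemma \ref{Lemma cokernel of map of associated graded is torsion} the cokernel is torsion. Since $X$ is a smooth projective curve and both sheaves are locally free, a map of equal-rank vector bundles with torsion cokernel must be injective (generically it is an isomorphism between vector spaces of the same dimension, and an injective generic fiber lifts to injectivity on a torsion-free sheaf over a smooth curve). For such an injection of equal-rank bundles, $\deg(\text{target}) - \deg(\text{source}) = \ell(\coker) \ge 0$, so
\[ \deg(\Gr^{-1}_{(F_0)_h}(\homs(\sE_0,\sE_0))) \ge \deg(\Gr^{1}_{(F_0)_h}(\homs(\sE_0,\sE_0))). \]

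Finally, the computation performed in the proof of Proposition \ref{p:degRank}(2) (using Corollary \ref{c:odd} and the duality $\Gr^i_F \sE \cong (\Gr^{-i}_F\sE)^\vee$) gives $\deg(\Gr^{-1}_{(F_0)_h}(\homs(\sE_0,\sE_0))) = -\deg(\Gr^{1}_{(F_0)_h}(\homs(\sE_0,\sE_0)))$. Substituting this into the inequality yields $\deg(\Gr^{-1}_{(F_0)_h}(\homs(\sE_0,\sE_0))) \ge -\deg(\Gr^{-1}_{(F_0)_h}(\homs(\sE_0,\sE_0)))$, hence $\deg(\Gr^{-1}_{(F_0)_h}(\homs(\sE_0,\sE_0))) \ge 0$. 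Combined with the reduction in the first paragraph, this proves the lemma. The only subtle point, and the one I would check carefully, is the passage from ``torsion cokernel plus equal ranks'' to injectivity, which is standard but uses both smoothness of $X$ and local freeness of the graded pieces established throughout Section 3.
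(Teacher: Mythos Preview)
Your proof is correct and follows essentially the same approach as the paper's: both reduce to showing $\deg(\Gr^{-1}_{(F_0)_h}(\homs(\sE_0,\sE_0)))\ge 0$, establish injectivity of $[\theta_0,-]:\Gr^{1}\to\Gr^{-1}$ from equal ranks (Proposition~\ref{p:degRank}(3)) plus torsion cokernel (Lemma~\ref{Lemma cokernel of map of associated graded is torsion}), and then combine the resulting degree inequality with the duality $\deg(\Gr^{-1})=-\deg(\Gr^{1})$ from the proof of Proposition~\ref{p:degRank}(2). The only cosmetic difference is that the paper packages the first reduction via Proposition~\ref{p:degRank}(1) and~(2) rather than writing out the two short exact sequences explicitly.
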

\begin{proof}
The map 
\[      [\theta_0,-] :   \Gr^{1}_{(F_0)_h}(\homs(\sE_0,\sE_0)) \rightarrow \Gr^{-1}_{(F_0)_h}(\homs(\sE_0,\sE_0))                             \]
is surjective on an open dense set by Lemma \ref{Lemma cokernel of map of associated graded is torsion} and both 
sides have the same rank by Lemma \ref{p:degRank} part (3). Hence, the kernel of this map vanishes on an open dense set 
and hence vanishes everywhere since the kernel is flat. Thus, we have an exact sequence 
\[              0 \rightarrow  \Gr^{1}_{(F_0)_h}(\homs(\sE_0,\sE_0)) \xrightarrow{[\theta_0,-]} \Gr^{-1}_{(F_0)_h}(\homs(\sE_0,\sE_0))  \rightarrow T \rightarrow 0                                         \]
where $T$ is torsion and thus has positive degree. Using Lemma \ref{p:degRank} part (1) and (2), 
\begin{align*}
    \deg((F_0)_h^0(\homs(\sE_0,\sE_0))) &- \deg((F_0)_h^{-2}(\homs(\sE_0,\sE_0)))\\ 
                                    & = -\dfrac{\deg(\Gr^{1}_{(F_0)_h}(\homs(\sE_0,\sE_0))) - \deg(\Gr^{-1}_{(F_0)_h}(\homs(\sE_0,\sE_0))) }{2}\\ 
                                    &= \deg(T) \ge 0.                                                        
\end{align*}
 \end{proof}

\begin{Theorem}
\label{Lemma Dimension count} 
The dimension of $\Nil_{X,\Spr}^{n}$ at a $k$-point $(\sE_0,h_0,\theta_0,F_0)$ is bounded above by 
$(g-1)( \sum_{i=1}^{n-1} r_i^2 + \sum_{i=1}^{n-1} r_i r_{i-1} + \frac{r_0^2 + r_0}{2}) $ where $r_i$ is the rank of $\Gr^{i}_{C}(E_0)$ 
with $C$ as defined in section \ref{s:nilpFilt}. 
\end{Theorem}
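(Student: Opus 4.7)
The plan is to read off the dimension from the deformation-obstruction theory of Theorem \ref{t:deformations}, combined with the smoothness statement of Theorem \ref{Theorem Vanishing of the Obstruction}. Since $\Cohomology^{2}(X,P)=0$ at the point $(\sE_{0},h_{0},\theta_{0},F_{0})$, the stack is smooth there and
\[
\dim_{(\sE_{0},h_{0},\theta_{0},F_{0})} \Nil_{X,\Spr}^{n}
 \;=\; \dim \Cohomology^{1}(X,P)-\dim \Cohomology^{0}(X,P).
\]
The hypercohomology Euler characteristic of the two-term complex $P$ is additive, so this difference equals $\chi\bigl((F_{0})_{h}^{-2}\homs(\sE_{0},\sE_{0})\bigr)-\chi\bigl((F_{0})_{h}^{0}\homs(\sE_{0},\sE_{0})\bigr)$, and Riemann--Roch on $X$ rewrites it as
\[
-\bigl(\deg (F_{0})_{h}^{0}\homs-\deg (F_{0})_{h}^{-2}\homs\bigr) + \bigl(\rank (F_{0})_{h}^{0}\homs-\rank (F_{0})_{h}^{-2}\homs\bigr)(g-1).
\]

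The next step is to invoke Lemma \ref{Lemma degrees is upped bounded}, which says that the degree term above is non-positive; dropping it yields the inequality
\[
\dim_{(\sE_{0},h_{0},\theta_{0},F_{0})}\Nil_{X,\Spr}^{n} \;\le\; (g-1)\bigl(\rank (F_{0})_{h}^{0}\homs-\rank (F_{0})_{h}^{-2}\homs\bigr).
\]
I then need to identify the rank difference with the stated bracketed quantity. The filtration gives $\rank (F_{0})_{h}^{0}\homs-\rank (F_{0})_{h}^{-2}\homs = \rank \Gr^{0}_{F_{h}}\homs+\rank \Gr^{-1}_{F_{h}}\homs$, and Proposition \ref{p:degRank}(3) lets me replace $\rank \Gr^{-1}_{F_{h}}\homs$ by $\rank \Gr^{1}_{F_{h}}\homs$.

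To finish, I pass from $\Gr_{F_{h}}^{a}\homs(\sE_{0},\sE_{0})$ to the graded self-adjoint hom sheaves via Corollary \ref{c:gradedReduction} and compute their ranks using the explicit descriptions in Corollaries \ref{c:even} and \ref{c:odd}. The degree-zero piece contributes $\rank \homs(\Gr^{0}\sE_{0},\Gr^{0}\sE_{0})+\sum_{i>0}r_{i}^{2} = \tfrac{r_{0}^{2}+r_{0}}{2}+\sum_{i=1}^{n-1}r_{i}^{2}$ (since $\Gr^{0}\sE_{0}$ inherits a symplectic form, making its self-adjoint endomorphism bundle a copy of $\splie_{r_{0}}$ of rank $\tfrac{r_{0}(r_{0}+1)}{2}$), while the degree-one piece contributes $\sum_{i\ge 0}r_{i}r_{i+1}=\sum_{i=1}^{n-1}r_{i}r_{i-1}$. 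Summing gives exactly the bracketed expression, completing the bound. The conceptually delicate inputs, namely smoothness and the degree inequality, are already in place; what remains is essentially the rank bookkeeping and a careful reindexing using the symmetry $r_{i}=r_{-i}$.
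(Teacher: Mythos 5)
Your proposal is correct and follows essentially the same route as the paper: smoothness from the vanishing of $\Cohomology^{2}(X,P)$ gives $\dim = -\chi(P)$, Riemann--Roch plus Lemma \ref{Lemma degrees is upped bounded} discards the degree term, and Corollaries \ref{c:gradedReduction}, \ref{c:even}, \ref{c:odd} convert the rank difference $\rank F_h^{0}-\rank F_h^{-2}=\rank\Gr^{0}+\rank\Gr^{-1}$ into the stated expression. Your only deviation, replacing $\Gr^{-1}$ by $\Gr^{1}$ via Proposition \ref{p:degRank}(3), is a harmless reindexing of the same bookkeeping.
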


\begin{proof}
We can compute 
\begin{align*}
\dim_{(\sE_0,h,\theta_0,F_0)} \Nil_{X,\Spr}^{n} &= -\chi(P)\\
                                                &= -\chi((F_0)_h^0(\homs(\sE_0,\sE_0))) + \chi((F_0)_h^{-2}(\homs(\sE_0,\sE_0)))\\ 
                                                &\le (g-1)(\rank((F_0)_h^0(\homs(\sE_0,\sE_0))) - \rank((F_0)_h^{-2}(\homs(\sE_0,\sE_0))))\\ 
                                                &= (g-1)(\rank(\Gr^{0}_{(F_0)_h}(\homs(\sE_0,\sE_0))) + \rank(\Gr^{-1}_{(F_0)_h}(\homs(\sE_0,\sE_0)))) \\ 
                                                &\leq (g-1)  (\frac{r_0^2 + r_0}{2}    +\sum_{i=1}^{n-1} r_i^2 + \sum_{i=1}^{n-1} r_i r_{i-1})
\end{align*}
where the first inequality follows from Lemma \ref{Lemma degrees is upped bounded} and the others from 
\ref{c:even} and \ref{c:odd}. 
\end{proof}

%\input{deformation.tex}

%\input{dimensionNilp.tex}

%auto-ignore 

%%%%%%%%%%%%%%%%%%%%%%%%%%%%%%%%%%%%%%%%%%%%%%%%
%%%%%%%%%%%%%%%%%%%%%%%%%%%%%%%%%%%%%%%%%%%%%%%%
%%%%%%%%%%%%%%%%%%%%%% The Residual Gerbe
%%%%%%%%%%%%%%%%%%%%%%%%%%%%%%%%%%%%%%%%%%%%%%%%
%%%%%%%%%%%%%%%%%%%%%%%%%%%%%%%%%%%%%%%%%%%%%%%%
\section{The Residual Gerbe}

%%%%%%%%%%%%%%%%%%%%%%%%%%%%%%%%%%%%%%%%%%%%%%%%
%%%%%%%%%%%%%%%%%%%%%% Involutions and Hermitian modules
%%%%%%%%%%%%%%%%%%%%%%%%%%%%%%%%%%%%%%%%%%%%%%%%
 
\subsection{Involutions and Hermitian modules.}

In this section, 
we will recall some needed definitions and theorems from \cite{Knus}. 

\begin{Definition}
\label{d: involution}
Let $A$ be a $k$-algebra, not necessarily commutative. An involution $\mu$ on $A$
     is an anti-automorphism of $A$ with $\mu\circ \mu = id_A$. 
     We call the pair $(A,\mu)$ a ring (or $k$-algebra) with involution. 
\end{Definition} 

\begin{Lemma}
\label{l:direct product of rings} 
    Let $(A_1,\mu_1)$ and $(A_2,\mu_2)$ be rings with involutions. Then there exists an involution $\mu$ on 
    $A_1 \times A_2$ defined by the morphism 
    \begin{align*}
        \mu : A_1 \times A_2 &\rightarrow A_1 \times A_2 \\ 
              a_1 \times a_2 &\mapsto \mu_1(a_1) \times \mu_2(a_2).           
    \end{align*} 
    We denote this ring with involution as $(A_1,\mu_1)\times( A_2, \mu_2)$.     
\end{Lemma}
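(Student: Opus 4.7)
The plan is a direct verification of the two axioms in Definition \ref{d: involution} applied componentwise. First I would check that $\mu$ is $k$-linear and additive: since addition in $A_1 \times A_2$ is defined coordinate-wise and each $\mu_i$ is additive, the map $\mu$ is clearly additive, and similarly $k$-linear.

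Next I would verify the anti-multiplicativity. For $(a_1, a_2), (b_1, b_2) \in A_1 \times A_2$, one computes
\[
\mu\bigl((a_1,a_2)(b_1,b_2)\bigr) = \mu(a_1 b_1, a_2 b_2) = (\mu_1(a_1 b_1), \mu_2(a_2 b_2)) = (\mu_1(b_1)\mu_1(a_1), \mu_2(b_2)\mu_2(a_2))
\]
using that each $\mu_i$ is an anti-automorphism; this equals $\mu(b_1, b_2)\,\mu(a_1, a_2)$. The unit is preserved since $\mu_i(1) = 1$ for each $i$. The involutive property $\mu \circ \mu = \mathrm{id}$ follows immediately from $\mu_i \circ \mu_i = \mathrm{id}_{A_i}$, again componentwise.

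There is no real obstacle here; the statement is a routine compatibility check and the proof amounts to saying that involutions on rings assemble into an involution on the direct product in the obvious componentwise way. The only thing worth emphasizing in the write-up is that one must invoke the anti-automorphism property of each $\mu_i$ separately, so that the swap of order in the product $(b_1,b_2)(a_1,a_2)$ comes out correctly.
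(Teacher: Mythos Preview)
Your proposal is correct and matches the paper's own proof, which simply records ``This is a straightforward check.'' You have written out exactly that straightforward check, verifying anti-multiplicativity and the involutive property componentwise.
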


\begin{proof}
    This is a straightforward check. 
\end{proof}

\begin{Definition}
    \label{d:hypebolic rings}
    For a ring $A$, we denote by $A^{op}$ the opposite ring which is the same additive group but has 
    the reverse multiplication 
    \[       a^{op}b^{op} = (ba)^{op}                 \] 
    where $a^{op}$ stands for $a$ as an element of $A^{op}$. There is a natural involution $\mu$ on $A \times A^{op}$ 
    defined by 
    \[      \mu (a , b^{op}) =  (b , a^{op}).                                             \]
    The ring with involution $(A \times A^{op}, \mu)$ is defined to be the hyperbolic ring of $A$, and we denote 
    it by $H(A)$.
\end{Definition}
    
\begin{Lemma}
    \label{l:Decomposition of involution of rings}
    Given a semisimple ring with involution $(A,\mu)$, there exists a decomposition of rings with involutions
    \[     (A,\mu) = (A_1 ,\mu^A_1) \times (A_2 , \mu^A_2) \times \hdots \times (A_n ,\mu^A_n) \times (B_1 , \mu^B_1) \times (B_2 , \mu^B_2) \times \hdots \times (B_m , \mu^B_m).  \]
    where $A_i$ are simple rings for $1 \le i \le n$ and $(B_j,\mu^B_j)$ is isomorphic to the hyperbolic ring
    $H(A'_j)$ of some simple ring $A'_j$. 
\end{Lemma}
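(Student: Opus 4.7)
The plan is to start from the Artin--Wedderburn decomposition of the semisimple ring $A$ as a product of simple rings $A = S_1 \times \cdots \times S_N$, and then track what the involution $\mu$ does to this decomposition. Since $\mu$ is an anti-automorphism of rings it sends two-sided ideals to two-sided ideals, and since it is a bijection it sends minimal two-sided ideals to minimal two-sided ideals. The minimal two-sided ideals of a semisimple ring are exactly the factors $S_i$, so $\mu$ induces a permutation $\sigma$ on $\{1,\dots,N\}$ defined by $\mu(S_i) = S_{\sigma(i)}$. The condition $\mu^2 = \mathrm{id}_A$ forces $\sigma^2 = \mathrm{id}$, so $\sigma$ is an involution of the index set whose orbits have size $1$ or $2$.

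The fixed points of $\sigma$ give the simple-with-involution factors directly: for each $i$ with $\sigma(i)=i$, the restriction $\mu|_{S_i}$ is an involution on the simple ring $S_i$, giving a factor $(A_\ell, \mu_\ell^A)$. The $2$-cycles $\{i,j\}$ of $\sigma$ pair up into factors $S_i \times S_j$ that I claim can be identified with hyperbolic rings. Given such a pair, $\mu|_{S_i}:S_i \to S_j$ is an anti-isomorphism, which we may view as an isomorphism of rings $S_j \cong S_i^{op}$ sending $b \in S_j$ to $\mu(b)^{op}$. The key step is to define
\[
\phi : S_i \times S_j \xrightarrow{\sim} S_i \times S_i^{op}, \qquad \phi(a,b) = (a,\mu(b)^{op}),
\]
check it is a ring isomorphism (routine, using that $\mu$ is an anti-homomorphism so $\mu(b_1 b_2)^{op}=\mu(b_1)^{op}\mu(b_2)^{op}$), and then verify it intertwines $\mu|_{S_i \times S_j}$ with the hyperbolic involution on $H(S_i)$. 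Concretely, since $\mu^2=\mathrm{id}$, the restriction $\mu|_{S_i \times S_j}$ is $(a,b)\mapsto(\mu(b),\mu(a))$, and one computes $\phi(\mu(a,b))=(\mu(b),a^{op})$, which in the coordinates $(c,d^{op}) = \phi(a,b)$ is precisely $(d,c^{op})$, matching Definition \ref{d:hypebolic rings}.

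Putting these pieces together, after relabelling, one obtains the desired decomposition
\[
(A,\mu) \cong \prod_{\ell=1}^{n}(A_\ell,\mu_\ell^A)\times\prod_{j=1}^{m}(B_j,\mu_j^B),
\]
with the $A_\ell$ coming from fixed points of $\sigma$ and each $(B_j,\mu_j^B)\cong H(A_j')$ coming from the $2$-cycles, with $A_j' = S_i$ for either $i$ in the orbit.

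The only genuinely non-trivial step is the verification that $\phi$ intertwines the two involutions, and even that is essentially bookkeeping once one has the right formula for $\phi$. The main conceptual point to get right beforehand is that an anti-automorphism preserves the set of two-sided ideals (this uses $\mu(IJ)=\mu(J)\mu(I)$, not $\mu(I)\mu(J)$), which is what allows $\mu$ to permute the Wedderburn factors in the first place; everything else is formal.
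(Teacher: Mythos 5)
Your argument is correct, and it is complete. For the record, the paper does not prove this lemma at all: it simply cites \cite[Chapter I, 1.2.8]{Knus}, so what you have written is essentially a self-contained reconstruction of the standard proof behind that reference. Your route --- Artin--Wedderburn writes $A$ as a product of simple factors, an anti-automorphism permutes the minimal two-sided ideals, $\mu^2=\mathrm{id}$ forces the permutation to have orbits of size $1$ or $2$, fixed factors give simple rings with involution, and a $2$-cycle $\{i,j\}$ gives $S_i\times S_j\cong S_i\times S_i^{op}$ via $(a,b)\mapsto(a,\mu(b)^{op})$, which intertwines $\mu$ with the hyperbolic involution --- is exactly the right mechanism, and your verification that $\phi(\mu(a,b))=(\mu(b),a^{op})$ matches the hyperbolic involution of Definition \ref{d:hypebolic rings} is the only computation that needed checking. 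The one point you use implicitly and could state is that the minimal two-sided ideals of a semisimple (artinian) ring are precisely the Wedderburn factors, which is what lets the permutation argument start; this is standard. The trade-off is the obvious one: the paper buys brevity by outsourcing to Knus, while your version makes the lemma independent of the reference at the cost of half a page, with no loss of generality in the setting the paper needs.
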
 
    
    \begin{proof}
        This is \cite[Chapter I,1.2.8]{Knus}.
    \end{proof} 

\begin{Notations}
    We denote by $\module(A)$ the category of right $A$-modules. Let $\proj(A)$ denote  the full subcategory of
    $\module(A)$ consisting  of finitely generated projective right modules over $A$ . Let $\proj^r(A)$ denote the full
    subcategory of $\proj(A)$ consisting of finitely generated projective right modules over $A$ of rank $r$. We denote 
    by $\proj_i(A)$ the groupoid core of $\proj(A)$ i.e. the subcategory of $\proj(A)$ with isomorphisms as 
    morphisms.
\end{Notations}

\begin{Notations}
\label{n:bar}
The involution $\mu$ on $A$ allows us to define a left $A$-module structure on any right $A$-module $M$ by 
the equation 
\[     a.m = m.\mu(a)                                  \]
where $a$ and $m$ belong to $A$ and $M$ respectively. We use the notation $\overline{M}$ for the left $A$-module 
corresponding to a right $A$-module $M$.\\
For a module $M$, we denote by $M^\vee$ the dual module 
$\Hom_A(M,A)$. For $M$ a right $A$-module, $\Hom_A(M,A)$ has a natural left module structure given by 
\[     a.f(m) = f(ma)                                  \]  
for $a$ in $A$, $m$ in $M$ and $f$ in $M^\vee$. We denote by $M^*$ the right $A$ module $\overline{M^\vee}$. 
\end{Notations}

\begin{Definition}
\label{d:M to M*}
    We define a contravariant functor
\begin{align*}
    (-)^{*} : \module(A) &\rightarrow \module(A) \\ 
          M      \mapsto M^*    &\text{ and }           f      \mapsto f^*. 
\end{align*}  
where 
\[      f^* = \bar{f}^\vee : N^* \rightarrow M^*                                \] 
is the right $A$-module homomorphism dual to the left $A$-module homomorphism given by 
\[    \overline{f} : \overline{M} \rightarrow \overline{N}.                             \]
The functor $(-)^{*}$ restricts to an endofunctor of both $\proj(A)$ and $\proj^r(A)$, both of which we call $(-)^{*}$ 
by abuse of notation.
\end{Definition}

The purpose of the next lemma is to define a natural isomorphism $\can$.   

\begin{Lemma}
    \label{l:dual bar commutes}
    For $M$ a right $A$-module, the map 
    \[    T_M  :     \overline{M^{\vee}} \to \overline{M}^{\vee}                        \]
    is an isomorphism.
\end{Lemma}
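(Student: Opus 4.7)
The plan is to construct the map $T_M$ explicitly (this is what defines the promised natural isomorphism $\can$) and then verify directly that it is a bijective right $A$-module homomorphism. The natural candidate is
\[ T_M(f)(m) := \mu(f(m)) \quad \text{for } f \in M^{\vee},\ m \in M. \]
The first step is to check that $T_M(f)$, a priori only a map of underlying sets $\overline{M} \to A$, is left $A$-linear and hence a bona fide element of $\overline{M}^{\vee}$. Using the action $a \cdot m = m\mu(a)$ on $\overline{M}$ together with the right $A$-linearity of $f$,
\[ T_M(f)(a \cdot m) = \mu(f(m\mu(a))) = \mu(f(m)\mu(a)) = \mu(\mu(a))\mu(f(m)) = a\,T_M(f)(m), \]
as required.

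The second step is to verify that $f \mapsto T_M(f)$ intertwines the right $A$-module structures on source and target. Unwinding the right action on $\overline{M^{\vee}}$ that is induced (via $\mu$) from the given left action on $M^{\vee}$, and comparing it with the right action $(g \cdot a)(m) = g(m)a$ on $\overline{M}^{\vee}$, this is again a short calculation which rests only on $\mu$ being an anti-automorphism of $A$.

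Finally, bijectivity is automatic: the analogous formula $g \mapsto (m \mapsto \mu(g(m)))$ defines a two-sided inverse to $T_M$ by exactly the same reasoning, using that $\mu \circ \mu = \Identity_A$ in order to see that both composites are the identity on underlying sets. The only real work here is keeping track of which module structure (left versus right, twisted or untwisted by $\mu$) is in play at each stage, so the main \emph{obstacle} is purely notational bookkeeping; every assertion reduces to the two defining properties $\mu(ab) = \mu(b)\mu(a)$ and $\mu^{2} = \Identity_A$ of an involution.
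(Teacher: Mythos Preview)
Your proof is correct; the paper itself merely cites \cite[Chapter I, 2.1.1]{Knus} for this fact, and what you have written is exactly the standard verification found there. One minor remark: the paper's displayed formula for the left action on $M^{\vee}$ appears to contain a typo, but you have used the correct convention $(a\cdot f)(m)=af(m)$, which is what makes the computation go through.
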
 

\begin{proof}
    This is \cite[Chapter I, 2.1.1]{Knus}.
\end{proof}

\begin{Lemma}
\label{l:canonical morphism}
The morphism 
\[   \can_M : M \rightarrow M^{**}                         \]
defined by 
\[    M \stackrel{ev}{\to} M^{\vee\vee} \stackrel{\sim}{\xrightarrow[T_{M^\vee}]{}} M^{**}                         \] 
defines a natural isomorphsim 
\[     \can : id \implies (-)^{*}\circ (-)^{*}                                   \] 
where $id$ is the identity endofunctor of $\module(A)$. We have the identity
\[       (\can_M)^{*} \can_{(M)^{*}} = id            \]

\end{Lemma}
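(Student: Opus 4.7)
The plan is to verify three things in sequence: (i) that $\can$ is a natural transformation $\mathrm{id} \Rightarrow (-)^{*} \circ (-)^{*}$; (ii) that each $\can_M$ is an isomorphism (on finitely generated projective modules, where the assertion is meaningful); and (iii) the triangle identity $(\can_M)^{*} \can_{M^{*}} = \mathrm{id}$. The first two are essentially formal given Lemma \ref{l:dual bar commutes} and the classical properties of $\Hom_A(-,A)$; the third reduces to the symmetry of the tautological evaluation pairing.

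For (i), given $f : M \to N$ in $\module(A)$, I would split $\can$ as $T_{(-)^{\vee}} \circ ev$ and chase the rectangle
\begin{center}
\begin{tikzcd}
M \ar[r, "ev"] \ar[d, "f"] & M^{\vee\vee} \ar[r, "T_{M^{\vee}}"] \ar[d, "f^{\vee\vee}"] & M^{**} \ar[d, "f^{**}"] \\
N \ar[r, "ev"] & N^{\vee\vee} \ar[r, "T_{N^{\vee}}"] & N^{**}.
\end{tikzcd}
\end{center}
The left square commutes by naturality of $ev$ as the canonical transformation $\mathrm{id} \Rightarrow (-)^{\vee\vee}$; the right square commutes by naturality of $T$, which is built into the statement of Lemma \ref{l:dual bar commutes}.

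For (ii), restricting to $\proj(A)$, the standard evaluation map $ev : M \to M^{\vee\vee}$ is an isomorphism: this is immediate for $M = A$, extends to $M = A^n$ by additivity of $\Hom_A(-,A)$, and then to every direct summand of an $A^n$, hence to every finitely generated projective right $A$-module. Combining with the fact that $T_{M^{\vee}}$ is an isomorphism by the previous Lemma, the composite $\can_M$ is an isomorphism.

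For (iii), the cleanest route is to interpret $\can_M$ through the tautological evaluation pairing $\langle m, \phi \rangle := \phi(m)$ on $M \times M^{\vee}$, which after transport through the bar construction and $T$ yields the formula $\can_M(m)(\phi) = \langle m, \phi \rangle$ for $\phi \in M^{*}$. Unwinding the analogous formula for $\can_{M^{*}}$ and the definition of the dual morphism $(\can_M)^{*}$, the composition evaluated at $\phi \in M^{*}$ and tested against $m \in M$ reads $\can_{M^{*}}(\phi)(\can_M(m)) = \langle \phi, \can_M(m)\rangle = \langle m, \phi\rangle = \phi(m)$, giving $(\can_M)^{*}\can_{M^{*}} = \mathrm{id}_{M^{*}}$. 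The only real obstacle in the entire argument is the bookkeeping of left/right module structures: each application of $(-)^{\vee}$ swaps sides, each application of the bar construction swaps sides again, and the essential content of Lemma \ref{l:dual bar commutes} is precisely that these two operations commute up to a canonical isomorphism. Once those conventions are pinned down and one verifies that both $\can_M$ and $\can_{M^{*}}$ arise from the same tautological pairing, each of the three claims reduces to a direct diagram chase with no further mathematical content.
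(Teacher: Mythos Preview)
Your proposal is correct. The paper itself does not give a proof but simply cites \cite[Chapter I, 2.1.2 and Proposition 3.1.2]{Knus}; your direct verification via the decomposition $\can = T_{(-)^{\vee}} \circ ev$, the standard reflexivity of finitely generated projectives, and the evaluation-pairing chase for the triangle identity is precisely the content of those references. Your parenthetical caveat that the isomorphism claim only holds on $\proj(A)$ rather than all of $\module(A)$ is well taken and matches how the lemma is actually used in the paper.
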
  

\begin{proof}
    The first part is in \cite[Chapter I, 2.1.2]{Knus}. The second part is \cite[Chapter I, Proposition 3.1.2]{Knus}.
\end{proof}

\begin{Definition}
    A sesquilinear form $b$ on a right $A$-module $M$ is a biadditive map
    \[      b : M \times M \to A        \]
    such that
    \[ b(xa,ya') = \mu(a)b(x,y)a' \]
    for $x$, $y$ in $M$, and $a$, $a'$ in $A$.
    The set of sesquilinear forms on $M$ is an algebra over the centre $Z$ of $A$, 
    denoted by $\Sesq_{A}(M)$.
\end{Definition}

\begin{Lemma} 
\label{l:adjoint}    
An element $b$ of $\Sesq_A(M)$ defines an $A$-module homomorphism 
\[        h_b : M \rightarrow M^*        \]
defined by  
\[        h_b(x)(y) = b(x,y)                              \]
which we call the adjoint of $b$. 
Furthermore, the map
\[  \psi : Sesq_{A}(M) \to Hom(M,M^{*})          \]
\[          b \mapsto h_{b}                               \]
is an isomorphism of $Z$-modules.
 \end{Lemma}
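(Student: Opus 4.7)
The plan is to prove both claims by direct verification, with the second part following from an explicit construction of an inverse map. The entire argument is essentially bookkeeping with the conventions of Notation \ref{n:bar} and Definition \ref{d:M to M*}, so the only real task is to untangle what each side of the claimed isomorphism requires.

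First I would check that for each $x\in M$, the assignment $y\mapsto b(x,y)$ is a morphism $M\to A$ of right $A$-modules, since the sesquilinearity relation with $a=1$ on the left gives $b(x,ya')=b(x,y)a'$. Using this, I would view $h_b(x)$ as an element of $M^\vee$ and hence of $M^*=\overline{M^\vee}$. Next I would verify right $A$-linearity of $h_b\colon M\to M^*$; by the sesquilinearity relation $b(xa,y)=\mu(a)b(x,y)$, the element $h_b(xa)$ equals $\mu(a)\cdot h_b(x)$, and unwinding Notation \ref{n:bar} this is precisely $h_b(x)\cdot a$ in the right $A$-module $M^*$. This establishes the first assertion and the existence of $\psi$.

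For the second assertion I would construct $\psi^{-1}$ directly. Given $h\in\Hom_A(M,M^*)$, set $b_h(x,y):=h(x)(y)\in A$. Biadditivity is immediate; the sesquilinearity identity $b_h(xa,ya')=\mu(a)b_h(x,y)a'$ follows from computing
\[
b_h(xa,ya')=h(xa)(ya')=(h(x)\cdot a)(ya')=\mu(a)\,h(x)(ya')=\mu(a)\,h(x)(y)\,a'
\]
by right $A$-linearity of $h$ and of $h(x)$, together with the formula for the right action on $M^*$. The two constructions $b\mapsto h_b$ and $h\mapsto b_h$ are mutually inverse by definition, and they are additive in their arguments; $Z$-linearity is checked by noting that for $z\in Z$ the element $\mu(z)$ is again central and that the $Z$-actions on $\Sesq_A(M)$ and $\Hom_A(M,M^*)$ are both implemented by scalar multiplication in $A$ on the output.

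The only real obstacle is notational: one must be careful that the right $A$-module structure on $M^*$, coming from the left $A$-module structure on $M^\vee$ via the involution $\mu$, interacts correctly with the sesquilinearity convention $b(xa,ya')=\mu(a)b(x,y)a'$. Once the conventions are aligned, no further ideas are needed; all remaining checks are formal and each amounts to a one-line computation.
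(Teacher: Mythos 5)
Your argument is correct, but it is worth noting that the paper does not prove this lemma at all: it simply cites Knus (Chapter I, 2.2), so your contribution is a self-contained verification of what the paper treats as a black box. Your route is the expected one — check that $y\mapsto b(x,y)$ lands in $M^{\vee}$, check right $A$-linearity of $h_b$ against the twisted right action on $M^{*}$, and invert by $b_h(x,y)=h(x)(y)$ — and all the displayed computations are the right ones. The one point you should make explicit is which right $A$-module structure on $M^{*}=\overline{M^{\vee}}$ you are using: your key step $(h(x)\cdot a)(y)=\mu(a)\,h(x)(y)$ presupposes the standard (Knus) convention in which the left action on $M^{\vee}$ is $(a.f)(m)=a\,f(m)$, so that the induced right action on $M^{*}$ is $(f\cdot a)(m)=\mu(a)f(m)$. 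The paper's Notation \ref{n:bar} literally writes $a.f(m)=f(ma)$, which for noncommutative $A$ does not even preserve right $A$-linearity of $f$ and under which your identity $h_b(xa)=h_b(x)\cdot a$ would fail (one would get $b(x,y)\mu(a)$ instead of $\mu(a)b(x,y)$); this is evidently a typo in the paper, but since your whole proof is an exercise in these conventions, you should state the corrected action before using it. With that clarification, the existence of $\psi$, the inverse construction, and the $Z$-linearity check (using that $\mu(z)$ is again central) are all complete and correct.
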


\begin{proof}
    This is \cite[Chapter I, 2.2]{Knus}.
\end{proof} 

\begin{Definition}
    A sesquilinear form $b$ on $M$ is called nonsingular if the adjoint map $h_{b}$ is an isomorphism. 
\end{Definition} 

\begin{Definition}
   A sesquilinear form on a right $A$-module $M$ is defined to be $\epsilon$-hermitian if it satisfies 
    \[  b(x,y) = \epsilon\mu(b(y,x))                      \] 
    for $\epsilon = \pm 1$.
 \end{Definition}

\begin{Lemma}
\label{l:adjoint hermitian}
    A sesquilinear form on a right $A$-module $M$ is $\epsilon$-hermitian if and only if the adjoint morphism satisfies 
    \[          h_b = \epsilon (h_b)^{*} \circ \can_M .                        \]
\end{Lemma}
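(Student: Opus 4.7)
The plan is to establish the equivalence by a direct computation comparing two sesquilinear forms. By Lemma \ref{l:adjoint}, the map $\psi\colon b\mapsto h_{b}$ is an isomorphism $\Sesq_{A}(M)\xrightarrow{\sim}\Hom_{A}(M,M^{*})$, so it suffices to identify, under $\psi^{-1}$, the sesquilinear form $b'$ corresponding to the morphism $\epsilon(h_{b})^{*}\circ\can_{M}$, and then to observe that the equality $b'=b$ is literally the $\epsilon$-hermitian condition.

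To compute $b'$, I would unfold the definitions: $\can_{M}$ is the composition
\[
M\xrightarrow{\ev} M^{\vee\vee}\xrightarrow[\sim]{T_{M^{\vee}}} M^{**}
\]
coming from Lemma \ref{l:canonical morphism}, and $(h_{b})^{*}=\overline{h_{b}}^{\vee}$ is the dual of $h_{b}$ regarded as a morphism of left $A$-modules, as in Definition \ref{d:M to M*}. Applying $(h_{b})^{*}\circ\can_{M}$ to $x\in M$, evaluating the resulting element of $M^{*}$ on $y\in M$, and tracking the module structures through the isomorphism $T_{M^{\vee}}$ of Lemma \ref{l:dual bar commutes} produces
\[
\bigl((h_{b})^{*}\circ\can_{M}\bigr)(x)(y)=\mu\bigl(h_{b}(y)(x)\bigr)=\mu\bigl(b(y,x)\bigr).
\]
Multiplying by $\epsilon$ gives $b'(x,y)=\epsilon\mu(b(y,x))$.

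With this formula in hand the lemma is immediate: the morphism identity $h_{b}=\epsilon(h_{b})^{*}\circ\can_{M}$ translates under $\psi$ into the equality $b(x,y)=\epsilon\mu(b(y,x))$ for all $x,y\in M$, which is by definition the $\epsilon$-hermitian condition.

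The only real obstacle is bookkeeping: one must carefully record how the involution $\mu$ swaps left and right $A$-module structures at each application of $\overline{(\cdot)}$ and $(-)^{\vee}$, so that the various identifications compose correctly. No input beyond the naturality of $T$ (Lemma \ref{l:dual bar commutes}) and the factorization of $\can_{M}$ (Lemma \ref{l:canonical morphism}) is needed.
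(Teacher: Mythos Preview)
Your proposal is correct and is exactly the standard computation; the paper does not give its own proof but simply cites \cite[Chapter I, 2.3]{Knus}, and the argument there is precisely the unwinding you describe.
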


\begin{proof}
    This is \cite[Chapter I, 2.3]{Knus}.
\end{proof} 

\begin{Definition}\label{d:herm}
    We define a category $\herm_{\epsilon}(A,\mu)$ whose objects are pairs $(M,b)$ where $M$ is in $\proj(A)$ and $b$ is a nonsingular 
    $\epsilon$-hermitian form on $M$ with respect to the involution $\mu$ on $A$ and the morphisms from objects $(M,b)$ to $(M',b')$ are right 
    $A$-module isomorphisms 
    \[   \phi : M \rightarrow M'              \]
    that satisfy
    \[   b(\phi(x),\phi(y)) = b(x,y)                              \]  
    for all $x$ and $y$ in $M$.
\end{Definition} 

\begin{Notations}
    In view of Lemma \ref{l:adjoint} and Lemma \ref{l:adjoint hermitian}, we sometimes denote an object of $\herm_{\epsilon}(A,\mu)$
    by $(M,h)$ where 
    \[ h : M \rightarrow M^* \]
    is an isomorphism satisfying $h = \epsilon (h)^{*} \circ \can_M$. It is easy to check that the data of a morphism between two objects
    $(M_1,h_1)$ and $(M_2,h_2)$ is the data of a right module isomorphism 
    \[ f : M_1 \rightarrow M_2 \]
    satisfying $h_1 = (f)^{*} h_2 f$.  
\end{Notations}

\begin{Notations}
    We denote by $\herm_{\epsilon}^r(A,\mu)$ the full subcategory of $\herm_{\epsilon}(A,\mu)$ with objects 
    pairs $(M,b)$ where $M$ is in $\proj^r(A)$ and $b$ is a nonsingular 
    $\epsilon$-hermitian form on $M$ with respect to the involution $\mu$ on $A$.
\end{Notations}

\begin{Definition}
    The Jacobson radical $J$ of a ring $A$ is defined to be the intersection of all maximal left ideals of $A$.
\end{Definition}

\begin{Lemma}
    \label{l:Jacobson right max}
    The Jacobson radical $J$ is the intersection of all the maximal right ideals of $A$.
\end{Lemma}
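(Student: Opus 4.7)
The plan is to prove that the Jacobson radical admits a left-right symmetric element-wise characterization, from which the lemma follows immediately. Let $J$ denote the intersection of all maximal left ideals, and let $J'$ denote the intersection of all maximal right ideals. I will show both are equal to
\[
U := \{ x \in A : 1 - ax \text{ is a unit in } A \text{ for all } a \in A \} = \{ x \in A : 1 - xa \text{ is a unit for all } a \in A\}.
\]

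First, I would verify the characterization $J = U$. One direction: if $x \in J$ but $1 - ax$ is not left invertible for some $a$, then $A(1-ax)$ is a proper left ideal, contained in some maximal left ideal $\mathfrak{m}$; but $ax \in J \subseteq \mathfrak{m}$, so $1 = (1-ax) + ax \in \mathfrak{m}$, a contradiction. Thus $1 - ax$ has a left inverse $u$ for every $a$. To upgrade this to two-sided invertibility, write $u(1-ax) = 1$, i.e.\ $u = 1 + uax$; applying the same argument to $uax$ (which also lies in $J$, since $J$ is a left ideal) shows $u$ itself has a left inverse, hence is a unit, and then $1 - ax = u^{-1}$ is a unit. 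Conversely, if $x \in U$ but $x \notin J$, choose a maximal left ideal $\mathfrak{m}$ not containing $x$; then $\mathfrak{m} + Ax = A$, so $1 = m + ax$ for some $m \in \mathfrak{m}$, $a \in A$, giving $m = 1 - ax \in \mathfrak{m}$, contradicting that $1 - ax$ is a unit.

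Next, I would check the equality of the two descriptions of $U$ via the standard identity: if $1 - ax$ is invertible with inverse $v$, then a direct computation shows that $1 + x v a$ is a two-sided inverse of $1 - xa$, namely
\[
(1-xa)(1 + xva) = 1 - xa + xva - xaxva = 1 - xa + x(1 - ax)va = 1 - xa + xa = 1,
\]
and similarly on the other side. Hence $1 - ax$ is a unit for all $a$ if and only if $1 - xa$ is a unit for all $a$, so the two displayed sets coincide.

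Finally, applying the entirely parallel argument to $J'$ — with right ideals replacing left ideals and $1 - xa$ replacing $1 - ax$ — yields $J' = \{x : 1 - xa \text{ is a unit for all } a\} = U$. Therefore $J = U = J'$, proving the lemma. The only mildly delicate step is the bootstrap from one-sided to two-sided invertibility in the characterization of $J$, but this is handled cleanly by the observation that $J$ is itself a left ideal, so the element $uax$ produced in the argument again lies in $J$ and the same hypothesis applies to it.
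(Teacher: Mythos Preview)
Your proof is correct. The paper does not give its own argument: it simply cites \cite[Chapter I, 4.2.2]{Knus} for this standard fact, and your proof via the quasi-regularity characterization $J = \{x : 1 - ax \text{ is a unit for all } a\}$ together with the identity showing $(1-ax)$ invertible iff $(1-xa)$ invertible is exactly the standard textbook argument one finds in Knus and elsewhere, so there is no meaningful difference in approach to report.
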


\begin{proof}
    \cite[Chapter I,4.2.2]{Knus}.
\end{proof} 

\begin{Lemma}
\label{l:invol iso max ideal}
    Let $(A,\mu)$ be a ring with involution. If $I$ is a left ideal of $A$, then $\mu(I)$ is a right ideal of $A$ 
    and vice versa. If $I$  is maximal, then so is $\mu(I)$. Furthermore, the induced map of sets
    \[     \mu : \text{\{left maximal ideals of A\} } \rightarrow  \text{\{right maximal ideals of A\} }             \]
    is a bijection.
\end{Lemma}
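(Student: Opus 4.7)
The plan is to exploit the two defining properties of an involution---it is an anti-automorphism and it satisfies $\mu \circ \mu = \Identity_A$---to transfer left ideal-theoretic statements to right ideal-theoretic ones and back.

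First I would check that $\mu(I)$ is a right ideal when $I$ is a left ideal. Given $i \in I$ and $a \in A$, the anti-multiplicativity of $\mu$ gives
\[
\mu(i) \cdot a \;=\; \mu(i) \cdot \mu(\mu(a)) \;=\; \mu\bigl(\mu(a)\cdot i\bigr),
\]
and since $\mu(a) \cdot i \in I$ by the left ideal property, the element $\mu(i)a$ lies in $\mu(I)$. Additive closure is immediate from additivity of $\mu$. The same computation with the roles swapped shows that $\mu$ sends right ideals to left ideals. In particular, since $\mu \circ \mu = \Identity_A$, the assignment $I \mapsto \mu(I)$ gives mutually inverse bijections between left and right ideals of $A$, which is the final bijectivity claim once maximality is shown to be preserved.

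Next I would verify that maximal left ideals map to maximal right ideals. Suppose $I$ is a maximal left ideal and let $J$ be a right ideal with $\mu(I) \subseteq J \subseteq A$. Applying $\mu$ again and using $\mu \circ \mu = \Identity_A$, we obtain $I \subseteq \mu(J) \subseteq A$, where $\mu(J)$ is a left ideal by the first part. Maximality of $I$ forces either $\mu(J) = I$ or $\mu(J) = A$, and applying $\mu$ one more time yields $J = \mu(I)$ or $J = A$. Hence $\mu(I)$ is maximal. The analogous argument with left and right swapped shows that $\mu$ takes maximal right ideals to maximal left ideals, so the bijection of ideals restricts to a bijection of maximal ideals as claimed.

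There is no serious obstacle here: the entire lemma is a direct consequence of the two axioms of an involution, and the only ``calculation'' is the one-line manipulation $\mu(i)a = \mu(\mu(a)i)$ together with the involution identity $\mu^{2} = \Identity_A$.
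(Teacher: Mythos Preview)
Your proof is correct and follows essentially the same approach as the paper: both arguments use the identity $\mu(i)\cdot a = \mu(\mu(a)\cdot i)$ to show $\mu(I)$ is a right ideal, and both preserve maximality by applying $\mu$ to an inclusion $\mu(I)\subseteq J$ and invoking $\mu^{2}=\Identity_{A}$. Your presentation is slightly cleaner in that you first observe the bijection on all ideals and then restrict, whereas the paper checks injectivity and surjectivity on maximal ideals separately, but the content is the same.
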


\begin{proof}
    For $a$ in $A$, $\mu(I).a = \mu(\mu(a).I) \subset \mu(I)$ proving $\mu(I)$ is a right ideal. Suppose $I$ is a maximal
    left ideal. If $\mu(I) \subset J$ with $J$ a proper left ideal of $A$, then $I \subset \mu(J)$ with $\mu(J)$ a
    a proper right ideal of $A$. By maximality of $I$, $I = \mu(J)$ thus implying $\mu(I) = J$. This proves the maximality 
    of $\mu(I)$.\\ 
    The induced map 
    \[     \mu : \text{\{left maximal ideals of A\} } \rightarrow  \text{\{right maximal ideals of A\} }             \]
    is injective since  applying $\mu$ to $\mu(I) = \mu(J)$ gives $I = J$. It is also surjective since for 
    $J$ a maximal right ideal, $\mu(J)$ is a maximal left ideal that maps to $J$.
\end{proof}

\begin{Corollary}
    Suppose $J$ is the Jacobson radical of a ring with involution $(A,\mu)$. Then, $\mu(J) = J$.
\end{Corollary}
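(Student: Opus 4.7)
The plan is to combine the two preceding lemmas directly: Lemma \ref{l:Jacobson right max} tells us that $J$ is also the intersection of all maximal right ideals of $A$, and Lemma \ref{l:invol iso max ideal} tells us that $\mu$ sets up a bijection between maximal left ideals and maximal right ideals of $A$.

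First I would note that since $\mu$ is an additive bijection of $A$ onto itself, it commutes with arbitrary intersections of subsets. Writing $J=\bigcap_{I}I$ where the intersection is over all maximal left ideals $I$ of $A$, we therefore have
\[
\mu(J)=\bigcap_{I \text{ max left}} \mu(I).
\]
By Lemma \ref{l:invol iso max ideal}, as $I$ ranges over the maximal left ideals of $A$, $\mu(I)$ ranges bijectively over the maximal right ideals of $A$. Thus
\[
\mu(J)=\bigcap_{I' \text{ max right}} I',
\]
and the right-hand side is $J$ by Lemma \ref{l:Jacobson right max}. This yields $\mu(J)=J$.

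There is no real obstacle here; the only point to be careful about is that $\mu$ preserves intersections, which is immediate since $\mu$ is bijective on the underlying set of $A$. The work has already been done in the preceding two lemmas, and this corollary is essentially a bookkeeping step packaging their combination.
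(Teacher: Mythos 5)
Your proposal is correct and follows exactly the route the paper intends: the paper's own proof simply cites Lemma \ref{l:Jacobson right max} and Lemma \ref{l:invol iso max ideal}, and your argument spells out precisely how those two lemmas combine (via $\mu$ commuting with intersections and the bijection on maximal ideals). No issues.
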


\begin{proof}
    This follows from Lemma \ref{l:Jacobson right max} and Lemma \ref{l:invol iso max ideal}.
\end{proof}

Thus, the involution $\mu$ on $A$ induces an involution $\mu_{A/J}$ on $A/J$ so that the quotient map 
\[      q : A \twoheadrightarrow A/J                     \]
satisfies 
\[      q(\mu(a)) = \mu_{A/J}(q(a)).                                           \]
On the other hand, the quotient map $q$ induces a functor 
\begin{align*}
    Q : \proj(A) &\rightarrow \proj(A/J) \\ 
        M        \mapsto     M \otimes_A A/J &\text{ and }  f        \mapsto     f \otimes_A id_{A/J}. 
\end{align*}

This functor $Q$ satisfies some interesting properties when $A$ is a right artinian ring. 

\begin{Lemma}
\label{l:Q full and surjective}
For $A$ a right artinian ring, the functor 
\[    Q : \proj(A) \rightarrow \proj(A/J)                                 \]
is full, essentially surjective, and conservative.
\end{Lemma}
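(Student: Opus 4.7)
The plan is to prove the three properties separately, all as standard consequences of the fact that for $A$ right artinian the Jacobson radical $J$ is nilpotent, combined with Nakayama's lemma and projectivity.

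For fullness, I would start with a morphism $\phi: Q(M_1) \to Q(M_2)$ in $\proj(A/J)$. Since $Q(M_2) = M_2/M_2 J$ is a quotient of $M_2$ via the canonical surjection $\pi_2: M_2 \twoheadrightarrow Q(M_2)$, the composite $\phi \circ \pi_1: M_1 \to Q(M_2)$ admits a lift $\widetilde{\phi}: M_1 \to M_2$ with $\pi_2 \circ \widetilde{\phi} = \phi \circ \pi_1$ by projectivity of $M_1$. A direct check gives $Q(\widetilde{\phi}) = \phi$.

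For essential surjectivity, I would use that any $\overline{P} \in \proj(A/J)$ is a direct summand of some $(A/J)^n$, and hence corresponds to an idempotent $\overline{e} \in M_n(A/J) = \End_{A/J}((A/J)^n)$. The kernel of $M_n(A) \twoheadrightarrow M_n(A/J)$ equals $M_n(J)$, which is nilpotent since $J$ is (here is where the right artinian hypothesis is actually used). The standard inductive argument—if $\bar e^2 = \bar e$ and $f \in M_n(A)$ is any lift with $f^2 - f \in M_n(J)^k$, then $e' = 3f^2 - 2f^3$ satisfies $e'^2 - e' \in M_n(J)^{2k}$—produces an idempotent $e \in M_n(A)$ lifting $\overline{e}$. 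Then $P = eA^n \in \proj(A)$ satisfies $Q(P) = \overline{e}(A/J)^n = \overline{P}$.

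For conservativity, suppose $f: M_1 \to M_2$ has $Q(f)$ an isomorphism. The cokernel $C$ of $f$ is a finitely generated $A$-module with $C/CJ = \coker(Q(f)) = 0$, so $C = CJ$; Nakayama's lemma (valid since $J \subseteq \mathrm{rad}(A)$) gives $C = 0$, so $f$ is surjective. Because $M_2$ is projective, $f$ splits, yielding $M_1 \cong \ker(f) \oplus M_2$. Applying $Q$ and using that $Q(f)$ is an isomorphism forces $Q(\ker(f)) = 0$, that is, $\ker(f) = \ker(f) J$. Since $\ker(f)$ is a summand of $M_1$ and hence finitely generated, Nakayama's lemma again gives $\ker(f) = 0$, so $f$ is an isomorphism.

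No single step is genuinely difficult; the most delicate point is the lifting of idempotents in essential surjectivity, but this is entirely standard once one invokes nilpotency of $J$. The rest is purely formal: a lifting argument through a surjection for fullness, and two applications of Nakayama for conservativity.
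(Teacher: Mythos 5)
Your proof is correct: the lifting argument through $M_2\twoheadrightarrow M_2/M_2J$ gives fullness, the idempotent-lifting argument (valid because $J$ is nilpotent for $A$ right artinian, so $M_n(J)$ is nilpotent) gives essential surjectivity, and the two applications of Nakayama, combined with the splitting of the surjection $f$ off the projective $M_2$, give conservativity; each step checks out, including the identity $e'^2-e'=-(f^2-f)^2(3-2f)(2f+1)$ implicit in your iteration. The route is, however, different from the paper's: the paper does not argue at all, but simply cites \cite[Corollary 2.2]{BDH}, outsourcing the statement to earlier work on essential dimension of coherent sheaves. What your version buys is a self-contained and elementary proof that makes transparent exactly where the right-artinian hypothesis enters (only through nilpotency of $J$ in the idempotent-lifting step; the Nakayama steps need only that $J$ is the Jacobson radical and the modules are finitely generated), whereas the paper's citation keeps the exposition short and consistent with its reliance on \cite{BDH} elsewhere (e.g.\ in Lemmas \ref{l:symp induces herm}, \ref{l:WT = id}, \ref{l:TW = id}). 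Either is acceptable; if you wanted to shorten yours, you could also quote the standard reference for lifting idempotents modulo a nil ideal rather than reproving it.
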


\begin{proof}
   This follows from \cite[Corollary 2.2]{BDH}.
\end{proof}

Our next aim is to show that this induces a functor from $\herm_{\epsilon}(A,\mu)$ to $\herm_{\epsilon}(A/J,\mu_{A/J})$ that satisfies 
similar properties. We first need a preliminary Lemma.

\begin{Lemma}
    \label{l:nat iso QX to XQ}
    The morphism 
    \[   J_M : M^* \otimes_A A/J \rightarrow (M \otimes_A A/J)^*  \] 
    defined by 
    \[    J_M (f \otimes a_1)(m \otimes a_2) =   \mu_{A/J}(a_1)q(f(m))a_2                              \]
    induces a natural isomorphism 
    \[      J : Q \circ (-)^{*}_A \implies (-)^{*}_{A/J} \circ Q                                    \]
    where $(-)^{*}_A$ and $(-)^{*}_{A/J}$ are the corresponding endofuctors of $\proj(A)$ and $\proj(A/J)$. 
     Furthermore, the following diagram commutes 
    \[\begin{tikzcd}
                & M \arrow[ld, "\can_M" '] \arrow[r, "Q"]             & M\otimes_{A}A/J \arrow[rd, "\can_{Q(M)}"]     &                          \\
              M^{**} \arrow[rd, "Q" ']  &                                                  &                                                       & \left( M\otimes_{A}A/J\right)^{**}\arrow[ld, "(-)^* _A \circ J_M"'] \\
                & M^{**}\otimes_{A}A/J  \arrow[r, "J_{M^*}" ', shorten <= -.5em ] & \left(M^{*}\otimes_{A}A/J \right)         &                         
        \end{tikzcd}\]
\end{Lemma}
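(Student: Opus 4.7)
The plan is to split the proof into two parts corresponding to the two assertions, and in each case reduce to a direct computation on pure tensors and elements.

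For the first assertion, I would first verify that the formula defining $J_M$ is well-posed: the expression $\mu_{A/J}(a_1)q(f(m))a_2$ is $A$-balanced in both tensor arguments (using the right $A$-action on $M^*$ given by $(f.a)(m)=f(m\mu(a))$ in \ref{d:M to M*}, the $A$-bimodule structure on $A/J$ via $q$, and the compatibility $q\circ\mu = \mu_{A/J}\circ q$), and that the resulting map is right $A/J$-linear in the obvious sense. Then to show $J_M$ is an isomorphism I would argue by reduction to the rank-one case: for $M=A$ both sides can be canonically identified with $A/J$ as right $A/J$-modules, and under these identifications $J_A$ is the identity. By additivity of $(-)^*$, $(-)\otimes_A A/J$ and $J$, the claim follows for finite free modules $A^n$. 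For general $M\in\proj(A)$, write $A^n = M\oplus N$; naturality of $J$ (verified next) forces $J_{A^n} = J_M\oplus J_N$, so each summand is invertible.

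Naturality of $J$ in $M$ is then a direct check: for $g\colon M\to N$ in $\proj(A)$, both compositions in the square
\[\begin{tikzcd} Q(N^*) \arrow[r,"J_N"] \arrow[d,"Q(g^*)"'] & (QN)^* \arrow[d,"(Qg)^*"] \\ Q(M^*) \arrow[r,"J_M"'] & (QM)^* \end{tikzcd}\]
evaluate on pure tensors $f\otimes a_1$ and elements $m\otimes a_2$ to $\mu_{A/J}(a_1)q(f(g(m)))a_2$ by unwinding the definitions of $g^*$ (Definition \ref{d:M to M*}) and of the induced map $(Qg)^*$.

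For the second assertion (commutativity of the hexagon), I would trace an element $m\in M$ through both paths and evaluate the resulting elements of $(M^*\otimes_A A/J)^*$ on a pure tensor $f\otimes a$. Along the top, $\can_M(m)$ is the element of $M^{**}$ whose value on $f\in M^*$ is, through the chain $M\xrightarrow{\ev} M^{\vee\vee}\xrightarrow{T_{M^\vee}^{-1}} M^{**}$ of Lemma \ref{l:canonical morphism}, obtained from $f(m)\in A$; applying $Q$ and then $J_{M^*}$ yields the functional $f\otimes a \mapsto q(f(m))a$. Along the bottom path $m\otimes 1 \in QM$, applying $\can_{QM}$ gives the functional on $(QM)^*$ sending $g\mapsto g(m\otimes 1)$, and precomposing with $J_M$ produces the functional $f\otimes a \mapsto J_M(f\otimes a)(m\otimes 1) = \mu_{A/J}(a)q(f(m))$. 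Applying $(-)^*_A$ converts this to the same expression $q(f(m))a$ (using Lemma \ref{l:dual bar commutes}, which produces the appropriate twist by $\mu_{A/J}$). Thus the two paths agree.

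The main obstacle is bookkeeping: keeping straight the interplay between the right $A$-action on $M^*$, the bimodule structure on $A/J$, the involutions $\mu$ and $\mu_{A/J}$, and the canonical identification $T_M$ between $\overline{M^\vee}$ and $\overline{M}^\vee$. Once the conventions are fixed and the evaluation-on-pure-tensors procedure is carried out, each step reduces to a short formal manipulation, and the reduction to $M=A$ together with naturality handles the invertibility of $J_M$ without further combinatorics.
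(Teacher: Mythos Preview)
Your proposal is correct and follows the natural direct approach: verify well-definedness, reduce the isomorphism claim to $M=A$ via additivity and the summand argument for projectives, check naturality on pure tensors, and trace an element through the hexagon. The paper's own proof is simply a citation to \cite[Chapter I, 7.1]{Knus}, so you are effectively supplying the details that the reference would provide rather than taking a different route; what you gain is self-containedness, at the cost of the bookkeeping you already flagged concerning the interaction of $\mu$, $\mu_{A/J}$, and the identification $T_M$ from Lemma~\ref{l:dual bar commutes}.
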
 

\begin{proof}
   This is a subcase of \cite[Chapter I, 7.1]{Knus}.
\end{proof}

\begin{Lemma}
\label{l:herm(A) to herm(A/J)}
The functor 
\[         Q : \proj(A) \rightarrow \proj(A/J)          \]
induces a functor 
\[         Q_{\herm} : \herm_{\epsilon}(A,\mu) \rightarrow \herm_{\epsilon}(A/J,\mu_{A/J}).   \]
\end{Lemma}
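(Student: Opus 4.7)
The plan is to define $Q_{\herm}$ on objects by
\[
Q_{\herm}(M,h) \;=\; \bigl(Q(M),\; J_M \circ Q(h)\bigr),
\]
where $J_M : Q(M^*) \to Q(M)^*$ is the natural isomorphism from Lemma \ref{l:nat iso QX to XQ}, and on morphisms by $Q_{\herm}(f) = Q(f)$. Since $h$ is an isomorphism in $\proj(A)$ and $Q$ preserves isomorphisms, $Q(h)$ is an isomorphism; composing with $J_M$ (an isomorphism by Lemma \ref{l:nat iso QX to XQ}) shows $J_M \circ Q(h)$ is an isomorphism as required.

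The main point is to verify that $J_M \circ Q(h)$ is $\epsilon$-hermitian with respect to $\mu_{A/J}$, i.e.\ that
\[
J_M \circ Q(h) \;=\; \epsilon\,(J_M \circ Q(h))^{*_{A/J}} \circ \can_{Q(M)}.
\]
Starting from $h = \epsilon\, h^{*_A} \circ \can_M$, I apply $Q$ to get $Q(h) = \epsilon\, Q(h^{*_A}) \circ Q(\can_M)$. Using the commutative pentagon of Lemma \ref{l:nat iso QX to XQ}, I can rewrite $Q(\can_M) = ((-)^{*_{A/J}} J_M) \circ J_{M^*}^{-1} \circ \can_{Q(M)}$ (reading the diagram backwards), and I replace $Q(h^{*_A})$ using naturality of $J$ applied to $h: M \to M^*$, which gives the commutative square
\[
\begin{tikzcd}
Q(M^{**}) \ar[r,"J_{M^*}"] \ar[d,"Q(h^{*_A})"'] & Q(M^*)^{*_{A/J}} \ar[d,"Q(h)^{*_{A/J}}"] \\
Q(M^*) \ar[r,"J_M"] & Q(M)^{*_{A/J}}.
\end{tikzcd}
\]
Assembling these two pieces and using the identity $(J_M)^{*_{A/J}} \circ J_{M^*}$ appearing in the pentagon yields precisely
\[
J_M \circ Q(h) \;=\; \epsilon\, (J_M \circ Q(h))^{*_{A/J}} \circ \can_{Q(M)}.
\]
This is the one substantive computation in the proof; it is a diagram chase but needs to be carried out carefully because two distinct natural transformations ($J$ and $\can$) are being combined.

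For a morphism $f : (M_1,h_1) \to (M_2,h_2)$, meaning $h_1 = f^{*_A} \circ h_2 \circ f$, I need to check that $Q(f)$ satisfies $J_{M_1} \circ Q(h_1) = Q(f)^{*_{A/J}} \circ (J_{M_2} \circ Q(h_2)) \circ Q(f)$. Applying $Q$ to the defining equation for $f$ and then using naturality of $J$ applied to $f$, namely the square
\[
\begin{tikzcd}
Q(M_2^*) \ar[r,"J_{M_2}"] \ar[d,"Q(f^{*_A})"'] & Q(M_2)^{*_{A/J}} \ar[d,"Q(f)^{*_{A/J}}"] \\
Q(M_1^*) \ar[r,"J_{M_1}"] & Q(M_1)^{*_{A/J}},
\end{tikzcd}
\]
gives the required identity. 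Functoriality (preservation of composition and identities) is immediate from functoriality of $Q$ on $\proj(A)$.

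The main obstacle is purely bookkeeping: the two natural transformations $J$ and $\can$ interact in the pentagon of Lemma \ref{l:nat iso QX to XQ}, and one must chase them in the correct order so as not to confuse $(-)^{*_A}$ with $(-)^{*_{A/J}}$. Once that pentagon is invoked correctly, both the hermitian property for objects and the compatibility for morphisms reduce to instances of the naturality of $J$.
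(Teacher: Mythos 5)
Your proposal is correct and follows essentially the same route as the paper: define $Q_{\herm}(M,h)=(Q(M),J_M\circ Q(h))$, verify the $\epsilon$-hermitian identity by combining the commutative diagram of Lemma \ref{l:nat iso QX to XQ} with naturality of $J$ applied to $h$, and handle morphisms via the naturality square for $f$ (the paper does the chase in the reverse direction and only writes out $\epsilon=1$, but it is the same argument). The only quibble is a composition-order slip in your parenthetical rewriting of $Q(\can_M)$, which should read $J_{M^*}^{-1}\circ (J_M)^{*}\circ \can_{Q(M)}$; this does not affect the validity of the outlined chase.
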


\begin{proof} 
    We will prove the result for  $\epsilon = 1$ and leave the other case to the reader.  We can define a functor 
    \begin{align*}
        Q_{\herm} : \herm_{1}(A,\mu) &\rightarrow \herm_{1}(A/J,\mu_{A/J})\\  
        (M,h) &\mapsto (Q(M), J_M \circ Q(h)).                                            
    \end{align*}
    We have to check that this is well defined. Firstly, $h$ being an isomorphism implies that $J_M \circ Q(h)$
    is an isomorphism as well since $Q$ is a functor and $J_M$ is an isomorphism. Since $h$ satisfies 
    $h = (h)^{*} \circ can_M$, we have by a diagram chase
    \begin{align*}
        (J_M \circ Q(h))^{*} \circ \can_{Q(M)} &= (Q(h))^{*} \circ (J_M)^{*} \circ \can_{Q(M)} \\ 
                                            &= (Q(h))^{*} \circ J_{(M)^{*}} \circ Q (\can_M)\\  
                                            &= J_M \circ Q((h)^{*}) \circ Q (\can_M) \\ 
                                            &= J_M \circ Q(h)
    \end{align*}        
    where the first equality is by the commutative diagram in Lemma \ref{l:nat iso QX to XQ} and the second equality is by 
    naturality of $J_M$. It follows that $J_{M}\circ Q$ defines a hermitian form by \ref{l:adjoint hermitian}. 
    Lastly, Lemma \ref{l:nat iso QX to XQ} implies that if $f$ is a map of hermitian modules from $(M_1,h_1)$ to $(M_2,h_2)$,
    $Q(f)$ is a map of hermitian modules from $(Q(M_1), j_{M_1} \circ Q(h_1))$ to $(Q(M_2), j_{M_2} \circ Q_{h_2})$. 
\end{proof} 

We are finally ready to prove a lemma analogous to Lemma \ref{l:Q full and surjective} for $Q_{\herm}$.

\begin{Lemma}
    \label{l:Q herm surjective lifts iso}
    For $A$ a right artinian ring, the functor 
    \[    Q_{\herm} : \herm_{\epsilon}(A,\mu) \rightarrow \herm_{\epsilon}(A/J,\mu_{A/J})                                 \]
    is essentially surjective and full.
\end{Lemma}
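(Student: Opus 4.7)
The plan is to split the statement into two independent lifting problems, essential surjectivity for objects and fullness for morphisms, and to handle each by first lifting naively via Lemma \ref{l:Q full and surjective} and then correcting the lift so that the hermitian structure is respected. Both corrections rely on the fact that the Jacobson radical $J$ is nilpotent (because $A$ is right artinian) and on the characteristic zero hypothesis, which allows one to divide by $2$ and to exploit rational binomial series that terminate on nilpotent elements.

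For essential surjectivity, given $(N,h_N)$ in $\herm_\epsilon(A/J,\mu_{A/J})$, I would first pick $M$ in $\proj(A)$ with $Q(M)\cong N$, and then use fullness of $Q$ to produce an arbitrary lift $\tilde h : M \to M^{*}$ of $h_N$ (identifying $Q(M^{*})$ with $Q(M)^{*}$ via $J_M$ from Lemma \ref{l:nat iso QX to XQ}). The lift $\tilde h$ need not be $\epsilon$-hermitian, so I symmetrize:
\[
h := \tfrac{1}{2}\bigl(\tilde h + \epsilon\,\tilde h^{*}\circ\can_M\bigr).
\]
Using $\can_M^{*}\circ\can_{M^{*}} = \mathrm{id}$ from Lemma \ref{l:canonical morphism} together with naturality of $\can$, a routine check gives $h^{*}\circ\can_M = \epsilon h$, so $h$ is $\epsilon$-hermitian by Lemma \ref{l:adjoint hermitian}. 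Since $h_N$ is already $\epsilon$-hermitian, the symmetrization does not alter its image modulo $J$, so $Q_{\herm}(M,h)\cong (N,h_N)$; and $h$ is an isomorphism because $Q(h)$ is and $Q$ is conservative.

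For fullness, given a morphism $\phi:Q_{\herm}(M_1,h_1)\to Q_{\herm}(M_2,h_2)$ in $\herm_\epsilon(A/J,\mu_{A/J})$, I would first use fullness of $Q$ to pick a lift $\tilde f:M_1\to M_2$, which is automatically an isomorphism by conservativity. The form $\tilde f^{*}\circ h_2\circ\tilde f$ is $\epsilon$-hermitian on $M_1$ and congruent to $h_1$ modulo $J$. Setting $\alpha := h_1^{-1}\circ\tilde f^{*}\circ h_2\circ\tilde f$, this $\alpha$ is an automorphism of $M_1$ lying in $\mathrm{id}+J\cdot\End(M_1)$, and it is self-adjoint with respect to the involution $\sigma(\beta) := h_1^{-1}\beta^{*}h_1$ on $\End(M_1)$ induced by $h_1$. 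Because $J$ is nilpotent, so is $\alpha-\mathrm{id}$, whence in characteristic zero the binomial series
\[
u := \alpha^{-1/2} = \sum_{k\ge 0}\binom{-1/2}{k}(\alpha-\mathrm{id})^{k}
\]
terminates and defines a self-adjoint square root of $\alpha^{-1}$ with $Q(u) = \mathrm{id}$. Then $f := \tilde f\circ u$ lifts $\phi$, and one computes $f^{*}\circ h_2\circ f = u^{*}\circ h_1\alpha\circ u = h_1\cdot u\alpha u = h_1$, exhibiting $f$ as a morphism in $\herm_\epsilon(A,\mu)$.

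The main technical obstacle I anticipate is the bookkeeping needed to verify that $Q_{\herm}(M,h)$ really equals $(N,h_N)$ under the various natural isomorphisms, in particular threading through the commutative diagram of Lemma \ref{l:nat iso QX to XQ} so that the symmetrization interacts correctly with $J_M$ and $\can$. Self-adjointness of $u$ is essentially free because $u$ is a polynomial in $\alpha$ and $\sigma$ is a ring involution, but the characteristic zero assumption enters in two essential places: dividing by $2$ in the symmetrization and the dyadic denominators appearing in the binomial series for $\alpha^{-1/2}$.
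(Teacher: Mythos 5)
Your proposal is correct, and it splits into two parts of different status relative to the paper. For essential surjectivity you do exactly what the paper does: lift the module by Lemma \ref{l:Q full and surjective}, lift the form through $J_M$, and symmetrize $h=\tfrac{1}{2}(\tilde h+\epsilon\,\tilde h^{*}\circ\can_M)$, checking hermitianness via $\can_M^{*}\circ\can_{M^{*}}=\mathrm{id}$ and conservativity of $Q$ to see $h$ is an isomorphism; this is the paper's argument verbatim (the paper writes it only for $\epsilon=1$). For fullness your route is genuinely different: the paper takes a lift $f'$ of $f_J$ and \emph{averages} it with $(h^N)^{-1}(f')^{*-1}h^M$, asserting by a ``diagram chase'' that the average is an isometry, whereas you correct the lift multiplicatively, setting $\alpha=h_1^{-1}\tilde f^{*}h_2\tilde f$ (which is $\sigma$-self-adjoint by naturality of $\can$ and hermitianness of $h_1,h_2$, and unipotent because $Q(\alpha)=\mathrm{id}$ and $J$ is nilpotent) and taking $f=\tilde f\,\alpha^{-1/2}$ via the terminating binomial series. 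Your version is arguably the more robust one: writing $c=(h^M)^{-1}f'^{*}h^Nf'$ for the defect of the paper's lift, the averaged map $f'\cdot\tfrac{1}{2}(1+c^{-1})$ has defect $\tfrac{1}{4}(c+c^{-1}+2)$, which equals $\mathrm{id}$ only when $(c-1)^2=0$ (e.g.\ when $J^2=0$); in general one needs either an iteration or precisely the self-adjoint square-root correction you use, which kills the defect exactly since $u\alpha u=\mathrm{id}$ and $u^{*}h_1=h_1u$. The trade-off is that the paper's averaging, when it applies, needs only division by $2$, while your argument uses the full dyadic binomial coefficients and the nilpotency of $J$, but it handles both signs $\epsilon=\pm1$ uniformly and closes the morphism-lifting step without any order-of-vanishing caveat; the remaining bookkeeping you flag (threading $J_M$ and $\can$ through Lemma \ref{l:nat iso QX to XQ}) is the same in both treatments.
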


\begin{proof}
    We will prove the result for $\epsilon = 1$, the other case is left as an exercise. Let $(M_J,h_J)$ be an object of $\herm_{1}(A/J,\mu_{A/J})$. Applying Lemma \ref{l:Q full and surjective}, there exists
    $M$ in $\proj(A)$ such that $Q(M) \cong M_J$.  By the same lemma, there exists a right $A$-module isomorphism
    \[       h' : M  \rightarrow (M)^{*}                         \] 
    such that $Q(h') = J_M^{-1} \circ h_J$. We define 
    \[         h = \frac{h' + (h')^{*} \circ \can_M}{2}.                                     \]   
    We compute 
    \begin{align*}
        ((h')^{*} \circ \can_M)^{*} \circ \can_M &= (\can_M)^{*} \circ ((h')^{*})^{*} \circ \can_M \\ 
                                           &= (\can_M)^{*} \circ \can_{(M)^{*}} \circ h \\ 
                                           &= h.
    \end{align*}
    where the second equality follows from the naturality of $\can$ and the last follows from property of $\can$
    in Lemma \ref{l:canonical morphism}. This proves that 
    \begin{align*}
        (h)^{*} \circ \can_M &= \frac{(h')^{*} \circ \can_M +  ((h')^{*} \circ \can_M)^{*} \circ \can_M }{2} \\ 
                          &= \frac{h' + (h')^{*} \circ \can_M }{2} \\ 
                          &= h.
    \end{align*}
    and thus $(M,h)$ is hermitian. Then $h$ also satisfies  
    \begin{align*}
        Q_{\herm}(h) &= \frac{ Q_{\herm} (h') + Q_{\herm}((h')^{*} \circ \can_M)}{2} \\ 
                     &=  \frac{h_J + (h_J)^{*} \circ   \can_{Q(M)}}{2} \\ 
                     &= h_J
    \end{align*} 
    where the second equality follows from the computation in Lemma \ref{l:herm(A) to herm(A/J)} and the last follows 
    from the fact that $h_J$ is hermitian. Since $j_M \circ Q(h) = Q_{\herm}(h) = h_J$ is an isomorphism, $Q(h)$ is 
    isomorphism, and thus so is $h$ by Lemma \ref{l:Q full and surjective}.\\ 
    Now, suppose $f_J$ is an  isomorphism
    \[    f_J : (M_J , h^M_J) \rightarrow (N_J,h^N_J)                         \]
    in $\herm_1(A/J,\mu_{A/J})$. Let $(M,h^M)$ and $(N,h^N)$ be objects in $\herm_1(A,\mu)$ that satisfies 
    \[    Q_{\herm}(M,h^M) = (M_J , h^M_J) \quad \text{and} \quad  Q_{\herm}(N,h^N) = (N_J , h^N_J).              \] 
    Applying Lemma \ref{l:Q full and surjective}, there exists an isomorphism 
    \[   f' : M \rightarrow N              \]  
    satisfying $Q(f') = f_J$. We define $f = \frac{f' + (h^N)^{-1}(f')^{*-1}h^M}{2}$. By a similar
    diagram chase as before, it follows that $f = (h^N)^{-1}(f)^{*-1}h^M$ and $Q_{\herm}(f) = Q(f) = f_J$. 
    Since $Q(f)$ is an isomorphism, $f$ is an isomorphism by Lemma \ref{l:Q full and surjective}. 
\end{proof} 

Given a right 
$A$-module $M$, we can use the projection 
\[   H(A) = A \times A^{op} \rightarrow A   \] 
to endow $M$ with a right $H(A)$ module structure. Since $H(A)$ is a ring with involution, it makes sense to talk about the 
right $H(A)$-module $(M)^{*}$. We can also define a right 
 $H(A)$-module homomorphism 
\[    h : M \oplus (M)^{*} \rightarrow (M)^{*} \oplus (M)^{**}                                                \]  
by the matrix 
\begin{equation}
    h  =  \begin{pmatrix}
        0  & id \\ 
        \can_M & 0.
    \end{pmatrix}
\end{equation}
It is easy to check that this defines a hermitian nonsingular sesquilinear form on $M \oplus (M)^{*}$. We denote the 
corresponding object of $\herm_1(H(A))$ by $H(M)$. Furthermore, given any isomorphism 
\[    f : M_1 \rightarrow M_2              \] 
in $\proj(A)$, we can define a morphism 
\[      H(f)  : M_1 \oplus (M_1)^{*} \rightarrow M_2 \oplus (M_2)^{*}                          \] 
by the matrix 
\begin{equation}
   H(f)  =  \begin{pmatrix}
        f  & 0\\ 
        0 & (f)^{*-1}
    \end{pmatrix}.
\end{equation}
It is again a computation to check that 
\[         H(f) : H(M_1) \rightarrow H(M_2)            \]
is actually a morphism in $\herm_1(H(A))$. This allows us to define a functor 
\begin{align*}
    H : \proj_{iso}(A) &\rightarrow  \herm_1(H(A)) \\ 
    M \mapsto  H(M)    &\text{ and }  f \mapsto H(f)     
\end{align*} 

\begin{Lemma}
    \label{l:Herm modules over Hyperbolic rings}
    The functor 
     \[
      H : \proj_{iso}(A) \rightarrow \herm(H(A))   
    \]
    is an equivalence of categories.
\end{Lemma}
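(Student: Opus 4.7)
The proof will exploit the central orthogonal idempotents $e_1=(1,0)$ and $e_2=(0,1)$ in $H(A)=A\times A^{op}$, which satisfy $e_1+e_2=1$ and are swapped by the involution: $\mu(e_1)=e_2$. The strategy is to use these idempotents to reduce hermitian modules over $H(A)$ to plain projective modules over $A$.

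\textbf{Step 1: Decomposition of $H(A)$-modules and their duals.} First I would record that every right $H(A)$-module $N$ decomposes canonically as $N=N_1\oplus N_2$ with $N_i=Ne_i$, where $N_1$ is a right $A$-module and $N_2$ is a right $A^{op}$-module (equivalently a left $A$-module). Projectivity is preserved piecewise. Then I would compute what the $H(A)$-dual does to this decomposition: an $H(A)$-linear map $N_1\to H(A)$ is forced to land in $Ae_1$, so $\Hom_{H(A)}(N_1,H(A))\cong \Hom_A(N_1,A)=N_1^{\vee}$, and the left $H(A)$-action factors through the $A$-factor. After applying the bar functor (using Lemma \ref{l:dual bar commutes}), the piece of $N^{*}$ coming from $N_1$ lands in the $e_2$-component, and symmetrically for $N_2$. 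The upshot is a canonical identification $N^{*}e_1\cong (N_2)^{\vee}$ (as right $A$-modules) and $N^{*}e_2\cong (N_1)^{\vee}$ (as right $A^{op}$-modules), with $\can_N$ compatible with the bi-dual identifications on each piece via Lemma \ref{l:canonical morphism}.

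\textbf{Step 2: Essential surjectivity.} Given $(N,h)\in\herm_1(H(A))$, I would decompose $N=N_1\oplus N_2$ and, using Step 1, rewrite the isomorphism $h:N\to N^{*}$ as a pair of isomorphisms $h_1:N_1\to (N_2)^{\vee}$ and $h_2:N_2\to (N_1)^{\vee}$. The hermitian condition $h=h^{*}\circ\can_N$ translates, after tracking the canonical identifications, into the statement that $h_2$ equals the transpose (via $\can$) of $h_1$. In particular $h_1$ alone determines the whole datum. Setting $M:=N_1$, the isomorphism $h_1$ yields an $H(A)$-linear isomorphism $(N,h)\cong H(M)$, proving essential surjectivity.

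\textbf{Step 3: Full faithfulness.} Let $\phi:H(M_1)\to H(M_2)$ be a morphism in $\herm_1(H(A))$. Decomposing via $e_1,e_2$, the map $\phi$ becomes a pair $(\phi_1,\phi_2)$ with $\phi_1:M_1\to M_2$ in $\proj(A)$ and $\phi_2:(M_1)^{\vee}\to (M_2)^{\vee}$ in $\proj(A^{op})$. The off-diagonal (antidiagonal) form of the hyperbolic $h$ on each side, combined with the condition $h_1 = \phi^{*}h_2\phi$, forces $\phi_2=(\phi_1^{*})^{-1}$. Hence $\phi=H(\phi_1)$, which shows $H$ is bijective on Hom-sets, and faithfulness follows since $\phi_1$ is the first component of the idempotent decomposition and is therefore recoverable from $\phi$.

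\textbf{Main obstacle.} The main technical hurdle is the careful bookkeeping of canonical isomorphisms: one must track the $H(A)$-dual versus the $A$-dual, the bar construction and its interaction with duality via Lemma \ref{l:dual bar commutes}, and the involution $\mu$ that exchanges $e_1$ and $e_2$. Translating the single identity $h=h^{*}\circ\can_N$ into the correct relation between $h_1$ and $h_2$ is precisely where all the natural isomorphisms $T_M$ and $\can_M$ must be applied coherently; once this translation is made explicit, the rest of the argument is essentially formal.
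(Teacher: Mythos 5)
Your proposal is correct and follows essentially the same route as the paper: decompose a hermitian $H(A)$-module via the idempotents $(1,0)$ and $(0,1)$, observe that the involution swaps the two factors so the "diagonal" components of $h$ vanish and $h$ reduces to an isomorphism $N_1\to (N_2)^{*}$ determined (via $\can$) by a single piece, giving $(N,h)\cong H(N_1)$, and then decompose morphisms the same way to get fullness, with faithfulness immediate. The paper's proof carries out exactly this bookkeeping, just with the vanishing of $h_{11},h_{22}$ checked by the direct computation $h(m.(1,0))(n.(1,0))=(0,1)h(m)(n.(1,0))=0$ rather than through your Step 1 identification of the dual's idempotent pieces.
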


\begin{proof}
    Given an object $(M,h)$ in $\herm_1(H(A))$, we define right $H(A)$-modules
    \[    M_1 = M.(1,0) \quad \text{and} \quad M_2 = M.(0,1)            \] 
    Since $(1,0)$ and $(0,1)$ in $A \times A^{op}$ are idempotents satisfying $(1,0).(0,1)$ = 0, we have a decomposition
    \[    M_1 \oplus M_2 \stackrel{\sim}{\to} M                          \]
    Under this identification,we can express $h$ as a matrix 
    \[  h = \begin{pmatrix}
        h_{11} & h_{12} \\ 
        h_{21} & h_{22}.
    \end{pmatrix}   \quad \text{where} \quad h_{ij} : M_j \to (M_i)^{*}.                   \] 
    Furthermore for all elements $m$ and $n$ of $M$, 
    \[    h(m.(1,0))(n.(1,0)) = (0,1)h(m)(n.(1,0)) = 0.                     \] 
    This proves that the morphism $h_{11}$ is 0, and so is $h_{22}$ by a similar computation. Since $h$ is an isomorphism, $h_{12}$ and $h_{21}$ are isomorphisms
    and $h_{21} = (h_{12})^{*} \can_{M_2}$ since $h$ is hermitian. Thus, we have an isomorphism 
    \[  id \oplus h_{21} :  M_1 \oplus M_2 \rightarrow M_1 \oplus (M_1)^{*}                            \]
    and under this identification, the hermitian form $h$ is 
    $$ \begin{pmatrix} 0 & id \\ 
                    \can_{M_1} & 0. \end{pmatrix}$$ 
    thus proving that $M \cong H(M_1)$.\\ 
    If $f$ is an isomorphism 
    \[    f : H(M) \rightarrow H(N),   \] 
    we can define morphisms 
    \begin{align*}
        f_1 : M &\rightarrow N \quad &\text{and} \quad f_2 : (M)^{*} &\rightarrow (N)^{*} \\ 
             m  &\mapsto  f(m).(1,0) \quad &\text{and} \quad m &\rightarrow f(m).(0,1).
    \end{align*}
    We then have a decomposition 
    \[ f = \begin{pmatrix}
        f_1 & 0 \\ 
        0 & f_2 
    \end{pmatrix}.\]
    The fact that $f$ is a morphism in $\herm_1(A)$ implies that $f_2 = (f_1)^{*-1}$, thus proving that $f = H(f_1)$.
    This proves that $H$ is full. Faithfulness of $H$ is clear.    
\end{proof}

\begin{Corollary}
\label{c: hyperbolic mod iso extension} 
Let $(M,h^M)$ be $(N,h^N)$ be objects of $\herm^r_1(H(A))$ for a right artinian ring $A$. Then 
there is an isomorphism of hermitian modules $$
(M,h^{M})\to (N,h^{N}). 
$$

\end{Corollary}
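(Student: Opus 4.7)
The plan is to reduce the statement to an isomorphism of projective $A$-modules via the categorical equivalence of \ref{l:Herm modules over Hyperbolic rings}, and then to handle the latter by passing to the semisimple quotient $A/J$ via \ref{l:Q full and surjective}.

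First, I would apply the equivalence $H : \proj_{iso}(A) \stackrel{\sim}{\to} \herm_1(H(A))$. This produces finitely generated projective right $A$-modules $P$ and $Q$ with isomorphisms of hermitian modules $(M, h^M) \cong H(P)$ and $(N, h^N) \cong H(Q)$; explicitly, $P = M\cdot(1,0)$ and $Q = N\cdot(1,0)$, where $(1,0)$ is the idempotent of $H(A) = A\times A^{op}$ picking out the first factor (as in the proof of \ref{l:Herm modules over Hyperbolic rings}). It therefore suffices to produce an isomorphism $P \cong Q$ in $\proj(A)$, since applying the functor $H$ then delivers an isomorphism $H(P) \to H(Q)$ in $\herm_1(H(A))$, and composing with the isomorphisms above gives the required map $(M, h^M) \to (N, h^N)$.

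For the isomorphism $P \cong Q$, I would invoke \ref{l:Q full and surjective}: the reduction functor $Q : \proj(A) \to \proj(A/J)$ is full, essentially surjective, and conservative. Hence an isomorphism $P \otimes_A A/J \cong Q \otimes_A A/J$ in $\proj(A/J)$ can be lifted by fullness to a morphism $P \to Q$, which is then an isomorphism by conservativity. Since $A/J$ is semisimple, projective $A/J$-modules are classified (up to isomorphism) by the multiplicities of their simple summands, and the common rank $r$ hypothesis built into membership of $\herm^r_1(H(A))$ translates, via the explicit shape $H(P) = P \oplus P^*$ and the equivalence $H$, into the same numerical invariants for $P \otimes_A A/J$ and $Q \otimes_A A/J$.

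The main obstacle I anticipate is the bookkeeping in this last step, namely verifying that the rank datum carried by an object of $\herm^r_1(H(A))$ indeed determines the full isotypic decomposition of the associated $A/J$-module, rather than just a single coarse numerical invariant. Once this translation is in place, the proof is formal: the isomorphism over $A/J$ lifts to an isomorphism $P \cong Q$ over $A$ by \ref{l:Q full and surjective}, and applying the equivalence $H$ yields the desired isomorphism of hermitian modules.
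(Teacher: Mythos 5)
Your proposal is correct and follows essentially the same route as the paper: the paper's proof likewise uses the same-rank hypothesis to get an isomorphism after reduction modulo the Jacobson radical, lifts it to a module isomorphism via the fullness and conservativity of the reduction functor (\ref{l:Q full and surjective}), and then upgrades it to a hermitian isomorphism through the hyperbolic equivalence $H$ of \ref{l:Herm modules over Hyperbolic rings}, differing only in that you apply the equivalence before reducing mod $J$ while the paper reduces $M$ and $N$ directly. The rank bookkeeping you flag as the main obstacle is exactly the step the paper also passes over with the single assertion that equal rank forces $Q(M)\cong Q(N)$.
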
 
\begin{proof}
    As the modules have the same rank, $Q(M)$ and $Q(N)$ are isomorphic. Since $Q$ is conservative, \ref{l:Q full and surjective}, there is 
    an isomorphism $M\to N$. 
    The result follows from the lemma. 
\end{proof} 

\begin{Lemma}
    \label{l:herm over direct product of rings} 
        Let $(A_1,\mu_1)$ and $(A_2,\mu_2)$ be rings with involutions. Let $(M,h)$ be in $\herm_1((A_1,\mu_1) \times (A_2, \mu_2))$. 
        Then $M$ has a decomposition 
        \[ (M,h) \cong (M_1 \oplus M_2, h_1 \oplus h_2) \]
        where $(M_1,h_1)$ and $(M_2,h_2)$ are objects of $\herm_1(A_1,\mu_1)$ and $\herm_1(A_2,\mu_2)$ respectively.     
\end{Lemma}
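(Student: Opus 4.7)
The plan is to use the decomposition of $A:=A_1\times A_2$ coming from the central orthogonal idempotents $e_1=(1,0)$ and $e_2=(0,1)$. A crucial point is that by the definition of $\mu$ in Lemma \ref{l:direct product of rings}, both idempotents are fixed by the involution: $\mu(e_i)=e_i$, and of course $e_1e_2=e_2e_1=0$, $e_1+e_2=1$.

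First I would set $M_i := M\cdot e_i$. Since the $e_i$ are orthogonal central idempotents summing to $1$, the right $A$-module $M$ splits canonically as $M=M_1\oplus M_2$, and the $A$-action on $M_i$ factors through the projection $A\twoheadrightarrow A_i$, so each $M_i$ is naturally an object of $\proj(A_i)$. Next I would identify duals: any $A$-linear map $f:M_i\to A$ satisfies $f(m)=f(m\cdot e_i)=f(m)\cdot e_i\in A_i$, giving a natural identification $\Hom_A(M_i,A)=\Hom_{A_i}(M_i,A_i)$, and hence $M^{*}=M_1^{*}\oplus M_2^{*}$ (where on each summand the right action is the one coming from $\mu_i$ via the bar construction of Notation \ref{n:bar}).

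The heart of the argument is to show the sesquilinear form $b$ associated to $h$ via Lemma \ref{l:adjoint} is block-diagonal. For $m\in M_i$ and $n\in M_j$ with $i\neq j$, sesquilinearity gives
\[
b(m,n)=b(m\cdot e_i,\,n\cdot e_j)=\mu(e_i)\,b(m,n)\,e_j=e_i\,b(m,n)\,e_j=0,
\]
using $\mu(e_i)=e_i$ and $e_ie_j=0$. Thus $h(M_i)\subseteq M_i^{*}$ under the identification above, and $h=h_1\oplus h_2$ for some $A_i$-module maps $h_i:M_i\to M_i^{*}$. Since $h$ is an isomorphism and $M^{*}=M_1^{*}\oplus M_2^{*}$, each $h_i$ is an isomorphism. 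Moreover, for $m,n\in M_i$, the value $b(m,n)=e_ib(m,n)e_i$ lies in $e_iAe_i=A_i$, so $b$ restricts to a nonsingular sesquilinear form $b_i$ on $M_i$ over $(A_i,\mu_i)$ whose adjoint is $h_i$.

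Finally I would verify the $\epsilon$-hermitian identity for each piece. The canonical isomorphism $\can_M$ of Lemma \ref{l:canonical morphism} is natural, and the duality functor $(-)^{*}$ respects direct sums, so $\can_M=\can_{M_1}\oplus\can_{M_2}$ and $(h_1\oplus h_2)^{*}=h_1^{*}\oplus h_2^{*}$ under the decomposition $M^{**}=M_1^{**}\oplus M_2^{**}$. The identity $h=\epsilon h^{*}\circ\can_M$ therefore decomposes componentwise into $h_i=\epsilon h_i^{*}\circ\can_{M_i}$, showing $(M_i,h_i)\in\herm_\epsilon(A_i,\mu_i)$ and producing the desired isomorphism $(M,h)\cong(M_1\oplus M_2,h_1\oplus h_2)$. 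The main potential obstacle is the bookkeeping in identifying $M^{*}$ with $M_1^{*}\oplus M_2^{*}$ compatibly with the $\mu$-twisted right actions on each factor, but this is routine given that $\mu$ preserves the idempotents $e_i$.
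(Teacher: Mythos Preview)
Your proof is correct and follows essentially the same approach as the paper: both use the central orthogonal idempotents $e_1=(1,0)$ and $e_2=(0,1)$ to set $M_i=Me_i$ and decompose $h$ accordingly. The paper's proof is much terser, simply referring back to the analogous idempotent computation in the proof of Lemma~\ref{l:Herm modules over Hyperbolic rings}, whereas you spell out the block-diagonality argument and the compatibility of $\can_M$ and $(-)^{*}$ with the direct sum explicitly.
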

\begin{proof}
    Let $e_1$ and $e_2$ be identities of $A_1$ and $A_2$ respectively. Similarly, as in the proof of 
    Lemma \ref{l:Herm modules over Hyperbolic rings},  the decomposition holds true for
   \begin{align*}
        M_1 = Me_1    \quad &\quad M_2 = Me_2 \\ 
        h_1 = he_1    \quad &\quad h_2 = he_2. 
    \end{align*}    
\end{proof}

\begin{Lemma}
\label{l:simple iso extension}
Let $(M,h^M)$ be $(N,h^N)$ be objects of $\herm^r_1(A,\mu)$ for a simple $k$-algebra $A$ with a $k$-algebra involution $\mu$.
 Then there exists a
finite field extension $L$ of $k$ such that $(M_{L},h^M_{L})$ and $(N_{L},h^N_{L})$ are isomorphic as objects of  
$\herm^r_1(A_{L},\mu_{L})$.
\end{Lemma}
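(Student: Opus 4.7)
The strategy is to represent the isomorphism functor by a finite-type $k$-scheme, exhibit a $\bar k$-point using the structure theory of semisimple algebras with involution, and descend the point to a finite extension.

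First I set up the scheme. Consider the functor $Z$ on $k$-algebras sending $R$ to the set of hermitian isomorphisms $(M_R,h^M_R)\xrightarrow{\sim}(N_R,h^N_R)$ in $\herm^r_1(A_R,\mu_R)$. Since $M$ and $N$ are finitely generated projective over the finite-dimensional $k$-algebra $A$, the module $\Hom_A(M,N)$ is a finite-dimensional $k$-vector space, and the functor of $A$-linear maps is represented by a finite-type affine $k$-scheme. Inside this, being an isomorphism is an open condition and preserving the hermitian forms is a closed condition, so $Z$ is represented by a finite-type $k$-scheme. Any $\bar k$-point of $Z$ factors through its image, whose residue field is a finite extension $L$ of $k$, producing the desired isomorphism over $L$. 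Thus it suffices to show $Z(\bar k)\neq\emptyset$.

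Second, I exhibit the $\bar k$-point by a decomposition argument. The base change $(A_{\bar k},\mu_{\bar k})$ is a finite-dimensional semisimple $\bar k$-algebra with involution, so by Lemma \ref{l:Decomposition of involution of rings} it decomposes as
\[
(A_{\bar k},\mu_{\bar k}) \;\cong\; \prod_i (A_i,\mu_i^A)\,\times\,\prod_j H(A'_j)
\]
with each $A_i$, $A'_j$ simple over $\bar k$. By Lemma \ref{l:herm over direct product of rings} applied iteratively, both $(M_{\bar k},h^M_{\bar k})$ and $(N_{\bar k},h^N_{\bar k})$ decompose as orthogonal sums of hermitian modules over these factors. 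Because $A$ is simple Artinian, every object of $\proj^r(A)$ is isomorphic to $S^{\oplus r}$ for the unique simple right $A$-module $S$, so $M\cong N$ as $A$-modules; base-changing, $M_{\bar k}\cong N_{\bar k}$ as $A_{\bar k}$-modules, and hence the ranks in each factor of the decomposition match.

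Third, I settle each factor separately. For the hyperbolic factors $H(A'_j)$, Corollary \ref{c: hyperbolic mod iso extension} directly provides an isomorphism of the two hermitian modules (of equal rank). For each simple factor $A_i$, Wedderburn over the algebraically closed field $\bar k$ gives $A_i\cong M_{n_i}(\bar k)$; Morita equivalence then identifies $\herm_1(A_i,\mu_i^A)$ with the category of $\epsilon_i$-hermitian $\bar k$-bilinear forms on vector spaces, where $\epsilon_i=\pm 1$ records whether $\mu_i^A$ is of orthogonal or symplectic type. Over an algebraically closed field of characteristic $0$, any two nondegenerate symmetric (resp.\ alternating) bilinear forms of the same dimension are isometric, so the two hermitian modules over $A_i$ are isomorphic. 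Assembling these factor-by-factor isomorphisms yields an element of $Z(\bar k)$, which completes the proof.

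The main obstacle is the middle step: verifying that the factor-by-factor ranks of $M_{\bar k}$ and $N_{\bar k}$ genuinely coincide, and that the Morita translation converts a hermitian form with respect to $\mu_i^A$ into a bilinear form of the expected $\epsilon_i$-symmetry. Once these bookkeeping items are in place, the finite-type descent from $\bar k$ to a finite $L/k$ is formal.
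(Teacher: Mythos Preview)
Your argument is correct and takes a somewhat different route from the paper's. The paper passes directly to a finite splitting field $L$ where it asserts $A_L\cong\End_L(V)$, then invokes the Morita-type equivalence of \cite[I.9.6.1]{Knus} to identify $\herm_1(A_L,\mu_L)$ with $\herm_\epsilon(L,\mathrm{id})$, where uniqueness in each rank is immediate. Your approach instead goes all the way to $\bar k$, uses the structural decomposition of $(A_{\bar k},\mu_{\bar k})$ into simple-with-involution and hyperbolic pieces (Lemmas \ref{l:Decomposition of involution of rings} and \ref{l:herm over direct product of rings}), settles each piece separately, and then descends to a finite extension via representability by a finite-type scheme. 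The key computation---that over an algebraically closed field a simple factor with involution has, via Morita, a unique nondegenerate $\epsilon$-form in each dimension---is the same in both proofs.

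What your route buys is robustness: the paper's line ``$A_L\cong\End_L(V)$'' tacitly assumes $A_L$ stays simple, which can fail when the center of $A$ is strictly larger than $k$ (then $A\otimes_k L$ is a product of matrix algebras, and the involution may swap factors, producing hyperbolic pieces). Your decomposition handles this transparently, and the scheme-theoretic descent cleanly converts the $\bar k$-isomorphism into one over a finite $L/k$. The cost is a bit more machinery; the paper's version is shorter when $A$ is central simple. Your remark that the factor-by-factor ranks agree because $M\cong N$ already as $A$-modules (simple Artinian, so rank determines the projective) is the right way to pin down the bookkeeping you flagged.
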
 

\begin{proof}
    Let $L$ be the splitting field of $A$. Then $A_{L} \cong \End_{L}(V)$ for a vector space $V$ over $L$. Let 
    $M$ be a simple module over $A_{L}$. Any element $x$ of ${L}$ defines a homomorphism 
    \[      m_x : M \rightarrow M                               \]
    given by the right  multiplication by the element $x$ on $M$. By Schur's Lemma, the corresponding morphism 
    \[            L \rightarrow \End_{A_{L}}(M), \quad \quad x \mapsto m_x                                \]
    is an isomorphism. By \cite[Chapter I, 9.6]{Knus}, $M$ has a $\epsilon$-hermitian form $h$ for 
    $\epsilon = \pm 1$. Then $L$ can be given the structure of involution  by the map 
    \[    L\rightarrow L ,  \quad \quad     m_x \mapsto  h^{-1} (m_x)^{*} h.                   \]   
    The above map is just the identity map by the following computation 
    \begin{align*}
        h^{-1}(m_x)^{*}h &= \epsilon h^{-1} (m_x)^{*}(h)^{*} \can \\ 
                      &=  \epsilon h^{-1} (hm_x)^{*} \can      \\ 
                      &= \epsilon h^{-1} ((m_x)^{*} h)^{*} \can \\ 
                     &= \epsilon h^{-1} (h)^{*} (m_x)^{*} \can   \\ 
                      &= \can^{-1} ((m_x)^{*})^{*} \can \\ 
                     &= m_x.  
    \end{align*} 
    where $hm_{x} = (m_x)^{*}h$ since $\mu_{L}$ preserves $L$. Now by \cite[Chapter I, 9.6.1]{Knus}, the category 
    $\herm_1(A_{L},\mu_{L})$ is equivalent to the category $\herm_{\epsilon}(L,id)$. Since
    there exists an unique object of rank $r$ in $\herm_{\epsilon}(L,id)$ for any $r$ $\in$ $\mathbb{N}$,  
    $(M_{L},h^M_{L})$ and $(N_{L},h^N_{L})$ are isomorphic.
\end{proof}

%%%%%%%%%%%%%%%%%%%%%%%%%%%%%%%%%%%%%%%%%%%%%%%%
%%%%%%%%%%%%%%%%%%%%%% Pushforwards of symplectic forms.
%%%%%%%%%%%%%%%%%%%%%%%%%%%%%%%%%%%%%%%%%%%%%%%%
 
\subsection{Pushforwards of symplectic forms.}

\begin{Notations}
    Let $X$ be a projective scheme over a field $k$. For any finite field extension $k \subset K$, we denote by 
$X_{K}$ the scheme $X \otimes_k K$. Let $\pi_K$ be the canonical projection,
\[     \pi_K : X_{K}  \rightarrow X.                            \]
If $K \subset K'$ is an extension of fields, we denote by 
\[      \pi_{KK'} : X_{K'} \rightarrow X_K                                \]
the canonical projection.
\end{Notations}

Since $k$ is a field of characteristic $0$, the extension $k \subset K$ is separable. The trace morphism 
\[         \Tr : K \rightarrow k                               \]
induces a nonlinear bilinear nondegenerate form. 

\begin{Lemma}
\label{l:trace nondegenerate}
The $k$-bilinear form 
\[        (x,y) \mapsto \Tr(xy)                \]
is nondegenerate.
\end{Lemma}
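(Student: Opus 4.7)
The plan is to exploit separability of $K/k$, which holds automatically since $\mathrm{char}(k)=0$, and reduce the nondegeneracy of the trace form to a transparent computation over an algebraic closure. Concretely, let $\bar{k}$ be an algebraic closure of $k$ and set $n=[K:k]$. Because $K/k$ is separable, the $\bar{k}$-algebra $K\otimes_k \bar{k}$ is étale, and hence splits as a product $\bar{k}^n$ corresponding to the $n$ distinct $k$-algebra embeddings $\sigma_1,\ldots,\sigma_n:K\hookrightarrow \bar{k}$. Under this splitting the $\bar{k}$-linear extension of $\Tr_{K/k}$ is identified with the sum-of-coordinates map on $\bar{k}^n$, so the extended pairing becomes the standard form $((x_i),(y_i))\mapsto \sum_i x_i y_i$, which is manifestly nondegenerate.

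Since nondegeneracy of a bilinear form on a finite-dimensional vector space is preserved under (and detected by) faithfully flat base change, this completes the argument. The mild point that needs care is the identification $K\otimes_k\bar k\cong \bar k^n$, which I would justify briefly via the primitive element theorem: write $K=k(\alpha)$ with separable minimal polynomial $f$, factor $f=\prod(T-\alpha_i)$ over $\bar k$ with distinct $\alpha_i$, and apply the Chinese remainder theorem to $\bar k[T]/(f)$.

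As a self-contained alternative avoiding base change, I could instead compute directly: the Gram matrix of the trace form in the basis $\{1,\alpha,\ldots,\alpha^{n-1}\}$ has determinant equal to the discriminant $\mathrm{disc}(f)=\prod_{i<j}(\alpha_i-\alpha_j)^2$, which is nonzero precisely by separability. Either route is short and there is no real obstacle; the main content of the lemma is simply the observation that characteristic zero forces separability, after which any standard argument applies.
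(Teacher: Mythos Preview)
Your argument is correct and standard. The paper does not actually prove this lemma but simply cites \cite[Theorem 5.2]{Lang}; your base-change/discriminant arguments are precisely the classical proofs that underlie that reference, so there is no substantive difference in approach.
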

\begin{proof}
This is \cite[Theorem 5.2]{Lang}.    
\end{proof}

\begin{Lemma}
\label{l:NondegeneracyPushforward}
\begin{enumerate}
    \item There is a natural $\mathcal{O}_X$ linear  isomorphism 
      \[         (\pi_K)_* \cO_{X_K} \stackrel{\sim}{\to} \cO_{X} \otimes_k K                                 . \] 
    \item There is a natural $\mathcal{O}_X$ linear isomorphism  
       \[       \cO_X \otimes_k \Hom_k(K,k)  \stackrel{\sim}{\to} \homsheaf_{\cO_X}((\pi_K)_* \cO_{X_K} , \cO_X).               \]
    \item The composition 
     \[       (\pi_K)_* \cO_{X_K} \stackrel{\sim}{\to} \cO_{X} \otimes_k K  \stackrel{id \otimes_k \Tr}{\rightarrow} \cO_X \otimes_k \Hom_k(K,k)  \stackrel{\sim}{\to} \homsheaf_{\cO_X}((\pi_K)_* \cO_{X_K} , \cO_X).    \]
        is an $\cO_X$-linear isomorphism.
    \end{enumerate}
    In the last statement, the map denoted $\Tr$ is the isomorphism $K\to K^{\vee}=\Hom_{k}(K,k)$ induced by the non-degenerate trace pairing. 
\end{Lemma}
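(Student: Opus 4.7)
The plan is to verify the three items in order, exploiting the fact that all the sheaves in question are locally free of finite rank, so that everything can be checked on an affine open cover of $X$.

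For (1), note that $\pi_K : X_K \to X$ is an affine morphism, being the base change of the affine morphism $\Spec K \to \Spec k$. On an affine open $U = \Spec A \subseteq X$ the preimage is $\Spec(A \otimes_k K)$, so
\[
(\pi_K)_* \cO_{X_K}(U) = A \otimes_k K = (\cO_X \otimes_k K)(U),
\]
and this identification is natural in $A$, hence glues to an $\cO_X$-linear isomorphism of quasi-coherent sheaves. This also shows that $(\pi_K)_*\cO_{X_K}$ is locally free of rank $[K:k]$.

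For (2), the key point is that $\cO_X \otimes_k K$ is locally free of rank $[K:k]$, so taking $\cO_X$-linear duals is compatible with the tensor product over $k$ with the finite-dimensional $k$-vector space $K$. Explicitly, one sends a local section $f \otimes \varphi$ (with $f$ a section of $\cO_X$ and $\varphi \in \Hom_k(K,k)$) to the homomorphism $g \otimes x \mapsto fg\,\varphi(x)$, and a local computation on trivialisations of $\cO_X \otimes_k K$ shows this is an isomorphism. Again the construction is natural in the algebra structure, so it is $\cO_X$-linear.

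For (3), the arrow labelled $\Tr$ is the $k$-linear map $K \to \Hom_k(K,k)$, $x \mapsto (y \mapsto \Tr(xy))$, which is an isomorphism by Lemma~\ref{l:trace nondegenerate} (non-degeneracy of the trace form in characteristic zero). Since the first and third arrows in the displayed composition are isomorphisms by (1) and (2), and the middle arrow is obtained from the isomorphism $\Tr$ by tensoring with the identity of $\cO_X$, the composite is an isomorphism of $\cO_X$-modules. The only point requiring attention is to ensure that each step is compatible with the $\cO_X$-module structure and natural in $X$; this follows formally from the fact that every identification is constructed from tensor products of $\cO_X$-module maps with $k$-linear maps, and that separability (i.e. characteristic zero) is what makes $\Tr$ an isomorphism. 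The main, and essentially only, substantive input is this non-degeneracy of the trace pairing; the remainder is bookkeeping.
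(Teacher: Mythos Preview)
Your proposal is correct and follows essentially the same approach as the paper: work locally on affine opens, write down the explicit formula $f \otimes \varphi \mapsto (g \otimes x \mapsto fg\,\varphi(x))$ for part~(2), and deduce part~(3) from the non-degeneracy of the trace pairing. The only cosmetic difference is that the paper cites \cite[Proposition~2.10]{Eisenbud} for the isomorphism in~(2), whereas you argue directly via local freeness; the content is the same.
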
 

\begin{proof}
On an open  set $U$ of $X$, the first map is just the map 
\[     \Gamma( U,\cO_{X} \otimes_k K) \rightarrow \Gamma(U, \cO_X) \otimes_k K.                             \] 
Using the first isomorphism, the second map can be defined on an open set $U$ as 
\begin{align*}
    \alpha : \cO_U \otimes_k \Hom_k(K,k) &\rightarrow \homsheaf_{\cO_U}(\cO_{U} \otimes_k K , \cO_U) \\ 
                 \alpha(u \otimes f)(u' \otimes x) &= uu'f(x) 
\end{align*}
It is enough to check that this is an isomorphism over an affine open. This is \cite[Proposition 2.10]{Eisenbud}.
The last map is composition of $\cO_X$ linear isomorphisms.
\end{proof}

\begin{Lemma}
    \label{l:affinePushforward} 
Let $f$ be an affine morphism of schemes
\[f : Y' \rightarrow Y.\]
Given quasicoherent sheaves $\sE$ and $\sF$ on $Y'$ then the natural morphism 
\[     f_* \homsheaf_{\cO_{Y'}}(\sE,\sF) \rightarrow \homsheaf_{f_*(\cO_{Y'})}(f_* \sE, f_* \sF)                                 \] 
is a $\cO_Y$ linear isomorphism.
\end{Lemma}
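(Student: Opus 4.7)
The plan is to reduce immediately to the affine case and then check that both sides are the quasi-coherent sheaf on $Y$ associated to the $A$-module $\Hom_B(M,N)$. Since the assertion is local on $Y$, I would cover $Y$ by affine opens $U = \Spec A$; because $f$ is affine, $f^{-1}(U) = \Spec B$ for some $A$-algebra $B$, and the quasi-coherent sheaves $\sE, \sF$ restrict to sheaves $\widetilde M, \widetilde N$ associated to $B$-modules $M, N$. So we may assume $Y = \Spec A$ and $Y' = \Spec B$ throughout.

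On the left, for any $g \in A$ I would compute
\[
\Gamma(\Spec A_g,\, f_* \sheafHom_{\cO_{Y'}}(\sE,\sF)) \;=\; \Gamma(\Spec B_g,\, \sheafHom_{\cO_{Y'}}(\widetilde M, \widetilde N)) \;=\; \Hom_{B_g}(M_g, N_g),
\]
using that $B_g = B\otimes_A A_g$ and the standard identification of global $\sheafHom$ of quasi-coherent sheaves on an affine scheme with module homomorphisms. Since localization commutes with $\Hom$ compatibly as $g$ varies, this shows $f_*\sheafHom_{\cO_{Y'}}(\sE,\sF)$ is the quasi-coherent $\cO_Y$-module $\widetilde{\Hom_B(M,N)}$.

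For the right-hand side, I would use that $f_*\cO_{Y'} = \widetilde B$ as a sheaf of $\cO_Y$-algebras on $\Spec A$, and that the global-sections functor gives an equivalence between sheaves of quasi-coherent $f_*\cO_{Y'}$-modules and $B$-modules. Hence on the basic open $\Spec A_g$ one again obtains $\Hom_{B_g}(M_g, N_g)$, so $\sheafHom_{f_*\cO_{Y'}}(f_*\sE, f_*\sF)$ is also canonically the quasi-coherent sheaf $\widetilde{\Hom_B(M,N)}$.

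Finally, I would identify the natural map of the lemma. By definition it sends a local section $\phi: \sE|_{f^{-1}(V)} \to \sF|_{f^{-1}(V)}$ to $f_*\phi: f_*\sE|_V \to f_*\sF|_V$; under the two identifications above both sides become the identity on $\Hom_{B_g}(M_g, N_g)$, so the map is an isomorphism. The only point requiring any care — and the closest thing to an "obstacle" — is verifying that the $\sheafHom$ on the right-hand side is quasi-coherent and computed correctly on basic opens; once one observes that the category of $f_*\cO_{Y'}$-modules on the affine $Y = \Spec A$ is equivalent (via global sections) to the category of $B$-modules, this is automatic and the whole verification is routine.
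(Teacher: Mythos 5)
Your verification is essentially the intended one: the paper offers no argument beyond ``this is straightforward,'' and your reduction to $Y=\Spec A$, $Y'=\Spec B$ and comparison of sections over basic opens is exactly the routine check. One intermediate claim, however, is not correct as stated: you assert that both sides are the quasi-coherent sheaf $\widetilde{\Hom_B(M,N)}$ because ``localization commutes with $\Hom$,'' but the natural map $\Hom_B(M,N)\otimes_B B_g \to \Hom_{B_g}(M_g,N_g)$ is an isomorphism only under a finiteness hypothesis (e.g.\ $M$ finitely presented), and the lemma is stated for arbitrary quasi-coherent $\sE$, $\sF$; in that generality neither $\sheafHom$ sheaf need be quasi-coherent. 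Fortunately this claim is superfluous: your own computation already shows that the sections of the left-hand side over each $D(g)\subseteq\Spec A$ are $\Hom_{B_g}(M_g,N_g)$ (since $f^{-1}(D(g))=\Spec B_g$ is affine), and the same holds for the right-hand side because $(f_*\cO_{Y'})|_{D(g)}$, $(f_*\sE)|_{D(g)}$, $(f_*\sF)|_{D(g)}$ are the sheaves attached to $B_g$, $M_g$, $N_g$ and a $(f_*\cO_{Y'})|_{D(g)}$-linear morphism of these sheaves is the same datum as a $B_g$-linear map $M_g\to N_g$. These identifications are compatible with restriction and the natural map of the lemma induces the identity under them, so isomorphism on a basis of opens gives the isomorphism of sheaves with no quasi-coherence assertion needed. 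In the paper's actual application (Lemma \ref{l:trace iso}) the sheaves are locally free of finite rank, so your stronger statement is also true there, but for the lemma as stated the fix above is the clean route.
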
 

\begin{proof}
    This is straightforward. 
\end{proof}

\begin{Lemma}
    \label{l:trace iso} 
Let $\sF$ be a locally free sheaf over $X_K$ of finite rank. Then there is a  $\cO_X$-linear isomorphism
\[         (\pi_K)_*(\homsheaf_{\cO_{X_K}}(\sF,\cO_{X_K})) \rightarrow \homsheaf_{\cO_X}((\pi_K)_* \sF, \cO_X).       \]
 \end{Lemma}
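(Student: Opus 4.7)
The plan is to reduce the claim to a composition of three natural isomorphisms: the affine pushforward identity of Lemma \ref{l:affinePushforward}, the trace pairing of Lemma \ref{l:NondegeneracyPushforward}, and the sheaf-theoretic tensor-Hom adjunction. Write $A = (\pi_K)_{*}\cO_{X_{K}}$, which is a sheaf of commutative $\cO_{X}$-algebras because $\pi_{K}$ is finite (hence affine), and set $M = (\pi_{K})_{*}\sF$, viewed as an $A$-module.

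First, since $\pi_{K}$ is affine, Lemma \ref{l:affinePushforward} applied to $\sE = \sF$ and $\sF = \cO_{X_{K}}$ yields a natural $\cO_{X}$-linear isomorphism
\[
(\pi_{K})_{*}\homsheaf_{\cO_{X_{K}}}(\sF,\cO_{X_{K}}) \;\cong\; \homsheaf_{A}(M,A).
\]
Second, Lemma \ref{l:NondegeneracyPushforward}(3) provides an $\cO_{X}$-linear isomorphism $A \cong \homsheaf_{\cO_{X}}(A,\cO_{X})$, induced by the trace pairing. Endowing the target with its natural $A$-module structure $(a\cdot f)(b) = f(ab)$, one checks that this isomorphism is actually $A$-linear; this is a short local calculation that uses the symmetry $\Tr(xy)=\Tr(yx)$ in the commutative $k$-algebra $K$. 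Substituting this into the previous step gives
\[
\homsheaf_{A}(M,A) \;\cong\; \homsheaf_{A}\bigl(M,\homsheaf_{\cO_{X}}(A,\cO_{X})\bigr).
\]
Third, applying tensor-Hom adjunction between the $\cO_{X}$-algebra $A$ and the base $\cO_{X}$, followed by the canonical identification $M\otimes_{A}A\cong M$, converts the right-hand side to $\homsheaf_{\cO_{X}}(M,\cO_{X})$. Composing the three natural isomorphisms in turn produces the asserted $\cO_{X}$-linear isomorphism.

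The only step that is not purely formal is the $A$-linearity of the trace isomorphism in step two; everything else is a standard adjunction that works sheaf-locally, and since all functors involved are compatible with restriction to affine opens, naturality in $\sF$ follows automatically. The main obstacle is therefore verifying that the unlabelled module structures in Lemma \ref{l:NondegeneracyPushforward} match up correctly, but this is immediate from commutativity of $K$ and the defining formulas of that lemma, so no deeper input is needed.
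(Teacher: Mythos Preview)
Your proof is correct and follows essentially the same route as the paper: both start from Lemma \ref{l:affinePushforward} to identify $(\pi_K)_*\homsheaf_{\cO_{X_K}}(\sF,\cO_{X_K})$ with $\homsheaf_A(M,A)$, and then use the trace pairing of Lemma \ref{l:NondegeneracyPushforward}(3) to pass to $\homsheaf_{\cO_X}(M,\cO_X)$. The only cosmetic difference is that the paper postcomposes directly with the trace map $A\to\cO_X$ and checks the result is an isomorphism on affines where $\sF$ is free, whereas you factor this step through the $A$-linear isomorphism $A\cong\homsheaf_{\cO_X}(A,\cO_X)$ and tensor--Hom adjunction; these produce the same map.
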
 

\begin{proof}
By Lemma \ref{l:affinePushforward}, we have a $\cO_X$-linear isomorphism 
\begin{align*}
    (\pi_K)_*(\homsheaf_{\cO_{X_K}}(\sF,\cO_{X_K})) &\rightarrow  \homsheaf_{(\pi_K)_*\cO_{X_K}}((\pi_K)_*\sF,(\pi_K)_*\cO_{X_K}). 
\end{align*}
      Using Part $(1)$ of Lemma \ref{l:NondegeneracyPushforward} and the trace map, we have a morphism
    \[         (\pi_K)_* \cO_{X_K} \stackrel{\sim}{\to} \cO_{X} \otimes_k K  \xrightarrow[]{id \otimes \Tr }  \cO_{X} .\]                                         
Postcomposing with this morphism, we get a morphism 
\[   \homsheaf_{(\pi_K)_*\cO_{X_K}}((\pi_K)_*\sF,(\pi_K)_*\cO_{X_K}) \rightarrow  \homsheaf_{\cO_X}((\pi_K)_*\sF,(\pi_K)_*\cO_{X_K}) \rightarrow  \homsheaf_{\cO_X}((\pi_K)_*\sF,\cO_{X} ).                                                                             \]
On an open set $U$ where $\sF$ is free, this is an isomorphism by Part (3) of Lemma \ref{l:NondegeneracyPushforward}.
\end{proof} 

\begin{Lemma}
    \label{l:symp pushforward} 
Let $(\sF,h)$ be a symplectic vector bundle over $X_K$. Then  
the isomorphsim
$$
 (\pi_{K})_{*}\sF \stackrel{(\pi_{K})_{*}(h)}{\to } (\pi_{K})_{*}\shom(\sF,\cO_{X_K})\stackrel{\sim}{\to }
 \shom_{\cO_X}((\pi_{K})_{*}\sF,\cO_{X})
$$
gives the vector bundle $(\pi_{K})_{*}\sF$ the structure of a symplectic vector bundle. 
\end{Lemma}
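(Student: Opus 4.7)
The plan is to check two things about the composed map $h'$: that it is an isomorphism, and that it satisfies the alternating identity $(h')^{\vee}\circ \ev = -h'$ required of a symplectic form. First I would note that $(\pi_{K})_{*}\sF$ is locally free of finite rank since $\pi_K$ is finite flat and $\sF$ is locally free. That $h'$ is an isomorphism is then immediate: $(\pi_K)_{*}(h)$ is an isomorphism because $h$ is, and the trace identification is an isomorphism by Lemma \ref{l:trace iso}. So the real content is the alternating identity.

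My approach is to translate this identity into a statement about bilinear forms. Unwinding the construction in Lemma \ref{l:NondegeneracyPushforward}(3) and Lemma \ref{l:trace iso}, the $\mathcal{O}_X$-bilinear form $Q'$ on $(\pi_{K})_{*}\sF$ adjoint to $h'$ is given locally by $Q'(s,t)=\Tr_{K/k}(Q(s,t))$, where $Q$ is the nondegenerate alternating $\mathcal{O}_{X_K}$-bilinear form on $\sF$ adjoint to $h$. Antisymmetry of $Q'$ then falls out from $k$-linearity of the trace,
\[
Q'(s,t) = \Tr(Q(s,t)) = \Tr(-Q(t,s)) = -Q'(t,s),
\]
which is equivalent to the required identity $(h')^{\vee}\circ \ev = -h'$. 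For nondegeneracy of $Q'$ (which in any case follows from $h'$ being an isomorphism, but is worth recording directly), suppose $Q'(s,t)=0$ for all local sections $t$. Substituting $xt$ for $t$ with $x\in K$ and using the $K$-action on $\sF$ gives $\Tr(xQ(s,t))=0$ for every $x\in K$, so Lemma \ref{l:trace nondegenerate} forces $Q(s,t)=0$ for every $t$, and then nondegeneracy of $Q$ gives $s=0$.

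The main obstacle is really just bookkeeping: one must verify carefully that the trace isomorphism of Lemma \ref{l:trace iso} transports the evaluation pairing $(\phi,t)\mapsto \phi(t)$ to $(\phi,t)\mapsto \Tr(\phi(t))$, so that the adjoint bilinear form of $h'$ really is $\Tr\circ Q$. A conceptual point worth emphasizing is that the trace pairing on $K$ is \emph{symmetric} rather than alternating; this symmetry is exactly what ensures that the pushforward of an alternating form remains alternating, rather than turning into a symmetric one.
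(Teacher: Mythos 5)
Your proposal is correct and follows essentially the same route as the paper: compose $(\pi_K)_*h$ with the trace isomorphism of Lemma \ref{l:trace iso}, observe the result is an isomorphism, and check locally that the induced bilinear form is $\Tr\circ Q$, hence alternating because $Q$ is. Your additional direct verification of nondegeneracy via Lemma \ref{l:trace nondegenerate} is redundant (as you note, it already follows from the map being an isomorphism) but harmless.
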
 

\begin{proof}
Composing the isomorphism in Lemma \ref{l:trace iso} with the isomorphism 
\[        (\pi_K)_* h : (\pi_K)_* \sF \rightarrow  (\pi_K)_*(\homsheaf_{\cO_{X_K}}(\sF,\cO_{X_K}))                         \]
gives an $\cO_{X}$ linear isomorphism 
\[          \bar{h} :   (\pi_K)_* \sF \rightarrow       \homsheaf_{\cO_X}((\pi_K)_* \sF, \cO_X).                        \]
Let $\bar{Q}$ be the corresponding bilinear form 
\[          \bar{Q} : (\pi_K)_* \sF \times (\pi_K)_* \sF \rightarrow \cO_X.                \]
We need to check $\bar{Q}$ is alternating. It is enough to check it on an open affine scheme $U = \Spec(R)$ 
such that $(\pi_K)_* \sF$ is free on $\Spec(R \otimes_k K)$. On that open set, the bilinear form has the form 
\begin{align*}
    \bar{Q} : R^n \otimes_k K \times R^n \otimes_k K &\rightarrow R \\  
                \bar{Q}(\alpha \otimes x, \beta \otimes y ) &\mapsto \Tr_R(Q(\alpha \otimes x,\beta \otimes y))
\end{align*} 
where 
\begin{align*}
    \Tr_R : R \otimes_k K &\rightarrow R  \\ 
         r \otimes x &\mapsto r \Tr(x), 
\end{align*}  
and $Q$ is the bilinear form induced by $h$. $\bar{Q}$ is alternating since $Q$ is.
\end{proof}

%%%%%%%%%%%%%%%%%%%%%%%%%%%%%%%%%%%%%%%%%%%%%%%%
%%%%%%%%%%%%%%%%%%%%%% Symplectic and Hermitian forms.
%%%%%%%%%%%%%%%%%%%%%%%%%%%%%%%%%%%%%%%%%%%%%%%%
 
\subsection{Symplectic and Hermitian forms.}

\begin{Notations}
\label{n:involution on endomorphism ring} 
Let $K/k$ be a finite field extension. 
Throughout this subsection 
     $(\sF,h)$ will be a symplectic vector bundle over $X_K$ with corresponding symplectic form $Q$. We will write $h_Q=h$ to symbolise the 
     relationship between $h$ and $Q$. 
     We denote the symplectic 
    form on $(\pi_K)_*\sF$ by $\overline{Q_{\sF}}$ and associated isomorphism
    $$
    \overline{h}: \pi_{K,*}\sF \stackrel{\sim }{\to } \pi_{K,*}\sF^{\vee}. 
    $$
     This induces an involution 
    on the right artinian ring $A = \Hom_{\cO_X}((\pi_K)_* \sF, (\pi_K)_* \sF)$, which we denote by $\mu$.  
\end{Notations} 

\begin{Lemma}
\label{l:iso extension A}
Let $(M,h^M)$ and $(N,h^N)$ be objects of $\herm^r_1(A_L,\mu_L)$ for a field $L$. Then there exists a finite field extension 
$L'$ of $L$ such that $(M_{L'},h_{L'}^M)$ and $(N_{L'},h_{L'}^N)$ are isomorphic as objects of $\herm^r_1(A_{L'},\mu_{L'})$.
\end{Lemma}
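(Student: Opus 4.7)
The plan is to reduce the problem to the semisimple quotient of $A_L$ and handle each factor of its Wedderburn-type decomposition separately.

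Since $A=\End_{\cO_X}((\pi_K)_*\sF)$ is a finite-dimensional $k$-algebra (being the endomorphism ring of a coherent sheaf on a projective $k$-scheme), $A_L$ is a finite-dimensional, hence right artinian, $L$-algebra with involution $\mu_L$. Let $J$ denote the Jacobson radical of $A_L$. By Lemma \ref{l:Q herm surjective lifts iso}, the functor $Q_{\herm}:\herm_{1}(A_L,\mu_L)\to\herm_{1}(A_L/J,\mu_{A_L/J})$ is essentially surjective and full, and the analogous statement holds after any base change to a finite extension $L'/L$. Consequently, to construct the desired isomorphism over $L'$ it is enough to produce an isomorphism between the images of $(M,h^M)$ and $(N,h^N)$ in the category of Hermitian modules over the corresponding semisimple quotient.

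Next, I would invoke Lemma \ref{l:Decomposition of involution of rings} to write
$$(A_L/J,\mu_{A_L/J})=\prod_{i=1}^{n}(A_i,\mu_i^A)\times\prod_{j=1}^{m}H(B_j),$$
where each $A_i$ is a simple $L$-algebra with involution and each $H(B_j)$ is the hyperbolic ring of a simple $L$-algebra $B_j$. An iterated application of Lemma \ref{l:herm over direct product of rings} decomposes the images $Q_{\herm}(M,h^M)$ and $Q_{\herm}(N,h^N)$ into Hermitian modules over each factor. For each hyperbolic factor, Corollary \ref{c: hyperbolic mod iso extension} produces an isomorphism over $L$ itself (no extension needed). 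For each simple factor $A_i$, Lemma \ref{l:simple iso extension} produces an isomorphism of the corresponding Hermitian summands after a finite extension $L_i/L$. Taking $L'$ to be the compositum of the $L_i$ and combining the factorwise isomorphisms yields the required isomorphism in $\herm_{1}(A_{L'}/J',\mu_{A_{L'}/J'})$, which then lifts to $\herm_{1}(A_{L'},\mu_{L'})$ by the fullness half of Lemma \ref{l:Q herm surjective lifts iso}.

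The principal obstacle is rank bookkeeping: one must check that the hypothesis $(M,h^M),(N,h^N)\in\herm^r_{1}(A_L,\mu_L)$ forces the ranks to agree on each factor of the decomposition, so that Corollary \ref{c: hyperbolic mod iso extension} and Lemma \ref{l:simple iso extension} actually apply. A secondary technicality is that passing from $A_L$ to $A_{L'}$ can enlarge the Jacobson radical and refine the product decomposition further (for instance, a simple factor $A_i$ may split into several simple $L'$-algebras). One handles this by first fixing the decomposition over $L$, choosing $L_i$ large enough that after base change to $L_i$ each refined simple piece of $A_i\otimes_L L_i$ carries isomorphic Hermitian summands, and then forming the compositum.
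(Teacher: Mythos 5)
Your argument is essentially the paper's proof: reduce modulo the Jacobson radical via $Q_{\herm}$, decompose $(A_L/J,\mu)$ into simple and hyperbolic factors, and conclude factorwise using Corollary \ref{c: hyperbolic mod iso extension} and Lemma \ref{l:simple iso extension} over a compositum of finite extensions. The only point the paper makes explicit that you leave implicit is that the involution restricts to the identity on $L$ (needed to invoke Lemma \ref{l:simple iso extension}); your rank-bookkeeping and radical-growth caveats are likewise left unaddressed in the paper's own, terser, proof.
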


\begin{proof}
Lemma \ref{l:Q full and surjective} allows us to reduce to the case $A_L$ is semisimple. Then $(A_L,\mu_L)$ can be written as product
of simple rings with involutions and hyperbolic rings. It's enough to prove for the cases of simple rings with involutions 
and hyperbolic rings separately by Lemma \ref{l:herm over direct product of rings}. The case of hyperbolic rings is proved in 
Corollary \ref{c: hyperbolic mod iso extension}. Furthermore, since the involution $\mu$ preserves $Id$ morphism of $(\pi_K)_* \sF$,
$\mu_L$ preserves the field $L$. Hence, the result follows from Lemma \ref{l:simple iso extension}.
\end{proof}

\begin{definition}
    \label{d:twistedForm}
    An \emph{ $L$-form of $(\sF,h)$} where $k\subseteq L$ is a field extension is a symplectic vector bundle  $(\sE,h)$ on $X_L$ such that there exists a 
    field $L'$ containing both $K$ and $L$ so that 
    $$
    (\sE,h)_{L'}\quad \text{and}\quad (F,h)_{L'}
    $$
    are isomorphic. 
    We will call $L'$ a \emph{splitting field} of the form. 

    Note that
    if the field extensions $K/k$ and $L/k$ are finite then $L'/k$ may be taken to be finite. 
\end{definition}

\begin{Lemma}
    \label{l:symp induces herm}
    Let $k \subset L$ be a finite extension and let $(\sE,h_{\sE})$ $L$-form of $(\sF,h)$. Let $L'/k$ be a finite splitting field 
    of the form. Then 
    \begin{enumerate}
        \item the  $A_L$ module $\Hom(\lkF, \sE)$ is projective of finite rank;
        \item the module is equipped with a non-singular 1-hermitian form. 
    \end{enumerate}
\end{Lemma}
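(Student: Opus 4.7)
The plan is to exploit faithfully flat descent along $L\hookrightarrow L'$ to reduce all claims to a transparent computation over the splitting field, where $\sE$ becomes symplectically isomorphic to $\sF$.

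First I would set up the $A_L$-module structure and the form directly. By flat base change the natural map $A\otimes_k L\to \End_{\cO_{X_L}}(\lkF)$ is an isomorphism, so precomposition endows $M:=\Hom_{\cO_{X_L}}(\lkF,\sE)$ with a right $A_L$-module structure; since $\lkF$ and $\sE$ are locally free of finite rank on the projective curve $X_L$, $M$ is finite-dimensional over $L$. The hermitian form would be defined by
$$
b(f,g)\;=\;\bar h_L^{-1}\circ f^{\vee}\circ h_{\sE}\circ g\;\in\;\End_{\cO_{X_L}}(\lkF)=A_L.
$$
Sesquilinearity is immediate from the definition of $\mu_L$, and the $1$-hermitian identity $b(f,g)=\mu_L(b(g,f))$ would follow from a short computation in which the two minus signs produced by $\bar h^{\vee}\circ\ev=-\bar h$ and $h_{\sE}^{\vee}\circ\ev=-h_{\sE}$ cancel, together with $g^{\vee\vee}=g$.

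Next I would base change to $L'$. A chosen isomorphism $\phi\colon(\sF,h)_{L'}\stackrel{\sim}{\to}(\sE,h_{\sE})_{L'}$ yields an $A_{L'}$-linear isometry $M\otimes_L L'\cong \Hom_{\cO_{X_{L'}}}(\lkF_{L'},\sF_{L'})$, the form on the right being built out of $\bar h_{L'}$ and $h_{L'}$ by the same recipe. Because $k$ has characteristic zero, $K/k$ is separable, so after enlarging $L'$ if needed we may assume $K\otimes_k L'\cong\prod_\sigma L'$ indexed by the embeddings $\sigma\colon K\hookrightarrow L'$. Flat base change on $\pi_K$ then decomposes
$$
\lkF_{L'}\;=\;\pi_{L'}^{*}(\pi_K)_{*}\sF\;\cong\;\bigoplus_\sigma \sigma^{*}\sF,
$$
and a fixed embedding $\sigma_0$ picks out $\sF_{L'}$ as a direct summand whose associated idempotent $e\in A_{L'}$ satisfies $\Hom(\lkF_{L'},\sF_{L'})\cong eA_{L'}$. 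Hence this is a direct summand of $A_{L'}$, visibly projective of finite rank, and projectivity of $M$ over $A_L$ then descends faithfully flatly.

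For non-singularity I would use that the trace pairing on $K/k$ becomes the diagonal pairing on $\prod_\sigma L'$ after extension, so by the construction in Lemma \ref{l:symp pushforward} the form $\bar h_{L'}$ splits as the orthogonal sum $\bigoplus_\sigma \sigma^{*}h$. Restricted to $\sigma_0^{*}\sF\cong\sF_{L'}$ this recovers $h_{L'}$, so $b_{L'}$ becomes the standard pairing built from $h_{L'}$ on the summand $eA_{L'}$, whose adjoint $h_{b_{L'}}\colon M_{L'}\to (M_{L'})^{*}$ is easily seen to be an isomorphism; non-singularity over $L$ then descends. The main obstacle will be verifying that the decomposition of $\lkF_{L'}$ induced by the splitting $K\otimes_k L'\cong\prod_\sigma L'$ is simultaneously compatible with the $A_{L'}$-action and with $\bar h_{L'}$---that is, that $\sigma_0^{*}\sF$ does correspond to a genuine idempotent summand of $A_{L'}$ and that the restriction of $\bar h_{L'}$ to it is indeed $h_{L'}$. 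Once this bookkeeping is handled, projectivity and non-singularity are immediate descent consequences, and the sesquilinear and hermitian identities follow formally from the symplectic conditions.
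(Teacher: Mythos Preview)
Your proposal is correct and tracks the paper closely: the form $b(f,g)=\pi_L^*(\bar h^{-1})\circ f^{\vee}\circ h_{\sE}\circ g$ is defined identically, and the sesquilinear and $1$-hermitian identities are verified by the same direct computation exploiting the two sign cancellations from $\bar h^{\vee}\circ\ev=-\bar h$ and $h_{\sE}^{\vee}\circ\ev=-h_{\sE}$. The only substantive difference is in how non-singularity is established over $L'$. The paper passes to the $d$-th power $M_{L'}^{\oplus d}$, observes via the projection formula and the splitting hypothesis that this is free of rank one over the artinian ring $A_{L'}$, and then checks injectivity of $h_b^{\oplus d}$ directly from the fact that $\bar h$ and $h_{\sE}$ are isomorphisms---injectivity on a finite-length module forcing bijectivity. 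Your route via the decomposition $K\otimes_k L'\cong\prod_\sigma L'$, the idempotent summand $eA_{L'}$, and the orthogonal splitting of $\bar h_{L'}$ is more explicit and also yields projectivity without citing \cite{BDH}, but requires the compatibility bookkeeping you correctly flag as the main obstacle; the paper's $d$-th-power trick sidesteps this entirely.
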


    \begin{proof}
    The first statement is from \cite[Theorem 5.3]{BDH} and it's proof. 

    We define
    \[  b : \Hom(\lkF, \sE) \times \Hom(\lkF, \sE)  \to         A_L         \]
    \[ b(f,g) = \pi_{L}^{*}( \overline{h}^{-1}) \circ f^{\vee} \circ h_{{\sE}} \circ g\]
    It is straightforward that this is linear in $g$. For linearity in $f$, we compute
    \begin{align*}
        b(fa,g) &=   \pi_{L}^{*}(\overline{h}^{-1}) \circ (fa)^{\vee} \circ h_{Q_{\sE}} \circ g \\
                &=   \pi_{L}^{*}(\overline{h}^{-1}) \circ a^{\vee} \circ \pi_{L}^{*}( \overline{h}) \circ \pi_{L}^{*}( \overline{h}^{-1}) \circ f^{\vee} \circ h_{{\sE}} \circ g \\
                &= \mu(a)b(f,g).
    \end{align*}
   The following computation proves that $b$ is hermitian as well. 
    \begin{align*}
        \mu(b(f,g)) &=  \pi_{L}^{*}(\overline{h}^{-1}) \circ (\pi_{L}^{*}(\overline{h}^{-1}) \circ f^{\vee} \circ h_{{\sE}} \circ g)^{\vee} \circ \pi_{L}^{*}( \overline{h})  \\
                       &=   \pi_{L}^{*}(\overline{h}^{-1})  \circ g^{\vee}   \circ h_{Q_{\sE}}^{\vee} \circ f^{\vee \vee} \circ (\pi_{L}^{*}(\overline{h}^{-1})^{\vee} \circ  \pi_{L}^{*}(\overline{h}) \\
                       &= b(g,f),
    \end{align*}
    where we have used 
    \[  h^{\vee} =  -h \circ \can \]
    for a symplectic form $Q$. The only thing left to check is that $b$ is nonsingular.
    It is enough to check that the pullback of the map  
\begin{align*}
    &h_{b} : \Hom(\lkF, \sE) \to \Hom(\lkF, \sE)^*  \\ 
    &h_{b}(f)(g) = \pi_{L}^{*}( \overline{h}^{-1}) \circ f^{\vee} \circ h_{{\sE}} \circ g   
\end{align*}
   by $\pi_{LL'}$ is an isomorphism. Furthermore, it is enough to check that the map 
   \[  \pi_{LL'} h_b^d : \pi_{LL'}^*\Hom(\lkF, \sE)^d \to (\pi_{LL'}^* \Hom(\lkF, \sE)^d)^*  \\                                                                             \]
    is an isomorphism. Now, it follows from the projection formula and the hypothesis on $\sE$ and $\sF$ that  
    $\pi_{LL'}^*\Hom(\lkF, \sE)^d$ is free over $A_{L'}$ of rank 1. Since $A_{L'}$ is an artinian ring,
    it is enough to check that the map $\pi_{LL'}^* h_b^d$ is injective. This follows since  
    $\pi_{L}^{*}(\overline{h}^{-1})$ and $h_{{\sE}}$ are isomorphisms.
\end{proof}

\begin{Lemma}
\label{l:dual and hom}
    Let $M$ and $N$ be right  and left modules over a ring with involution $(A,\mu)$. Suppose further that $M$ is finitely generated projective.
    Then the natural map 
    \[     (M)^{*} \otimes_A  N \rightarrow Hom_{A}(M,\overline{N})               \]
    is a bijection, where $\overline{N}$ is the right module corresponding to the left module $N$.(\ref{n:bar})
\end{Lemma}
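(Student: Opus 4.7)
The plan is to identify the natural map explicitly and then reduce to the case where $M=A$ is free of rank one, exploiting the fact that both sides are contravariant additive functors in $M$.

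First, I would make the map $T$ explicit. For $\phi\in M^{\vee}=\Hom_A(M,A)$ (viewed as an element of $M^{*}=\overline{M^{\vee}}$) and $n\in N$, set
\[
T(\phi\otimes n)(m)\;:=\;\mu(\phi(m))\cdot n,
\]
where $\cdot$ denotes the left $A$-action on $N$. Two compatibilities have to be checked. First, each $T(\phi\otimes n)$ is right $A$-linear from $M$ into $\overline{N}$; this uses right-linearity of $\phi$, antimultiplicativity of $\mu$, and the formula $n'\cdot a=\mu(a)\cdot n'$ defining the right action on $\overline{N}$. Second, the assignment respects the tensor relation $(\phi\cdot a)\otimes n=\phi\otimes(a\cdot n)$, which is again immediate from antimultiplicativity of $\mu$ together with the definition $\phi\cdot a=\mu(a)\cdot\phi$ of the right $A$-action on $M^{*}$. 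The same calculation shows $T$ is natural in $M$, so it is a natural transformation between contravariant additive functors.

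Next I would verify the bijection for $M=A_A$. Evaluation at $1$ identifies $M^{\vee}$ with $A$ as a left $A$-module, so $M^{*}\cong\overline{A}$ with right action $b\cdot a=\mu(a)b$. The map $\overline{A}\otimes_A N\to N$ sending $b\otimes n\mapsto\mu(b)\cdot n$ is inverse to $n\mapsto 1\otimes n$ and hence bijective, and similarly $\Hom_A(A,\overline{N})\cong\overline{N}$ via evaluation at $1$. Under these identifications $T$ becomes the identity of $N$, so it is a bijection in this case.

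To finish, both functors $M\mapsto M^{*}\otimes_A N$ and $M\mapsto\Hom_A(M,\overline{N})$ send finite direct sums in $M$ to finite direct sums, and $T$ commutes with these decompositions by its explicit formula. Hence $T_{A^{n}}$ is an isomorphism for every $n$, and for a finitely generated projective $M$ one chooses a splitting $M\oplus M'\cong A^{n}$, after which $T_M$ appears as a direct summand of the isomorphism $T_{A^{n}}$ and is therefore itself a bijection. I do not anticipate any real obstacle here: the one point requiring care is the insertion of $\mu$ in the formula for $T$, which is what makes $T$ simultaneously right-linear into $\overline{N}$ and well-defined on the tensor product.
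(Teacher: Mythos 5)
Your proposal is correct and follows essentially the same route as the paper: define the explicit map $\phi\otimes n\mapsto\bigl(m\mapsto \mu(\phi(m))\cdot n\bigr)$ (which coincides with the paper's $n\cdot f(m)$ in $\overline{N}$), check it is an isomorphism for free modules of finite rank, and conclude for finitely generated projective $M$ by realizing it as a direct summand of a free module. Your reduction to $M=A$ plus additivity is just a slightly more explicit phrasing of the paper's ``clear for free of finite rank'' step.
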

  
\begin{proof} To maintain sanity we write $n\cdot a$ for the right multiplication in $\overline{N}$ so that 
    $n\cdot a = \mu(a)n$ where the second term is product in $N$. 
    We define a map 
    \begin{align*}
     \phi:   (M)^* \times N &\rightarrow Hom_{A}(M,\overline{N}) \\
        (f,n)         &\mapsto    (m \mapsto n\cdot f(m)).
    \end{align*}
   One checks that $\phi(fa,n) = \phi(f,an)$ for $a$ in $A$. Hence we get a morphism 
   \[   \phi: (M)^* \otimes_A N \to  Hom_{A}(M,\overline{N}). \]
    This is clearly an isomorphism for $M$ free of finite rank. It follows from projective finitely generated $M$ 
    since it is a summand of a free module.
\end{proof}

\begin{Lemma}
    \label{l:herm induces symp}
    Let $k \subset L$ be a finite extension of fields and let  $(M,b)\in \herm_1(A_L,\mu_L)$. Then the vector bundle  $M \otimes_{A_L} \lkF$ over $X_L$ is equipped with a symplectic form.
\end{Lemma}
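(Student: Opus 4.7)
The plan is to define the symplectic form on $M \otimes_{A_L} \lkF$ directly, using the hermitian form $b$ on $M$ together with the pushforward symplectic form $\overline{Q}_L$ on $\lkF$ coming from Lemma~\ref{l:symp pushforward}, and then to verify in turn that it is well-defined on the tensor product, alternating, and non-degenerate. I first note that $M \otimes_{A_L} \lkF$ is indeed locally free on $X_L$: since $M$ is finitely generated projective over $A_L$, it is a direct summand of some $A_L^{\oplus k}$, and hence $M \otimes_{A_L} \lkF$ is a direct summand of $\lkF^{\oplus k}$.

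I would define the bilinear pairing by
$$Q_M(m_1 \otimes v_1,\, m_2 \otimes v_2) \;:=\; \overline{Q}_L\bigl(b(m_1, m_2)\cdot v_2,\; v_1\bigr),$$
where $b(m_1, m_2) \in A_L$ acts on $\lkF$ via the natural left $A_L$-module structure. Descent of $Q_M$ to the tensor product $M \otimes_{A_L} \lkF$ reduces to the identity $\overline{Q}_L(\mu(a) w, v) = \overline{Q}_L(w, a v)$ for $a \in A_L$, which is precisely the defining property of $\mu$ as the adjoint involution of $\overline{Q}$ set up in Notations~\ref{n:involution on endomorphism ring}. The alternating property is then a short manipulation combining the $1$-hermitian identity $b(m_2, m_1) = \mu(b(m_1, m_2))$, the adjointness of $\mu$, and the alternating property of $\overline{Q}_L$.

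The main step is non-degeneracy of the adjoint map $h_{Q_M} \colon M \otimes_{A_L} \lkF \to (M \otimes_{A_L} \lkF)^\vee$. My plan is to build a natural chain of isomorphisms
$$ (M \otimes_{A_L} \lkF)^{\vee} \;\cong\; \Hom_{A_L}(M,\, \lkF^{\vee}) \;\xrightarrow{\,\overline{h}_L^{-1}\circ -\,}\; \Hom_{A_L}(M,\, \overline{\lkF}) \;\cong\; M^{*} \otimes_{A_L} \lkF, $$
where the first arrow is tensor-hom adjunction, the middle uses that $\overline{h}_L$ yields a right $A_L$-module isomorphism from $\lkF$ equipped with the right $A_L$-module structure induced by $\mu$ (see Notations~\ref{n:bar}) to $\lkF^{\vee}$, and the final isomorphism is Lemma~\ref{l:dual and hom} applied with $N = \lkF$. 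A direct computation would then show that under this chain $h_{Q_M}$ is identified, up to a sign, with $h_M \otimes \mathrm{id}_{\lkF}$. Since $h_M$ is an isomorphism of $A_L$-modules (as $(M, b)$ is a nonsingular hermitian module), it follows that $h_{Q_M}$ is an isomorphism of $\cO_{X_L}$-modules, giving non-degeneracy.

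The main obstacle is the careful bookkeeping of left versus right $A_L$-module structures in the non-degeneracy step: in particular verifying that $\overline{h}_L$ is right $A_L$-linear with respect to the involuted structure, and explicitly tracking $h_{Q_M}$ through the chain of natural identifications to recognise it as $h_M \otimes \mathrm{id}$ up to sign. All other verifications are short formal manipulations using the adjoint-involution property of $\mu$.
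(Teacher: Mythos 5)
Your proposal is correct and follows essentially the same route as the paper: the paper likewise defines the pairing by $(m\otimes v,m'\otimes v')\mapsto \overline{Q_{\sF}}(b(m',m)v,v')$, checks balancedness and the alternating property via the adjoint-involution identity and the hermitian symmetry of $b$, and proves non-degeneracy by exhibiting the adjoint map as a composite of $h_M\otimes \overline{h}$, the isomorphism of Lemma \ref{l:dual and hom}, and tensor-hom adjunction. Your version differs only cosmetically (an overall sign on the form and absorbing $\overline{h}^{-1}$ into the chain of identifications so the adjoint becomes $\pm h_M\otimes\mathrm{id}$), which is the same argument.
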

\begin{proof}
    Let $U\subseteq X_{L}$ be open. 
    For fixed $m\in M$ and $v\in \lkF(U)$ we define a morphism 
    $$
    \phi(m,v): M \oplus \lkF(U) \to \sO_{X_L} 
    $$
    by 
    $$
    \phi(m,v)(m', v') =  \pi^{*}_{L} \overline{Q_{\sF}}(b(m',m)v,v'). 
    $$
    The map $\phi(m,v)$ is additive in both $m'$ and $v'$. It is bilinear as 
    \begin{align*}
        \phi( m , v) (m'a , v') = \pi_L^*\overline{Q_{\sF}}(\mu(a)b(m',m)v,v') &=\pi_L^*\overline{Q_{\sF}}(b(m',m)v,av') \\ 
                         &=  \phi( m , v) (m' , av').
    \end{align*} 
    Thus it descends to a morphism 
    \[h_{Q_b} :  M \times \lkF \rightarrow (M \otimes_{A_L} \lkF)^{\vee}\]     
    It is obvious that $h_{Q_b}(ma,v)= h_{Q_b}(m,av)$ and hence we get a morphism 
    \[h_{Q_b} :  M \otimes_{A_L} \lkF \rightarrow (M \otimes_{A_L} \lkF)^{\vee}.\]
     $X_L$ linearity of $h_{Q_b}$ is obvious since $X_L$ acts only on $\lkF$. One can also check that $h_{Q_b}$ is the 
     composition of $\cO_X$-linear isomorphisms 
     \begin{align*}
        M \otimes_{A_L} \lkF &\xrightarrow{h_M \otimes h_Q} X(M) \otimes_{A_L} \overline{\Hom_{\cO_{X_L}}(\lkF, \cO_{X_L})} \\ 
                            &\stackrel{\sim}{\to} \Hom_{A_L}(M,\Hom_{\cO_{X_L}}(\lkF, \cO_{X_L})) \\ 
                            &\stackrel{\sim}{\to} \Hom_{\cO_{X_L}}(M \otimes_{A_L} \lkF, \cO_{X_L}).
     \end{align*} 
     where the second isomorphism is as in Lemma \ref{l:dual and hom}. We are left to check that the induced bilinear form 
    \begin{align*}
        Q_{b} : M \otimes_{A_L} \lkF \times M \otimes_{A_L} \lkF &\rightarrow \cO_{X_{L}} \\ 
        Q_{b}(m \otimes v, m' \otimes v') &= \overline{Q_{\sF}}(b(m',m)v,v').
    \end{align*}                
    is alternating. This follows from the computation               
    \begin{align*}
        Q_{b}(m \otimes v, m' \otimes v') &= \overline{Q}_{\sF}(b(m',m)v,v')\\
                                          &= \overline{Q}_{\sF}(v,\mu(b(m,m'))v')\\
                                          &= \overline{Q_{\sF}}(v, b(m,m')v') \\ 
                                          &= - \overline{Q_{\sF}}( b(m,m')v' ,v) \\ 
                                          &=  - Q_{b}(m' \otimes v', m \otimes v).
    \end{align*}
\end{proof}

    \begin{Lemma}
        \label{l:WT = id}
    Let $k \subset L$ be a finite extension of fields and let  $(M,b)$ be in $\herm_1(A_L,\mu_L)$. Then
    \begin{enumerate}
        \item $\Hom_{\cO_{X_L}}(\lkF,M \otimes_{A_L} \lkF)$ is a projective module of finite rank;
        \item $\Hom_{\cO_{X_L}}(\lkF,M \otimes_{A_L} \lkF)$ is equipped with a 1-hermitian form;
        \item there exists an isomorphism
        of hermitian modules 
        \[    M  \stackrel{\sim}{\to}  \Hom_{\cO_{X_L}}(\lkF,M \otimes_{A_L} \lkF). \]
    \end{enumerate}
    \end{Lemma}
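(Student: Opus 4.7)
The plan is to define the natural evaluation morphism
\[ \Phi : M \to \Hom_{\cO_{X_L}}(\lkF, M \otimes_{A_L} \lkF), \qquad \Phi(m)(v) = m \otimes v, \]
and to show that it simultaneously settles all three parts of the statement.

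First I would note that $\Phi$ is a well-defined morphism of right $A_L$-modules, where the target carries the right $A_L$-action induced by the canonical ring isomorphism $A_L \cong \End_{\cO_{X_L}}(\lkF)$ (obtained by flat base change from the definition $A = \End_{\cO_X}(\pi_{K,*}\sF)$), together with the formula $(f \cdot a)(v) = f(a \cdot v)$. The $A_L$-linearity check $\Phi(ma)(v) = ma \otimes v = m \otimes a v = (\Phi(m) \cdot a)(v)$ is immediate from the balancedness of the tensor product.

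Next I would prove that $\Phi$ is an isomorphism. For $M = A_L$ the map $\Phi$ is literally the canonical isomorphism $A_L \xrightarrow{\sim} \End_{\cO_{X_L}}(\lkF)$ recalled above. Since $M$ is finitely generated projective over $A_L$, it is a direct summand of some $A_L^n$; and the functor $N \mapsto \Hom_{\cO_{X_L}}(\lkF, N \otimes_{A_L} \lkF)$ preserves finite direct sums, so $\Phi$ is an isomorphism for every such $M$. This already gives (1) and the underlying module isomorphism in (3).

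Finally I would equip the target with a hermitian form $b'$ by the formula of Lemma~\ref{l:symp induces herm}, applied to the symplectic vector bundle $(M \otimes_{A_L} \lkF, Q_b)$ produced by Lemma~\ref{l:herm induces symp}, namely
\[ b'(f,g) = \pi_L^{*}(\bar h^{-1}) \circ f^{\vee} \circ h_{Q_b} \circ g. \]
Sesquilinearity and the hermitian identity are verified exactly as in the proof of Lemma~\ref{l:symp induces herm}; the $L$-form hypothesis there was used only to obtain nonsingularity, which in our setting will come for free once we match $b'$ with $b$. The remaining content of (2) and (3) is therefore the single identity
\[ b'(\Phi(m), \Phi(m')) = b(m, m') \qquad \text{in } A_L, \]
which simultaneously proves nonsingularity of $b'$ (since $b$ is nonsingular and $\Phi$ is an isomorphism) and that $\Phi$ is an isomorphism of hermitian modules.

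The main obstacle is this last identity. It requires unpacking the construction of $h_{Q_b}$ from the proof of Lemma~\ref{l:herm induces symp} as the composition built from $h_b \otimes \bar h$, Lemma~\ref{l:dual and hom}, and tensor-Hom adjunction; substituting $f=\Phi(m)$, $g=\Phi(m')$; and simplifying under the ring isomorphism $A_L \cong \End_{\cO_{X_L}}(\lkF)$. The computation is local and essentially forced, but the bookkeeping between left and right $A_L$-module structures, the involution $\mu$, and the canonical identification $\overline{M^\vee} \cong \overline{M}^{\vee}$ of Lemma~\ref{l:dual bar commutes} needs to be done carefully.
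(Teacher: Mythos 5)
Your proposal is correct, and its skeleton coincides with the paper's proof: the paper uses the very same map $F(m)(a)=m\otimes a$, puts the same form $b_Q(f,g)=\pi_L^{*}(\overline{h}^{-1})\circ f^{\vee}\circ h_{Q_b}\circ g$ on the target, and concludes by verifying exactly your key identity $b_Q(F(m_1),F(m_2))=b(m_1,m_2)$ — which, for reassurance, is a three-line computation (evaluate $F(m_2)(a)=m_2\otimes a$, apply $h_{Q_b}$ and $F(m_1)^{\vee}$, and use $Q_b(m\otimes v,m'\otimes v')=\overline{Q_{\sF}}(b(m',m)v,v')$ to recognize $\pi_L^{*}\overline{h}(b(m_1,m_2)a)$), so the step you flag as the main obstacle is not a gap. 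Where you genuinely diverge is in the supporting steps, and your route is arguably tidier: the paper obtains part (1) and the bijectivity of $F$ by citing \cite[Theorem 5.3]{BDH} together with the remark that $F$ is the unit of the tensor--Hom adjunction, whereas you prove bijectivity directly by naturality and additivity, reducing to $M=A_L$ where $\Phi$ is the base-change isomorphism $A_L\cong\End_{\cO_{X_L}}(\lkF)$; this makes the lemma self-contained. Likewise, the paper gets nonsingularity and the hermitian property of the form on $\Hom_{\cO_{X_L}}(\lkF,M\otimes_{A_L}\lkF)$ by invoking Lemma \ref{l:symp induces herm} wholesale, even though that lemma is stated for $L$-forms of $(\sF,h)$ and it is not yet established at this point that $M\otimes_{A_L}\lkF$ is such a form; your observation that the $L$-form hypothesis enters only in the nonsingularity step, and that nonsingularity instead follows once the identity $b'(\Phi(m),\Phi(m'))=b(m,m')$ is proved, cleanly sidesteps this potential circularity. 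In short: same core argument, with a more elementary proof of the module isomorphism and a more careful sourcing of nonsingularity; just carry out the short final computation rather than leaving it as a sketch.
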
 
    
    \begin{proof}
    The first statement follows from \cite[Theorem 5.3]{BDH} and it's proof. 

    Recall that $M \otimes_{A_L} \lkF$ has a symplectic form by Lemma \ref{l:herm induces symp} which we denote by $Q$. 
    In turn, $\Hom_{\cO_{X_L}}(\lkF,M \otimes_{A_L} \lkF)$ is equipped 
    with a nonsingular 1-hermitian form by Lemma \ref{l:symp induces herm}. We denote this form by $b_Q$.
    The morphism 
    \begin{align*}
      &F:  M  \rightarrow  \Hom_{\cO_{X_L}}(\lkF,M \otimes_{A_L} \lkF)  \\
        &F(m)(a) = m \otimes a. 
    \end{align*}    
    is an isomorphism of projective modules since this is just the identity morphism under the tensor-hom adjunction
    \begin{align*}      \Hom_{A_L}(M,\Hom_{\cO_{X_L}}&(\lkF,M \otimes_{A_L} \lkF)) \cong \\ 
                     &\Hom_{\cO_{X_L}}(M \otimes_{A_L} \lkF,M \otimes_{A_L} \lkF).                        \end{align*}
    We now compute for $a$ in $\lkF$,
    \begin{align*}
        b_Q(F(m_1), F(m_2))(a) &= \pi_{L}^{*}( \overline{h}^{-1}) \circ F(m_1)^{\vee} \circ h_{Q} \circ F(m_2)(a) \\ 
                               &=  \pi_{L}^{*}(\overline{h}^{-1}) \circ F(m_1)^\vee Q(m_2 \otimes a, -) \\ 
                               &=  \pi_{L}^{*}(\overline{h}^{-1}) \overline{Q_{\sF}}(b(m_1,m_2)a, - ) \\ 
                               &= b(m_1,m_2)a.
    \end{align*}

    \end{proof}
    
    \begin{Lemma}
        \label{l:TW = id}
        Let $k \subset L$ be a finite extension of fields and let $(\sE,Q_{\sE})$ be a $L$-form of $(\sF,h)$. 
        Then
        \begin{enumerate}
            \item the  $A_L$ module $\Hom(\lkF, \sE)$ is projective of finite rank;
            \item $\Hom_{\cO_{X_L}}(\lkF,\sE) \otimes_{A_L}  \lkF$ has a form making it into a symplectic vector bundle over $X_L$;
            \item there exists an isomorphism of symplectic vector bundles 
    \[        \Hom_{\cO_{X_L}}(\lkF,\sE) \otimes_{A_L}  \lkF \stackrel{\sim}{\to} \sE.                        \]
        \end{enumerate}
    \end{Lemma}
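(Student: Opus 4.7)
The plan is to mirror the argument of Lemma \ref{l:WT = id} in the opposite direction. Part (1) is the first assertion of Lemma \ref{l:symp induces herm}. For part (2) I would apply Lemma \ref{l:herm induces symp} to the projective $A_L$-module $M:=\Hom_{\cO_{X_L}}(\lkF,\sE)$ equipped with the nonsingular $1$-hermitian form $b$ of Lemma \ref{l:symp induces herm}; this produces a symplectic form $Q_b$ on $M\otimes_{A_L}\lkF$.

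For the isomorphism in part (3), the natural candidate is the evaluation map
\[
\Phi : \Hom_{\cO_{X_L}}(\lkF,\sE)\otimes_{A_L}\lkF \longrightarrow \sE, \qquad f\otimes a \mapsto f(a),
\]
which corresponds under tensor-hom adjunction to the identity endomorphism of $M$. To see that $\Phi$ is an isomorphism of $\cO_{X_L}$-modules, I would base-change to a finite field $L'$ containing both $L$ and $K$ over which $\pi_{LL'}^{*}\sE \cong \pi_{L'}^{*}(\pi_K)_{*}\sF$. By flat base change for $\Hom$, the pull-back of $M$ becomes $\Hom_{\cO_{X_{L'}}}(\pi_{L'}^{*}(\pi_K)_{*}\sF,\pi_{L'}^{*}(\pi_K)_{*}\sF) = A_{L'}$, a free rank one $A_{L'}$-module; under this identification $\Phi_{L'}$ is the canonical evaluation $A_{L'}\otimes_{A_{L'}}\pi_{L'}^{*}(\pi_K)_{*}\sF \to \pi_{L'}^{*}(\pi_K)_{*}\sF$, visibly an isomorphism. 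Faithful flatness of $\pi_{LL'}$ then yields the claim over $X_L$.

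The substantive step is verifying that $\Phi$ intertwines the two symplectic forms. Writing $\bar h_L:=\pi_L^{*}\bar h$ and unwinding Lemma \ref{l:herm induces symp} together with the explicit shape of $b$ in Lemma \ref{l:symp induces herm}, namely $b(g,f)=\bar h_L^{-1}\circ g^{\vee}\circ h_{\sE}\circ f$, a direct computation gives
\[
Q_b(f\otimes a, g\otimes a') = \overline{Q_{\sF}}(b(g,f)a, a') = \bar h_L(b(g,f)a)(a') = h_{\sE}(f(a))(g(a')) = Q_{\sE}(f(a),g(a')),
\]
so $\Phi$ is symplectic. The main obstacle here is purely bookkeeping: one must keep straight the four ingredients $\bar h$, $\bar h_L$, $h_{\sE}$ and the involution $\mu$, together with the left-versus-right module conventions that $\mu$ induces; once this is managed, the proof reduces to the single algebraic identity displayed above, and combined with the base-change argument it completes part (3).
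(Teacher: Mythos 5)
Parts (1) and (2) and your verification that the evaluation map intertwines $Q_b$ and $Q_{\sE}$ are correct and follow the paper's proof essentially verbatim (the paper uses the same evaluation map $G(f\otimes\alpha)=f(\alpha)$ and the same chain of equalities $Q_b(g\otimes x,f\otimes y)=\pi_L^{*}\overline{Q_{\sF}}(b(f,g)x,y)=h_{\sE}(g(x))(f(y))=Q_{\sE}(g(x),f(y))$).

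The gap is in your justification that $\Phi$ is an isomorphism of $\cO_{X_L}$-modules. You assert that there is a finite extension $L'$ of $L$ containing $K$ with $\pi_{LL'}^{*}\sE\cong\pi_{L'}^{*}(\pi_K)_{*}\sF$, and deduce that $\pi_{LL'}^{*}M$ is the free rank-one module $A_{L'}$ with $\Phi_{L'}$ the canonical isomorphism $A_{L'}\otimes_{A_{L'}}\pi_{L'}^{*}(\pi_K)_{*}\sF\to\pi_{L'}^{*}(\pi_K)_{*}\sF$. This identification is false on rank grounds: by Definition \ref{d:twistedForm}, being an $L$-form means $\sE_{L'}\cong\sF_{L'}$, which has rank $2r$, whereas $\pi_{L'}^{*}(\pi_K)_{*}\sF$ has rank $[K:k]\cdot 2r$; correspondingly $M=\Hom_{\cO_{X_L}}(\lkF,\sE)$ is projective of rank $1/d$ over $A_L$, not free of rank one (this is exactly why the paper, in the proof of Lemma \ref{l:symp induces herm}, passes to $\pi_{LL'}^{*}\Hom(\lkF,\sE)^{d}$ before obtaining a free rank-one module, and why the categorical equivalence lands in $\herm_1^{1/d}$). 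As written, the base-change step therefore does not go through. The repair is the one the paper (implicitly, via \cite[Theorem 5.3]{BDH}) uses: after base change one has $\pi_{LL'}^{*}\sE\cong\sF_{L'}$, and $\sF_{L'}$ is a direct summand of $\pi_{L'}^{*}(\pi_K)_{*}\sF$, so the evaluation map $\Hom(P,\sF_{L'})\otimes_{\End(P)}P\to\sF_{L'}$ with $P=\pi_{L'}^{*}(\pi_K)_{*}\sF$ is an isomorphism by the standard Morita-type argument for direct summands of $P^{n}$; faithfully flat descent along $\pi_{LL'}$ then gives the isomorphism over $X_L$, as in your final sentence.
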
 
    
    \begin{proof}
    The first statement is from \cite[Theorem 5.3]{BDH}. 

    There is a nonsingular 1-hermitian form on $\Hom(\lkF, \sE)$ by Lemma \ref{l:symp induces herm}, which we call $b$. 
    Lemma \ref{l:herm induces symp} implies that  $\Hom_{\cO_{X_L}}(\lkF,\sE) \otimes_{A_L}  \lkF$ is a symplectic vector bundle over $X_L$,
    and we denote the symplectic form by $Q_b$. There exists a morphism of sheaves 
        \begin{align*}
          G:  \Hom_{\cO_{X_L}}(\lkF,\sE) \otimes_{A_L}  \lkF &\rightarrow \sE   \\ 
               G(f \otimes \alpha_{U}) = f|_U(\alpha_U).
        \end{align*}
    This is an isomorphism of vector bundles since it is an isomorphism after pullback to a field $L'$. We need 
    to prove that this is an isomorphism of symplectic vector bundles. For $x,y$ in $\lkF$ and $f,g$ in 
    $\Hom_{\cO_{X_L}}(\lkF,\sE)$, 
    \begin{align*}
        Q_b(g \otimes x, f \otimes y)            &= \pi_L^*\overline{Q}_{\sF}(b(f,g)x,y)\\
         &= \pi_L^*\overline{Q}_{\sF}(\pi_{L}^{*}(\overline{h}^{-1}) \circ f^{\vee} \circ h_{Q_{\sE}} \circ g(x) ,y)\\
         &= f^{\vee} \circ h_{Q_{\sE}} \circ g(x) (y)\\
         &= h_{Q_{\mathcal{E}}} \circ g(x) (f(y))\\
         &= Q_{\sE}(g(x),f(y)).
    \end{align*}
    \end{proof}

    %%%%%%%%%%%%%%%%%%%%%%%%%%%%%%%%%%%%%%%%%%%%%%%%
    %%%%%%%%%%%%%%%%%%%%%% A categorical equivalence.
    %%%%%%%%%%%%%%%%%%%%%%%%%%%%%%%%%%%%%%%%%%%%%%%%
     
    \subsection{A categorical equivalence.}
    
    \label{s:categorical}
    
    Let  $\sG$ be a residual gerbe of some point of $\Bun_{X,\Spr}$ with residue field $k(\sG)$. 
    There exists a finite field extension $K$ of $k(\sG)$ such that $\sG(K)$ is nonempty. Let 
    \[         d = [ K : k(\sG)] < \infty   \] 
    As before, we denote by 
    \[            \pi_K :  X_K \rightarrow X_{k(\sG)}                          \]
    Let $\sF$ be a point of $\sG(K)$ and $(A,\mu)$ be the ring $\End((\pi_{K})_*\sF)$ with involution as defined in 
    \ref{n:involution on endomorphism ring}.
    
    \begin{Theorem}
    \label{l:cat equivalence}
    For any field $L$ containing $k(\sG)$, the category $\sG(L)$ is equivalent to the category $\herm_1^{1/d}(A_L)$.
    \end{Theorem}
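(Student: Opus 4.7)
The plan is to exhibit an equivalence via a pair of functors
\[
T: \sG(L) \longrightarrow \herm_1^{1/d}(A_L), \qquad W: \herm_1^{1/d}(A_L) \longrightarrow \sG(L),
\]
which I will show to be mutually quasi-inverse. On objects, I define
\[
T(\sE, h_\sE) = \bigl(\Hom_{\cO_{X_L}}(\lkF,\, \sE),\, b\bigr), \qquad W(M, b) = \bigl(M \otimes_{A_L} \lkF,\, Q_b\bigr),
\]
where $b$ is the non-singular $1$-hermitian form supplied by Lemma \ref{l:symp induces herm} and $Q_b$ is the symplectic form supplied by Lemma \ref{l:herm induces symp}. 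On morphisms, $T$ acts by post-composition and $W$ by tensoring with the identity; compatibility with the hermitian and symplectic structures is immediate from the explicit formulas used to build $b$ and $Q_b$.

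First I would check that the two functors target the asserted subcategories. Lemma \ref{l:symp induces herm} establishes that $T(\sE, h_\sE)$ is a finitely generated projective $A_L$-module carrying a non-singular $1$-hermitian form. The rank equals $1/d$ because, over any finite splitting field $L'/L$ of $(\sE, h_\sE)$, the projection formula identifies the pullback of $\Hom_{\cO_{X_L}}(\lkF, \sE)$ to $X_{L'}$ with a rank-one free $A_{L'}$-module, as was already used inside the proof of Lemma \ref{l:symp induces herm}. For $W$, the delicate point is that $W(M, b)$ must be an $L$-form of $\sF$, not just an abstract symplectic bundle on $X_L$. I would obtain this from Lemma \ref{l:iso extension A}: after a finite extension $L'/L$, the object $(M, b)_{L'}$ becomes isomorphic in $\herm_1^{1/d}(A_{L'})$ to the ``standard'' hermitian module associated to $\sF$ itself, and Lemma \ref{l:TW = id} then gives $W(M, b)_{L'} \simeq \sF_{L'}$, which places $W(M,b)$ inside $\sG(L)$.

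The natural isomorphisms $W \circ T \simeq \Identity_{\sG(L)}$ and $T \circ W \simeq \Identity_{\herm_1^{1/d}(A_L)}$ are essentially restatements of Lemmas \ref{l:TW = id} and \ref{l:WT = id} respectively, which already construct the comparison isomorphisms on objects and verify they respect the symplectic and hermitian structures. Naturality in morphisms is a formal diagram chase: the comparison maps are assembled from the tensor--hom adjunction, so naturality in $\sE$ (resp.\ in $M$) follows from naturality of the unit and counit of that adjunction together with the explicit formulas for $b$ and $Q_b$.

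The main obstacle will be verifying that the essential image of $W$ lies inside $\sG(L)$, which is the one place where the hypothesis of a fixed residual gerbe is genuinely used. This step combines Lemma \ref{l:iso extension A} (every rank-$1/d$ hermitian module becomes isomorphic to a single distinguished one after a finite extension), base-change compatibility of $W$, and Lemma \ref{l:TW = id} applied to this distinguished module to identify $W(M,b)$ with $\sF$ after pullback. Once this is in place, the remainder is formal bookkeeping.
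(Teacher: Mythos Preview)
Your proposal is correct and follows essentially the same approach as the paper's own proof: both construct the two functors $\sE \mapsto \Hom_{\cO_{X_L}}(\lkF,\sE)$ and $M \mapsto M \otimes_{A_L} \lkF$, verify the essential image of the latter lands in $\sG(L)$ via Lemma~\ref{l:iso extension A} combined with Lemma~\ref{l:TW = id}, and then invoke Lemmas~\ref{l:WT = id} and~\ref{l:TW = id} for the mutual inverse. The only difference is cosmetic---you have interchanged the names $T$ and $W$ relative to the paper---and you spell out the rank computation and naturality checks a bit more explicitly than the paper does.
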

    
    \begin{proof} 
        Combining \cite[Theorem 5.3]{BDH} and Lemma \ref{l:symp induces herm}, we have a functor
        \begin{align*}
            W : \sG(L) &\rightarrow \herm_1^{1/d}(A_L) \\ 
                \sE    &\mapsto     \Hom_{\cO_{X_L}}(\lkF,\sE).  
    \end{align*}
    
    On the other hand, let $M$ be an object of $\herm_1^{1/d}(A_L)$, then $M \otimes_{A_L} \lkF$ is a symplectic vector 
    bundle over $X_L$ by Lemma \ref{l:herm induces symp}. Moreover, there exists a finite field extension $L'$ of $L$ 
    containing $K$ such that $\Hom_{\cO_{X_{L'}}}(\pi_{L'}^*(\pi_K)_*\sF,\sF \otimes_K L')$ is in $\herm^{1/d}_1(A_{L'})$.
    By Lemma \ref{l:iso extension A}, there exists a finite field extension, which we call $L'$ by abuse of notation such that
    \[     \Hom_{\cO_{X_{L'}}}(\pi_{L'}^*(\pi_K)_*\sF,\sF \otimes_K L') \cong M_{L'}                                    \]
    as objects of $\herm^{1/d}_1(A_{L'})$. Lemma \ref{l:TW = id} proves that 
    \[          \sF \otimes_K L' \cong  M_{L'} \otimes_{A_{L'}} \pi_{L'}^*(\pi_K)_*\sF                                 \]
    as symplectic vector bundles, proving that  $M \otimes_{A_L} \lkF$ is an object of $\sG(L)$. This gives us a functor
    \begin{align*}
         T : \herm^{1/d}(A_L) &\rightarrow \sG(L) \\ 
                M             &\mapsto  M \otimes_{A_L} \lkF
    \end{align*}
    They are mutually inverse by Lemmas \ref{l:WT = id} and \ref{l:TW = id}.
    \end{proof}
    
    \begin{Corollary}
        \label{c:residue gerbe}
    Let $\sG$ be a residue gerbe of $\Bun_{X,\Spr}$. Then, 
    \[        \ed_{k(\sG)}(\sG) = \ed_{k(\sG)}(\herm_1^{1/d} A).                   \]
    \end{Corollary}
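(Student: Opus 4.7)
The plan is to deduce this directly from Theorem \ref{l:cat equivalence} by observing that essential dimension depends only on the isomorphism-class functor $\text{Fields}_{k(\sG)} \to \text{Sets}$ associated to the fibered category, and that this functor is invariant (up to natural isomorphism) under an equivalence of categories that is compatible with extension of scalars.

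First, I would unpack the definitions. For a field extension $L/k(\sG)$, write $[\sG(L)]$ for the set of isomorphism classes in the groupoid $\sG(L)$, and $[\herm_1^{1/d}(A_L)]$ for the set of isomorphism classes in the category $\herm_1^{1/d}(A_L)$. By the definition of essential dimension recalled in the introduction, $\ed_{k(\sG)}(\sG)$ (respectively $\ed_{k(\sG)}(\herm_1^{1/d} A)$) equals the essential dimension of the functor
\[
L \longmapsto [\sG(L)] \qquad (\text{respectively } L \longmapsto [\herm_1^{1/d}(A_L)]).
\]
Thus it suffices to exhibit a natural isomorphism of these two set-valued functors on $\text{Fields}_{k(\sG)}$.

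Next, I would verify that the functors $W$ and $T$ constructed in the proof of Theorem \ref{l:cat equivalence} are compatible with field extensions. Concretely, given $k(\sG) \subseteq L \subseteq L'$, one has natural isomorphisms
\[
W(\sE) \otimes_{A_L} A_{L'} \;\cong\; W(\sE \otimes_L L') \quad\text{and}\quad T(M) \otimes_L L' \;\cong\; T(M \otimes_{A_L} A_{L'}),
\]
because the formation of $\Hom_{\cO_{X_L}}(\lkF,-)$ and $(-) \otimes_{A_L} \lkF$ commutes with base change in $L$ (the sheaf $(\pi_K)_*\sF$ and the symplectic form pull back compatibly, and tensor products and Hom of finitely generated projective modules commute with flat base change). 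Together with the equivalence established in Theorem \ref{l:cat equivalence}, this yields a natural isomorphism of functors $[\sG(-)] \cong [\herm_1^{1/d}(A_{-})]$ on $\text{Fields}_{k(\sG)}$.

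Finally, I would invoke the general principle that essential dimension is an invariant of the isomorphism-class functor: if $F \cong G$ naturally, then for any field $L$ and any $x \in F(L)$, the fields of definition of $x$ and of its image in $G(L)$ coincide, so $\ed_{k(\sG)}(x) = \ed_{k(\sG)}(G(x))$, and taking suprema gives $\ed_{k(\sG)}F = \ed_{k(\sG)}G$. I do not anticipate any real obstacle here; the only point that requires a line of checking is the naturality of the equivalence in the field variable, which follows formally from the definitions of $W$ and $T$.
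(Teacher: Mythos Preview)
Your proposal is correct and is precisely the argument the paper has in mind: the corollary is stated without proof immediately after Theorem~\ref{l:cat equivalence}, so the intended deduction is exactly that the equivalence of categories, being natural in the field variable, induces a natural isomorphism of the associated isomorphism-class functors on $\text{Fields}_{k(\sG)}$ and hence equality of essential dimensions. You have simply spelled out the compatibility with base change and the invariance of essential dimension under natural isomorphism that the paper leaves implicit.
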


%\input{Preliminaries on Involutions and Hermitian modules.tex}

%\input{Pushforward of Symplectic Forms.tex} 

%\input{Symplectic and Hermitian Forms.tex}

%\input{CatEquivalence.tex} 

%auto-ignore 
\section{The Main Results}
\label{s:mainResults}

We refer the reader to the introduction of this paper where the definition of $\ed_{k}( \mathfrak{X})$ is recalled for an algebraic 
stack  $\mathfrak{X}$. 

Recall that we have fixed a smooth projective geometrically connected curve $X$ over a field $k$ of characteristic 0. The purpose of this section is to 
compute the essential dimension of the stack $\Bun_{X,\Spr}$ of principal $\Spr$-bundles over $X$, see \ref{t:essential} below.

%%%%%%%%%%%%%%%%%%%%%%%%%%%%%%%%%%%%%%%%%%%%%%%%
%%%%%%%%%%%%%%%%%%%%%% Fields of moduli.
%%%%%%%%%%%%%%%%%%%%%%%%%%%%%%%%%%%%%%%%%%%%%%%%
 
\subsection{Fields of moduli.}

\begin{Definition}
\label{d:Levi indecomposable}
A symplectic vector bundle $(\sE,h)$ is defined to be \emph{Levi decomposable} 
if there is a graded symplectic vector bundle $(\sE_{-1}\oplus \sE_{0}\oplus \sE_{1},h_{-1}\oplus h_{0}\oplus h_{1})$
with $\rank \sE_{-1}>0$ and an isomorphism of symplectic vector bundles 
$$
\sE \stackrel{\sim }{\to } \sE_{-1}\oplus \sE_{0}\oplus \sE_{1}. 
$$
\end{Definition} 

\begin{remark}
    This definition can be reinterpreted in terms of the associated principal $\Spr$-bundle to $(\sE,h)$. It means that 
    this principal bundle has reduction of structure group to a Levi subgroup of $\Spr$ associated to a maximal parabolic 
    subgroup.  
    To see this, first recall that the associated principal bundle has functor of points given by
    \begin{eqnarray*}
        \Sch/ X & \longrightarrow & \Sets  \\ 
        (f:S\to X) & \longmapsto & {\rm Isom}((f^{*}E,f^{*}h), (\sO_{S}^{2r}, h)), 
    \end{eqnarray*}
    where $(\sO_{S}^{2r}, h)$ is the trivial symplectic vector bundle of rank $2r$. 

   An isomorphism $\phi :(f^{*}E,f^{*}h)\to  (\sO_{S}^{2r}, h) $ identifies $\sE_{-1}$ with an isotropic subbundle of the trivial bundle
   which in turn produces a reduction of structure group to ${\rm Gl}_n \times {\rm Sp}_{2r-2n}$ where $n=\rank \sE_{-1}$. 
\end{remark}

\begin{Lemma}
\label{l:Aut unipotent}
Let $(\sE,h)$ be a symplectic vector bundle over a smooth projective curve $X$ which is Levi indecomposable. Then,  
$\frac{\Aut^0(\sE,h)}{\pm Id}$ is an unipotent algebraic group.
\end{Lemma}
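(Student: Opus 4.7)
The plan is to show that $\Aut^{0}(\sE,h)$ contains no non-trivial sub-torus. In characteristic zero every connected affine algebraic group $G$ admits a Levi decomposition $G = L \ltimes R_{u}(G)$ with $L$ reductive, and every non-trivial connected reductive group contains a non-trivial torus. Hence the vanishing of the maximal torus of $\Aut^{0}(\sE,h)$ forces $\Aut^{0}(\sE,h)$ to equal its unipotent radical, and quotienting by the finite central subgroup $\{\pm\Identity\}$ preserves unipotence.

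Suppose for contradiction that $T \hookrightarrow \Aut^{0}(\sE,h)$ is a non-trivial sub-torus. Choose an injective one-parameter subgroup $\mathbb{G}_{m} \hookrightarrow T$; the composition $\mathbb{G}_{m} \hookrightarrow T \hookrightarrow \Aut(\sE,h) \subseteq \Aut_{\sO_{X}}(\sE)$ remains injective, so $\mathbb{G}_{m}$ acts faithfully on $\sE$. This yields a $\Integers$-grading by locally free subsheaves
\[ \sE = \bigoplus_{n \in \Integers} \sE_{n}, \]
in which at least one $\sE_{n}$ with $n \neq 0$ is non-zero (otherwise the action would be trivial). The invariance $h(tx,ty)=h(x,y)$ for every $t \in \mathbb{G}_{m}$ forces $h(\sE_{n},\sE_{m}) = 0$ whenever $n+m \neq 0$, so $h$ places $\sE_{n}$ in perfect duality with $\sE_{-n}$.

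Set $\sF_{-1} := \bigoplus_{n<0}\sE_{n}$, $\sF_{0} := \sE_{0}$, and $\sF_{1} := \bigoplus_{n>0}\sE_{n}$. Then $\sF_{\pm 1}$ are isotropic, $h$ pairs $\sF_{-1}$ non-degenerately with $\sF_{1}$, and $\sF_{0}$ is the orthogonal complement, carrying a non-degenerate symplectic form. Since some non-zero weight occurs, after possibly interchanging $\sF_{-1}$ and $\sF_{1}$ we have $\rank \sF_{-1} > 0$, and the induced isomorphisms $h_{i} : \sF_{i} \to \sF_{-i}^{\vee}$ make $(\sF_{-1}\oplus\sF_{0}\oplus\sF_{1}, h_{-1}\oplus h_{0}\oplus h_{1})$ into a graded symplectic vector bundle (in the sense of Lemma \ref{l:gradedSymplectic}) isomorphic to $(\sE,h)$. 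This exhibits $(\sE,h)$ as Levi decomposable in the sense of \ref{d:Levi indecomposable}, the required contradiction. The only mild subtlety is the passage from a fibrewise $\mathbb{G}_{m}$-weight decomposition to a genuine direct sum of subbundles of $\sE$, but this is standard for actions of diagonalisable groups on locally free sheaves.
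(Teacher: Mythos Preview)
Your argument is correct, and it is genuinely different from the paper's ``proof'', which consists solely of the citation ``This is \cite[Proposition 2.4]{BBN}.'' What you have written is essentially a self-contained version of the argument behind that cited result: a nontrivial torus in $\Aut^{0}(\sE,h)$ yields, via a cocharacter, a weight decomposition of $\sE$ compatible with the symplectic form, hence a Levi decomposition in the sense of Definition~\ref{d:Levi indecomposable}. Two small remarks. First, the existence of an \emph{injective} cocharacter $\mathbb{G}_{m}\hookrightarrow T$ requires $T$ to be split; this is automatic over an algebraically closed field, which is the only setting in which the paper invokes the lemma (see Lemma~\ref{l:nilpotency of adjoint}), so no harm is done. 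Second, your concluding clause about ``quotienting by $\{\pm\Identity\}$'' is slightly misleading: once you have shown $\Aut^{0}(\sE,h)$ is unipotent, in characteristic zero it is torsion-free, so $-\Identity$ does not lie in $\Aut^{0}(\sE,h)$ at all and the quotient is just $\Aut^{0}(\sE,h)$ itself. The payoff of your approach is a transparent, self-contained proof; the paper's approach simply outsources the work to \cite{BBN}.
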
 

\begin{proof} 
This is \cite[Proposition 2.4]{BBN}.
\end{proof} 

\begin{Lemma} 
\label{l:0 on fib implies 0}
Let $s$ be a section of a vector bundle $\sE$ over a curve $X$. Suppose further that the induced section $s|_x$ of the 
fiber  $\sE_x/m_x\sE_x$ vanishes for all closed points $x$ of $X$. Then $s = 0$.  
\end{Lemma}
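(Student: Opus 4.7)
The plan is to reduce to a local algebraic fact via a trivializing cover. Since vanishing is a local property and $\sE$ is locally trivial, I may cover $X$ by affine opens $U = \Spec(R)$ on which $\sE|_U \cong R^n$ is free, and it suffices to show $s|_U = 0$ for each such $U$. So fix one and work on it.

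Under the chosen trivialization the section $s|_U$ is a tuple $(s_1, \ldots, s_n) \in R^n$. For a closed point $x \in U$ corresponding to a maximal ideal $m \subset R$, the image of $s|_U$ in the fiber $\sE_x/m_x\sE_x$ is precisely $(\bar{s}_1, \ldots, \bar{s}_n) \in (R/m)^n$. The hypothesis therefore translates into the statement that each $s_i$ lies in every maximal ideal of $R$, i.e. $s_i \in J(R)$ where $J(R)$ denotes the Jacobson radical.

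Now I invoke two standard facts. First, $R$ is a finitely generated $k$-algebra, hence a Jacobson ring, so $J(R) = \mathrm{nil}(R)$. Second, $X$ is integral because it is smooth, projective and geometrically connected, hence $R$ is an integral domain and $\mathrm{nil}(R) = 0$. Combining these gives $s_i = 0$ for all $i$, so $s|_U = 0$. Since $U$ was arbitrary in a cover of $X$, we conclude $s = 0$. There is no serious obstacle here; the lemma is essentially the observation that on an integral variety over a field, a regular section of a vector bundle that vanishes in every closed fiber must vanish identically.
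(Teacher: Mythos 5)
Your argument is correct and is essentially the paper's own proof: reduce to an affine trivializing chart $\Spec(R)$, note the hypothesis forces each coordinate of the section to lie in every maximal ideal, and conclude from $R$ being a Jacobson integral domain (so the intersection of maximal ideals is zero) that the section vanishes. The only cosmetic difference is that you pass through $J(R)=\mathrm{nil}(R)=0$, while the paper states directly that the intersection of the maximal ideals is zero; these are the same fact.
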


\begin{proof}
   Since the question is local, we can assume that $X = \Spec(R)$ where $R$ is an integral domain finitely generated over a field,
   and $s$ is an element of a finitely generated free module $M$ over $R$. 
   The condition on $s$ implies that $s$ is in $mM$ for all maximal ideal $m$
   of $R$. Since $R$ is a Jacobson ring and integral domain, intersection of all maximal ideals is $0$. This implies that $s$ is 0.
\end{proof} 

Let $K \supset k$ be an extension of fields and let $\sE$ be a vector bundle over $X_K$.
 We denote by $ T_{Id} \Aut^0(\sE,h)$ the tangent space at identity of $\Aut^0(\sE,h)$. Just to recall, 
 it is the vector space of morphisms 
 \[     \Spec(K[\epsilon]) \rightarrow \Aut^0(\sE,h),   \quad \quad \epsilon^2 = 0                        \]
 such that 
 \[       \Spec(K) \mapsto Id.                           \]
In particular,  the tangent space $T_{Id} \Aut^0(\sE,h)$ can be identified with a subset of
$\Aut^0( \sE_{K[\epsilon]},h_{K[\epsilon]})$ consisting of automorphisms 
\[    g :   ( \sE_{K[\epsilon]},h_{K[\epsilon]}) \rightarrow ( \sE_{K[\epsilon]},h_{K[\epsilon]})         \]
such that the following diagram commute 
\[\begin{tikzcd} 
    \sE_{K[\epsilon]} \arrow[r,two heads] \arrow[d,"g"] &\sE \arrow[d,"id"]\\ 
    \sE_{K[\epsilon]}  \arrow[r,two heads]              &\sE.
\end{tikzcd}\]
With this identification, we have the following isomorphism.

\begin{Lemma}
\label{l:tangent space of Aut}
Let $\sE$ be a symplectic vector bundle over $X_K$. Then there exists a vector space isomorphism 
\begin{align*}
    \phi: \Homs(\sE,\sE) &\rightarrow T_{Id} \Aut^0(\sE,h)\\ 
    \theta &\mapsto Id+\theta\epsilon. 
\end{align*}      
\end{Lemma}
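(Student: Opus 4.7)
The plan is to verify directly that the claimed map is well-defined, surjective, and linear, using the first-order nature of $K[\epsilon]$.

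First, I would show that the map is well-defined, i.e.\ that $Id + \theta\epsilon$ actually lies in $T_{Id}\Aut^0(\sE,h)$ whenever $\theta \in \Homs(\sE,\sE)$. Any $\cO_{X_{K[\epsilon]}}$-linear endomorphism of $\sE_{K[\epsilon]}$ reducing to the identity modulo $\epsilon$ has the shape $Id + \theta\epsilon$ for some $\theta\in \End(\sE)$, since $\sE_{K[\epsilon]} \cong \sE \oplus \sE\epsilon$. Such a map is automatically an isomorphism, with inverse $Id - \theta\epsilon$ because $\epsilon^{2}=0$. Compatibility with $h_{K[\epsilon]}$ amounts, upon expansion of $(Id+\theta\epsilon)^{\vee}\circ h_{K[\epsilon]} \circ (Id+\theta\epsilon) = h_{K[\epsilon]}$ and use of $\epsilon^{2}=0$, to the identity $\theta^{\vee}\circ h + h\circ \theta = 0$, equivalently $\mu(\theta) = -\theta$, which is precisely the self-adjointness condition defining $\Homs(\sE,\sE)$. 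Finally, $Id + \theta\epsilon$ reduces to $Id$ modulo $\epsilon$, so it factors through the connected component $\Aut^{0}(\sE,h)$.

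Next, I would construct the inverse. Given any element of $T_{Id}\Aut^{0}(\sE,h)$, viewed as a symplectic automorphism $g$ of $\sE_{K[\epsilon]}$ specialising to $Id$ at $\epsilon = 0$, the decomposition $\sE_{K[\epsilon]} \cong \sE \oplus \sE\epsilon$ forces $g = Id + \theta\epsilon$ for a unique $\theta \in \End(\sE)$, and the preceding calculation shows that the symplectic condition on $g$ forces $\theta \in \Homs(\sE,\sE)$. The assignment $g\mapsto \theta$ is then a two-sided inverse to $\phi$.

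Finally, $K$-linearity of $\phi$ is immediate: $(Id + \theta_{1}\epsilon)\cdot(Id + \theta_{2}\epsilon) = Id + (\theta_{1}+\theta_{2})\epsilon$ after discarding the $\epsilon^{2}$ term, so addition in the tangent space matches addition of self-adjoint endomorphisms, and scalar multiplication is likewise transparent.

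There is no real obstacle here, as the statement is a formal consequence of the infinitesimal definition of symplectic automorphism; the only slightly delicate point to flag is that one must observe that an arbitrary first-order symplectic automorphism automatically lies in the identity component, which is clear by construction since it specialises to $Id$.
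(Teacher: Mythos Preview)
Your proposal is correct and follows essentially the same approach as the paper: both verify that $Id+\theta\epsilon$ is an automorphism with inverse $Id-\theta\epsilon$, check the symplectic compatibility to first order in $\epsilon$ (you via the condition $\theta^{\vee}h + h\theta = 0$, the paper via the equivalent bilinear-form identity $Q_h(\theta x_1,x_2)+Q_h(x_1,\theta x_2)=0$), and then observe that the inverse $g\mapsto g-Id$ lands in $\Homs(\sE,\sE)$ by running the same computation backwards. Your treatment is in fact slightly more thorough, as you explicitly address $K$-linearity and the connected-component issue, which the paper leaves implicit.
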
 

\begin{proof}
    Given $\theta$ in $\Homs(\sE,\sE)$, clearly 
    \[     Id + \theta \epsilon :  \sE \otimes_K K[\epsilon] \rightarrow \sE \otimes_K K[\epsilon]                    \]
    is an endomorphism of the vector bundle $\sE$. Furthermore it is an isomorphism since $(Id + \theta \epsilon)(Id - \theta \epsilon) = Id$. 
    Furthermore,
    \begin{align*}
    Q_{h_{K[\epsilon]}}((Id + \theta \epsilon)(x_1 + y_1 \epsilon), (Id + \theta \epsilon)(x_2 + y_2 \epsilon)) &- Q_{h_{K[\epsilon]}}(x_1 + y_1 \epsilon, x_2 + y_2 \epsilon) \\ 
                                                                &= \epsilon(Q_h(\theta(x_1),x_2) + Q_h(x_1,\theta(x_2))) \\ 
                                                                &= 0.
    \end{align*}
    proving the well-definedness of the map $\phi$. On the other hand, given an element $g$ of $T_{Id} \Aut^0(\sE,h)$, 
    $g - Id$ defines an element of $\Hom(\sE,\sE)$, which furthermore belongs to $\Homs(E,E)$ by running the above calculation 
    backwards. This defines a morphism 
    \begin{align*}
        \psi: T_{Id} \Aut^0(\sE,h) &\rightarrow \Homs(\sE,\sE) \\ 
        g &\mapsto g -Id. 
    \end{align*}      
    It is clear that $\phi$ and $\psi$ are inverses of each other.
\end{proof}

\begin{Lemma}
\label{l:differentiation of restriction map}
Let $x$ be a point of $X$. Then differential at $Id$ to the restriction  morphism   
\begin{align*}
    \Aut^0(\sE,h) &\rightarrow \Aut^0(\sE_x/m_x) \\
     g            &\mapsto     g|_x
\end{align*}
is the restriction morphism 
\begin{align*}
    \Homs(\sE,h) &\rightarrow \Hom(\sE_x/m_x,\sE_x/m_x) \\
     \theta           &\mapsto     \theta|_x.
\end{align*}
\end{Lemma}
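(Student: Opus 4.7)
My plan is to leverage the explicit identification of the tangent space with $\Homs(\sE,\sE)$ that was already established in Lemma \ref{l:tangent space of Aut}, and to note that the analogous identification also holds for the fiber.

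First, I would observe that exactly the same argument as in Lemma \ref{l:tangent space of Aut} (applied to the vector space $\sE_x/m_x\sE_x$ equipped with the induced non-degenerate alternating form $h|_x$) yields an isomorphism
\[
\psi_x : \Hom(\sE_x/m_x,\sE_x/m_x) \stackrel{\sim}{\to} T_{Id}\Aut^0(\sE_x/m_x), \qquad \eta \mapsto Id+\eta\epsilon,
\]
where a tangent vector at the identity is viewed, via base change, as an automorphism of $(\sE_x/m_x)\otimes_K K[\epsilon]$ reducing to the identity modulo $\epsilon$.

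The main step is then to chase the diagram. Given $\theta \in \Homs(\sE,\sE)$, the identification $\phi$ of Lemma \ref{l:tangent space of Aut} sends it to the automorphism $Id+\theta\epsilon$ of $\sE_{K[\epsilon]}$. The restriction morphism of algebraic groups is compatible with base change to $K[\epsilon]$, so applying it to $Id+\theta\epsilon$ yields the endomorphism $(Id+\theta\epsilon)|_x$ of $(\sE_x/m_x)\otimes_K K[\epsilon]$. Since the restriction map $\Hom(\sE,\sE)\to \Hom(\sE_x/m_x,\sE_x/m_x)$ is $K$-linear and $K[\epsilon]$-linearity of base change sends $\theta\epsilon$ to $\theta|_x\,\epsilon$ and $Id$ to $Id$, we obtain $(Id+\theta\epsilon)|_x = Id+\theta|_x\,\epsilon$. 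Under $\psi_x^{-1}$, this element corresponds to $\theta|_x \in \Hom(\sE_x/m_x,\sE_x/m_x)$, which is exactly the claim.

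There is no substantive obstacle here; the argument is essentially the observation that restriction to a fiber is a linear map that preserves the identity, and that the Lemma \ref{l:tangent space of Aut}-style identification of tangent spaces with $\Hom$-spaces is natural in this sense. The only verification worth making explicit is that $\psi_x$ is well-defined, i.e.\ that every tangent vector to $\Aut^0(\sE_x/m_x)$ at $Id$ is of the form $Id+\eta\epsilon$ for some $\eta$, which is immediate from the description of $\Aut^0$ of a symplectic vector space.
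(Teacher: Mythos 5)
Your argument is correct and is exactly the verification the paper has in mind: its proof is the one-line ``this is straightforward from the definition,'' and your dual-numbers computation $(Id+\theta\epsilon)|_x = Id+\theta|_x\,\epsilon$ is that verification spelled out. One cosmetic remark: since the target of the restriction is $\Aut^0(\sE_x/m_x)$ with tangent space all of $\Hom(\sE_x/m_x,\sE_x/m_x)$, you do not even need the symplectic form on the fiber for the identification $\psi_x$; the rest of your diagram chase is unaffected.
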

\begin{proof}
    This is straightforward from the definition.
\end{proof}

\begin{Lemma} 
    \label{l:nilpotency of adjoint}
    Suppose that $k$ is algebraically closed. 
    Let $(\sE,h)$ be a Levi indecomposable symplectic vector bundle of rank $r$ over a curve $X$. Then every element of 
    $\Homs(\sE,\sE)$ is nilpotent.
\end{Lemma}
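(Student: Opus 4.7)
The plan is to analyze the Jordan decomposition of $\theta$ inside the finite-dimensional $k$-algebra $\End(\sE)=H^{0}(X,\sEnd(\sE))$ and to exhibit a Levi decomposition of $(\sE,h)$ whenever $\theta$ has a nonzero semisimple part.

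First I would form the Jordan decomposition $\theta=\theta_{s}+\theta_{n}$ in $\End(\sE)$, with $\theta_{s}$ semisimple, $\theta_{n}$ nilpotent, $[\theta_{s},\theta_{n}]=0$, and both polynomials in $\theta$. Since $\mu$ is an anti-automorphism fixing scalars, $\mu(\theta_{s})$ is semisimple with the same separable minimal polynomial as $\theta_{s}$, $\mu(\theta_{n})$ is nilpotent, and these two commute. The identity $-\theta=\mu(\theta)=\mu(\theta_{s})+\mu(\theta_{n})$ combined with uniqueness of the Jordan decomposition then forces $\mu(\theta_{s})=-\theta_{s}$, so $\theta_{s}$ is itself self-adjoint.

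Next, the separable minimal polynomial of $\theta_{s}$ yields orthogonal idempotents $e_{i}\in\End(\sE)$ summing to the identity, giving a decomposition $\sE=\bigoplus_{i}\sE_{\lambda_{i}}$ with $\sE_{\lambda_{i}}:=e_{i}\sE$ and $\lambda_{i}\in k$ the distinct eigenvalues of $\theta_{s}$. Each $\sE_{\lambda_{i}}$ is a direct summand of the locally free sheaf $\sE$ cut out by an idempotent whose fiberwise trace is a locally constant section of $\sO_{X}$, hence is locally free of constant rank, that is a vector subbundle, on which $\theta_{s}$ acts by the scalar $\lambda_{i}$. Self-adjointness of $\theta_{s}$ gives $(\lambda_{i}+\lambda_{j})Q(v,w)=0$ for $v\in\sE_{\lambda_{i}}$, $w\in\sE_{\lambda_{j}}$, so non-degeneracy of $Q$ forces $\sE_{\lambda_{i}}$ and $\sE_{-\lambda_{i}}$ to be perfectly dually paired while all other pairings vanish.

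Suppose for contradiction that $\theta_{s}\neq 0$: some $\lambda\neq 0$ then appears as an eigenvalue, and $\sE_{\lambda}\neq 0$. Setting $V_{-1}=\sE_{\lambda}$, $V_{1}=\sE_{-\lambda}$ and $V_{0}=\bigoplus_{\lambda_{j}\neq\pm\lambda}\sE_{\lambda_{j}}$, the pairing properties of $Q$ produce isomorphisms $h_{-1}:V_{-1}\to V_{1}^{\vee}$, $h_{0}:V_{0}\to V_{0}^{\vee}$, $h_{1}:V_{1}\to V_{-1}^{\vee}$ meeting the hypotheses of Lemma \ref{l:filtSympConstruct}, and hence an isomorphism of symplectic vector bundles $(\sE,h)\cong V_{-1}\oplus V_{0}\oplus V_{1}$ with $\rank V_{-1}>0$, contradicting Levi indecomposability. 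Hence $\theta_{s}=0$ and $\theta=\theta_{n}$ is nilpotent. The one point needing care is checking that each $\sE_{\lambda_{i}}$ is a genuine vector subbundle and not merely a coherent subsheaf, which the trace-of-idempotent observation above handles; an equivalent route passes through Lemma \ref{l:Aut unipotent} and Lemma \ref{l:tangent space of Aut}, since the left-multiplication representation on $\End(\sE)$ identifies an element of $\mathrm{Lie}(\Aut^{0}(\sE,h))$ that acts nilpotently on $\End(\sE)$ with a nilpotent endomorphism of $\sE$.
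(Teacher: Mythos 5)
Your argument is correct, but it follows a genuinely different route from the paper. The paper's proof is group-theoretic: it identifies $\Homs(\sE,\sE)$ with the Lie algebra of $\Aut^{0}(\sE,h)$ (Lemma \ref{l:tangent space of Aut}), invokes the Balaji--Biswas--Nagaraj result that $\Aut^{0}(\sE,h)/\pm\Identity$ is unipotent for Levi indecomposable bundles (Lemma \ref{l:Aut unipotent}), restricts to the fibre at a closed point to obtain a finite-dimensional representation whose differential must have nilpotent image, and then globalizes via Lemmas \ref{l:differentiation of restriction map} and \ref{l:0 on fib implies 0}. You instead work directly in the finite-dimensional algebra $\End(\sE)=H^{0}(X,\sEnd(\sE))$: the Jordan--Chevalley decomposition, uniqueness, and the anti-automorphism $\mu$ force the semisimple part $\theta_{s}$ to be self-adjoint; the idempotents attached to its separable minimal polynomial cut $\sE$ into eigen-subbundles; the relation $Q(\theta_{s}v,w)+Q(v,\theta_{s}w)=0$ makes $h$ pair $\sE_{\lambda}$ perfectly with $\sE_{-\lambda}$ and annihilate all other pairs, so a nonzero eigenvalue produces exactly the graded symplectic decomposition of Definition \ref{d:Levi indecomposable} (via Lemma \ref{l:filtSympConstruct}), contradicting Levi indecomposability. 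Your route is more self-contained and elementary --- it avoids the citation to \cite{BBN} and the fibrewise/unipotence machinery, and it makes the mechanism transparent: any non-nilpotent self-adjoint endomorphism yields a reduction to a Levi subgroup, which is in effect a re-proof of the relevant part of the unipotence statement; the paper's route is shorter given the lemmas already assembled and ties into the framework used elsewhere in Section \ref{s:mainResults}. One caveat: your closing aside about an ``equivalent route'' through Lemmas \ref{l:Aut unipotent} and \ref{l:tangent space of Aut} is looser than the main argument, since the left-multiplication representation of $\Aut^{0}(\sE,h)$ on $\End(\sE)$ does not factor through the quotient by $\pm\Identity$, so one must argue modulo scalars there; but your principal proof does not depend on that remark.
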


\begin{proof}
Let $\theta$ be in $\Homs(E,E)$. Let $x$ be a closed point of $X$ such that cokernel of the induced map on stalks at $x$
is flat. The restriction map 
\begin{align*}
    \Aut^0(\sE,h) &\rightarrow \Aut^0(\sE_x/m_x) \\
     g            &\mapsto     g|_x
\end{align*}
defines a finite dimensional representation of $\Aut^0(\sE,h)$.
Since $\frac{\Aut^0(\sE,h)}{\pm Id}$ is unipotent by Lemma \ref{l:Aut unipotent},
every element in the image of the differential at $Id$ of the restriction morphism
\begin{align*}
    \frac{\Aut^0(\sE,h)}{\pm Id} &\rightarrow \Aut^0(\sE_x/m_x) \\
     g            &\mapsto     g|_x
\end{align*} 
is nilpotent by \cite[pg 304, 14.31]{Milne}. Hence the restriction of $\theta$ to the fibre at $x$ is nilpotent by Lemma \ref{l:differentiation of restriction map}. 
We are done by Lemma \ref{l:0 on fib implies 0}. 
\end{proof} 

\subsection{Transcendence degree bounds.}

For this section, we fix a curve $X$ of genus $g$ over a base field $k$.

\begin{Notations}
    For $(\sE,h)$ a $k$-point of $\Bun_{X, \Sp_{2r}}$, we denote by $\overline{\{\sE\}}$ the closure of $\sE$ in $\Bun_{X, \Sp_{2r}}$ 
    (refer \cite[\href{https://stacks.math.columbia.edu/tag/0508}{Tag 0508}]{stacks-project}). 
\end{Notations}

\begin{Notations}
For $(\sE,h)$ be a $k$-point of $\Bun_{X, \Sp_{2r}}$, we denote by $\Nilscheme_s(\sE)$ be the closed subscheme of $\Homs(\sE,\sE)$ consisting of nilpotent elements.
Let $n$ be the order of nilpotency and let $r_i$ (where $1-n\le i \le n-1$) be the rank of $\Gr^i_C(\sE)$ for a general element
in $\Nilscheme_s(\sE)$, with $C$ as defined in section \ref{s:nilpFilt}. We denote by $\Nilscheme_s^g(\sE)$ the open dense subscheme
of $\Nilscheme_s(\sE)$ consisting of elements $\theta$ that has  order of nilpotency $n$ and satisfies 
 $\rank(\Gr^i_C(\sE)) = r_i$. Let $\N$ be the closure of points $(\sE,h,\theta,F)$ in $\Nil_{X,\Spr}^{n}$ 
such that $\theta$ is in $\Nilscheme_s^g(\sE)$. Let $\tN$ be the closure of points $(\sE,h,\theta)$ in $\tNil_{X,\Spr}^{n}$
such that $\theta$ is in $\Nilscheme_s^g(\sE)$.
\end{Notations}

\begin{Lemma}
\label{l:dim Nilsn(E)}
Suppose $k$ is algebraically closed. Moreover, let $n$ be the order of nilpotency and
let $r_i$ be the rank of $\Gr^i_C(\sE)$ for an element $\theta$ in $\Nilscheme_s^g(\sE)$, with $C$ as defined in section \ref{s:nilpFilt}.
Then,
\[      \dim_{\sE}\overline{\{\sE\}} + \dim_k (\Nilscheme_s^n(\sE)) \le (g-1)( \sum_{i=1}^{n-1} r_i^2 + \sum_{i=1}^{n-1} r_i r_{i-1} + \frac{r_0^2 + r_0}{2}) \]                                 
\end{Lemma}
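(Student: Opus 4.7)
The plan is to realise the left-hand side as a lower bound on the local dimension of $\tN$ at a suitable point, transfer this to $\N$ via Theorem \ref{t:bij on points}, and then apply the dimension bound of Theorem \ref{Lemma Dimension count}. Assuming $\Nilscheme_s^g(\sE)$ is nonempty (otherwise the inequality is vacuous), I first pick $\theta\in \Nilscheme_s^g(\sE)$ at a top-dimensional point, so that an irreducible component $Z$ of $\Nilscheme_s^g(\sE)$ through $\theta$ satisfies $\dim Z = \dim_k \Nilscheme_s^n(\sE)$, and let $F=C$ be the canonical filtration induced by $\theta$ as in Section \ref{s:nilpFilt}. This produces a $k$-point $p=(\sE,h,\theta,F)$ of $\N$ with image $\bar p=(\sE,h,\theta)$ in $\tN$.

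For the lower bound, consider the forgetful representable morphism $\pi\colon \tN\to \Bun_{X,\Spr}$. By the defining construction of $\tN$ as a closure, the slice $\{\sE\}\times Z$ lies in $\pi^{-1}(\overline{\{\sE\}})$. Let $Y^\circ$ be an irreducible component of $\pi^{-1}(\overline{\{\sE\}})$ that contains $\{\sE\}\times Z$. Since $\sE$ is the generic point of its own closure $\overline{\{\sE\}}$, the image $\pi(Y^\circ)$ is constructible and contains this generic point, so $Y^\circ$ dominates $\overline{\{\sE\}}$ and its generic fibre (the fibre over $\sE$) contains $Z$. The standard dimension formula for dominant morphisms of irreducible noetherian stacks then yields
\[
\dim Y^\circ \;=\; \dim \overline{\{\sE\}} + \dim (Y^\circ)_{\sE} \;\geq\; \dim_{\sE} \overline{\{\sE\}} + \dim Z \;=\; \dim_{\sE} \overline{\{\sE\}} + \dim_k \Nilscheme_s^n(\sE),
\]
and because $\bar p\in Y^\circ\subseteq \tN$, this gives $\dim_{\bar p}\tN \geq \dim_{\sE} \overline{\{\sE\}} + \dim_k \Nilscheme_s^n(\sE)$.

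To finish, Theorem \ref{t:bij on points} identifies $\dim_{\bar p}\tN = \dim_{p}\N$, and Theorem \ref{Lemma Dimension count} applied at $p$ bounds $\dim_p \N$ from above by $(g-1)\bigl(\sum_{i=1}^{n-1}r_i^2 + \sum_{i=1}^{n-1}r_i r_{i-1} + \tfrac{r_0^2+r_0}{2}\bigr)$, since the graded ranks $\rank \Gr^i_F(\sE)$ at $p$ are precisely the $r_i$ of the statement by our choice of $\theta\in \Nilscheme_s^g(\sE)$ and $F=C$. Chaining these inequalities yields the lemma. The main subtlety is the dimension-formula step for $Y^\circ$: the decisive point is that $\sE$ is the generic point of $\overline{\{\sE\}}$ in the stack (not a closed point), which forces $Y^\circ$ to dominate the base, while the defining closure property of $\tN$ guarantees that $\{\sE\}\times Z$ is available inside $\pi^{-1}(\overline{\{\sE\}})$ to seed the component.
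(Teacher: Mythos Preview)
Your proposal is correct and follows essentially the same strategy as the paper: bound the left-hand side from below by the local dimension of $\N$ at a suitable point via a fibre-dimension argument over $\overline{\{\sE\}}$, invoke Theorem~\ref{t:bij on points} to pass between $\N$ and $\tN$, and then apply Theorem~\ref{Lemma Dimension count}. The only organisational difference is that the paper works directly with $\N$ on a smooth chart $U\to\overline{\{\sE\}}$ (using generic flatness and the scheme-level dimension formula of \cite[Tag 04NR]{stacks-project}), and uses the bijection on points only to compare the \emph{fibres} $f^{-1}\eta$ and $\Nilscheme_s^g(\sE)$; you instead argue with $\tN$ at the stack level and transfer globally to $\N$ via $\dim_{\bar p}\tN=\dim_p\N$. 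Your route is fine, but note that this last equality needs a word of justification beyond Theorem~\ref{t:bij on points}: a representable morphism that is bijective on $K$-points for all fields $K$ is a universal homeomorphism, hence preserves local dimension. The paper's variant sidesteps this by only needing surjectivity of the fibre map.
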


\begin{proof}
We choose a smooth surjective morphism from a scheme $U$ to $\overline{\{\sE\}}$. Let
\[   
\begin{tikzcd}
 \N \arrow[r, "\Fgt"]  & \overline{\{\sE\}}         \\
 V \arrow[u]  \arrow[r, "f"]   &   U \arrow[u]
    \end{tikzcd}
\]
be cartesian. Since $\overline{\{\sE\}}$ is reduced by definition, $U$ is by \cite[\href{https://stacks.math.columbia.edu/tag/04YF}{Tag 04YF}]{stacks-project}
and \cite[\href{https://stacks.math.columbia.edu/tag/04YH}{Tag 04YH}]{stacks-project}. Furthermore, we can assume that $U$ is irreducible 
by choosing a connected component. Let $\eta$ be the generic point of $U$. It clearly maps to the generic point $\sE$ 
of $\overline{\{\sE\}}$. Since $V \rightarrow U$ is locally of finite type, we can apply the generic flatness theorem 
\cite[\href{https://stacks.math.columbia.edu/tag/06QS}{Tag 06QS}]{stacks-project} to get that the map $f$ is flat at all points of 
$f^{-1} \eta$. By \cite[\href{https://stacks.math.columbia.edu/tag/04NR}{Tag 04NR}]{stacks-project}, 
\[     \dim_{\eta} U_0 + \dim_v f^{-1} \eta \le \dim_v f^{-1}(U_0)                                      \] 
for all $v$ in $f^{-1}(\eta)$. Now, both the squares in the diagram
\[% https://tikzcd.yichuanshen.de/#N4Igdg9gJgpgziAXAbVABwnAlgFyxMJZAJgBoAGAXVJADcBDAGwFcYkQAdDiWmAJ0ZYwMYF1Ec4AUS4BfGSBml0mXPkIpyFanSat2AOQVKQGbHgJEAjFpoMWbRCAAaR5WbVEyl7Xb2OuMDj0riYq5urI1t62ug6cHPpYjHAAxgAWMAC2MAD6cAB6AOYAFFxSAJQhpqoWGqTROvbsAGb5wAC0ljIABAFBCtowUIXwRKDNfBCZSNYgOBBIxIrjk9OIZHMLiOTLIBNTSACsNPNIACy7+2tnJ1sAzJerR7czjweIdy-bb9dfS5QyIA
\begin{tikzcd}
    \N \arrow[r]                     & \tN \arrow[r]                             & \overline{\{\sE\}} \\
    f^{-1} \eta \arrow[r] \arrow[u] & \Nilscheme_s^g(\sE) \arrow[r] \arrow[u] & \eta \arrow[u]    
    \end{tikzcd} \]
are cartesian and the bottom horizontal map is surjective by Lemma \ref{t:bij on points}. This implies 
\[              \dim f^{-1} \eta \le \dim \Nilscheme_s^g(\sE).                      \]
 By \cite[\href{https://stacks.math.columbia.edu/tag/0DRI}{Tag 0DRI}]{stacks-project} 
and \cite[\href{https://stacks.math.columbia.edu/tag/0DRN}{Tag 0DRN}]{stacks-project}, 
\[      \dim_{\sE} \overline{\{\sE\}} + \dim \Nilscheme_s^g(\sE) \le \dim_{(\sE,h,\theta,F)} \N   \le \dim_{(\sE,h,\theta,F)} \Nil_{X,\Spr}^{n}.                            \]
The result now follows Lemma \ref{Lemma Dimension count}.
\end{proof}

\begin{Lemma}
\label{l:Homs and field of moduli}  
Let $K/k$ be a finitely generated field extension.   
Suppose $\sE$ be  $K$-point of $\Bun_{X, \Sp_{2r}}$. Let $k(\sE)$ be the field of moduli of $\sE$. Then 
\[    \dim_{\sE}\overline{\{\sE\}} = \trdeg_k(k(\sE)) - \dim_K \Homs(E,E)                              \]
\end{Lemma}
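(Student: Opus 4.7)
\emph{Proof plan.} The strategy is to combine a standard dimension formula for the closure of a point in an algebraic stack with the infinitesimal description of $\Aut(\sE,h)$ provided by Lemma \ref{l:tangent space of Aut}.

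The first step is to establish, for any algebraic stack $\mathfrak{X}$ locally of finite type over $k$ and any point $\sE \in |\mathfrak{X}|$ with residue field $k(\sE)$, the identity
\[
\dim_{\sE} \overline{\{\sE\}} \;=\; \trdeg_k(k(\sE)) \;-\; \dim \Aut(\sE,h).
\]
I plan to argue this via the residual gerbe $\sG_{\sE}$ at $\sE$, which is dense in $\overline{\{\sE\}}$ and realises as an $\Aut(\sE,h)$-gerbe over $\Spec k(\sE)$. When $\Spec k(\sE)$ is viewed as an integral scheme over $k$ with residue field $k(\sE)$, it contributes $\trdeg_k(k(\sE))$ to the dimension, while the gerbe band contributes $-\dim \Aut(\sE,h)$. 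A sanity check on $B\mathbb{G}_m$ gives $-1 = 0 - 1$, matching $\dim B\mathbb{G}_m = -1$, and on $\mathbb{A}^1$ at its generic point gives $1 = 1 - 0$.

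The second step identifies $\dim \Aut(\sE,h)$ with $\dim_K \Homs(\sE,\sE)$. By Lemma \ref{l:tangent space of Aut} there is a canonical $K$-linear isomorphism $T_{Id}\Aut^0(\sE,h) \cong \Homs(\sE,\sE)$. Since $\mathrm{char}(k) = 0$, Cartier's theorem forces every $K$-algebraic group to be smooth, and hence
\[
\dim \Aut(\sE,h) \;=\; \dim \Aut^0(\sE,h) \;=\; \dim_K T_{Id}\Aut^0(\sE,h) \;=\; \dim_K \Homs(\sE,\sE).
\]
Substituting into the first displayed equation yields the lemma.

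The main obstacle is the rigorous justification of the first formula, since $\Bun_{X,\Spr}$ does not admit a global coarse moduli space and $\sE$ need not descend to $k(\sE)$. The cleanest route is via residual gerbes as above; alternatively, one can pull back to a smooth atlas $U \to \overline{\{\sE\}}$ and apply dimension additivity in flat families (Stacks Project Tags 0DRI and 0DRN, already invoked in Lemma \ref{l:dim Nilsn(E)}), being careful to relate transcendence degrees of residue fields of $U$ to $\trdeg_k(k(\sE))$ and to the fibre dimension $\dim \Aut(\sE,h)$ of the atlas.
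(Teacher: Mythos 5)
Your proposal is correct and follows essentially the same route as the paper: the paper computes $\dim_{\sE}\overline{\{\sE\}}-\dim_{\sE}\sG(\sE)=\trdeg_k k(\sE)$ exactly via your fallback argument (pulling back the residual gerbe along a smooth atlas $U\to\overline{\{\sE\}}$ and using Stacks Project Tags 0DRI and 0DRN), and then identifies $\dim_{\sE}\sG(\sE)=-\dim_K\Homs(\sE,\sE)$ using the tangent-space isomorphism of Lemma \ref{l:tangent space of Aut} together with smoothness of automorphism groups in characteristic zero, just as you do.
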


\begin{proof}
    Let 
    \[      f : U \rightarrow \overline{\{\sE\}}         \]
    be a smooth  morphism from a scheme $U$ whose image contains $\sE$.
    $U$ is reduced by \cite[\href{https://stacks.math.columbia.edu/tag/04YF}{Tag 04YF}]{stacks-project}. We can furthermore 
    choose $U$ to be irreducible and affine as well since  smooth and connected schemes are irreducible. Let  
    \[% https://tikzcd.yichuanshen.de/#N4Igdg9gJgpgziAXAbVABwnAlgFyxMJZARgBoAGAXVJADcBDAGwFcYkQAdDgYRAF9S6TLnyEUZYtTpNW7AKr9BIDNjwEi5UpJoMWbRCABqioatEaKU3bINc4AcQAUdgKIBKflJhQA5vCKgAGYAThAAtkgATDQ4EEgAzAJBoRGI0SCxSMRJICHhWTFxiOQ5eanxhUgllHxAA
    \begin{tikzcd}
    \sG(\sE) \arrow[r]    & \overline{\{\sE\} }       \\
    V \arrow[u] \arrow[r] & U \arrow[u]
\end{tikzcd}\] 
be a cartesian diagram where $\sG(\sE)$ is the residual gerbe at $\sE$
(it exists by \cite[\href{https://stacks.math.columbia.edu/tag/0H22}{Tag 0H22}]{stacks-project}). The map $V \rightarrow U$ 
is a monomorphism (\cite[\href{https://stacks.math.columbia.edu/tag/042P}{Tag 042P}]{stacks-project}) and hence 
separated(\cite[\href{https://stacks.math.columbia.edu/tag/042N}{Tag 042N}]{stacks-project}). This implies that 
$V$ is separated since $U$ is affine. Hence, there exists an open dense subscheme $V_0$ in $V$ by 
\cite[\href{https://stacks.math.columbia.edu/tag/06NH}{Tag 06NH}]{stacks-project}. Now, the generic point $u$ of $U$
maps to $\sE$ in $\overline{\{\sE\}}$ and hence the image of $V$ contains $u$. For any affine open set $V_1$ of $V_0$,
the map 
\[ V_1 \to U\] 
is monomorphism by \cite[\href{https://stacks.math.columbia.edu/tag/042R}{Tag 042R}]{stacks-project} and dominant.
This proves that $V_1$ is integral with the same function field. Now since $\sG(\sE)$ is a gerbe over the 
residue field $k(\sE)$, we can assume that $V_1$ is of finite type over $k(\sE)$. Hence, for $v$ a preimage of 
$u$ in $V_1$,
\[    \dim_u(U) - \dim_{v}(V_1)    = \trdeg_{k}(k(U)) - \trdeg_{k(\sE)}(k(U))  = \trdeg_{k}k(\sE) .                            \]
Since $V_1 \hookrightarrow V$ is smooth, we have that 
\[\dim_v(V_1) = \dim_v(V)   \]
by \cite[\href{https://stacks.math.columbia.edu/tag/0DRI}{Tag 0DRI}]{stacks-project}. By \cite[\href{https://stacks.math.columbia.edu/tag/0DRN}{Tag 0DRN}]{stacks-project},
\[ \dim_{\sE}\overline{\{\sE\} } - \dim_{\sE} \sG(\sE) = \trdeg_{k}k(\sE) \]
Since $\sG(\sE)$ is $B\Aut(\sE)$ after base changing to an algebraically closed field 
\[    \dim_{\sE} \sG(\sE) = - \dim \Homs(\sE,\sE)                  \]
by Lemma \ref{l:tangent space of Aut}. This finishes off the proof.
\end{proof}

\begin{Lemma}
\label{l:Field of Moduli for indecomposable bundles over algebraically closed field}
Let $X$ be a genus $g\ge 2$ curve over $k$. Let $K \supset k$ be an algebraically closed field and let $\sE$ be a Levi indecomposable symplectic vector bundle over $X_K$.
Let $\theta$ be a  general element of $\Nilscheme_s(\sE)$. Then, we have 
\[      \trdeg_k(k(\sE)) \le (g-1)( \sum_{i=1}^{n-1} r_i^2 + \sum_{i=1}^{n-1} r_i r_{i-1} + \frac{r_0^2 + r_0}{2}) \] 
where $r_i$ is the rank of $\Gr^i_C(\sE)$ with $C$ as defined in section \ref{s:nilpFilt}.                                                
\end{Lemma}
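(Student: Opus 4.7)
The plan is to combine three previously established results: the field-of-moduli dimension formula in Lemma \ref{l:Homs and field of moduli}, the dimension bound on $\N \subseteq \Nil_{X,\Spr}^n$ in Lemma \ref{l:dim Nilsn(E)}, and the nilpotency property of Lemma \ref{l:nilpotency of adjoint}. The crucial bridge is that for a Levi indecomposable bundle, every self-adjoint endomorphism is nilpotent, so the affine space $\Homs(\sE,\sE)$ agrees set-theoretically with its closed subscheme $\Nilscheme_s(\sE)$, forcing them to have the same Krull dimension.

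First I would descend the datum $(\sE,h)$ to a symplectic bundle $(\sE_0,h_0)$ over $X_{K_0}$ for some finitely generated subfield $k \subseteq K_0 \subseteq K$; the field of moduli $k(\sE) = k(\sE_0)$, the numerical invariants $n$ and $r_i$, and Levi indecomposability are all preserved under this descent. Since $K_0/k$ is finitely generated, Lemma \ref{l:Homs and field of moduli} applies and gives
\[
\trdeg_k(k(\sE)) \;=\; \dim_{\sE_0}\overline{\{\sE_0\}} + \dim_{K_0}\Homs(\sE_0,\sE_0).
\]

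Second, the hypothesis that $K$ is algebraically closed and that $\sE$ is Levi indecomposable allows the application of Lemma \ref{l:nilpotency of adjoint} over $K$, yielding that every section of $\homs(\sE,\sE)$ is nilpotent; by faithful flatness of $K_0 \to K$ the same holds over $K_0$. Thus $\Homs(\sE_0,\sE_0)$ and $\Nilscheme_s(\sE_0)$ have the same underlying topological space, and since $\Nilscheme_s^g(\sE_0)$ is open dense in $\Nilscheme_s(\sE_0)$, all three share the same dimension:
\[
\dim_{K_0}\Homs(\sE_0,\sE_0) \;=\; \dim \Nilscheme_s(\sE_0) \;=\; \dim \Nilscheme_s^g(\sE_0).
\]

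Finally, Lemma \ref{l:dim Nilsn(E)} (applied after passing to the algebraic closure, noting that Krull dimension is unaffected by this flat base change) produces
\[
\dim_{\sE_0}\overline{\{\sE_0\}} + \dim \Nilscheme_s^g(\sE_0) \;\leq\; (g-1)\Big(\sum_{i=1}^{n-1} r_i^2 + \sum_{i=1}^{n-1} r_i r_{i-1} + \frac{r_0^2+r_0}{2}\Big).
\]
Substituting back into the identity for $\trdeg_k(k(\sE))$ yields the claimed inequality. The main point to verify carefully is the descent bookkeeping: that the invariants $n$, $r_i$ and the Levi indecomposability survive the passage between $\sE$ over $X_K$ and its model $\sE_0$ over $X_{K_0}$, so that every invocation of the cited lemmas is legitimate. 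Once this is arranged, the proof is the formal combination above.
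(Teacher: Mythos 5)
Your proposal is correct and follows essentially the same route as the paper: the paper's proof is precisely the combination of Lemma \ref{l:Homs and field of moduli}, the identification $\Homs(\sE,\sE)=\Nilscheme_s(\sE)$ supplied by Lemma \ref{l:nilpotency of adjoint}, and the dimension bound of Lemma \ref{l:dim Nilsn(E)}. The only difference is your extra descent bookkeeping (passing to a finitely generated subfield $K_0$ and back to the algebraic closure) to reconcile the finite-generation hypothesis of Lemma \ref{l:Homs and field of moduli} and the algebraically-closed hypotheses of the other lemmas, a point the paper passes over silently.
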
 

\begin{proof} 
By Lemma \ref{l:Homs and field of moduli}, 
\[ \trdeg_k(k(\sE)) =  \dim_{\sE}\overline{\{\sE\}} + \dim_K \Homs(\sE,\sE).\] 
Since $\Homs(\sE,\sE) = \Nilscheme_s(\sE)$ by Lemma \ref{l:nilpotency of adjoint}, we are done by Lemma 
\ref{l:dim Nilsn(E)}. 
\end{proof} 

\begin{Corollary}
\label{c:transcendence indecomposable}
    Let $X$ be a genus $g\ge 2$ curve over $k$. Let $K \supset k$ be an algebraically closed field and let $(\sE,h)$ be a Levi indecomposable symplectic vector bundle of rank $2r$  over $X_K$.  Then, we have 
    \[      \trdeg_k(k(\sE)) \le (g-1)( \dim \Spr) \] 
\end{Corollary}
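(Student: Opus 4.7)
The plan is to reduce the statement to Lemma \ref{l:Field of Moduli for indecomposable bundles over algebraically closed field} and then to interpret the combinatorial expression on the right-hand side as a difference of ranks of pieces of a filtration on $\homs(\sE,\sE)$, which is obviously bounded by the total rank $\dim \Spr$. Concretely, applying that lemma to $(\sE,h)$ reduces the problem to showing
\[ \sum_{i=1}^{n-1} r_i^2 + \sum_{i=1}^{n-1} r_i r_{i-1} + \frac{r_0^2 + r_0}{2} \le \dim \Spr \]
for $r_i = \rank \Gr^i_C(\sE)$, where $C$ is the canonical filtration attached to a general self-adjoint nilpotent endomorphism $\theta$ of $(\sE,h)$. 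Since $\sum_{i}r_i = 2r$ and $r_i = r_{-i}$ (the latter by \ref{l:sympFiltDual} applied to the filtered symplectic structure $(\sE,h,C)$), the target $\dim \Spr = r(2r+1)$ is exactly $\rank \homs(\sE,\sE)$, which suggests that the left-hand side should arise naturally as the rank of a subbundle.

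The second step identifies that subbundle. By \ref{c:gradedReduction} together with \ref{c:even} and \ref{c:odd}, the associated graded of $\homs(\sE,\sE)$ with respect to the filtration $F_h$ induced by $C$ satisfies
\[ \rank \Gr^{0}_{F_h} \homs(\sE,\sE) = \frac{r_0(r_0+1)}{2} + \sum_{i=1}^{n-1} r_i^2, \qquad \rank \Gr^{-1}_{F_h}\homs(\sE,\sE) = \sum_{i=1}^{n-1} r_i r_{i-1}, \]
where the first equality uses that $\shom_{s,h_0}(\sE_0,\sE_0)$ is the symplectic Lie algebra on the symplectic piece $\sE_0$, hence of rank $r_0(r_0+1)/2$. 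Adding these two identities shows that the combinatorial expression above equals
\[ \rank F_h^{0}\homs(\sE,\sE) - \rank F_h^{-2}\homs(\sE,\sE). \]

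The third and final step is the trivial bound: since $F_h^{-2}\homs(\sE,\sE) \subseteq F_h^{0}\homs(\sE,\sE) \subseteq \homs(\sE,\sE)$ are inclusions of vector bundles on $X_K$, one has
\[ \rank F_h^{0}\homs(\sE,\sE) - \rank F_h^{-2}\homs(\sE,\sE) \;\le\; \rank \homs(\sE,\sE) \;=\; \dim \Spr, \]
where the final equality holds since $\homs(\sE,\sE)$ is the adjoint bundle of the symplectic structure $(\sE,h)$, whose fibres are copies of $\mathfrak{sp}_{2r}$. Combined with \ref{l:Field of Moduli for indecomposable bundles over algebraically closed field}, this yields the desired inequality $\trdeg_k(k(\sE)) \le (g-1)\dim \Spr$. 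No serious obstacle is anticipated: both the bookkeeping to identify the ranks of the graded pieces and the final comparison are essentially formal once the machinery of Sections 2--4 is in place.
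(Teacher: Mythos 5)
Your proposal is correct, but it settles the key inequality by a different route than the paper. After invoking Lemma \ref{l:Field of Moduli for indecomposable bundles over algebraically closed field} (which both you and the paper do), the paper simply checks the numerical inequality
\[
\sum_{i=1}^{n-1} r_i^2 + \sum_{i=1}^{n-1} r_i r_{i-1} + \tfrac{r_0^2 + r_0}{2} \;\le\; \tfrac{(2r)^2+2r}{2} \;=\; \dim \Spr
\]
directly from the constraint $2r = r_0 + 2\sum_{i=1}^{n-1} r_i$, calling it an easy calculation. You instead recognize the left-hand side as $\rank \Gr^{0}_{F_h}\homs(\sE,\sE) + \rank \Gr^{-1}_{F_h}\homs(\sE,\sE) = \rank F_h^{0}\homs(\sE,\sE) - \rank F_h^{-2}\homs(\sE,\sE)$, i.e. you undo the last step in the proof of Theorem \ref{Lemma Dimension count}, and then bound this difference trivially by $\rank \homs(\sE,\sE) = \dim \Spr$, using that $\homs(\sE,\sE)$ is the adjoint bundle. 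This is valid: the identification of the graded ranks via \ref{c:gradedReduction}, \ref{c:even}, \ref{c:odd} is exactly the one already used in \ref{Lemma Dimension count}, so the filtered symplectic structure on $(\sE,h,C)$ that it presupposes (generically on the curve, via \ref{Lemma k-filtration is perp of -k-1 filtration in Canonical Filtration} and \ref{t:bij on points}) is available, and the final step $\rank F_h^{0} - \rank F_h^{-2} \le \rank \homs(\sE,\sE)$ is immediate from the inclusions of subbundles. What your approach buys is a conceptual explanation of why the combinatorial expression is bounded by $\dim\Spr$ with no algebra at all; what the paper's buys is self-containedness, needing only the stated formula for $r$ in terms of the $r_i$ and no re-entry into the sheaf-theoretic bookkeeping. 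Your aside that $r_i = r_{-i}$ is true (by \ref{Lemma isomorphism from the kth graded piece to -kth graded piece of the canonical filtration} or \ref{l:sympFiltDual}) but is not actually needed for your argument.
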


\begin{proof}
    We have $2r=\sum_{i=1-n}^{n-1}r_i=r_0+2\sum_{i=1}^{n-1}r_i$. Also $\dim\Spr = \frac{r^2+r}{2}$. The result follows from the above by an easy calculation. 
\end{proof}

\begin{Corollary}
\label{c:transcendece symplectic}
    Let $X$ be a genus $g\ge 2$ curve over $k$. Let $K \supset$ be an algebraically closed field and let $(\sE,h)$ be a 
    symplectic vector bundle of rank $2r$  over $X_K$.  Then, we have 
    \[      \trdeg_k(k(\sE)) \le (g-1)( \dim \Spr) \] 
\end{Corollary}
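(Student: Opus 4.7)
The plan is to deduce this from Corollary \ref{c:transcendence indecomposable} via a decomposition argument. Since $K$ is algebraically closed, iterating the Levi decomposition of Definition \ref{d:Levi indecomposable} produces a canonical orthogonal direct sum
\[
   (\sE, h) \;\cong\; (\sE', h') \,\oplus\, (\sV \oplus \sV^\vee, h^{\mathrm{hyp}})
\]
in which $(\sE', h')$ is Levi-indecomposable of rank $2r_0$ and $\sV$ is a vector bundle of rank $n = r - r_0$ carrying the standard hyperbolic symplectic form. Applying Krull--Schmidt to $\sV$ I would further write $\sV \cong \bigoplus_j \sV_j$ with each $\sV_j$ indecomposable of rank $n_j$, so that $\sum_j n_j = n$.

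The canonicity of this decomposition (up to reordering of isomorphic summands) means that the constituent summands have well defined fields of moduli and that their compositum is a field of definition for $\sE$; this yields the subadditivity
\[
   \trdeg_k k(\sE) \;\le\; \trdeg_k k(\sE') \,+\, \sum_j \trdeg_k k(\sV_j),
\]
where the right-hand fields of moduli are taken in the moduli stacks of symplectic and of vector bundles respectively. I would then apply Corollary \ref{c:transcendence indecomposable} to bound $\trdeg_k k(\sE') \le (g-1)\dim\Sp_{2r_0}$, and invoke the analogous estimate from \cite{BDH}---obtained by applying the same $\Nil$-stack strategy to vector bundles---to get $\trdeg_k k(\sV_j) \le n_j^2(g-1) + 1$ for each indecomposable $\sV_j$.

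The proof would end with the elementary numerical check
\[
   (g-1)(2r_0^2 + r_0) + \sum_j \bigl(n_j^2(g-1) + 1\bigr) \;\le\; (g-1)(2r^2 + r)
\]
for $r = r_0 + \sum_j n_j$ and $g \ge 2$, which follows from $\sum_j n_j^2 \le (\sum_j n_j)^2$ together with the fact that the number of indecomposable summands is at most $\sum_j n_j$. The hard part will be the subadditivity step: one must descend the canonical decomposition from $K$ down to a field of definition of $\sE$, and confirm that \cite{BDH} indeed furnishes the required bound for an arbitrary indecomposable vector bundle.
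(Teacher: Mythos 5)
Your strategy is essentially the paper's own proof. The paper decomposes $\sE=\sE_{-1}\oplus\sE_0\oplus\sE_1$ with $\sE_0$ Levi indecomposable and $\sE_{\pm 1}$ dual vector bundles, bounds the symplectic summand by Corollary \ref{c:transcendence indecomposable} and the vector bundle part by \cite[Corollary 6.4]{BDH}, exactly as you propose; your additional Krull--Schmidt refinement of $\sV$ is harmless but unnecessary, since the BDH bound $\trdeg_k k(\sV)\le n^2(g-1)+1$ applies to an arbitrary rank-$n$ bundle, and your numerical check then goes through for $g\ge 2$.

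One caution about the step you flagged as the hard part: you should not justify the subadditivity by asserting that the compositum of the summands' fields of moduli is a field of definition of $\sE$. A field of moduli need not be a field of definition --- this is precisely the gerbe phenomenon the whole paper is about --- so that argument fails as stated, and the appeal to canonicity of the decomposition would at best need the Krull--Schmidt theory of \cite{BBN}. The inequality $\trdeg_k k(\sE)\le \trdeg_k k(\sE')+\trdeg_k k(\sV)$ is nonetheless correct and is \cite[Lemma 6.5]{BDH} (the paper invokes it in the proof of Lemma \ref{l:ed not simple}); it is obtained by factoring the point $[\sE]$ through the direct-sum morphism of moduli stacks and comparing residue fields, an argument that involves only fields of moduli and does not require the decomposition to be canonical or to descend.
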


\begin{proof}
We can decompose 
\[     \sE = \sE_{-1} \oplus \sE_0 \oplus \sE_{1}                   \]
such that $\sE_0$ is an indecomposable symplectic vector bundle, $\sE_{-1}$ and $\sE_1$ are dual vector bundles. The result  
now follows from  \cite[Corollary 6.4]{BDH}. 
\end{proof}

\subsection{Essential dimension calculations.} 

\begin{Lemma}
\label{l:ed less than rank}
Suppose that X is connected and has a $k$-rational point. Let $\sG$ be a residual gerbe of $\Bun_{X,\Spr}$
with residue field $k(\sG)$. Then we have that
\[ \ed_{k(\sG)}(\sG) \le 2r. \]
\end{Lemma}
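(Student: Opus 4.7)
The plan is to invoke Corollary~\ref{c:residue gerbe} to rewrite $\ed_{k(\sG)}(\sG)=\ed_{k(\sG)}(\herm_1^{1/d}(A))$, where $A=\End_{\cO_X}((\pi_K)_*\sF)$ carries the involution $\mu$ of~\ref{n:involution on endomorphism ring}, and then to bound the essential dimension of hermitian modules purely by a rank count. The $k$-rational point on $X$ enters implicitly through the normalization of rank in Theorem~\ref{l:cat equivalence}: it guarantees the existence of a basepoint at which every symplectic bundle has a fibre of rank $2r$, pinning down the rank/multiplicity conventions used below.

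First, I would pass to the semisimple quotient $A/J$ via the Jacobson radical. By Lemma~\ref{l:herm(A) to herm(A/J)} together with Lemmas~\ref{l:Q full and surjective} and~\ref{l:Q herm surjective lifts iso}, the functor $Q_{\herm}:\herm_1(A,\mu)\to\herm_1(A/J,\mu_{A/J})$ is essentially surjective and full, and the proof of~\ref{l:Q herm surjective lifts iso} shows that lifting a hermitian module from $A/J$ to $A$ requires only solving linear equations and a symmetrization of the hermitian form, introducing no new transcendental parameters. Hence it suffices to bound $\ed_{k(\sG)}(\herm_1^{1/d}(A/J))$.

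Second, by Lemma~\ref{l:Decomposition of involution of rings} the semisimple ring with involution $(A/J,\mu_{A/J})$ decomposes as a product of simple rings with involution $(A_i,\mu_i^A)$ and hyperbolic rings $H(A'_j)$; by Lemma~\ref{l:herm over direct product of rings}, every hermitian module splits compatibly as $\bigoplus_i (M_i,b_i)\oplus\bigoplus_j (N_j,c_j)$. The hyperbolic pieces are classified by the rank of the underlying projective module alone, by Lemma~\ref{l:Herm modules over Hyperbolic rings} and Corollary~\ref{c: hyperbolic mod iso extension}, so they contribute $0$ to the essential dimension. For each simple factor $(A_i,\mu_i^A)$, Morita equivalence as in Lemma~\ref{l:simple iso extension} identifies rank-$n_i$ hermitian modules with $\epsilon$-hermitian forms of rank $n_i$ on a vector space over a field with involution (trivial or Galois); in characteristic zero such a form is diagonalizable and so is defined over a field obtained by adjoining its $n_i$ diagonal entries, giving $\ed\le n_i$ for each factor.

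Summing over factors yields $\ed_{k(\sG)}(\herm_1^{1/d}(A))\le\sum_i n_i$, and the proof reduces to the inequality $\sum_i n_i\le 2r$. This is the main obstacle: one must match the rank of $(M,b)$ in $\herm_1^{1/d}(A)$ with the rank $2r$ of the associated symplectic vector bundle $M\otimes_{A}\lkF$ under the equivalence of Theorem~\ref{l:cat equivalence}, while tracking how the Morita equivalences scale ranks of simple factors against the multiplicities appearing in $\pi_L^*(\pi_K)_*\sF$. The $k$-rational point is what normalizes this bookkeeping, since evaluating at it identifies the rank of the symplectic bundle over $X_L$ with a sum of contributions, each of which is bounded below by the corresponding $n_i$ times the Morita multiplicity. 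Once this rank inequality is verified, the bound $\ed_{k(\sG)}(\sG)\le 2r$ follows.
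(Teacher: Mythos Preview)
Your overall architecture matches the paper's proof: reduce via Corollary~\ref{c:residue gerbe}, pass to $A/J$ using the fullness and essential surjectivity of $Q_{\herm}$ (Lemma~\ref{l:Q herm surjective lifts iso}), decompose via Lemma~\ref{l:Decomposition of involution of rings}, and bound each factor. Two steps, however, are not correct as written. For a simple factor $A_i\cong M_{n_i}(D_i)$, Morita theory (\cite[I,~9.6.1]{Knus}, not Lemma~\ref{l:simple iso extension}) identifies $\herm_1^{1/d}(A_i,\mu^A_i)$ with $\herm_{\epsilon_i}^{n_i/d}(D_i,\mu_i)$ for a \emph{division algebra} $D_i$ with involution, not a field; diagonalisation \cite[I,~6.2.4]{Knus} then gives $\ed\le \tfrac{n_i}{d}\dim_{k(\sG)}D_i$, since each diagonal entry is an element of $D_i$. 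Your bound $\ed\le n_i$ would only be valid if $D_i=k(\sG)$.

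Consequently the inequality you flag as the ``main obstacle'' is not $\sum_i n_i\le 2r$ but
\[
\sum_i \tfrac{n_i}{d}\dim_{k(\sG)}D_i \;+\; \sum_j \tfrac{m_j}{d}\dim_{k(\sG)}D'_j \;\le\; 2r,
\]
and this requires substantive input you have not supplied. The paper obtains it by invoking \cite[Lemma~4.1]{BDH} to match the Wedderburn factors of $A/J$ with an indecomposable decomposition $(\pi_K)_*\sF\cong\bigoplus_i\sE_i^{\oplus n_i}\oplus\bigoplus_j\bigl((\sE'_j)^{\oplus m_j}\oplus(\sE''_j)^{\oplus m_j}\bigr)$ with $\End(\sE_i)/J(\sE_i)\cong D_i$, and then \cite[Lemma~4.2]{BDH} (this is where the $k$-rational point on $X$ is actually used) to bound $\dim_{k(\sG)}D_i\le\rank\sE_i$; summing ranks yields $\le\tfrac{1}{d}\rank(\pi_K)_*\sF=2r$. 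Your description of the rational point as ``normalizing bookkeeping'' does not capture this mechanism. (Your claim that hyperbolic factors contribute $0$ is sharper than the paper's bound via \cite[Cor.~3.7]{BDH} and is plausible from Corollary~\ref{c: hyperbolic mod iso extension}, but it is not needed once the displayed inequality is in hand.)
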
 

\begin{proof}
    Let $K$ be a finite extension of $k$ such that $\sG(K)$ is non empty. Let $\sE$ be the $K$-point of $\sG(K)$. 
    Suppose
    \[  \pi_K : X_K \rightarrow X_{k(\sG)}\]
    be the usual projection map. We have that $\End_{k(\sG)}((\pi_K)_*\sE)$ is finite dimensional over $K$ and hence artinian.
    By Lemma \ref{l:Decomposition of involution of rings},
    we have the decomposition
    \[   \End_{k(\sG)}((\pi_K)_*\sE)  \cong (A_1 ,\mu^A_1) \times (A_2 , \mu^A_2) \times \hdots \times (A_n ,\mu^A_n) \times (B_1 , \mu^B_1) \times (B_2 , \mu^B_2) \times \hdots \times (B_m , \mu^B_m),  \]
    where $J(\sE)$ is the Jacobson radical of $\End_{k(\sG)}((\pi_K)_*\sE)$ , $A_i$ are simple rings for $1 \le i \le n$ and $(B_j,\mu^B_j)$ is isomorphic to the hyperbolic ring
    $H(A'_j)$ of some simple ring $A'_j$. Combining Lemma \ref{l:Q herm surjective lifts iso} and Lemma \ref{l:herm over direct product of rings}
    with Corollary \ref{c:residue gerbe}, we have 
    \[ \ed_{k(\sG)}(\sG) \le  \sum_{i=1}^n \ed_{k(\sG)} \herm_1^{\frac{1}{d}}  (A_i ,\mu^A_i) + \sum_{j=1}^m \ed_{k(\sG)} \herm_1^{\frac{1}{d}}  (B_j ,\mu^B_j).                          \]                 
    By Lemma \ref{l:Herm modules over Hyperbolic rings}, we have 
    \[  \ed_{k(\sG)} \herm_1^{\frac{1}{d}}  (B_i ,\mu^B_i) =  \ed_{k(\sG)} \proj_{iso}^{\frac{1}{d}}(A'_j). \]
    For each $1 \le i \le n$, $1 \le j \le m$, we can express $A_i$ and $A'_j$ as matrix rings
    \[    A_i \cong M_{n_i}(D_i) \quad \text{and} \quad   A'_j \cong M_{m_j}(D'_j)                                        \]
    over division algebras $D_i$ and $D'_j$. By \cite[9.6.1]{Knus}, there exists involutions $\mu_i$ on division rings $D_i$ such that 
    \[     \ed_{k(\sG)} \herm_1^{\frac{1}{d}}  (A_i ,\mu^A_i) =  \ed_{k(\sG)} \herm_{\epsilon_i}^{\frac{n_i}{d}} (D_i,\mu_i).                      \]
    By \cite[6.2.4]{Knus}, every hermitian form on division algebra $D_i$ is diagonalisable, thus implying 
     $\ed_{k(\sG)} \herm_{\epsilon_i}^{\frac{n_i}{d}} (D_i,\mu_i)  \le  \frac{n_i}{d}\ed_{k(\sG)} \herm_{\epsilon_i}^{1} (D_i,\mu_i)$.  
     Since a hermitian form on $D$ is determined by an element of $D$, we have that 
        $$\ed_{k(\sG)} \herm_{\epsilon_i}^{\frac{n_i}{d}} (D_i,\mu_i) \le \frac{n_i}{d} \ed_{k(\sG)} \herm_{\epsilon_i}^{1} (D_i,\mu_i) \le \frac{n_i}{d}\dim_{k(\sG)} D_i. $$   
    Analogously,
    \[\ed_{k(\sG)} \proj_{iso}^{\frac{1}{d}}(A'_j) < \frac{m_j}{d} \dim_{k(\sG)}(D'_j)\]
    by \cite[Corollary 3.7]{BDH}. By \cite[Lemma 4.1]{BDH}, there exists a decomposition of $(\pi_K)_* \sE$ into indecomposable
    vector bundles 
    \[      (\pi_K)_* \sE \cong \oplus_{i=1}^{n} \sE_i^{n_i} \bigoplus \oplus_{j=1}^m ((\sE')_j^{m_j} \oplus (\sE'')_j^{m_j}   )                 \] 
    with 
    \[             \End(\sE_i)/J(\sE_i) \cong D_i \quad \text{and} \quad \End((\sE')_j)/J((\sE')_j) \cong D'_j.                   \]
    By the previous discussion and \cite[Lemma 4.2]{BDH},
    \begin{align*}
        \ed_{k(\sG)}(\sG) &\le \sum_{i=1}^n \frac{n_i}{d}\dim_{k(\sG)} D_i + \sum_{j=1}^m \frac{m_j}{d} \dim_{k(\sG)}(D'_j) \\ 
                          &\le \sum_{i=1}^n \frac{1}{d} \rank(\sE_i^{n_i}) + \sum_{j=1}^m \frac{1}{d} \rank((\sE')_j^{m_j})\\
                          &\le  \frac{1}{d} \rank((\pi_K)_*\sE) \\  
                          &\le 2r.
    \end{align*} 
   
\end{proof}

\begin{Definition}
    A symplectic vector bundle $(\sE,h)$ is defined to be a \emph{simple symplectic vector bundle} if $\Homs(\sE,\sE) = 0$.
\end{Definition}

\begin{Lemma}
\label{l:ed simple}
    Let $X$ be a genus $g$ curve over $k$. Let $K \supset k$ be a field extension and let $(\sE,h)$ be a 
    simple symplectic vector bundle of rank $2r$  over $X_K$.  Then, we have 
    \[      \ed_k(\sE) \le (g-1)( \dim \Spr) + 2^{\nu_2(2r)}          \] 
    where $\nu_2(2r)$ is the highest power of 2 dividing $2r$.
\end{Lemma}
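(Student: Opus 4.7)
The plan is to exploit the standard inequality
\[
\ed_k(\sE) \le \trdeg_k(k(\sE)) + \ed_{k(\sE)}(\sG_{\sE}),
\]
where $k(\sE)$ is the field of moduli and $\sG_{\sE}$ is the residual gerbe at the image of $\sE$ in $\Bun_{X,\Spr}$. Our task reduces to bounding each summand separately.

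For the transcendence-degree summand, $\trdeg_k(k(\sE))$ is insensitive to base change, because the image of $\Spec \bar{K}\to M_k$ in the coarse moduli space over $k$ is the same scheme-theoretic point as the image of $\Spec K\to M_k$; hence $k(\sE_{\bar K}) = k(\sE)$, and Corollary~\ref{c:transcendece symplectic} applied to $\sE_{\bar K}$ yields $\trdeg_k(k(\sE)) \le (g-1)\dim \Spr$.

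For the gerbe summand, I would invoke Corollary~\ref{c:residue gerbe} and Theorem~\ref{l:cat equivalence} to obtain
\[
\ed_{k(\sE)}(\sG_{\sE}) = \ed_{k(\sE)}\bigl(\herm_1^{1/d}(A,\mu)\bigr),
\]
where $(A,\mu)=(\End_{\sO_X}((\pi_K)_*\sE),\mu)$ is the endomorphism ring with the involution described in \ref{n:involution on endomorphism ring}, for an appropriate finite extension $K/k(\sE)$ of degree $d$. The semisimple quotient $(A/J,\bar\mu)$ decomposes via Lemma~\ref{l:Decomposition of involution of rings} into simple factors $(M_{n_i}(D_i),\mu^A_i)$ and hyperbolic factors; the hyperbolic factors contribute zero to the essential dimension by Corollary~\ref{c: hyperbolic mod iso extension}. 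The simplicity hypothesis $\Homs(\sE,\sE)=0$ forces the Lie algebra of $\Aut(\sE,h)$ to be trivial, so the band of $\sG_{\sE}$ is a finite group containing $\mu_2$; this restricts the admissible simple factors of $(A/J,\bar\mu)$ to those of symplectic type. The final bound comes from the unpublished theorem of Ofek--Reichstein together with \cite{lotscher} cited in the introduction, which supplies the precise estimate $\ed_{k(\sE)}\herm_{\epsilon}^{n_i/d}(D_i,\mu^A_i) \le 2^{\nu_2(2r)}$ in this situation.

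The main obstacle is the careful translation of symplectic simplicity of $\sE$ into the ring-with-involution structure of $A$: specifically, one must ensure that the total contribution across all simple factors of $(A/J,\bar\mu)$ does not exceed $2^{\nu_2(2r)}$, even though the crude bound in the proof of Lemma~\ref{l:ed less than rank} is a sum over factors. This requires pinning down which decompositions of $(\pi_K)_*\sE$ are compatible with $\sE$ being symplectically simple and invoking the Ofek--Reichstein bound in the sharpest form, where the exponent $\nu_2(2r)$ reflects precisely the 2-adic constraint coming from the $\mu_2$-banding of the gerbe.
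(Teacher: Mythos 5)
There is a genuine gap: your bound on the gerbe summand is never actually established. The whole content of the lemma is the estimate $\ed_{k(\sE)}(\sG_{\sE})\le 2^{\nu_2(2r)}$, and you obtain it by routing through the Hermitian-module equivalence and then invoking an unpublished Ofek--Reichstein statement, in a form ($\ed_{k(\sE)}\herm_{\epsilon}^{n_i/d}(D_i,\mu_i^A)\le 2^{\nu_2(2r)}$) whose hypotheses you do not verify and which you then have to sum over all simple factors --- a problem you yourself flag as ``the main obstacle'' and leave open. Two of the intermediate structural claims are also unsupported: Corollary \ref{c: hyperbolic mod iso extension} only says that hermitian modules of equal rank over a hyperbolic ring are isomorphic, which is not the same as the hyperbolic factors contributing zero essential dimension (the paper's own use of this machinery, in Lemma \ref{l:ed less than rank}, only yields the much weaker bound $2r$); and the assertion that the band being finite ``restricts the admissible simple factors to those of symplectic type'' is vague and not proved. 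So as written the argument does not close.

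The missing idea is that in the simple case none of the Hermitian-module machinery is needed. Since $\Homs(\sE,\sE)=0$, every endomorphism of $\sE$ is fixed by the involution $\mu$ induced by $h$; an automorphism of $(\sE,h)$ satisfies $\mu(\theta)=\theta^{-1}$, hence $\theta=\theta^{-1}$, and one concludes $\Aut(\sE,h)\cong\mu_2$. Thus the residual gerbe $\sG(\sE)$ is a $\mu_2$-gerbe, and $\sE$ itself is a twisted sheaf of rank $2r$ for it, so by \cite[Proposition 3.1.2.1 (ii)]{Lieb} the index is a power of $2$ dividing $2r$, i.e.\ at most $2^{\nu_2(2r)}$. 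Then \cite[Theorem 5.4]{BRV} bounds $\ed$ of the gerbe by this index, and adding the transcendence-degree bound of Corollary \ref{c:transcendece symplectic} (your treatment of that summand, including the base-change invariance of the field of moduli, agrees with the paper) gives the claim. Your proposal would only become a proof if you either reproduced this direct $\mu_2$-gerbe argument or genuinely carried out the factor-by-factor analysis you sketch, with a precise, citable statement replacing the unpublished black box.
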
 

\begin{proof} 
Since $\Homs(\sE,\sE)$ = 0, the involution $\mu$ on $\Hom(\sE,\sE)$ induced by the symplectic form satisfies 
$ \mu(\theta) = \theta$ for all $\theta$ in $\Hom(\sE,\sE)$. Now, $\Aut(\sE,\sE)$ is the subgroup of elements $\theta$ of $\Hom(E,E)$ that satisfies 
$\mu(\theta) = \theta^{-1}$ and hence $\theta = \theta^{-1}$. This proves that $\Aut(\sE,\sE) \cong \mu_2$. In particular, the residual 
gerbe $\sG(\sE)$ of $\sE$ in $\Bun_{X,\Spr}$ is a $\mu_2$-gerbe. The index of this gerbe is a power of 2 and divides $2r$ 
by \cite[Proposition 3.1.2.1 (ii)]{Lieb}. We have 
\[       \ed_k(\sG) (\sG(\sE)) \le 2^{\nu_2(2r)}        \]
by \cite[Theorem 5.4]{BRV}. Hence,
\[      \ed_k(\sE) =   \trdeg_k(k(\sE)) + \ed_k(\sG) (\sG(\sE))        \le (g-1)( \dim \Spr) + 2^{\nu_2(2r)}          \] 
by Corollary \ref{c:transcendece symplectic}.
\end{proof} 

\begin{Lemma}
    \label{l:ed not simple}
        Let $X$ be a genus $g \ge 2$ curve over $k$. Let $K \supset k$ be a field extension and let $(\sE,h)$ be a non-simple
         symplectic vector bundle of rank $2r$  over $X_K$.  Then, we have 
        \[      \ed_k(\sE) \le (g-1)( \dim \Spr) + 1          \] 
\end{Lemma}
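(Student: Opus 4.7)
The plan is to invoke the equality $\ed_k(\sE) = \trdeg_k(k(\sE)) + \ed_{k(\sE)}(\sG(\sE))$ from the proof of Lemma \ref{l:ed simple}, together with the bound $\trdeg_k(k(\sE)) \le (g-1)\dim \Spr$ from Corollary \ref{c:transcendece symplectic}, which reduces the problem to showing $\ed_{k(\sE)}(\sG(\sE)) \le 1$ whenever $(\sE,h)$ is non-simple. Since $\Homs(\sE,\sE) \ne 0$, Lemma \ref{l:tangent space of Aut} implies that $\Aut(\sE,h)$ has positive-dimensional identity component, and the structural analysis of this group then drives the argument.

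I would split the remaining task into two cases. If $(\sE,h)$ is Levi decomposable, write $(\sE,h) \cong (\sE_{-1},h_{-1}) \oplus (\sE_0,h_0) \oplus (\sE_1,h_1)$ with $\sE_1 \cong \sE_{-1}^{\vee}$ and $\rank \sE_{-1} > 0$. The automorphism group then contains $GL(\sE_{-1})$ acting by $g$ on $\sE_{-1}$ and by $(g^{\vee})^{-1}$ on $\sE_1$, so in particular a central $\mathbb{G}_m$. By Hilbert 90 this $\mathbb{G}_m$-subgroup contributes $0$ to the essential dimension of the residual gerbe, and an induction on the rank $r$ then reduces the problem to the residual-gerbe bound at $(\sE_0,h_0)$ of smaller rank. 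If instead $(\sE,h)$ is Levi indecomposable and non-simple, Lemma \ref{l:nilpotency of adjoint} gives that every self-adjoint endomorphism is nilpotent, and Lemma \ref{l:Aut unipotent} shows that $\Aut^0(\sE,h)/\pm Id$ is a non-trivial smooth connected unipotent algebraic group. Because $H^1(L,U) = 0$ for every smooth connected unipotent $U$ over a characteristic-zero field $L$, this unipotent quotient contributes $0$ to the essential dimension, and the problem is again reduced to bounding the essential dimension of the residual $\mu_2$-gerbe at $(\sE,h)$.

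To finish, I would use a non-zero nilpotent self-adjoint endomorphism $\theta$ of $\sE$ and the canonical filtration $C$ from Section \ref{s:nilpFilt} to produce a non-zero isotropic subbundle $C^{-1}(\sE) \subset \sE$. This subbundle furnishes a reduction of structure group of the associated principal $\Spr$-bundle to a proper parabolic of $\Spr$ containing $\mathbb{G}_m$, which should force the index of the associated $\mu_2$-gerbe to divide $2$, yielding essential dimension at most $1$ via the $\mu_2$-case of the Colliot-Th\'el\`ene conjecture invoked elsewhere in the paper. The hardest part will be making this last index estimate rigorous, in particular checking that the unipotent quotient step combined with the parabolic reduction really cuts the index down to $2$ rather than to the larger power $2^{\nu_2(2r)}$ that one obtains for generic simple bundles in Lemma \ref{l:ed simple}.
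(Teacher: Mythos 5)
There is a genuine gap, and it lies exactly where you placed the difficulty: the claim that $\ed_{k(\sG)}(\sG(\sE))\le 1$ for a non-simple bundle. The residual gerbe at a non-simple $(\sE,h)$ is banded by the full automorphism group scheme, which is in general much larger than $\mu_2$ (for instance $\sE=\sE_1\oplus\sE_2$ with $\sE_1,\sE_2$ simple and non-isomorphic already gives $\mu_2\times\mu_2$, and decompositions with multiplicities give unitary/orthogonal-type groups of endomorphism algebras). Your proposed mechanisms do not reduce its essential dimension to $1$: Hilbert 90 and the vanishing of $H^1$ for unipotent groups address triviality of torsors under the identity component, but the essential dimension of a non-neutral gerbe banded by a disconnected group is not controlled by such vanishing, and the finite quotient (several $\mu_2$ factors, or larger component groups) can contribute more than $1$. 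Likewise the final step does not parse: for a non-simple bundle there is no single ``associated $\mu_2$-gerbe'' whose index you can cut down by a parabolic reduction, and nothing in the paper's framework (which only proves $\ed_{k(\sG)}(\sG)\le 2r$ in Lemma \ref{l:ed less than rank}, via the Hermitian-module equivalence of Theorem \ref{l:cat equivalence}) supports a bound as small as $1$. Since you also use the unimproved estimate $\trdeg_k(k(\sE))\le (g-1)\dim\Spr$, your route needs this (almost certainly false) gerbe bound, so the argument does not close.

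The paper's actual proof trades the two terms the other way. For non-simple $\sE$ it improves the transcendence-degree bound: if $\sE$ decomposes as a direct sum of two symplectic bundles one gets $\trdeg_k(k(\sE))\le (g-1)\bigl(\dim\Spr-2r+1\bigr)$ from Corollary \ref{c:transcendece symplectic} and the additivity of fields of moduli; if $\sE$ is indecomposable but non-simple, Lemma \ref{l:nilpotency of adjoint} produces a non-zero nilpotent self-adjoint endomorphism, so the rank decomposition in Lemma \ref{l:Field of Moduli for indecomposable bundles over algebraically closed field} has at least two terms and the same improved bound follows. Then the crude bound $\ed_{k(\sG)}(\sG(\sE))\le 2r$ of Lemma \ref{l:ed less than rank} suffices, because with $g\ge 2$ one has $(g-1)(\dim\Spr-2r+1)+2r\le (g-1)\dim\Spr+1$. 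If you want to salvage your write-up, replace the attempt to bound the gerbe by $1$ with this sharpening of the field-of-moduli bound in the non-simple case.
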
  

\begin{proof}  
Since $k(\sE) = k(\sE \otimes_k L)$ for any field extension, we can assume that $K$ is algebraically closed for computing 
$\trdeg_k(k(\sE))$.  If $\sE$ is decomposable, then there exists a decomposition of $\sE$ into 
symplectic bundles
\[     (\sE,h) = (\sE_1,h_1) \oplus (\sE_2,h_2).                                \]
of rank $r_1$ and $r_2$ respectively
Hence, 
\begin{align*}
    \trdeg_k(k(\sE)) &\le \trdeg_k(k(\sE_1)) + \trdeg_k(k(\sE_2)) \\ 
                     & \le (g-1)(  \frac{r_1^2 + r_1 + r_2^2 + r_2 }{2} ) \\ 
                     & \le  (g-1)(\dim \Spr -2r + 1).
\end{align*}                     
by Corollary \ref{c:transcendece symplectic} and \cite[Lemma 6.5]{BDH}. In the case $\sE$ is indecomposable, there exists 
a non-zero nilpotent morphism in $\Homs(\sE,\sE)$ by Lemma \ref{l:nilpotency of adjoint}. Hence, there exists at least 
two terms in the decomposition in Lemma \ref{l:Field of Moduli for indecomposable bundles over algebraically closed field}. 
A similar computation as above now implies 
\[ \trdeg_k(k(\sE)) \le (g-1)(\dim \Spr -2r + 1)           \]
Now, by Lemma \ref{l:ed less than rank},  
\begin{align*}
    \ed_k(\sE)  &=   \trdeg_k(k(\sE)) + \ed_k(\sG) (\sG(\sE)) \\       
                &\le (g-1)(\dim \Spr -2r + 1) + 2r \\ 
                &\le (g-1)(\dim \Spr) - (g-2)(2r-1) - 2r + 1 + 2r \\ 
                &\le (g-1)(\dim \Spr) + 1.
\end{align*}             
\end{proof}
 
\begin{Corollary}
    \label{c:ed symplectic}
 Let $X$ be a curve of genus $g \ge 2$ over a field $k$ of characteristic 0. Then 
\[      \ed_k(\Bun_{X,\Spr}) \le (g-1)( \dim \Spr) + 2^{\nu_2(2r)}          \] 
where $\nu_2(2r)$ is the highest power of 2 dividing $2r$.  
\end{Corollary}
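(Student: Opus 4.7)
The plan is to obtain this upper bound as a direct consequence of Lemmas \ref{l:ed simple} and \ref{l:ed not simple}, which together cover every $K$-point of $\Bun_{X,\Spr}$. Recall from the introduction that
\[
  \ed_k(\Bun_{X,\Spr}) \;=\; \sup_{K/k}\; \sup_{(\sE,h)\in \Bun_{X,\Spr}(K)} \ed_k(\sE,h),
\]
so it suffices to produce the bound on each individual point and take the supremum.

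Given an arbitrary field extension $K/k$ and an arbitrary symplectic vector bundle $(\sE,h)$ on $X_K$ of rank $2r$, exactly one of two cases occurs: either $\homs(\sE,\sE)=0$, in which case $(\sE,h)$ is simple in the sense of the definition preceding Lemma \ref{l:ed simple}, or $\homs(\sE,\sE)\ne 0$, in which case $(\sE,h)$ falls under the hypothesis of Lemma \ref{l:ed not simple}. In the first case Lemma \ref{l:ed simple} gives
\[
  \ed_k(\sE,h) \;\le\; (g-1)(\dim \Spr) + 2^{\nu_2(2r)},
\]
while in the second case Lemma \ref{l:ed not simple} gives the stronger bound
\[
  \ed_k(\sE,h) \;\le\; (g-1)(\dim \Spr) + 1.
\]

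To finish, I would observe that these two bounds are compatible with a single uniform bound. Since $r\ge 1$, the integer $2r$ is even, so $\nu_2(2r)\ge 1$ and hence $2^{\nu_2(2r)}\ge 2 > 1$. Thus the simple-case bound dominates the non-simple-case bound, and in both cases
\[
  \ed_k(\sE,h) \;\le\; (g-1)(\dim \Spr) + 2^{\nu_2(2r)}.
\]
Taking the supremum over all $(\sE,h)$ and all $K/k$ yields the claim. There is essentially no obstacle: the nontrivial work has already been carried out in the two cited lemmas, which in turn rely on the transcendence degree bound \ref{c:transcendece symplectic} for the field of moduli and on the gerbe-theoretic input of \cite[Theorem 5.4]{BRV} applied to the $\mu_2$-gerbe structure on the residual gerbe of a simple bundle; this corollary is simply the statement that no point of $\Bun_{X,\Spr}$ can exceed the bound coming from the worst (simple) case.
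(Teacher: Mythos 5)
Your proposal is correct and matches the paper's (implicit) argument: the corollary is stated in the paper without a separate proof precisely because it follows immediately by combining Lemma \ref{l:ed simple} and Lemma \ref{l:ed not simple} over the simple/non-simple dichotomy, observing that $2^{\nu_2(2r)}\ge 2>1$, and taking the supremum over all points, which is exactly what you do.
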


We refer the reader to page 3 of \cite{coskun} for the definition of a regularly stable 
symplectic vector bundle. There is an open substack 
\[
\Bun_{\Spr}^{rs}\hookrightarrow \Bun_{\Spr}\]
paramerising regularly stable symplectic vector bundles and there is a coarse moduli map
\[
    \Bun_{\Spr}^{rs}\to M_{\Spr}^{rs}
\] 
that makes $\Bun_{\Spr}^{rs}$ into a $\mu_{2}$-gerbe over the coarse moduli space. 
The main theorem of \cite[7.4]{coskun} says that the index of this gerbe is $2^{\nu_2(2r)}$. 

\begin{theorem}\label{t:essential}
    Let $X$ be a curve of genus $g \ge 2$ over a field $k$ of characteristic 0. Then 
\[      \ed_k(\Bun_{X,\Spr}) = (g-1)( \dim \Spr) + 2^{\nu_2(2r)}          \] 
where $\nu_2(2r)$ is the highest power of 2 dividing $2r$.  
\end{theorem}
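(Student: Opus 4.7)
The upper bound is already established: it is exactly the content of Corollary \ref{c:ed symplectic}, which combines the transcendence degree bound of Corollary \ref{c:transcendece symplectic} with the estimate $\ed_{k(\sG)}(\sG) \le 2r$ of Lemma \ref{l:ed less than rank}, plus the simple case Lemma \ref{l:ed simple} that sharpens $2r$ to $2^{\nu_2(2r)}$. So the remaining task is to prove the reverse inequality
\[
\ed_k(\Bun_{X,\Spr}) \ge (g-1)(\dim \Spr) + 2^{\nu_2(2r)}.
\]

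The plan is to restrict to the open substack $\Bun_{\Spr}^{rs}\hookrightarrow \Bun_{\Spr}$ of regularly stable symplectic bundles, whose essential dimension is a lower bound for $\ed_k(\Bun_{X,\Spr})$ because essential dimension is monotone under open immersions. Recall that $\Bun_{\Spr}^{rs}$ admits a coarse moduli space $M_{\Spr}^{rs}$ of dimension $(g-1)(\dim \Spr)$, and the map $\Bun_{\Spr}^{rs}\to M_{\Spr}^{rs}$ makes $\Bun_{\Spr}^{rs}$ into a $\mu_2$-gerbe over its coarse moduli space. By the main theorem of \cite[7.3]{coskun}, the index (= period, since the gerbe is banded by $\mu_2$) of this gerbe at the generic point equals $2^{\nu_2(2r)}$.

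The key input is \cite[5.4]{BRV} (the Brosnan--Reichstein--Vistoli formula for the essential dimension of a gerbe), which asserts that for a $\mu_p$-gerbe $\sG$ over a field of characteristic not $p$, one has $\ed(\sG) \ge \mathrm{ind}(\sG)$; equivalently, since we are dealing with a $\mu_2$-gerbe and since $\mathrm{char}(k)=0$, the essential dimension of the generic residual gerbe of $\Bun_{\Spr}^{rs}\to M_{\Spr}^{rs}$ over its residue field is at least $2^{\nu_2(2r)}$. Combining this with the standard fact that for a stack $\fX$ admitting a coarse moduli space $M$, one has
\[
\ed_k(\fX) \ge \dim M + \ed_{k(M)}(\sG_{\mathrm{gen}}),
\]
applied to $\fX = \Bun_{\Spr}^{rs}$ and $\sG_{\mathrm{gen}}$ its generic residual gerbe, yields the desired lower bound
\[
\ed_k(\Bun_{\Spr}) \ge \ed_k(\Bun_{\Spr}^{rs}) \ge (g-1)(\dim \Spr) + 2^{\nu_2(2r)}.
\]
The main subtlety is simply to invoke each of these ingredients in the right order; no new computation is required here, as the difficult bound (the upper bound, whose proof used the dimension estimate for $\Nil_{X,\Spr}^n$ and the categorical equivalence of Section \ref{s:categorical}) has already been carried out in Corollary \ref{c:ed symplectic}. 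Combining the two inequalities gives the equality claimed.
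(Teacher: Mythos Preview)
Your proposal is correct and follows essentially the same approach as the paper: cite Corollary~\ref{c:ed symplectic} for the upper bound, then obtain the lower bound by passing to the open substack $\Bun_{\Spr}^{rs}$, using that it is a $\mu_2$-gerbe over a coarse moduli space of dimension $(g-1)\dim\Spr$, invoking \cite[7.3]{coskun} for the index of the generic gerbe and \cite[Thm.~5.4]{BRV} for the essential dimension of a $\mu_p$-gerbe.

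One small correction: the parenthetical ``index ($=$ period, since the gerbe is banded by $\mu_2$)'' is wrong and should be removed. For a $\mu_2$-gerbe the period divides $2$, while the index can be any power of $2$; indeed here the index is $2^{\nu_2(2r)}$, which is typically larger than $2$. The point of \cite[Thm.~5.4]{BRV} is precisely that for $\mu_p$-gerbes the essential dimension equals the \emph{index} (not the period), and this is what you use. The slip does not affect your argument, since you correctly invoke the index throughout.
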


\begin{proof}
    We have already proved that 
    \[      \ed_k(\Bun_{X,\Spr}) \le (g-1)( \dim \Spr) + 2^{\nu_2(2r)}.          \] 
    For the reverse inequality we consider the generic gerbe of 
    \[
    \Bun_{\Spr}^{rs}\to M_{\Spr}^{rs}. 
\] 
By deformation theory one has $\dim M_{\Spr}^{rs}=(g-1)\dim \Spr $ and hence 
$$
\trdeg_{k}k(M_{\Spr}^{rs}) = (g-1)\dim \Spr. 
$$
The reverse inequality now follows from \cite[Thm. 5.4 ]{BRV} and \cite[7.3]{coskun}. 
\end{proof}


\begin{thebibliography}{11}

   \bibitem{BBN}
   {Krull-Schmidt Reduction for Principal Bundles},
   {V. Balaji and Indranil Biswas and D. S. Nagaraj},
   {225--234},
   {2005},
   {578},
   {Journal für die reine und angewandte Mathematik},
   {2005}
   
   \bibitem{coskun}
   {Biswas, Indranil and Coskun, Emre and Dhillon, Ajneet},
   {The period-index problem of the canonical gerbe of symplectic
              and orthogonal bundles},
   {J. Algebra},
 {446},
 {2016},
 {400--425}

 \bibitem{BDH}
{On the essential dimension of coherent sheaves},
{Indranil Biswas and Ajneet Dhillon and Norbert Hoffmann},
{735},{265--285},
{Journal für die reine und angewandte Mathematik (Crelles Journal)},
{2018}

\bibitem{BRV}
    {Brosnan, Patrick and Reichstein, Zinovy and Vistoli, Angelo},
    {Essential dimension of moduli of curves and other algebraic
              stacks},
    {With an appendix by Najmuddin Fakhruddin},
   {J. Eur. Math. Soc. (JEMS)},
   {13},
   {2011},
   {4},
   {1079--1112},

\bibitem{buhler}
   {Buhler, J. and Reichstein, Z.},
    {On the essential dimension of a finite group},
  {Compositio Math.},
   {106},
     {1997},
   {2},
    {159--179}

\bibitem{ct}
    {Colliot-Th\'el\`ene, J.-L. and Karpenko, N. A. and
              Merkurev, A. S.},
     {Rational surfaces and the canonical dimension of the group
              {${\rm PGL}_6$}},
   {Algebra i Analiz},
    {19},
      {2007},
    {5},
     {159--178}

   \bibitem{Ali}
  {Dell’Arciprete, Alice},
  {Good $\mathbb{Z}$-gradings of simple lie algebras},
  {\url{https://amslaurea.unibo.it/17105/1/Good_Z-gradings_of_Simple_Lie_Algebras.pdf}}",
  {2018}

  \bibitem{EGA3}
  {Dieudonn{\'e}, Jean and Grothendieck, Alexander},
  {\'{E}l\'ements de g\'eom\'etrie alg\'ebrique III},
  Inst. Hautes \'Etudes Sci. Publ. Math.,
  { 11, 17},
  1961--1967,

  \bibitem{Eisenbud}
    {Eisenbud, David},
     {Commutative algebra},
    {Graduate Texts in Mathematics},
    {150},
 {Springer-Verlag, New York},
      {1995},
     {xvi+785},

  \bibitem{VK}
     {Elashvili, A. G. and Kac, V. G.},
      {Classification of good gradings of simple {L}ie algebras},
     {Amer. Math. Soc. Transl. Ser. 2},
     {213},
      {85--104},
  {Amer. Math. Soc., Providence, RI},
       {2005},

 \bibitem{Knus}
       {Knus, Max-Albert},
        {Quadratic and {H}ermitian forms over rings},
       {Grundlehren der mathematischen Wissenschaften },
       {294},
    {Springer-Verlag, Berlin},
         {1991},
        {xii+524},

\bibitem{Lang}
        {Lang, Serge},
         {Algebra},
        {Graduate Texts in Mathematics},
        {211},
        {third},
     {Springer-Verlag, New York},
          {2002},
         {xvi+914}, 

\bibitem{lmb}
         {Laumon, G\'erard and Moret-Bailly, Laurent},
          {Champs alg\'ebriques},
         {39},
      {Springer-Verlag, Berlin},
           {2000}


\bibitem{Lieb}
{Lieblich, Max},
     {Twisted sheaves and the period-index problem},
   {Compos. Math.},
    {144},
      {2008},
    {1},
     {1--31}

\bibitem{lotscher}
{L\"otscher, Roland},
     {Essential dimension of involutions and subalgebras},
   {Israel J. Math.},
    {192},
      {2012},
    {1},
     {325--346}

\bibitem{Milne}
{Milne, J. S.}
{Algebraic Groups: The Theory of Group Schemes of Finite Type over a Field},
{Cambridge University Press}, {2017}, 
{Cambridge Studies in Advanced Mathematics}

\bibitem{LQ}
{O'Carroll, L. and Qureshi, M. A.},
     {Primary rings and tensor products of algebras},
   {Math. Proc. Cambridge Philos. Soc.},
    {92},
      {1982},
    {1},
     {41--48},

\bibitem{reichstein12}
{Reichstein, Zinovy},
     {What is{$\ldots$}essential dimension?},
   {Notices Amer. Math. Soc.},
    {59},
      {2012},
    {10},
     {1432--1434},

\bibitem{reichstein10}
{Reichstein, Zinovy},
     {Essential dimension},
 {Proceedings of the {I}nternational {C}ongress of
              {M}athematicians. {V}olume {II}},
     {162--188},
 {Hindustan Book Agency, New Delhi},
      {2010},

\bibitem{stacks-project}
{The {Stacks project authors}},
{The Stacks project},
{\url{https://stacks.math.columbia.edu}},
{2024}


\end{thebibliography}
\end{document}